\documentclass{scrartcl}

\usepackage{egen}

\usepackage[english]{babel}
\usepackage{tikz}
\usepackage{caption, subcaption}

\numberwithin{equation}{section}

\theoremstyle{plain}

\newtheorem{nthm}[equation]{Theorem}

\newtheorem{nprop}[equation]{Proposition}

\newtheorem{nlemma}[equation]{Lemma}

\newtheorem{ncor}[equation]{Corollary}

\theoremstyle{definition}

\newtheorem{ndefn}[equation]{Definition}

\newtheorem{nconstruction}[equation]{Construction}

\newtheorem{nex}[equation]{Example}

\theoremstyle{remark}


\usepackage{hyperref}
\usepackage{stmaryrd} 
\usepackage{tensor} 
\usepackage{todonotes}
\usepackage{tikz-cd} 
\usepackage{microtype}

\let\wt\relax
\DeclareMathOperator{\wt}{wt}
\newcommand{\res}{\mathrm{res}}
\newcommand{\mfarrows}[2]{\overset{#1}{\underset{#2}{\rightleftarrows}}}
\DeclareMathOperator{\Crit}{Crit}

\newcommand{\Knorrer}{Knörrer}
\newcommand{\opposite}{\mathrm{op}}
\newcommand{\us}{\mathrm{us}}
\newcommand{\sstable}{\mathrm{ss}}
\newcommand{\triv}{\mathrm{triv}}
\DeclareMathOperator{\sEnd}{\mathcal{E}\mathit{nd}}
\DeclareMathOperator{\sExt}{\mathcal{E}\mathit{xt}}

\title{The homological projective dual of $\Sym^2\PP(V)$}
\date{}
\author{Jørgen Vold Rennemo}

\begin{document}
\maketitle
\begin{abstract}
We study the derived category of a complete intersection $X$ of bilinear divisors in the orbifold $\Sym^{2}\PP(V)$.
Our results are in the spirit of Kuznetsov's theory of homological projective duality, and we describe a homological projective duality relation between $\Sym^{2}\PP(V)$ and a category of modules over a sheaf of Clifford algebras on $\PP(\Sym^{2}V^{\vee})$.

The proof follows a recently developed strategy combining variation of GIT stability and categories of global matrix factorisations.
We begin by translating $D^{b}(X)$ into a derived category of factorisations on an LG model, and then apply VGIT to obtain a birational LG model.
Finally, we interpret the derived factorisation category of the new LG model as a Clifford module category.

In some cases we can compute this Clifford module category as the derived category of a variety.
As a corollary we get a new proof of a result of Hosono and Takagi, which says that a certain pair of nonbirational Calabi--Yau 3-folds have equivalent derived categories.
\end{abstract}

\section{Introduction}
Let $V$ be a vector space, let $\Sym^2\PP(V)$ be the quotient stack $\PP(V)^{2}/\ZZ_{2}$, and let $f : \Sym^{2}\PP(V) \to \PP(\Sym^{2}V)$ be the morphism given by
\[
\{[v_{1}], [v_{2}]\} \mapsto [v_{1}\otimes v_{2} + v_{2} \otimes v_{1}],\ \ \ \ v_{1}, v_{2} \in V.
\]
Choose a vector subspace $L \subset \Sym^{2}(V^{\vee})$.
We then get an orthogonal subspace $L^{\perp} = \{v \in \Sym^{2}V\ |\ (v,L) = 0\} \subset \Sym^{2}V$.
The main goal of this paper is to understand the derived category of the stack $X = f^{-1}(\PP(L^{\perp}))$.

Our first result relates this category to a category of modules over a sheaf of Clifford algebras.
We will define a certain $O(2)$-gerbe, $\cY \to \PP(\Sym^{2}V^{\vee})$, equipped with a locally free sheaf $E$, whose rank is $2\dim V$, and a section $q$ of $\Sym^{2}E$.
From this data we define a sheaf of Clifford algebras $C = T(E)/\cI$, where $T(E)$ is the tensor algebra and $\cI$ is the 2-sided ideal generated by $e \otimes e - q(e)$.

Let $\cY_{L}$ be the restriction of $\cY$ to $\PP(L) \subseteq \PP(\Sym^{2}V^{\vee})$, and keep the notation $C$ for the restriction $C|_{\cY_{L}}$.
There is a derived category $D^{b}(\cY_{L},C)$, whose objects are bounded complexes of coherent $C$-modules.
For such a complex $\cE$ and a point $p \in \PP(L)$, the restriction $\cE|_{p}$ is an $O(2)$-equivariant complex of sheaves on $p$, hence splits as a shifted sum of $O(2)$-representations.
We will define a subcategory $D^{b}(\cY_{L},C)_{\res} \subset D^{b}(\cY_{L},C)$ of grade restricted objects, where $\cE$ is grade restricted if for all $p \in \PP(L)$, only certain specified representations occur in the splitting of $\cE|_{p}$.

Let $n = \dim V$.
We say $X$ has the expected dimension if its codimension in $\Sym^{2}\PP(V)$ equals the codimension of $L^{\perp}$ in $\Sym^{2}V$.
Our first result is:
\begin{nthm}
\label{thm:MainTheorem}
If $X$ has the expected dimension and $n$ is odd, then:
\begin{itemize}
\item If $\codim X > n$, there is a fully faithful functor $D^{b}(X) \into D^{b}(\cY_{L},C)_{\res}$.
\item If $\codim X = n$, there is an equivalence $D^{b}(X) \cong D^{b}(\cY_{L},C)_{\res}$.
\item If $\codim X < n$, there is a fully faithful functor $D^{b}(\cY_{L},C)_{\res} \into D^{b}(X)$.
\end{itemize}
If $X$ has the expected dimension and $n$ is even, then:
\begin{itemize}
\item If $\codim X > n$, there is a fully faithful functor $D^{b}(X) \into D^{b}(\cY_{L},C)_{\res}$.
\item If $n/2 < \codim X \le n$, there is a non-trivial triangulated category $\cC$ which is a fully faithful subcategory of both $D^{b}(X)$ and $D^{b}(\cY_{L},C)_{\res}$.
\item If $\codim X \le n/2$, there is a fully faithful functor $D^{b}(\cY_{L},C)_{\res} \into D^{b}(X)$.
\end{itemize}
\end{nthm}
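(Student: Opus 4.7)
The plan is to execute the three-step strategy announced in the abstract: replace $D^{b}(X)$ by a derived factorization category, wall-cross in GIT, and identify the resulting factorization category via Clifford algebras. For the first step, the subspace $L\subseteq\Sym^{2}V^{\vee}$ determines a regular section $s_{L}$ of the vector bundle $L^{\vee}\otimes\cO_{\Sym^{2}\PP(V)}(1)$ whose zero locus is $X$; regularity is precisely the expected-dimension hypothesis. The Isik--Shipman theorem, in the global matrix factorization formulation of Ballard--Favero--Katzarkov and Hirano, then gives an equivalence between $D^{b}(X)$ and a derived factorization category on $\mathrm{tot}(L\otimes\cO_{\Sym^{2}\PP(V)}(-1))$ with a $\GG_{m}$-action scaling the fibers and tautological superpotential $W$ pairing $s_{L}$ against the fiber coordinate.

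For the second step, I would realize the factorization category above as a GIT quotient of a global affine LG model. Combining the presentation $\Sym^{2}\PP(V)\cong [(V\oplus V)^{\sstable}/(\GG_{m}\times O(2))]$ (with one $\GG_m$ the overall scaling and $O(2)=\GG_m\rtimes\ZZ_2$ the orthogonal-scaling-and-swap action on $V\oplus V$) with the total-space construction above expresses it as a GIT quotient of the affine variety $V\oplus V\oplus L$ by $\GG_{m}\times O(2)\times\GG_{m}$ for one choice of character. A wall-crossing brings us to a second chamber in which stability forces the $L$-coordinate, rather than the $V\oplus V$-coordinate, to be nonzero: the new quotient projects to $\PP(L)$, with fiber over $p$ the $O(2)$-equivariant quadratic LG model $(V\oplus V, q_{p})$, where $q_{p}$ is the symmetric bilinear form given by $p$. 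The window-category machinery of Halpern-Leistner and Ballard--Favero--Katzarkov then produces, for each admissible window of $O(2)$-weights, fully faithful embeddings of both factorization categories into the factorization category on the semistable locus of the total ambient stack, and hence comparison functors between them. Finally, Kuznetsov's theorem on quadric fibrations, extended to matrix factorizations by Baranovsky--Pecharich, identifies the factorization category in the new chamber with $D^{b}(\cY_{L},C)$; the window condition from Step 2 becomes exactly the grade restriction that cuts out $D^{b}(\cY_{L},C)_{\res}$.

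The main obstacle is the window bookkeeping across the wall: one must compute precisely the range of $O(2)$-weights admissible on each side and track how the parity of $n$ shifts this range through the non-identity component of $O(2)$. The difference of window widths on the two sides is controlled by $\codim X - n$, which accounts for the trichotomy of the theorem. For odd $n$ the windows on each side can be centered symmetrically, producing the equivalence at $\codim X = n$; for even $n$, the parity-induced half-integer shift in the $O(2)$-weights prevents any such matching, and in the range $n/2 < \codim X \le n$ the two window constraints still intersect in a proper nonempty subcategory common to both sides, giving the non-trivial common subcategory $\cC$ of the theorem. Making the weight shifts precise and verifying that the resulting fully faithful functors are compatible with each other is the heart of the argument.
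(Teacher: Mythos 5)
Your overall architecture (Kn\"orrer periodicity, wall-crossing for the affine LG model $(V\times V\times L)/G$, identification of the other chamber with a Clifford-module category, with the grade restriction cutting out $D^{b}(\cY_{L},C)_{\res}$) is the same as the paper's. But there is a genuine gap at the step you yourself identify as the heart of the argument: you propose to get the comparison functors from the general window machinery of Ballard--Favero--Katzarkov and Halpern-Leistner. In this example that machinery does not deliver what you need. Their windows for the $\cZ_{+}$-chamber are cut out by intervals of weights for one-parameter subgroups (concretely, a region of the form ``$m_{1}\le i+j\le m_{1}+2n$ together with $T_{1}$- and $T_{2}$-weights in $[m_{2},m_{2}+n]$''), and for no choice of shifts does such a region either contain or be contained in the band $S_{-,\res}=\{0\le i+j\le 2l-1,\ |i-j|\le\lfloor n/2\rfloor\}$ that corresponds to $\cW_{-,\res}$; this is exactly the point of Section \ref{sec:StrangeWindows}. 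The paper therefore has to define $\cW_{+}$ ad hoc, using weights with respect to the full two-dimensional torus $T$ and the parity-dependent hexagonal regions $S_{+}$, and then reprove the window theorem for this nonstandard window from scratch: essential surjectivity by repeatedly taking cones over objects supported on the unstable locus chosen according to minimal/maximal ``good'' weight subsets (Proposition \ref{thm:NewWindowsAndKernelsGenerate}), and full faithfulness by a local-cohomology computation (Lemma \ref{thm:restrictionIsIsomorphism}). Your ``window bookkeeping'' obstacle is not bookkeeping within an existing theorem; it requires constructing and justifying a new window, and without that the trichotomy in $\codim X$ versus $n$ (and the even-$n$ intersection category $\cC=\cW_{+}\cap\cW_{-,\res}$) cannot be read off.

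Two secondary points. First, the identification of the $\cZ_{-}$-chamber with Clifford modules is not an instance of Kuznetsov's quadric-fibration theorem (or its matrix-factorization extensions): here the Clifford sheaf is the full Clifford algebra of a rank-$2n$ bundle living on the $O(2)$-gerbe $\cY_{L}$, not the even Clifford algebra of a quadratic form on $\PP(L)$, and the paper proves the equivalence directly by exhibiting a generator $K=i_{*}\cO_{\cY}$ with $\pi_{*}\sEnd(\cK)$ quasi-isomorphic to $C$ (Proposition \ref{thm:MainPropositionClifford}). Second, the grade restriction is not literally ``the window condition from Step 2'': the window $\cW_{-}$ is equivalent to all of $D^{b}(\cZ_{-},W)$, and $\cW_{-,\res}$ is a further restriction imposed because $\cZ_{-}$ is an Artin stack; the Clifford equivalence is established for the full categories and then shown to respect the restricted subcategories. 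Neither of these is fatal to your outline, but both require arguments you have not supplied.
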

Explicit descriptions of the fully faithful functors and the subcategory $\cC$ will be given in the course of the proof.
In the cases where $D^{b}(\cY_{L},C)_{\res}$ includes into $D^{b}(X)$ or when there is a subcategory $\cC$ common to both of them, Proposition \ref{thm:HPDualityDecomposition} gives a description of the semiorthogonal complement to $D^{b}(\cY_{L}, C)_{\res}$ or $\cC$ inside $D^{b}(X)$.

Our result is an instance of Kuznetsov's theory of homological projective duality \cite{kuznetsov_homological_2007}.
In Section \ref{sec:HPD} we give an introduction to HP duality and explain how our results fit in.

Our second result is that for certain choices of $L$, we can give a more geometric description of the category $D^{b}(\cY_{L}, C)_{\res}$.
The description will depend on the parity of $n$.

Assume first that $n$ is odd.
Interpreting the points of $\PP(\Sym^{2}V^{\vee})$ as symmetric matrices up to scale, we may stratify the space by the ranks of these matrices.
We assume that $\PP(L)$ does not intersect the locus of matrices of corank $\ge 3$, and that the intersection of $\PP(L)$ with the locus of corank $i$ matrices is nonsingular of the expected dimension for $i = 0,1,2$.
This assumption holds for a general $L$ of dimension $\le 6$.

We define a nonsingular variety $Y \to \PP(L)$ as a double cover of the corank 1 locus in $\PP(L)$, ramified in the corank 2 locus.
At a corank 1 point $q \in \PP(L) \subset \PP(\Sym^{2}V^{\vee})$, the 2 points of the fibre $Y|_{q}$ correspond to the 2 connected components of the variety of maximal isotropic subspaces of the quadratic space $(V, q)$; letting this description hold in families of $q$ in the natural way determines $Y$ up to isomorphism (using e.g.\ Lemma \ref{thm:DeterminedVariety}).
\begin{nprop}
\label{thm:InterpretationOfCliffordNOdd}
Under the assumptions above, $D^{b}(\cY_{L},C)_{\res} \cong D^{b}(Y)$.
\end{nprop}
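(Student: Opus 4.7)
The plan is to establish $D^b(\cY_L,C)_{\res} \cong D^b(Y)$ by analysing the Clifford algebra $C$ stratum by stratum on $\PP(L)$ (stratified by the corank of $q$) and showing that the combined effect of $O(2)$-equivariance and the grade restriction carves out exactly the category of Clifford modules corresponding to $D^b(Y)$.

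The starting point is that $E \cong \rho \otimes V$, with $\rho$ the defining $2$-dimensional representation of $O(2)$ equipped with its invariant quadratic form $Q_\rho$, so the Clifford form on $E$ is the tensor product $Q_\rho \otimes q$. At a point $p \in \PP(L)$ where $q$ has kernel $K_p \subseteq V$, the form on $E$ has kernel $\rho \otimes K_p$ with non-degenerate quotient hyperbolic on $\rho \otimes (V/K_p)$; the reduced Clifford algebra is therefore $\sEnd(\wedge^\bullet (V/K_p))$, with even part $\sEnd(\wedge^{\mathrm{ev}}(V/K_p)) \oplus \sEnd(\wedge^{\mathrm{od}}(V/K_p))$, and the two summands are swapped by the $\ZZ/2 \subset O(2)$ that exchanges the two weight-spaces of $\rho$ (parity-reversing on $\wedge^\bullet$ precisely because $n$ is odd).

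Next I would assemble these local computations into a global statement. Over the open corank-$0$ stratum the Clifford algebra is a product of matrix algebras, $O(2)$-equivariantly Morita equivalent to pairs of sheaves on $\PP(L)$ swapped by $\ZZ/2$; the grade restriction, which picks out specific $O(2)$-weights in fibres, should kill this generic contribution and leave a category supported on the discriminant. Over the corank-$1$ stratum $D^\circ \subset \PP(L)$ the extra exterior-algebra factor $\wedge^\bullet(\rho \otimes K_p)$ coming from the $2$-dimensional kernel of the form on $E$ produces a non-semisimple extension of the two half-spinor components, and its grade-restricted modules correspond to sheaves on a double cover of $D^\circ$ whose two fibres above a point index the two families of maximal isotropics of the non-degenerate quadric $(V/K_p, \bar{q})$; this is the étale part of $Y$. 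Over the corank-$2$ stratum these two rulings become interchanged by a canonical involution determined by the $2$-dimensional kernel of $q$, so the cover ramifies there as in the statement.

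To conclude, I would invoke Lemma \ref{thm:DeterminedVariety} to identify the resulting double cover with $Y$ and verify that the functor $D^b(Y) \to D^b(\cY_L, C)_{\res}$ built from the local Morita equivalences is an equivalence, e.g.\ by exhibiting a generator on each side and matching their $\mathrm{Ext}$-groups via the fibrewise computations already carried out. The main technical difficulty is the corank-$2$ analysis: there the form on $E$ has corank $4$, $C$ acquires an exterior-algebra factor of rank $16$, and one must show that the grade restriction interacts correctly with this deeper degeneration and that the resulting cover is nonsingular, matching the smoothness of $Y$.
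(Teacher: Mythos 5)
Your fibrewise picture of $C$ is essentially right ($E$ is $V\otimes\rho$ with the hyperbolic form on $\rho$ tensored with $q$, the spinor module at a corank~$0$ point has $\CC^{*}$-weights spanning $n+1$ consecutive integers and so cannot be grade restricted, etc.), and your route — analysing $D^{b}(\cY_{L},C)_{\res}$ directly, stratum by stratum — genuinely differs from the paper, which deliberately avoids the Clifford side and works with the equivalent category $D^{b}(\cZ_{-},W)_{\res}$ (Proposition \ref{thm:MainPropositionClifford} combined with Proposition \ref{thm:GeometricInterpretationOfMFCategory}). But as it stands the proposal has two real gaps. First, stratum-wise Morita-type descriptions of $C_{p}$ and its grade-restricted modules do not assemble into an equivalence of derived categories: the category is not a disjoint union of its restrictions to strata, and the whole content of the statement lies in the structure transverse to the discriminant (the ``derived fibre product'' phenomenon mentioned in Section \ref{sec:HPDOurCase} is exactly what a pointwise computation misses). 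What is needed is a single global object over $\PP(L)$ together with a computation of its sheaf of dg endomorphisms as an $\cO_{\PP(L)}$-algebra; in the paper this is the object $K$ built from the family of kernels of $q$, with $H(R)\cong f_{*}\cO_{Y}$ proved étale-locally in standard forms via equivariant \Knorrer{} periodicity (Lemmas \ref{thm:localIso}, \ref{thm:O2Knorrer}), globalised over the corank~$1$ locus by the maximal-isotropic argument of Lemma \ref{thm:Corank1Fibre}, and only then extended over the corank~$2$ locus by the normality Lemma \ref{thm:DeterminedVariety}. Your plan invokes Lemma \ref{thm:DeterminedVariety} but has no candidate for the global algebra it is supposed to identify, and ``matching Ext-groups via the fibrewise computations'' cannot substitute for this, since the relevant Ext-sheaves are not determined by their fibres.

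Second, and more seriously, nothing in the proposal addresses generation: even once you have a functor $D^{b}(Y)\to D^{b}(\cY_{L},C)_{\res}$ that is fully faithful (Ext-matching only buys you that), you must show it is essentially surjective, i.e.\ that the right orthogonal of the image in the grade-restricted category vanishes. This is Lemma \ref{thm:GeneratorDoubleCover} in the paper, and it is not a formal consequence of the local structure of $C$: the proof there requires a genuinely global input, namely specialising to $\dim V=\dim L=5$, where $D^{b}(\cZ_{-},W)_{\res}\cong D^{b}(X)$ for a Calabi--Yau threefold $X$ and hence admits no nontrivial semiorthogonal decomposition, and then transporting this to the standard local models by \Knorrer{} periodicity and smooth base change. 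Without an argument of this kind (or some replacement, e.g.\ an explicit proof that every grade-restricted $C$-module is locally built from your proposed generator, which would itself be a substantial computation with the degenerate Clifford algebras at corank $1$ and $2$ points), the proposal proves at best that $D^{b}(Y)$ embeds into $D^{b}(\cY_{L},C)_{\res}$, not that the two are equivalent.
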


If we take $\dim V = \dim L = 5$ with $L$ generic, then $X$ and $Y$ are nonsingular Calabi--Yau 3-folds.
Combining Theorem \ref{thm:MainTheorem} and Proposition \ref{thm:InterpretationOfCliffordNOdd} gives $D^{b}(X) \cong D^{b}(Y)$. 
This result has been shown previously by Hosono and Takagi \cite{hosono_double_2013}, using completely different methods.
One interesting feature of this example is that $X$ and $Y$ have fundamental groups $\ZZ/2$ and $\{e\}$, respectively, hence are not birational.

Assume now that $n$ is even, that $\PP(L)$ does not intersect the locus of matrices of corank $\ge 2$, and that the intersection of $\PP(L)$ with the locus of corank $i$ matrices is nonsingular of the expected dimension for $i = 0,1$.
This assumption holds for a general $L$ of dimension $\le 3$.
Define the variety $Y \to \PP(L)$ as the double cover of the corank 0 locus in $\PP(L)$, ramified in the corank 1 locus.

\begin{nprop}
\label{thm:InterpretationOfCliffordNEven}
Under the assumptions above, $D^{b}(\cY_{L},C)_{\res} \cong D^{b}(Y)$.
\end{nprop}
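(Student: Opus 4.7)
The plan is to follow the same general strategy as for Proposition \ref{thm:InterpretationOfCliffordNOdd}, but working one stratum closer to the generic point because, for even $n$, a nondegenerate quadratic form on $V$ already has an even-dimensional Clifford algebra whose even part carries a nontrivial étale central cover. Thus the double cover $Y \to \PP(L)$ should appear directly from the generic (corank $0$) stratum rather than from the corank $1$ stratum.

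First, I would reduce the $O(2)$-gerbe $\cY_L$ together with its rank $2n$ bundle $E$ and Clifford algebra $C$ to a simpler Clifford algebra built directly from the tautological family of quadratic forms $(V, q)$ over $\PP(L)$. The natural identification $E = V \otimes W$, where $W$ is the standard $2$-dimensional representation of $O(2)$, together with the compatibility between the form $q$ on $E$ and the pairing on $W$, should make the grade restriction the exact condition that identifies grade restricted $C$-modules on $\cY_L$ with modules over the even Clifford algebra $(C_V)_0$ of the universal quadric on $V$, up to Morita equivalence.

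Second, on the open stratum $U_0 \subset \PP(L)$ where the quadric is nondegenerate of even rank, classical Clifford theory identifies $(C_V)_0$ as an Azumaya algebra whose centre is the étale double cover of $U_0$ given by the square root of the discriminant. By construction of $Y$, this centre is exactly $\pi_* \cO_{Y|_{U_0}}$ for $\pi \colon Y \to \PP(L)$, and Morita equivalence on the cover combined with the grade restriction should yield $D^b(\cY_L, C)_{\res}|_{U_0} \cong D^b(Y|_{U_0})$. The two half-spin representations over a corank $0$ point correspond to the two sheets of $Y$ over that point.

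Third, I would extend the equivalence across the ramification divisor $U_1$. Étale locally near a corank $1$ point, a \Knorrer{} periodicity argument splits off the degenerate direction and reduces the Clifford module problem to a Clifford algebra of a nondegenerate form of rank $n-2$ together with a choice of matrix factorisation of $x^2$. The two choices of such factorisation at an $A_1$-point merge into a single object on the ramification locus, matching precisely the geometry of $Y$ being a ramified double cover along $U_1$. Globalising these local equivalences into a single Fourier-Mukai kernel $\cS$ on $\cY_L$, a coherent $C$-module with $\sEnd(\cS) \cong \pi_* \cO_Y$, and checking generation via a tilting argument of Bondal-Van den Bergh type, would complete the proof.

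The main obstacle I anticipate is Step three, producing a global grade restricted $C$-module $\cS$ whose restriction to corank $0$ points realises both half-spin representations and whose behaviour across $U_1$ matches the ramified double cover structure. Verifying that the grade restriction selects exactly the locally free Clifford modules descending to $Y$, rather than a larger category including objects supported on $U_1$, and that $\sEnd(\cS)$ has the required rank and structure sheaf behaviour along the ramification, is where the local and global analyses must be reconciled.
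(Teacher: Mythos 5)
Your overall shape (find a sheaf of algebras over the base whose module category is the grade restricted category, identify it with $f_{*}\cO_{Y}$ generically, extend over the ramification, and prove generation) is the right one, but two of your steps hide the actual content and one of them is doubtful. The paper does not work with $D^{b}(\cY_{L},C)_{\res}$ at all for this statement: it combines Proposition \ref{thm:MainPropositionClifford} (which identifies $D^{b}(\cY_{L},C)_{\res}$ with $D^{b}(\cZ_{-},W)_{\res}$) with Proposition \ref{thm:InterpretationOfCliffordNEvenLater}, proved on the LG-model side exactly because the gerbe-plus-grade-restriction picture is awkward to manipulate directly (this is said explicitly in Section \ref{sec:RelatedWorks}). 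Your Step one — that the grade restriction identifies $C$-modules on the $O(2)$-gerbe with modules over Kuznetsov's even Clifford algebra $(C_{V})_{0}$ up to Morita equivalence — is asserted, not argued, and I see no reason it is true: it would identify the HP dual of $\Sym^{2}\PP(V)$ with the HP dual of the double Veronese $\PP(V)$, which is certainly false for odd $n$ (empty fibre over corank $0$ versus one point) and is unjustified for even $n$; moreover $(C_{V})_{0}$-modules are generically a \emph{Brauer-twisted} derived category of the discriminant cover, whereas the statement you must prove is the untwisted $D^{b}(Y)$, so even granting the identification you would still owe a proof that the twist vanishes over $\PP(L)$.

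That vanishing is exactly your Step three, which you correctly flag as the main obstacle but leave without an argument. This is where the paper's proof lives: it writes down the explicit global object $K = \cO_{Y_{1}}(n/2,0)\oplus\cO_{Y_{2}}(0,n/2)$ with $Y_{1} = 0\times V\times(L\setminus 0)$, $Y_{2}=V\times 0\times(L\setminus 0)$ inside $\cZ_{-}$, computes that the cohomology of $R=\pi_{*}\sEnd(\cK)$ is $f_{*}\cO_{Y}$ (a local corank $\le 1$ computation via the standard models of Section \ref{sec:StandardForm}, globalised using Lemma \ref{thm:DeterminedVariety}), and then must still prove that $\ker\bigl(\pi_{*}\Rhom(\cK,-)\bigr)=0$ on the grade restricted category. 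In the even case this last step cannot be done by the Calabi--Yau trick used for odd $n$ in Lemma \ref{thm:GeneratorDoubleCover}; the paper instead proves it by a direct weight estimate in the standard corank $1$ model, using the $SO(2)$ \Knorrer{} periodicity of Lemma \ref{thm:SO2Knorrer} to force any grade restricted object orthogonal to $K$ to have weight spread at least $n-1$, contradicting the grade restriction. Your proposal contains no substitute for either the explicit generator or this generation/kernel-vanishing argument ("a tilting argument of Bondal--Van den Bergh type" does not by itself produce the splitting module whose existence is the whole point), so as it stands the proof has a genuine gap at its central step.
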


The proofs of Propositions \ref{thm:InterpretationOfCliffordNOdd} and \ref{thm:InterpretationOfCliffordNEven} are obtained by combining Proposition \ref{thm:MainPropositionClifford} with Propositions \ref{thm:GeometricInterpretationOfMFCategory} and \ref{thm:InterpretationOfCliffordNEvenLater}, respectively.

\subsection{Proof of Theorem \ref{thm:MainTheorem}}
The strategy of the proof will be to combine categories of matrix factorisations with variation of GIT stability.
This approach was first described in \cite{segal_equivalence_2011}, inspired by the physics paper \cite{herbst_phases_2008}. 
See also \cite{ballard_homological_2013, addington_pfaffian-grassmannian_2014, favero_toric_2014} for other applications of this strategy.

Categories of matrix factorisations will be properly introduced in Section \ref{sec:Factorisations}.
For now it is enough to know that given a stack $\cX$ equipped with a function $W$ and some extra data, one can define the category of factorisations $D^{b}(\cX,W)$, which is a generalisation of the usual derived category $D^{b}(\cX)$.

The diagram below summarises the strategy.
\[
\begin{tikzcd}
D^{b}(\cZ_{+},W) \arrow[hook]{r}{GIT} & D^b(\cZ,W) \arrow[hookleftarrow]{r}{GIT} & D^{b}(\cZ_{-},W)_{\res} \arrow{d}{\cong} \\ 
D^{b}(X) \arrow{u}{\cong}[swap]{\text{\Knorrer{} per.}} & & D^{b}(\cY_{L},C)_{\res}
\end{tikzcd}
\]
The first step is to replace the category $D^{b}(X)$ by an equivalent category of matrix factorisations.
Let $\cO_{\Sym^{2}\PP(V)}(1,1)_{+} \in \Coh(\Sym^{2}\PP(V))$ be the $\ZZ_{2}$-equivariant sheaf $\cO(1,1)$ on $\PP(V)^{2}$, equipped with the $\ZZ_{2}$-action which leaves the restriction of $\cO(1,1)$ to the diagonal of $\PP(V)^{2}$ fixed.
Then $X$ is cut out by a section $s$ of $\cO(1,1)_{+}^{\oplus l}$, where $l = \dim L$.

Let $\cZ_{+}$ be the total space of the stacky vector bundle $\cO(-1,-1)_{+}^{\oplus l} \to \Sym^2\PP(V)$.
Dualising $s$ gives a function $W$ on $\cZ_{+}$.
A result known as \Knorrer{} periodicity (\cite[4.6]{isik_equivalence_2013}, \cite[3.4]{shipman_geometric_2012}) then says that $D^{b}(X) \cong D^{b}(\cZ_{+},W)$. 
This is Proposition \ref{thm:KnorrerPeriodicity}.

The space $\cZ_{+}$ is a GIT quotient for a quotient stack $\cZ = V \times V \times L/G$, where $G = (\CC^{*})^{2} \rtimes \ZZ_{2}$.
We show that there is a full ``window'' subcategory $\cW_{+} \subset D^{b}(\cZ,W)$ such that composing with the restriction functor $D^{b}(\cZ,W) \to D^{b}(\cZ_{+},W)$ we get an equivalence $\cW_{+} \cong D^{b}(\cZ_{+},W)$.

Having translated $D^{b}(X)$ into a window category, we next cross the GIT wall.\footnote{As Lil Jon \& the East Side Boyz put it: ``To the window! (To the window) / To the wall! (To the wall)''}
The stack $\cZ$ has a second GIT quotient $\cZ_{-}$, which geometrically is a vector bundle on an $O(2)$-gerbe $\cY_{L} \to \PP(L)$.
Again we find a full subcategory $\cW_{-} \subset D^{b}(\cZ,W)$ equivalent to $D^{b}(\cZ_{-},W)$.
As $\cZ_{-}$ is not a Deligne--Mumford stack, the category $\cW_{-}$ is too big to be directly compared to $\cW_{+}$ in the way we want.
We therefore define a subcategory $\cW_{-,\res} \subset \cW_{-}$ and get a corresponding subcategory $D^{b}(\cZ_{-},W)_{\res} \subset D^{b}(\cZ_{-},W)$.
These results on window categories are Proposition \ref{thm:windowsAsGIT} and Corollary \ref{thm:GITCorollary}.

Let $\pi : \cZ_{-} \to \cY_{L}$ be the projection.
We can find a $K \in D^{b}(\cZ_{-},W)$ such that $\pi_{*}(\Rhom(K,K)) \cong C$.
The functor $\pi_{*}(\Rhom(K,-)) : D^{b}(\cZ_{-},W) \to D^{b}(\cY_{L},C)$ is then an equivalence, which restricts to give $D^{b}(\cZ_{-},W)_{\res} \cong D^{b}(\cY_{L},C)_{\res}$.
This is Proposition \ref{thm:MainPropositionClifford}.

We thus have
\[
\cW_{+} \cong D^{b}(X)\ \ \ \ \text{and}\ \ \ \ \cW_{-,\res} \cong D^{b}(\cY_{L},C)_{\res}.
\]
Theorem \ref{thm:MainTheorem} now follows from these equivalences, because in each case it is obvious from the definitions that $\cW_{-,\res}$ and $\cW_{+}$ are either contained one in the other as subcategories of $D^{b}(\cZ,W)$ or have $\cC := \cW_{-,\res} \cap \cW_{+}$ non-trivial.

\subsection{Related works}
\label{sec:RelatedWorks}
This project began as an attempt to understand and generalise Hosono and Takagi's work in \cite{hosono_double_2013}, which treats the special case where $\dim V = \dim L = 5$.
They find an equivalence between two Calabi--Yau 3-folds $X$ and $Y$, and also conjecture that this equivalence generalises to a statement in homological projective duality.
Ingalls and Kuznetsov have studied the case where $\dim V = \dim L = 4$ in \cite{ingalls_nodal_2015}.

Our main theorem is inspired by Kuznetsov's description of the derived categories of intersections of quadrics in terms of even Clifford algebras \cite{kuznetsov_derived_2008}, which we informally recall in Section \ref{sec:Quadrics}.
Our category of Clifford modules $D^{b}(\cY_{L},C)_{\res}$ is different from the one in that paper in two important ways.
Firstly, our sheaf of Clifford algebras does not live on $\PP(L)$, but rather on the $O(2)$-gerbe $\cY_{L}$.
In particular, a module over $C$ is locally an $O(2)$-equivariant sheaf on $\PP(L)$.
Secondly, the need to consider the subcategory of grade restricted modules is new to our case.
Both of these features mean that the description in terms of Clifford modules is less useful than in the quadric case, and in proving Propositions \ref{thm:InterpretationOfCliffordNOdd} and \ref{thm:InterpretationOfCliffordNEven} we work mostly with the equivalent category $D^{b}(\cZ_{-},W)_{\res}$ instead of with $D^{b}(\cY_{L},C)_{\res}$.

As the title indicates, our results are motivated by Kuznetsov's theory of homological projective duality \cite{kuznetsov_homological_2007}. 
Our Theorem \ref{thm:MainTheorem} is close to saying that the category $D^{b}(\cY,L)_{\res}$ is the homological projective dual of $\Sym^{2}\PP(L)$.
We will explain this statement further in Section \ref{sec:HPD}, which also contains background on homological projective duality.

As explained above, a crucial step in the proof of Theorem \ref{thm:MainTheorem} is to relate the categories of factorisations on different GIT quotients.
The techniques for doing this were introduced in this context by Segal in \cite{segal_equivalence_2011}, and have since been worked out in great generality by Ballard, Favero and Katzarkov \cite{ballard_variation_2012}, and by Halpern-Leistner \cite{halpern-leistner_derived_2015}.
The main result of these two papers is that if $X\!\sslash\! G \subset X/G$ is a GIT quotient, then there exists a full subcategory $\cW \subset D^{b}(X/G)$ such that the restriction functor $D^{b}(X/G) \to D^{b}(X \!\sslash\! G)$ gives an equivalence $\cW \cong D^{b}(X \!\sslash\! G)$.
When $X/G$ is equipped with a superpotential $W$, it is shown in \cite{ballard_variation_2012} that same results hold for factorisation categories, i.e.\ there is a $\cW \subset D^{b}(X/G,W)$ such that $\cW \cong D^{b}(X\!\sslash\! G,W)$.

To define $\cW$, one first writes down a sequence of 1-parameter subgroups $\lambda_{i} \subset G$ and a sequence of open subvarieties $X_{i}$ of the fix point loci $X^{\lambda_{i}}$.
For any $\cE \in D^{b}_{G}(X)$ (or $D^{b}_{G}(X,W)$), the restriction $\cE|_{X_{i}}$ is then graded by $\lambda_{i}$-weights, and we define $\cW$ by saying $\cE \in \cW$ if the $\lambda_{i}$-weights of $\cE$ are contained in a certain interval $J_{i} \subset \ZZ$ for all $i$.
Unfortunately, the precise results of \cite{ballard_variation_2012, halpern-leistner_derived_2015} are not applicable in our case, as for our GIT quotients $\cZ_{+}, \cZ_{-} \subset \cZ$, the subcategories of $D^{b}(\cZ,W)$ constructed by these papers are not comparable in the way we want.
See Section \ref{sec:StrangeWindows} for a further discussion of this point.

We remedy this by giving an ad hoc definition of the subcategory $\cW_{+}$.
Since we only consider a quotient of an affine space, the technical details are considerably simpler than in the general case, and modifying the arguments of \cite{ballard_variation_2012, halpern-leistner_derived_2015} allows us to give a direct proof of the equivalence $\cW_{+} \cong D^{b}(\cZ_{+},W)$.
A novel feature of our case is that it is necessary to consider weights with respect to a 2-dimensional subtorus of our group $G$, instead of just to 1-parameter subgroups.
The definition of the category $\cW_{-}$ follows \cite{ballard_variation_2012, halpern-leistner_derived_2015}.

As mentioned above, the overall strategy of our proof has been applied successfully to several examples, beginning with \cite{segal_equivalence_2011, shipman_geometric_2012}.
Producing homological projective duals by this method was carried out in certain cases by Ballard et al.\ in \cite{ballard_homological_2013}.
They apply this further to the example of degree $d$ hypersurfaces in \cite{ballard_derived_2014}, in particular recovering Kuznetsov's quadric example \cite{kuznetsov_derived_2008}.
Our proof of the equivalence between the factorisation category $D^{b}(\cZ_{-},W)_{\res}$ and the Clifford module category $D^{b}(\cY,C)_{\res}$ goes along the same lines as parts of their proof in the case $d = 2$.
See also \cite{dyckerhoff_compact_2011}, where a similar equivalence is shown for a single Clifford algebra.

The overall VGIT/LG model approach is also used in Addington, Donovan and Segal's paper \cite{addington_pfaffian-grassmannian_2014}, which reproves the Pfaffian--Grassmannian equivalence of Calabi--Yau 3-folds from \cite{borisov_pfaffian-grassmannian_2009, kuznetsov_homological_2006}.
The fact that we need to take a good subcategory $D^{b}(\cZ_{-},W)_{\res} \subset D^{b}(\cZ_{-},W)$ has a parallel in their paper, as one of their gauged LG models is also an Artin stack.
They speculate that this category corresponds to what physicists call the category of branes in an associated $B$-model \cite[4.1]{addington_pfaffian-grassmannian_2014}.
At present the choice of this subcategory is rather ad hoc, and it will be interesting to see to what extent it can be made in a general way.

The example we consider has been studied from a physical perspective by Hori in \cite{hori_duality_2013}.
See also \cite{hori_linear_2013}, which fits both the Pfaffian--Grassmannian example and the one we study into a long list of similar examples; these await a mathematical treatment.

\subsection{Conventions}
We work over $\CC$.

For objects $\cE, \cF$ in a triangulated category $\cC$, we use the convention that $\Hom(\cE,\cF)$ is the space of maps in $\cC$ and $\RHom(\cE,\cF)$ is the graded space $\oplus_{i \in \ZZ}\Hom(\cE,\cF[i])$.

If $G$ is an algebraic group acting on $X$ and $\rho$ is a representation of $G$, we write $\cO_{X}(\rho)$ for the $G$-equivariant sheaf $\rho \otimes \cO_{X}$.
If $G$ is a $k$-dimensional torus, we denote by $\cO_{X}(i_{1}, \ldots, i_{k})$ the line bundle associated with the character $t_{1}^{i_{1}}\cdots t_{k}^{i_{k}}$.
Finally, if $G = (\CC^{*})^{2} \rtimes \ZZ_{2}$, we write $\cO_X(k,k)_{\pm}$ for $\cO_X(\rho)$, where $\rho$ is the character of $G$ which is $t_{1}^{k}t_{2}^{k}$ on $T$ and which sends the generator of $\ZZ_{2}$ to $\pm 1$.

\subsection{Acknowledgements}
This paper is a modified version of my Ph.D.\ thesis.
I am very grateful to my Ph.D.\ supervisor Richard Thomas for suggesting this topic, for many interesting conversations, and for all the help, advice and encouragement he has provided over four years.

Thanks also to E.\ Segal, who explained to me many of the ideas and tools used here.
I thank N.\ Addington, T.\ Bridgeland, T.\ Coates, D.\ Halpern-Leistner, K.\ Hori, S.\ Hosono, A.\ Kuznetsov and T.\ Pantev for useful conversations; Addington also jointly with Richard suggested the topic.

I thank M.\ Akhtar for thoroughly proofreading the thesis version of this paper; whatever mistakes or typos remain are of course entirely his fault.

\section{Homological projective duality}
\label{sec:HPD}
Theorem \ref{thm:MainTheorem} is motivated by Kuznetsov's theory of homological projective duality, which we explain in this section.
We first present the general definitions and results of the theory, taken from \cite{kuznetsov_homological_2007}.
Next we discuss the example of HP duality for quadric hypersurfaces in $\PP^{n}$.
Finally we explain how our results are a form of HP duality for bilinear divisors in $\Sym^{2}\PP^{n}$.

Note that the proofs of our propositions do not depend on the general results of HP duality, and so logically speaking this section is independent from the rest of the paper.

\subsection{The base locus and the incidence variety}
As a warm-up, we first treat a simple version of HP duality where the derived category results are clear from the geometry.
Let $X$ be a smooth, projective variety with a morphism $f : X \to \PP(V)$ for some vector space $V$, with $f$ not factoring through any linear subspace of $\PP(V)$, and let $\cL = f^{*}(\cO(1))$.
Choose a linear subspace $L \subset V^{\vee}$, which gives a linear system $\PP(L)$ of divisors of class $\cL$.

There are two natural schemes we can construct from this linear system.
Firstly, we can intersect the divisors in the linear system to get the base locus $X_{L^{\perp}} \subset X$.
Secondly, we can construct the incidence variety $\cH_{L} \subset X \times \PP(L)$, which consists of pairs $(x, H)$ such that $x \in H$.

Let us assume that $X_{L^{\perp}}$ has the expected dimension.
The first step towards HP duality is the observation that $D^{b}(X_{L^{\perp}})$ then includes as a full subcategory of $D^{b}(\cH_{L})$.

Consider first the case where $\PP(L) = \PP^{1}$.
Then $\cH_{L}$ is the blowup of $X$ in $X_{L^{\perp}}$, and by \cite[3.4]{bondal_semiorthogonal_1995} we get a semiorthogonal decomposition\footnote{The reference assumes $X_{L^{\perp}}$ to be nonsingular, but by \cite{kuznetsov_homological_2007} this is not necessary.}
\[
D^{b}(\cH_{L}) = \langle D^{b}(X_{L^{\perp}}), D^{b}(X) \rangle.
\]
More generally, if $\PP(L) = \PP^{l}$, $l \ge 1$, then the projection $\cH_{L} \to X$ has fibres $\PP^{l-1}$ over $X \setminus X_{L^{\perp}}$, which jump to $\PP^{l}$ over $X_{L^{\perp}}$.
This gives a semiorthogonal decomposition of $D^{b}(\cH_{L})$ with 1 piece isomorphic to $D^{b}(X_{L^{\perp}})$ and $l$ pieces isomorphic to $D^{b}(X)$.
In general, the inclusion functor $D^{b}(X_{L^{\perp}}) \to D^{b}(\cH_{L})$ is given by $i_{*}p^{*}$ with $p$ and $i$ as in the diagram
\[
\begin{tikzcd}
X_{L^{\perp}} \times \PP(L) \arrow[hook]{r}{i} \arrow{d}{p} & \cH_{L}\\
X_{L^{\perp}} &
\end{tikzcd}
\]

\subsection{Lefschetz decompositions}
Kuznetsov's remarkable discovery is that this relation between the base locus $X_{L^{\perp}}$ and the universal hyperplane $\cH_{L}$ can be turned into something more interesting if we can put a certain extra structure on $D^{b}(X)$.
Namely, assume that the derived category $D^{b}(X)$ admits a semiorthogonal decomposition
\[
D^{b}(X) = \langle \cA_{0}, \cA_{1}(1), \ldots, \cA_{k}(k) \rangle,
\]
where the $\cA_{i}$ are full subcategories of $D^{b}(X)$ satisfying $\cA_{i} \subseteq \cA_{i-1}$ for all $i \ge 1$, and where $\cA_{i}(i)$ denotes the full subcategory whose objects are $\cE \otimes \cL^{\otimes i}$, $\cE \in \cA_{i}$.
Such a decomposition is called a Lefschetz decomposition.

For any hyperplane $H \subset \PP(V)$ inducing a divisor $X_{H} := f^{-1}(H)$, the functor
\[
\cA_{i}(i) \to D^{b}(X) \stackrel{-|_{X_{H}}}{\to} D^{b}(X_{H}).
\]
is full and faithful for $1 \le i \le k$. 
Furthermore, the image subcategories $\cA_{i}(i) \subset D^{b}(X_{H})$ are semiorthogonal.
Both of these facts are easy to show using our assumptions on $\cA_{i}$ and the exact triangle
\[
\cE\otimes \cL^{-1} \to \cE \to \cE|_{X_{H}}\ \ \ \ \ \cE \in D^{b}(X).
\]

We therefore have a full subcategory $\langle \cA_{1}(1), \ldots, \cA_{k}(k) \rangle \subset D^{b}(X_{H})$, and letting $\cC_{H} = \langle \cA_{1}(1), \ldots, \cA_{k}(k) \rangle^{\perp}$, we get a semiorthogonal decomposition
\[
D^{b}(X_{H}) = \langle \cC_{H}, \cA_{1}(1), \ldots, \cA_{k}(k) \rangle.
\]
We see that $D^{b}(X_{H})$ decomposes into the parts $\cA_{i}(i)$ inherited from $D^{b}(X)$, and the one new part $\cC_{H}$.
This motivates the term ``Lefschetz decomposition'', cf. the Lefschetz hyperplane theorem.

More generally, let $L \subset V^{\vee}$ be a linear subspace, let $L^{\perp} = \{v \in V\ |\ (v,L) = 0\} \subset V$, and let $X_{L^{\perp}} := f^{-1}(\PP(L^{\perp}))$, which is the base locus of the linear system $\PP(L)$.
We then get a semiorthogonal decomposition
\[
D^{b}(X_{L^\perp}) = \langle \cC_{L^\perp}, \cA_{l}(l), \ldots, \cA_{k}(k) \rangle.
\]
One way of summarising HP duality is that if we know the category $\cC_{H}$ for all hyperplanes $H$ in the system $\PP(L)$, then we get a description of the category $\cC_{L^\perp}$, in terms of the ``homological projective dual'' variety, which we now describe.

\subsection{The homological projective dual}
Let $Y$ be a variety equipped with a map $g : Y \to \PP(V^{\vee})$, and assume that for every point $H \in \PP(V^{\vee})$ the fibre $Y_{H}$ satisfies $D^{b}(Y_{H}) \cong \cC_{H} \subset D^{b}(X_{H})$.
If $Y$ satisfies a certain strengthening of this condition,\footnote{Let $\cH \subset X \times \PP(V^{\vee})$ be the incidence variety.
The Lefschetz decomposition on $D^{b}(X)$ induces a certain decomposition $D^{b}(\cH) = \langle \cC, \cA_{1}(1)\boxtimes D^{b}(\PP(V^{\vee})), \ldots, \cA_{k}(k)\boxtimes D^{b}(\PP(V^{\vee})) \rangle$, and we require that there is an equivalence $D^{b}(Y) \cong \cC$, satisfying some further formal properties, see \cite[6.1]{kuznetsov_homological_2007}, \cite[2.3.9]{ballard_homological_2013}.} then we say that $Y$ is a homological projective dual for $X$.
Note that the existence of such a $Y$ is not automatic.

Here $Y$ is analogous to the incidence variety $\cH_{V^{\vee}} \subset X \times \PP(V^{\vee})$, with the difference that the ``categorical fibre'' $D^{b}(Y_{H})$ at each $H \in \PP(V^{\vee})$ is now the interesting part $\cC_{H} \subset D^{b}(X_{H})$ rather than the whole of $D^{b}(X_{H})$.

For any $L \subset V^{\vee}$, let $Y_{L} = g^{-1}(\PP(L))$.
Just as we saw above that $D^{b}(X_{L^{\perp}})$ includes into $D^{b}(\cH_{L})$, we can now include $\cC_{L^{\perp}}$ into $D^{b}(Y_{L})$.

To state the precise result, we will need some notation. 
Let $\cO_{Y}(1) = g^{*}\cO_{\PP(V^{\vee})}(1)$. 
Kuznetsov shows that $D^{b}(Y)$ admits a ``dual'' Lefschetz decomposition
\[
D^{b}(Y) = \langle \cB_{m}(-m), \cB_{m-1}(-m-1), \ldots, \cB_{0}(0) \rangle,
\]
where $\cB_{i} \subseteq \cB_{i+1}$ for $i \ge 1$.
Let $l$ and $c$ be the dimension and codimension of $L$, respectively.
\begin{nthm}[\cite{kuznetsov_homological_2007}, Thm.\ 1.1]
\label{thm:HPD}
If $X_{L^{\perp}}$ and $Y_{L}$ have the expected dimensions, then we have semiorthogonal decompositions
\[
D^{b}(X_{L^{\perp}}) = \langle \cC_{L^{\perp}}, \cA_{l}(l), \cdots, \cA_{k}(k) \rangle,
\]
\[
D^{b}(Y_{L}) = \langle \cB_{-m}(-m), \cB_{-m-1}(-m-1), \ldots, \cB_{-c}(-c), \cC_{L} \rangle,
\]
and $\cC_{L} \cong \cC_{L^{\perp}}$.
\end{nthm}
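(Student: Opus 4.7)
The plan is to prove the statement by working on the total universal hyperplane section and using the homological projective dual $Y$ as an auxiliary object to extract two compatible semiorthogonal decompositions, one ``seen from $X$'' and one ``seen from $Y$''. Let $\cH \subset X \times \PP(V^\vee)$ be the universal hyperplane section, with projections $p : \cH \to X$ and $q : \cH \to \PP(V^\vee)$. For a linear subspace $L \subset V^\vee$ write $\cH_L := q^{-1}(\PP(L)) \subset X \times \PP(L)$, so that $\cH_L$ fibres over $X$ with general fibre $\PP(L) \cap H_x \cong \PP^{l-2}$ and jumps to $\PP^{l-1}$ over $X_{L^\perp}$. In particular, projecting down to $X$ decomposes $D^b(\cH_L)$ into a copy of $D^b(X_{L^\perp})$ together with $l-1$ copies of pieces pulled back from $X$; combined with the Lefschetz decomposition of $D^b(X)$ this will produce the ``$X$-side'' semiorthogonal decomposition of $D^b(\cH_L)$.

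First I would establish the absolute statement on $D^b(\cH)$: the Lefschetz decomposition of $D^b(X)$ together with Orlov's projective bundle formula yields a semiorthogonal decomposition
\[
D^b(\cH) = \langle \cC,\ \cA_1(1) \boxtimes D^b(\PP(V^\vee)),\ \ldots,\ \cA_k(k) \boxtimes D^b(\PP(V^\vee)) \rangle,
\]
where $\cC$ is the ``primitive'' orthogonal complement, and the HPD hypothesis on $Y$ is precisely that this $\cC$ is equivalent to $D^b(Y)$ in a way compatible with the $\PP(V^\vee)$-linear structure. Dually, using the dual Lefschetz decomposition of $D^b(Y)$ and the equivalence $D^b(Y) \cong \cC$, I obtain a second semiorthogonal description of $D^b(\cH)$ in which the blocks $\cB_{-j}(-j) \boxtimes D^b(\PP(V^\vee))$ appear.

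Next I would propagate both decompositions to $D^b(\cH_L)$ by base change along the closed immersion $\PP(L) \hookrightarrow \PP(V^\vee)$. On the $X$-side, restricting the $\PP(V^\vee)$-linear decomposition of $D^b(\cH)$ to $\PP(L)$ (using that each $\cA_i(i) \boxtimes D^b(\PP(V^\vee))$ is $\PP(V^\vee)$-linear and that $\cH_L$ is cut out by a regular section coming from $L^\perp$) produces a decomposition of $D^b(\cH_L)$ whose first piece is exactly $D^b(X_{L^\perp})$ mapped in via $i_* p^*$, followed by $\cA_l(l), \ldots, \cA_k(k)$ pulled back from $X$. On the $Y$-side, analogous base change along $g^{-1}(\PP(L)) = Y_L \to \PP(L)$ and applying the dual Lefschetz decomposition yields another decomposition of $D^b(\cH_L)$, whose last piece is $D^b(Y_L)$ including the blocks $\cB_{-m}(-m), \ldots, \cB_{-c}(-c)$. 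Comparing the two decompositions, mutating the common $\cA_i(i)$ and $\cB_{-j}(-j)$ blocks out of the way, identifies two ``leftover'' subcategories of $D^b(\cH_L)$, each of which is the orthogonal of the same explicit full subcategory; these leftovers are $\cC_{L^\perp}$ and $\cC_L$, and the identification gives the desired equivalence $\cC_L \cong \cC_{L^\perp}$, while producing the two semiorthogonal decompositions of $D^b(X_{L^\perp})$ and $D^b(Y_L)$ claimed in the statement.

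The main obstacle I expect is the compatibility bookkeeping between the two descriptions: one needs to track that the restriction-of-scalars along $\PP(L) \hookrightarrow \PP(V^\vee)$ preserves both semiorthogonal decompositions of $D^b(\cH)$ and that the expected-dimension hypotheses on $X_{L^\perp}$ and $Y_L$ are precisely what is needed for the relevant restriction functors (and their left/right adjoints $i^*, i^!$, $p_*, p^*$) to remain fully faithful on each Lefschetz block — in particular, the mutations that exchange the two forms of the decomposition require that the blocks $\cA_i(i)$ with $i \ge l$ restrict faithfully to $\cH_L$, which is where the assumption $\dim X_{L^\perp} = \dim X - l$ enters. The matching number of blocks ($k - l + 1$ from the $X$-side and $m - c + 1$ from the $Y$-side) must be consistent with the Lefschetz and dual Lefschetz block structures; verifying this combinatorial shape in general is straightforward but tedious, and is really the content of \cite{kuznetsov_homological_2007}.
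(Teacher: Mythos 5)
This theorem is not proved in the paper at all: it is quoted verbatim from Kuznetsov's work (\cite{kuznetsov_homological_2007}, Thm.~1.1) as background for Section~\ref{sec:HPD}, and the paper explicitly notes that its own results do not logically depend on it. So there is no in-paper argument to compare yours against; the only meaningful comparison is with Kuznetsov's original proof, whose broad outline (universal hyperplane section $\cH$, the HPD condition $D^{b}(Y)\cong\cC$ as a $\PP(V^{\vee})$-linear equivalence, restriction to $\PP(L)$, and comparison of the two induced decompositions) your sketch does reproduce.

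As a proof, however, your proposal has a genuine gap: every technically hard step is asserted rather than established. The claim that a $\PP(V^{\vee})$-linear semiorthogonal decomposition of $D^{b}(\cH)$ ``restricts'' along $\PP(L)\into\PP(V^{\vee})$ to a semiorthogonal decomposition of $D^{b}(\cH_{L})$ is exactly the content of Kuznetsov's theory of faithful base change for linear sections; it is not automatic, and the expected-dimension hypotheses enter precisely to guarantee the relevant Tor-independence/faithfulness, which you would need to verify rather than invoke. Similarly, the final step — that after ``mutating the common blocks out of the way'' the two leftover subcategories are $\cC_{L^{\perp}}$ and $\cC_{L}$ and are equivalent — is the heart of the theorem: one must construct the comparison functor (a composition of the inclusion $D^{b}(X_{L^{\perp}})\into D^{b}(\cH_{L})$ with a projection to the $Y$-side), prove it is fully faithful on the complement of the $\cA_{i}(i)$ blocks, and identify its image with the complement of the $\cB_{-j}(-j)$ blocks; none of this follows formally from the two decompositions having ``the same orthogonal.'' Your closing remark that this ``is really the content of \cite{kuznetsov_homological_2007}'' is accurate, but it concedes that what you have is a statement of the strategy rather than a proof.
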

The most striking consequence of this theorem is that $D^{b}(X_{L^{\perp}})$ and $D^{b}(Y_{L})$ have the semiorthogonal piece $\cC_{L} \cong \cC_{L^{\perp}}$ in common.
The functor $\cC_{L^{\perp}} \stackrel{\cong}{\to} \cC_{L} \into D^{b}(Y_{L})$ is obtained by composing the functor $D^{b}(X_{L^{\perp}}) \into D^{b}(\cH_{L})$ with a certain projection $D^{b}(\cH_{L}) \to D^{b}(Y_{L})$.

If the dimension of $L$ is sufficiently low (resp.\ high) we get a fully faithful inclusion $D^{b}(Y_{L}) \into D^{b}(X_{L^{\perp}})$ (resp.\ $D^{b}(X_{L^{\perp}}) \into D^{b}(Y_{L})$).

As an aside, we note that the notion of HP dual makes sense more generally than in the setting described here.
In particular, one can drop the restriction of considering derived categories of varieties, and instead consider more general triangulated categories, linear over $D^{b}(\PP(V))$ and $D^{b}(\PP(V^{\vee}))$.
For some nice such categories the same results can be shown.
The main results of this paper deal with HP duality in this extended sense; see also \cite{ballard_homological_2013} and the next section.

\subsection{HP duality for quadrics}
\label{sec:Quadrics}
We will now explain the results of HP duality for the case of quadric hypersurfaces, worked out by Kuznetsov in \cite{kuznetsov_derived_2008}.
This is both an instructive example of HP duality in general and formally quite similar to the case we treat in this paper.

In the terminology used above, we take $X = \PP(V)$, and let the map $f$ be the Veronese embedding $\PP(V) \into \PP(\Sym^{2}V)$ with associated line bundle $\cL = \cO_{\PP(V)}(2)$.
Let $n = \dim \PP(V)$.
The semiorthogonal decomposition
\[
D^{b}(\PP(V)) = \langle \cO, \cO(1), \ldots, \cO(n) \rangle
\]
gives rise to a Lefschetz decomposition with $\cA_{0} = \cdots = \cA_{(n-1)/2} = \langle \cO, \cO(1) \rangle$ when $n$ is odd, and a Lefschetz decomposition $\cA_{0} = \cdots = \cA_{n/2-1} = \langle \cO, \cO(1) \rangle$, $\cA_{n/2} = \langle \cO \rangle$ when $n$ is even.

Let us focus on the case where $n$ is odd; similar results hold for even $n$.
Consider a hyperplane $H \subset \PP(\Sym^{2}V)$, such that the associated quadric $Q = f^{-1}(H) \subset \PP(V)$ is nonsingular.
One can then show that there is a semiorthogonal decomposition
\begin{equation}
\label{eqn:quadric}
D^{b}(Q) = \langle S_{+}, S_{-}, \cO(2), \cO(3), \ldots, \cO(n)\rangle,
\end{equation}
where $S_{+},S_{-}$ denotes (some twist of) the so-called spinor bundles.

The spinor bundles are natural bundles defined on all nonsingular quadrics; there are 2 spinor bundles on even-dimensional quadrics and 1 on odd-dimensional quadrics (see e.g.\ \cite{addington_spinor_2009}).
In low dimensions the spinor bundles are easily described: For a 2-dimensional quadric $\PP^{1}\times \PP^{1}$, they are $\cO(1,0)$ and $\cO(0,1)$, and for the 4-dimensional quadric $\Gr(2,4)$, they are the universal quotient bundle and the dual of the universal sub-bundle.

Let us write $\cC_{Q}$ for the component of $D^{b}(Q)$ denoted by $\cC_{H}$ in the previous section.
The decomposition \eqref{eqn:quadric} implies that $\cC_{Q} = \langle S_{+},S_{-}\rangle$.
The spinor bundles satisfy $\RHom(S_{\pm},S_{\pm}) = \CC$, and $\RHom(S_{\pm}, S_{\mp}) = 0$, so we have $\cC_{Q} = \langle S_{+}, S_{-} \rangle = D^{b}(\pt \sqcup \pt)$.
If we now deform the nonsingular quadric to a singular quadric $Q$ of corank 1, the bundles $S_{+}$ and $S_{-}$ become isomorphic, and we can furthermore show that in this case $\cC_{Q} = D^{b}(\Spec k[\varepsilon])$.

Ignoring a technical issue which we will discuss shortly, this tells us exactly what the HP dual variety $Y$ is over the locus in $\PP(V^{\vee})$ corresponding to nonsingular and corank 1 quadrics. 
Namely, we see that $Y$ is a double cover of the locus of nonsingular quadrics, ramified in the locus of corank 1 quadrics.

The simplest application of Theorem \ref{thm:HPD} is now to the case of a general pencil $\PP^{1} = \PP(L) \subset \PP(\Sym^{2}V^{\vee})$ generated by two quadrics $Q_{1}, Q_{2}$.
In this case the base locus $X_{L^{\perp}} = Q_{1} \cap Q_{2}$, and $Y_{L}$ is a double cover of $\PP(L) = \PP^{1}$, ramified in the $n+1$ points corresponding to singular quadrics in the pencil.
Theorem \ref{thm:HPD} then gives an old result of Bondal and Orlov \cite{bondal_semiorthogonal_1995}:
\[
D^{b}(Q_{1} \cap Q_{2}) = \langle D^{b}(Y_{L}), \cO(4), \ldots, \cO(n) \rangle,
\]

The technical issue ignored above is the fact that our description of the HP dual category was only true point-wise and may fail in a global setting.
To explain this complication, let us first give Kuznetsov's general description of the HP dual in terms of Clifford algebras.

Let $V$ be a vector space with a quadratic form $q$.
The Clifford algebra $C_{q}$ is defined to be $T(V)/I$, where $T(V)$ is the tensor algebra, and $I$ is the 2-sided ideal generated by $v \otimes v - q(v)$.
Taking $q = 0$ gives the exterior algebra $\wedge^{*}V$, and the Clifford algebras are in this sense deformations of $\wedge^{*}V$.
The natural grading on $T(V)$ descends to a $\ZZ_{2}$-grading on $C_{q}$, and taking the degree 0 part we obtain the ``even Clifford algebra'' $C_{q}^{0} \subset C_{q}$.

Now letting $q \in \PP(\Sym^{2}V^{\vee})$ vary, one can fit these even Clifford algebras into a global family, i.e.\ there is a sheaf of algebras $C$ on $\PP(\Sym^{2}V^{\vee})$ such that the restriction to each $q \in \PP(\Sym^{2}V^{\vee})$ is isomorphic to $C^{0}_{q}$.
Kuznetsov shows that the HP dual of $\PP(V)$ is the category $D^{b}(\PP(\Sym^{2} (V^{\vee})),C)$, i.e.\ the derived category of coherent $C$-modules on $\PP(\Sym^{2}(V^{\vee}))$.
This means in particular that Theorem \ref{thm:HPD} holds when we interpret $D^{b}(Y_{L})$ as $D^{b}(\PP(L),C|_{\PP(L)})$.

Let us consider what this means for a single quadric.
For any $q \in \PP(\Sym^{2}V^{\vee})$, if $Q \subset \PP(V)$ is the associated quadric, we find $\cC_{Q} \cong D^{b}(q, C|_{q}) = D^{b}(C^{0}_{q})$.
If we assume that $q$ and hence $Q$ is nonsingular, then it is a classical fact that $C^{0}_{q} \cong \End(\CC^{N}) \oplus \End(\CC^{N})$ for some $N$.
By Morita equivalence we then get $D^{b}(C^{0}_{q}) = D^{b}(\End(\CC^{N})) \oplus D^{b}(\End(\CC^{N})) \cong D^{b}(\pt) \oplus D^{b}(\pt)$.
Thus we recover the statement that the fibre of the HP dual at $q$ is 2 points.

We can now explain the complication in the global description of the HP dual.
Keeping to the locus of nonsingular $q$, the above discussion shows that the centre of the algebra $C$ is a commutative algebra on $\PP(\Sym^{2}V^{\vee})$, whose spectrum is a double cover $Z$.
The algebra $C$ is then equivalent to an Azumaya algebra $A$ on $Z$, i.e.\ an algebra which is étale locally isomorphic to $\sEnd(\cO_{Z}^{N})$.
The above results can be rephrased as saying that the HP dual is given by $D^{b}(Z,A)$.

If there exists a locally free sheaf $\cE$ on $Z$ such that $A \cong \sEnd(\cE)$, we can define an equivalence $D^{b}(Z) \cong D^{b}(Z,A)$ by the inverse functors $-\otimes_{\cO_{Z}} \cE$ and $\Rhom_{A}(\cE, -)$.
This can always be done locally, but there is a global obstruction to the existence of such an $\cE$, known as the Brauer class of $A$, which lives in $H^{2}_{\mathrm{an}}(Z,\cO_{Z}^{*})$.
In this example, the Brauer class does not always vanish, and in fact $D^{b}(Z,A)$ is not in general equivalent to $D^{b}(Z)$.

\subsection{HP duality for $\Sym^2\PP(V)$}
\label{sec:HPDOurCase}
The motivating problem for this paper is to construct the HP dual of $\Sym^{2}\PP(V)$, with respect to the natural map $f : \PP(\Sym^{2}\PP(V)) \to \PP(\Sym^{2}V)$ and a Lefschetz decomposition of $D^{b}(\Sym^{2}\PP(V))$ which we describe as follows.

We think of sheaves on $\Sym^2\PP(V)$ as $\ZZ_{2}$-equivariant sheaves on $\PP(V)^{2}$.
For any distinct $i, j \in \ZZ$, there is a unique $\ZZ_{2}$-equivariant sheaf whose underlying sheaf on $\PP(V)^{2}$ is $\cO(i,j) \oplus \cO(j,i)$.
For any $i$, there are two $\ZZ_{2}$-equivariant structures on $\cO(i,i)$.
We let $\cO(i,i)_{+}$ be the $\ZZ_{2}$-structure such that the $\ZZ_{2}$-action is trivial along the diagonal in $\PP(V)^{2}$, and let $\cO(i,i)_{-}$ be the other one.
Note that then $\cL = f^{*}(\cO_{\PP(\Sym^{2}V)}(1)) = \cO_{\Sym^{2}\PP(V)}(1,1)_{+}$.

We take the initial piece in our Lefschetz decomposition of $D^{b}(\Sym^{2}\PP(V))$ to be
\[
\cA_{0} = \langle \cO(0,0)_{+}, \cO(0,0)_{-}, \{\cO(i,j) \oplus \cO(j,i)\}_{(i,j) \in S} \rangle,
\]
where $S = \{(i,j)\ |\ i + j \in [0,1], i > j, i - j \le \lfloor \frac{n}{2} \rfloor \}$.\footnote{The choice of $S$ is somewhat arbitrary. We could equally well have chosen $S$ to be any set $(i_{1}, j_{1}), \ldots, (i_{\lfloor n/2 \rfloor}, j_{\lfloor n/2 \rfloor})$ satisfying $(i_{1},j_{1}) = (-1,0)$ or $(0,1)$, and for each $k$ either $(i_{k},j_{k}) = (i_{k-1} - 1, j_{k-1})$ or $(i_{k},j_{k}) = (i_{k-1}, j_{k-1}+1)$. The same results would hold, and we choose this particular $S$ because it simplifies the combinatorics of some of the arguments in Section \ref{sec:WindowCategories}.}
If $n$ is odd, we take $\cA_{i} = \cA_{0}$ for all $i \in [0, n-1]$.

If $n$ is even, we let $\cA_{0} = \cA_{1} = \cdots = \cA_{n/2-1}$.
We remove 1 element from $S$ to get $S^{\pr} = \{(i,j)\ |\ i + j \in [0,1], i > j, i - j \le \frac{n}{2} - 1\}$.
We let
\[
\cA^{\pr} = \langle \cO(0,0)_{+}, \cO(0,0)_{-}, \{\cO(i,j) \oplus \cO(j,i)\}_{(i,j) \in S^{\pr}} \rangle,
\]
and then let $\cA_{n/2} = \cdots = \cA_{n-1} = \cA^{\pr}$.

By Proposition \ref{thm:HPDualityDecomposition}, this gives a Lefschetz decomposition
\[
D^{b}(\Sym^{2} \PP(V)) = \langle \cA_{0}, \cA_{1}(1), \ldots, \cA_{n-1}(n-1) \rangle
\]
in both the even and the odd case.
Let $X_{L^{\perp}} = f^{-1}(\PP(L^{\perp}))$, which is denoted by $X$ in Theorem \ref{thm:MainTheorem}.
Theorem \ref{thm:MainTheorem} and the computation of the orthogonal complements in Proposition \ref{thm:HPDualityDecomposition} then shows that we have
\[
D^{b}(X_{L^{\perp}}) = \langle \cC_{L^{\perp}}, \cA_{l}(l), \cdots, \cA_{n-1}(n-1) \rangle,
\]
with $\cC_{L^{\perp}} = \cW_{+} \cap \cW_{-,\res}$, which is a fully faithful subcategory of $D^{b}(\cY_{L},C)_{\res}$.
If $D^{b}(\cY,C)_{\res}$ were the HP dual of $\Sym^{2}\PP(V)$, this is in accordance with what Theorem \ref{thm:HPD} would give.
In view of this and the similar results obtained in \cite{ballard_homological_2013}, it seems very likely that $D^{b}(\cY,C)_{\res}$ is the correct HP dual, though strictly speaking we do not prove this here.

\subsubsection{Geometric interpretation of the HP dual}
Our Proposition \ref{thm:InterpretationOfCliffordNOdd} can be rephrased as saying that when $n = \dim V$ is odd, then away from the corank $\ge 3$ locus the HP dual is a double cover of the corank 1 locus in $\PP(\Sym^{2}V^{\vee})$, ramified in the corank 2 locus.
Similarly, Proposition \ref{thm:InterpretationOfCliffordNEven} says that when $n$ is even, the HP dual is a double cover ramified in the corank 1 locus.

Let us show concretely what this means in the case where $n$ is odd.
First of all, if $H \subset \Sym^{2}\PP(V)$ is such that $X_{H}$ is a nonsingular bilinear divisor, then we have
\[
D^{b}(X_{H}) = \langle \cA_{1}(1), \ldots, \cA_{n}(n) \rangle,
\]
i.e.\ the interesting part $\cC_{H}$ is trivial.

Correspondingly, if $X_{H}$ is of corank 1, we would like to say that $\cC_{H}$ corresponds to the derived category of 2 points.
This is almost correct, but must be modified slightly because the fibre $Y_{H}$ of the double cover $Y \to \PP(\Sym^{2}V^{\vee})$ has higher dimension than expected.
The correct statement is that $\cC_{H}$ is the derived category of the derived fibre product of $0 \into \AA^{1}$ and $(0 \sqcup 0) \to \AA^{1}$.

A somewhat surprising aspect of our description is that our HP dual is globally a variety, and that there is no need for an Azumaya algebra or Brauer class as in the case of quadrics.
One way of thinking about this is that in the quadric case the spinor bundles, which are point-wise generators for the category of the HP dual, do not extend to globally defined bundles, and this can be explained by the presence of a Brauer twist.
In our case, it turns out that we can write down an explicit global object which locally generates the HP dual category; this is the object called $K$ in Section \ref{sec:GeometricInterpretation}.

\section{Factorisation categories}
\label{sec:Factorisations}
We review some background material on derived categories of factorisations -- further details can be found in \cite{addington_pfaffian-grassmannian_2014, ballard_variation_2012, shipman_geometric_2012}.
We first fix a definition of a gauged Landau--Ginzburg B-model (LG model for short).
\begin{ndefn}
A gauged LG model is the data of a smooth quasi-projective variety $X$, equipped with:
\begin{itemize}
\item An action of a reductive group $G$.
\item An action of a 1-dimensional torus $\CC_{R}^{*}$, commuting with the $G$-action.  
\item An element $g \in G$ such that $g^{2} = e$ and $(g,-1) \in G \times \CC^{*}_{R}$ fixes $X$.\footnote{Assuming the action of $G$ is faithful, which is the case in our examples, the choice of such a $g$ is unique, and we will not mention it further.}
\item A function $W$, which is $G$-invariant and has weight 2 with respect to the $\CC^{*}_{R}$-action, i.e.\ $W(t_{R}x) = t_{R}^{2}W(x)$ for $x \in X$ and $t_{R} \in \CC^{*}_{R}$.
\end{itemize}
\end{ndefn}
Let $\cX = X/(G \times \CC^{*}_{R})$.
The canonical character $t_{R}$ of $\CC^{*}_{R}$ induces a line bundle on $\cX$, which we denote $\cO_{\cX}[1]$.
For a sheaf $\cE$ on $\cX$ we write $\cE[l]$ for $\cE \otimes \cO_{\cX}[1]^{\otimes l}$.
Note that $W$ is a section of $\cO_{\cX}[2]$.

By work of Positselski and Orlov \cite{positselski_two_2011, efimov_coherent_2015, orlov_matrix_2012}, we can define a derived category of factorisations, $D(\cX,W)$, from the above data.
An object of this category is a quasi-coherent sheaf $\cE$ on $\cX$, equipped with a differential map $d : \cE \to \cE[1]$, satisfying $d^{2} = W$.
We call such an object a factorisation.
If we wish to emphasise the choice of differential, we denote this object by $(\cE,d)$, otherwise we will simply write $\cE$.

\begin{nex}
Consider the case where $X = \Spec A$ and $G$ is trivial.
Then the action of $\CC^{*}_{R}$ on $X$ induces a grading on $A$ and makes it a dg algebra with vanishing differential.
Since we require that $-1 \in \CC^{*}_{R}$ acts trivially on $X$, this grading will be even. 
Thus, $A$ is commutative as a dg algebra.
A factorisation on $\cX$ is in this case the same thing as a graded $A$-module $M$ with a differential $d : M \to M[1]$ squaring to $W$.
In particular, if $W = 0$, then a factorisation is the same thing as a dg module over $A$.
\end{nex}

If $\cE$ is a factorisation, we let $\cE[l]$ be the factorisation whose underlying sheaf is $\cE[l]$ and whose differential is $(-1)^{l}d[l]$.
Given two factorisations $\cE, \cF$ we have a graded vector space 
\[
\Hom(\cE,\cF) = \oplus_{i} \Hom_{\cX}(\cE, \cF[i]).
\]
The differentials $d_{\cE}$ and $d_{\cF}$ give a differential on $\Hom(\cE,\cF)$ by the usual Leibniz rule. 
This differential squares to 0, and so $\Hom(\cE,\cF)$ is a dg vector space.
We denote the homotopy category of the resulting dg category by $K(\cX,W)$.

The category $K(\cX,W)$ is triangulated, with the shift functor $[1]$ as described above.
The cone over $\cE \to \cF$ is $\cF[1] \oplus \cE$ with an induced differential, in the same way as for the usual homotopy category of complexes.

In analogy with the definition of the ordinary derived category, we should now take the Verdier quotient of $K(\cX,W)$ with respect to the subcategory of acyclic complexes.
Since the differentials of factorisations do not square to 0, they do not have a notion of cohomology, and so the usual definition of acyclic does not make sense.

The correct definition of acyclic in this setting is the following:
Consider a finite exact complex of factorisations
\[
\cE_{1} \to \cE_{2} \cdots \to \cE_{n}.
\]
Exactness is here defined by considering the underlying sheaves, and we require the maps to be closed with respect to the differentials on $\Hom(\cE_{i}, \cE_{i+1})$.
One can form the so-called totalisation $\mathrm{Tot}(\cE_{\bullet})$ of the above complex, which is a factorisation (see e.g. \cite[2.12]{shipman_geometric_2012}).
We declare $\mathrm{Tot}(\cE_{\bullet})$ to be acyclic, and let the category of acyclic objects be the thick triangulated subcategory of $K(\cX,W)$ generated by such totalisations.
Taking the Verdier quotient of $K(\cX,W)$ with respect to the subcategory of acyclic objects gives the derived category $D(\cX,W)$.

\subsection{Coherent and locally free factorisations}
We say a factorisation is coherent if the underlying sheaf is.
We define the category $D^{b}(\cX,W) \subset D(\cX,W)$ to be the full subcategory of objects isomorphic to coherent factorisations.
The category $D^{b}(\cX,W)$ is a generalisation of the usual bounded derived category, which is the special case where $W = 0$:
\begin{nprop}[\cite{ballard_homological_2013}, 2.1.6]
\label{thm:StandardDerivedCategory}
If $\CC^{*}_{R}$ acts trivially on $X$, then 
\[
D^{b}(\cX,0) \cong D^{b}(X/G).
\]
\end{nprop}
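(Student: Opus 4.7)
The plan is to show that when $\CC^{*}_{R}$ acts trivially and $W = 0$, a factorisation on $\cX$ is literally the same data as a cochain complex of $G$-equivariant sheaves on $X$, and that the Verdier quotient defining $D(\cX,0)$ coincides with the Verdier quotient defining $D(\QCoh(X/G))$. First I would unwind the sheaf-theoretic dictionary: since $\CC^{*}_{R}$ acts trivially on $X$, a $(G \times \CC^{*}_{R})$-equivariant quasi-coherent sheaf on $X$ decomposes into $\CC^{*}_{R}$-weight spaces, so it is the same as a $\ZZ$-graded $G$-equivariant quasi-coherent sheaf $\cE = \bigoplus_{i} \cE_{i}$. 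The twist $[1]$ by $\cO_{\cX}[1]$ is then the grading shift $\cE[1]_{i} = \cE_{i+1}$, so a factorisation with $W = 0$ consists of maps $d^{i} : \cE_{i} \to \cE_{i+1}$ with $d^{2} = 0$ — precisely a cochain complex in $\QCoh(X/G)$. (Under the faithfulness assumption of the footnote, the constraint from $g$ is vacuous here, since $(g,-1)$ trivial plus $-1 \in \CC^{*}_{R}$ trivial forces $g = e$.) The Hom complex defined via the Leibniz rule on factorisations matches the usual Hom complex of complexes, and the cone formula agrees; hence $K(\cX,0) \cong K(\QCoh(X/G))$ as triangulated categories.

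Next I would match the two notions of acyclic. In one direction, the totalisation of an exact sequence $\cE_{1} \to \cdots \to \cE_{n}$ of factorisations is, after the above translation, the total complex of a term-wise exact bicomplex of sheaves, and these are classically acyclic by a standard filtration/spectral sequence argument. For the converse, given any classically acyclic complex $E^{\bullet}$, one sets $Z^{i} = \ker d^{i} = \im d^{i-1}$ and uses the short exact sequences $0 \to Z^{i} \to E^{i} \to Z^{i+1} \to 0$ to express $E^{\bullet}$ as an iterated totalisation, placing it in the thick subcategory generated by the factorisation-theoretic acyclics. The two acyclic subcategories of $K$ therefore coincide, so the Verdier quotients $D(\cX,0)$ and $D(\QCoh(X/G))$ are equivalent.

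Finally, a factorisation is coherent exactly when the underlying $\ZZ$-graded sheaf $\bigoplus_{i} \cE_{i}$ is coherent on $X/G$, which forces $\cE_{i} = 0$ for all but finitely many $i$; hence coherent factorisations correspond to bounded complexes of $G$-equivariant coherent sheaves. Restricting the equivalence above to objects isomorphic to coherent factorisations on both sides yields $D^{b}(\cX,0) \cong D^{b}(X/G)$. The main obstacle is the converse direction in the acyclic-matching step: writing a classically acyclic complex as an iterated totalisation is straightforward in the bounded coherent setting via the $Z^{i}$-construction, but some care is needed for unbounded quasi-coherent complexes, where one may need either a filtered-colimit/Postnikov-tower argument or a direct verification that the thick subcategory generated by totalisations is closed under the relevant limits. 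Since the final statement only concerns the bounded subcategory, however, the bounded version of the argument suffices.
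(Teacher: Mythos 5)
First, a remark on the comparison: the paper does not actually prove this proposition — it is quoted directly from \cite{ballard_homological_2013}, Prop.\ 2.1.6 — so your proposal is measured against that reference rather than an in-paper argument. Your dictionary step is correct and is how any proof starts: with $W=0$ and $\CC^{*}_{R}$ acting trivially, a factorisation is literally an (unbounded) complex of $G$-equivariant quasi-coherent sheaves, the Hom-complexes, shifts and cones match, and a coherent factorisation is a bounded complex of coherent sheaves. The one-way comparison of acyclics (totalisations of finite exact complexes are classically acyclic) is also fine.

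The genuine gap is in the converse half of the localisation step. The claim that every complex with vanishing cohomology lies in the thick subcategory generated by totalisations is a theorem, not a formal consequence of the dictionary: it fails for a general abelian category (the $2$-periodic acyclic complex of $k[x]/(x^{2})$-modules is acyclic but not absolutely acyclic — this is exactly the difference between Positselski's absolute/coderived categories and the ordinary derived category, cf.\ \cite{positselski_two_2011}), and your $Z^{i}$ iterated-totalisation construction terminates only for bounded complexes; a thick subcategory is not closed under the infinite Postnikov-type process an unbounded complex would require. Nor does the fallback ``the bounded version suffices'' close the gap: $D^{b}(\cX,0)$ is by definition a full subcategory of the Verdier quotient of the homotopy category of \emph{all} quasi-coherent factorisations, so morphisms between two bounded coherent complexes are computed by roofs passing through arbitrary unbounded quasi-coherent complexes, and you must still show that the canonical functor from $D(\cX,0)$ to the ordinary derived category of quasi-coherent sheaves on $X/G$ is fully faithful on coherent objects. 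Replacing $E$ by a bounded locally free complex $P$ does not settle this, since on a non-affine stack $P$ is not homotopy-projective, so $\Hom_{K}(P,-)$ computes neither side. This is precisely where smoothness of $X$ and reductivity of $G$ must enter, and it is the actual content of the cited result: either invoke Positselski's theorem that in an abelian category of finite homological dimension every acyclic complex is absolutely acyclic (applicable here because quasi-coherent sheaves on $X/G$ have finite homological dimension when $X$ is smooth quasi-projective and $G$ reductive), or run a resolution argument as in \cite{ballard_homological_2013, efimov_coherent_2015}. With that input supplied, the rest of your argument goes through; without it, the key step is unproved.
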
 
We say a factorisation is locally free if the underlying sheaf is.
\begin{nprop}[\cite{ballard_category_2014}, 3.14]
Every factorisation on $\cX$ is isomorphic in $D(\cX,W)$ to a locally free factorisation.
Every coherent factorisation on $\cX$ is isomorphic in $D^{b}(\cX,W)$ to a finite rank locally free factorisation.
\end{nprop}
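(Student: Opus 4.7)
The plan is to adapt the classical locally free resolution argument for coherent sheaves to the factorisation setting, and then to collapse the resolution into a single locally free factorisation via the totalisation construction recalled above.

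First I would introduce, for any equivariant locally free sheaf $\cF$ on $\cX$, the \emph{free factorisation} built from it, namely $\tilde\cF := \cF\oplus\cF[-1]$ equipped with the differential
\[
\tilde d = \begin{pmatrix} 0 & W\cdot\mathrm{id}_\cF \\ \mathrm{id}_\cF & 0 \end{pmatrix},
\]
which satisfies $\tilde d^{2}=W$. Expanding the condition that a morphism commute with differentials shows that $\cF\mapsto\tilde\cF$ is left adjoint to the forgetful functor from factorisations to graded equivariant sheaves: a map of factorisations $\tilde\cF\to(\cE,d)$ is the same thing as a map of sheaves $\cF\to\cE$.

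Next, given a coherent factorisation $(\cE,d)$, I would invoke the resolution property of $\cX$ — which holds because $\cX$ is a quotient of a smooth quasi-projective variety by a reductive group — to choose a surjection $\pi:\cF\twoheadrightarrow\cE$ with $\cF$ a finite-rank equivariant locally free sheaf. The adjunction promotes $\pi$ canonically to a morphism of factorisations $\tilde\pi:\tilde\cF\to(\cE,d)$ which is still surjective on underlying sheaves, indeed already when restricted to the $\cF$ summand. The kernel of $\tilde\pi$ inherits a differential and is again coherent, so iterating produces a resolution
\[
\cdots \to \tilde\cF_{2}\to \tilde\cF_{1}\to \tilde\cF_{0}\to \cE \to 0
\]
by finite-rank locally free factorisations. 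Because $X$ is smooth of some finite dimension $n$ and $G\times\CC^{*}_{R}$ is reductive, the global dimension of equivariant coherent sheaves on $\cX$ is at most $n$, so the $n$-th syzygy is automatically locally free and the resolution truncates to a finite one.

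Finally I would feed this finite resolution into the totalisation construction: since $\tilde\cF_\bullet\to\cE$ is exact, its totalisation is declared acyclic in $K(\cX,W)$, hence $\mathrm{Tot}(\tilde\cF_\bullet)\cong\cE$ in $D^{b}(\cX,W)$. The totalisation of a bounded complex of finite-rank locally free factorisations is itself a finite-rank locally free factorisation, proving the second statement. For the first, quasi-coherent, statement I would run the same argument starting from an arbitrary quasi-coherent surjection, allowing possibly infinite-rank locally free resolvents. The main obstacle is this unbounded case: one has to control the totalisation of an a priori unbounded resolution. This is handled either by the syzygy argument (still terminating in $n$ steps because the global dimension of quasi-coherent equivariant sheaves on the smooth stack $\cX$ remains finite) or by a separate $K$-flat replacement argument; in either case it relies on the fact that the underlying sheaf functor is exact and both preserves and detects local freeness, so local freeness is transferred between the factorisation and sheaf levels without loss.
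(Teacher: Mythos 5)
This proposition is not proved in the paper at all --- it is quoted from \cite{ballard_category_2014} --- so I can only measure your argument against the standard one. For the second (coherent) statement your proof is correct and is essentially that standard argument: the free factorisation $\cF\oplus\cF[-1]$ together with its adjunction property, the resolution property of $\cX$ (valid by Thomason's results, as the paper itself notes in the section on sheaves of dg algebras), the syzygy truncation using smoothness of $X$ (a purely local statement, so equivariance costs nothing), and finally the identification of $\cE$ with the totalisation of the resulting finite complex of finite-rank locally free factorisations; this last step is legitimate because the cone of the map $\mathrm{Tot}(\cK_{n}\to\cdots\to\tilde\cF_{0})\to\cE$ is, up to shift, the totalisation of a finite exact complex of factorisations, hence acyclic by definition.

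The quasi-coherent statement is where the real work lies, and you have located the obstacle without closing it. First, note that simply totalising an unbounded resolution is not an option in this setting: the acyclic objects are by definition the \emph{thick} subcategory generated by totalisations of \emph{finite} exact complexes, and the totalisation of an infinite exact complex need not belong to it (this is precisely the difference between the absolute derived category used here and co- or contraderived variants); likewise a $K$-flat replacement does not have locally free components, so it cannot substitute. So everything rests on your termination claim, and as written that claim is too quick: finiteness of the global dimension of the regular local rings $\cO_{X,x}$ only gives that the $\dim X$-th syzygy is stalkwise (or locally) \emph{projective}, and for quasi-coherent sheaves of infinite rank, passing from ``locally projective'' to ``locally free'' is genuinely nontrivial --- one needs inputs such as Kaplansky's theorem that projective modules over local rings are free, Raynaud--Gruson descent of projectivity (already to see that an infinite-rank locally free module over an affine chart is projective, so that the syzygy lemma applies), and Bass's theorem that non-finitely-generated projectives over a connected Noetherian ring are free. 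With those classical results invoked explicitly your outline can be completed, but the sentence ``the $n$-th syzygy is automatically locally free'' is only justified, as it stands, in the coherent finite-rank case.
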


We record the following lemma, which gives a useful criterion for checking that a complex is acyclic:
\begin{nlemma}[\cite{shipman_geometric_2012}, 2.12]
\label{thm:BoundedsGenerate}
If $\cF \in D(\cX,W)$ and $\Hom(\cE, \cF) = 0$ for all $\cE \in D^{b}(\cX,W)$, then $\cF = 0$.
\end{nlemma}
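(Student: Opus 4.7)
The plan is to present $\cF$ as a filtered homotopy colimit of coherent subfactorisations, then use the hypothesis to force the identity map to vanish.

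First I would show that every coherent $G \times \CC^{*}_{R}$-equivariant subsheaf $\cF_{0}$ of the underlying sheaf of $\cF$ is contained in a coherent subfactorisation. The natural candidate is $\cF_{0}' := \cF_{0} + d(\cF_{0})$, which is coherent as a sum of two coherent sheaves. Since $W$ is a global section of $\cO_{\cX}[2]$, multiplication by $W$ preserves every $\cO_{\cX}$-submodule, so $d^{2}(\cF_{0}) = W \cdot \cF_{0} \subset \cF_{0}$ and hence $d(\cF_{0}') \subset \cF_{0}'$. Running over all such $\cF_{0}$ gives a directed system of coherent subfactorisations $\{\cF_{\alpha}\}$ with $\cF = \varinjlim_{\alpha} \cF_{\alpha}$ in the abelian category of quasi-coherent factorisations.

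Next I would verify that this colimit also represents the homotopy colimit in $D(\cX,W)$. Filtered colimits are exact in quasi-coherent sheaves on $X$ and hence in quasi-coherent factorisations, so filtered colimits of acyclics remain acyclic (the generating class of acyclics, namely totalisations of short exact sequences of factorisations, is evidently preserved under filtered colimits). Consequently the abelian-categorical colimit agrees with the $\mathrm{hocolim}$ in $D(\cX,W)$.

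Finally I would apply $\RHom_{D(\cX,W)}(-,\cF)$ to the identification $\cF = \mathrm{hocolim}_{\alpha} \cF_{\alpha}$. This converts the homotopy colimit into the homotopy limit of the diagram $\alpha \mapsto \RHom(\cF_{\alpha}, \cF)$. Since each $\cF_{\alpha}$ is coherent and hence lies in $D^{b}(\cX,W)$, and since $D^{b}(\cX,W)$ is closed under shifts, the hypothesis gives $\Hom(\cF_{\alpha}[i], \cF) = 0$ for all $\alpha$ and all $i \in \ZZ$, so the diagram is identically zero. Its homotopy limit therefore vanishes, giving $\RHom(\cF,\cF) = 0$. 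In particular the class of $\mathrm{id}_{\cF}$ is zero in $D(\cX,W)$, forcing $\cF = 0$.

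The main obstacle I anticipate is the identification of the abelian colimit with the homotopy colimit, which for factorisations requires a careful check that totalisations of short exact sequences of factorisations behave well under filtered colimits. This is the factorisation-category analogue of the standard fact that filtered colimits compute hocolims in the unbounded derived category of quasi-coherent sheaves, and the arguments from that setting should transport with only cosmetic changes.
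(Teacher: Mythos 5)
The paper does not actually prove this lemma---it is quoted verbatim from \cite{shipman_geometric_2012}---so the only comparison available is with the standard argument, which is indeed the strategy you are reconstructing. Your first and last steps are sound: the enlargement $\cF_{0} \mapsto \cF_{0} + d(\cF_{0})$ (up to the harmless twist, since $d$ maps $\cF$ to $\cF[1]$) is exactly the device used in the proof of Lemma \ref{thm:restrictionIsSurjective}, the exhaustion of a quasi-coherent equivariant sheaf by its coherent subsheaves is \cite[15.4]{laumon_champs_2000}, and $\Hom$ out of a coproduct is always a product, so the hypothesis does kill everything once the right triangle is in place.

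The gap is the middle step, and it sits exactly where the content of the cited result lies. In $D(\cX,W)$ acyclicity is not a cohomological condition: the acyclics form the thick subcategory generated by totalisations of finite exact complexes, and a thick subcategory is closed under summands and finite extensions, not under infinite coproducts or filtered colimits. So ``filtered colimits of acyclics remain acyclic'' does not follow from the colimit-stability of the generators, and whether the acyclics admit such closure is precisely the delicate point of this theory (it is the difference between the absolute derived and the coderived category of quasi-coherent factorisations). Moreover, even granted such closure, it would not by itself identify $\varinjlim_{\alpha}\cF_{\alpha}$ with a homotopy colimit; one needs an actual distinguished triangle. What does work, when $\cF$ is exhausted by a chain $\cF_{0}\subset\cF_{1}\subset\cdots$, is to observe that the telescope sequence $0 \to \oplus_{i}\cF_{i} \to \oplus_{i}\cF_{i} \to \cF \to 0$ is itself a finite exact complex of (quasi-coherent) factorisations, so its totalisation is acyclic by the very definition of $D(\cX,W)$; this yields the triangle $\oplus_{i}\cF_{i} \to \oplus_{i}\cF_{i} \to \cF$ directly, and applying $\Hom(-,\cF)$ together with the hypothesis forces $\mathrm{id}_{\cF}=0$. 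That settles the case of countably generated $\cF$, but in general the filtered poset of coherent subfactorisations contains no cofinal chain (it cannot when $\cF$ is not countably generated), so the telescope presentation is unavailable; one must then either run a transfinite induction over an ordinal-indexed exhaustion---where the limit stages raise the same difficulty again---or simply quote the generation results of Positselski and Shipman, which is what the paper does.
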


\subsection{Functors}
Suppose we are given a map of LG models $f : (\cX,W_{\cX}) \to (\cY,W_{\cY})$, i.e.\ a morphism of stacks $f : \cX \to \cY$ such that $f^{*}\cO_{\cY}[1] \cong \cO_{\cX}[1]$, and such that $f^{*}W_{\cY} = W_{\cX}$.

For any factorisation $\cE$, the pushforward $f_{*}\cE$ of its underlying sheaf inherits a differential map which squares to $W_{\cY}$, and so becomes a factorisation on $(\cY,W_{\cY})$.
This defines a pushforward functor $f_{*} : K(\cX,W_{\cX}) \to K(\cY,W_{\cY})$, which admits a right derived functor
\[
Rf_{*} : D(\cX,W_{\cX}) \to D(\cY,W_{\cY}),
\]
see \cite[3.36]{ballard_category_2014}.
We can compute the pushforward functor by replacing a factorisation $\cE$ with a quasi-isomorphic injective factorisation $\cI$.
In general $Rf_{*}$ does not send coherent factorisations to coherent factorisations, but in the special case where $f$ is a closed immersion this is true, as the underived functor is then exact.

We similarly get a functor $f^{*}$, which we may left derive by taking locally free replacements to get a functor
\[
Lf^{*} : D(\cY,W_{\cY}) \to D(\cX,W_{\cX}).
\]
This functor clearly sends $D^{b}(\cY,W_{\cY})$ to $D^{b}(\cX,W_{\cX})$.

We also have a tensor product.
If $\cE$ is a factorisation on $(\cX,W_{1})$ and $\cF$ a factorisation on $(\cX,W_{2})$, we may equip the sheaf $\cE \otimes \cF$ with a differential which squares to $W_{1} + W_{2}$, and so becomes a factorisation on $(\cX,W_{1} + W_{2})$.

To be precise, the differential is the following:
We have assumed that there exists a 2-torsion element $g \in G$ such that $(g,-1) \in G \times \CC^{*}_{R}$ acts trivially on $X$.
As a consequence, any sheaf $\cE$ on $\cX$ splits canonically into eigensheaves $\cE_{+} \oplus \cE_{-}$, where $\cE_{\pm}$ is the subsheaf on which $(g,-1) \in G \times \CC^{*}_{R}$ acts by $\pm 1$.
We can then equip $\cE \otimes \cF$ with the differential which is $d_{\cE} \otimes 1 + 1 \otimes d_{\cF}$ on $\cE_{+} \otimes \cF$ and which is $d_{\cE} \otimes 1 - 1 \otimes d_{\cF}$ on $\cE_{-} \otimes \cF$.
This is essentially the standard sign rule for the tensor product of dg objects.

Replacing either $\cE$ or $\cF$ by a locally free factorisation, we obtain a derived tensor product
\[
- \otimes^{L}_{\cX} - : D(\cX,W_{1}) \times D(\cX,W_{2}) \to D(\cX,W_{1} + W_{2}).
\]

Given factorisations $\cE, \cF$ on $(\cX,W)$ with $\cE$ coherent, we have a sheaf hom $\sHom(\cE,\cF)$.
This is the usual sheaf hom with the induced differential, where we use the standard sign rule together with the splitting of $\cE$ and $\cF$ into even and odd graded parts.
The differential on $\sHom(\cE,\cF)$ satisfies $d^{2} = 0$.
We may derive this to get $\Rhom(\cE,\cF) \in D(\cX,0)$.
The derived sheaf hom can be computed either by taking an injective replacement of $\cF$ or a locally free replacement of $\cE$.
We have an isomorphism $\Rhom(\cE,\cF) \cong \cF \otimes^{L} \cE^{\vee}$, where $\cE = \Rhom(\cE, \cO_{\cX})$.

\subsection{Resolutions}
\label{sec:Resolutions}
A sheaf $\cF$ on $\cX$ supported on $\{W = 0\}$, equipped with the trivial differential, is a factorisation on $\cX$.
This provides a useful supply of objects in $D^{b}(\cX,W)$.
Lemma \ref{thm:Resolutions} gives a way of constructing a locally free representative of such an $\cF$.

As a matter of notation, we write
\begin{equation}
\label{eqn:ResolutionNotation}
\cE_{n} \mfarrows{d_{r}}{d_{l}} \cdots \mfarrows{d_{r}}{d_{l}} \cE_{1}  \mfarrows{d_{r}}{d_{l}} \cE_{0}
\end{equation}
to mean the factorisation $(\cE,d)$, where $\cE = \oplus \cE_{i}$ and $d = d_{r} + d_{l}$.
Note that the arrows $d_{l}, d_{r}$ in \eqref{eqn:ResolutionNotation} are then maps of degree $1$ with respect to the $\CC^{*}_{R}$-action.

\begin{nlemma}
\label{thm:Resolutions}
Let
\[
\cE = \cE_{n} \mfarrows{d_{r}}{d_{l}} \cdots \mfarrows{d_{r}}{d_{l}} \cE_{0}
\]
be a factorisation, and suppose that there is a map of sheaves $\cE_{0} \stackrel{f}{\to} \cF$, such that the sequence
\[
0 \to \cE_{n} \stackrel{d_{r}}{\to} \cdots \stackrel{d_{r}}{\to} \cE_{0} \stackrel{f}{\to} \cF \to 0
\]
is exact.
Then $\cF$ is scheme-theoretically supported on $\{W = 0\}$.

Thinking of $\cF$ as a factorisation on $(\cX,W)$ with trivial differential, the induced map of factorisations $\cE \to \cE_{0} \stackrel{f}{\to} \cF$ is a quasi-isomorphism.
\end{nlemma}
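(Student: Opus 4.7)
The plan is to prove the two claims separately, both relying on decomposing the factorisation equation $d^{2} = W$. Since $d_{l}, d_{r}$ shift the index $i$ in opposite directions while both having $\CC^{*}_{R}$-weight $1$, the equation decomposes into $d_{l}^{2} = 0$, $d_{r}^{2} = 0$, and $d_{l}d_{r} + d_{r}d_{l} = W \cdot \mathrm{id}_{\cE_{i}}$ on each summand $\cE_{i}$.

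For the support claim, I would take a local section of $\cF$, lift it to $e \in \cE_{0}$, and compute $W \cdot e = d_{l}d_{r}(e) + d_{r}d_{l}(e)$. The first term vanishes because $d_{r}(\cE_{0}) \subset \cE_{-1} = 0$, so $We \in d_{r}(\cE_{1})$. Applying $f$ and using the exactness of the given sequence at $\cE_{0}$ (which gives $f \circ d_{r} = 0$) yields $W \cdot f(e) = 0$. Since $f$ is surjective, $W$ annihilates $\cF$ scheme-theoretically.

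For the quasi-isomorphism, I would induct on $n$. The base case $n=0$ is trivial: $f : \cE_{0} \to \cF$ is then an isomorphism of sheaves carrying the trivial differential on both sides. For the inductive step, set $\cK = d_{r}(\cE_{n}) \subset \cE_{n-1}$; the injectivity of $d_{r}$ on $\cE_{n}$ (from exactness at $\cE_{n}$) makes $d_{r} : \cE_{n} \to \cK$ an isomorphism of sheaves. Define a truncated factorisation $\cE'$ with underlying sheaf $\cE_{0} \oplus \cdots \oplus \cE_{n-2} \oplus (\cE_{n-1}/\cK)$, with $d_{r}$ descending because $\cK \subset \ker d_{r}$, and with $d_{l}$ into the top term given by composition with the projection $\pi : \cE_{n-1} \to \cE_{n-1}/\cK$. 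Verifying that $(d')^{2} = W$ reduces to the identity $d_{l}d_{r} + d_{r}d_{l} = W$ on $\cE_{n-1}$ modulo $\cK$, using that $d_{r}d_{l}(\cE_{n-1}) \subset d_{r}(\cE_{n}) = \cK$.

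There is a short exact sequence of factorisations $0 \to \cN \to \cE \to \cE' \to 0$ where $\cN$ has underlying sheaf $\cE_{n} \oplus \cK$. On $\cN$ the inherited differential consists of the isomorphism $d_{r} : \cE_{n} \to \cK$ together with $d_{l} : \cK \to \cE_{n}$, which must equal $W \cdot d_{r}^{-1}$ by the factorisation equation. The sheaf map $h$ of $\CC^{*}_{R}$-weight $-1$ defined by $h = d_{r}^{-1}$ on $\cK$ and $h = 0$ on $\cE_{n}$ then satisfies $dh + hd = \mathrm{id}_{\cN}$, exhibiting $\cN$ as contractible in $K(\cX,W)$, and hence zero in $D(\cX,W)$. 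Therefore $\cE \to \cE'$ is a quasi-isomorphism, and composing with the inductive hypothesis applied to $\cE' \to \cF$ completes the proof. The main technical obstacle is bookkeeping: verifying carefully that the induced differentials on $\cE'$ truly square to $W$ at the new top term, and tracking $\CC^{*}_{R}$-degrees and signs in the contracting homotopy. Conceptually the argument is the factorisation-category analogue of peeling a trivial summand off the top of a free resolution.
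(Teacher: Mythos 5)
Your proof is correct, and for the second claim it takes a genuinely different route from the paper. The support claim is argued exactly as the paper does: $d_{r}d_{l} = W\id$ on $\cE_{0}$ gives $W\cE_{0} \subseteq \im d_{r} = \ker f$, so $W$ kills $\cF$. For the quasi-isomorphism, however, the paper does not argue directly at all: it simply applies the general result on resolutions in factorisation categories, \cite[3.4]{ballard_resolutions_2012}, to the exact complex $\cE_{n} \to \cdots \to \cE_{0} \to \cF$. Your induction replaces that citation with a self-contained argument: you split off the sub-factorisation with underlying sheaf $\cE_{n} \oplus d_{r}(\cE_{n})$, check it is contractible via the homotopy $d_{r}^{-1}$ (using $d_{r}^{2}=0$, $d_{r}d_{l}+d_{l}d_{r}=W$ and injectivity of $d_{r}$ on $\cE_{n}$), and conclude from the strict short exact sequence $0 \to \cE_{n}\oplus d_{r}(\cE_{n}) \to \cE \to \cE^{\pr} \to 0$. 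Two small points you leave implicit and should state: first, that a strict short exact sequence of factorisations yields a distinguished triangle in $D(\cX,W)$ — this follows at once from the paper's definition of acyclic objects as totalisations of finite exact complexes, since the totalisation of this three-term complex is an iterated cone; second, that the truncated data $\cE^{\pr} \to \cF$ satisfies the inductive hypothesis, i.e.\ exactness at $\cE_{n-1}/d_{r}(\cE_{n})$, which is precisely exactness of the original sequence at $\cE_{n-1}$. With those remarks your argument is complete. As for what each approach buys: yours is elementary and stays entirely within the definitions of Section 3, making the statement self-contained (at the price of the bookkeeping you mention), while the paper's one-line citation is shorter and appeals to a result proved in greater generality, applicable to resolution arguments in factorisation categories beyond this bounded, strictly exact situation.
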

\begin{proof}
For the first claim, note that as $d_{r}d_{l} = W\id : \cE_{0} \to \cE_{0}$, we have $W\cE_{0} \subseteq \im d_{r}$.
For the second claim, apply \cite[3.4]{ballard_resolutions_2012} to the complex $\cE_{n} \stackrel{d_{r}}{\to} \cdots \stackrel{d_{r}}{\to} \cE_{0} \stackrel{f}{\to} \cF$.
\end{proof}

\subsection{Change of $R$-grading}
\label{sec:ChangeOfRGrading}
For the purpose of constructing $D(\cX,W)$ and $D^{b}(\cX,W)$, the definition of gauged LG model that we use contains some superfluous information, because the splitting of $\overline{G} = G \times \CC^{*}_{R}$ can be replaced with the choice of a surjection $\overline{G} \to \CC^{*}_{R}$.
Indeed, the only information used in the definition of $D(\cX,W)$ is the structure $\cX = X/\ol{G}$ as a stack and the line bundle $\cO_{\cX}[1]$.

We draw the following consequence.
Suppose that there is an automorphism $\sigma : \ol{G} \to \ol{G}$ commuting with the projection $G \times \CC_{R}^{*} \to \CC_{R}^{*}$.
Then we may replace the action $\rho : \ol{G} \to \Aut(X)$ by $\rho\sigma$, without modifying the category $D^{b}(\cX,W)$.
In particular, fixing the action of $G$ on $X$, different choices of $\CC^{*}_{R}$-action may give the same derived category $D^{b}(\cX,W)$.

Consider, for instance, the LG model where $X = \CC^{n}\setminus 0$, $G = \CC^{*}$, $W = 0$, and both $G$ and $\CC^{*}_{R}$ act by the usual multiplication.
Then by the above remarks we may instead take the same model with trivial $\CC^{*}_{R}$-action, without changing the category of factorisations.
Hence by Proposition \ref{thm:StandardDerivedCategory} we have $D^{b}(\cX,0) \cong D^{b}(\PP^{n-1})$.

We will use this flexibility to choose different $\CC^{*}_{R}$-actions at various points throughout the proof.
The choice of $\sigma \in \Aut(\ol{G})$ will in our case be unique, so that the categories corresponding to different choices are canonically equivalent.

\subsection{Notational abuse}
From the next section onwards we will drop the $\CC^{*}_{R}$-action from the notation and denote the LG model simply by $(X/G,W)$, and the category of factorisations by either $D^{b}(X/G,W)$ or $D_{G}^{b}(X,W)$.
As justification for this abuse, we offer Proposition \ref{thm:StandardDerivedCategory}, whose conclusion then has the natural form $D^{b}(X/G,0) = D^{b}(X/G)$.

\section{The GIT quotients}
\label{sec:GIT}
We now turn to the geometry of our examples.
Fix a vector space $V$ of dimension $n$ and a vector subspace $L \subset \Sym^{2}(V^{\vee})$ of dimension $l$.
We let $Z = V \times V \times L$.

Let $T$ be the group $(\CC^{*})^{2}$ with coordinates $t_{1}^{\pm 1}, t_{2}^{\pm 1}$, and let $G = T \rtimes \ZZ_{2}$, where the semi-direct product is given by the involution of $T$ which permutes the $t_{i}$.
We let $G$ act on $Z$ in such a way that $T$ acts via characters $t_{1}, t_{2}, t_{1}^{-1}t_{2}^{-1}$ on $V$, $V$, and $L$, respectively, and such that the $\ZZ_{2}$-factor of $G$ permutes the $V$ factors and fixes the $L$ factor.

Let $\CC^{*}_{R}$ act on $Z$ by scaling the $L$ factor by $t_{R}^{2}$.
There is a natural superpotential $W : Z \to \CC$, which at a point $(v_{1},v_{2},l)$ is the evaluation of $l \in \Sym^{2}(V^{\vee})$ on $(v_{1},v_{2})$.
We let $\cZ = Z/G$.
The above data defines for us a gauged LG model $(\cZ,W)$.

Consider the GIT problem posed by $Z/G$.
Let $\chi$ be the character of $G$ which restricts to $t_{1}t_{2}$ on $T$, and which is trivial on the $\ZZ_{2}$-factor.
By choosing either a positive or negative multiple of $\chi$ for our GIT linearisation we obtain two GIT quotients $\cZ_{+}$ and $\cZ_{-}$.

Note that for us a GIT quotient is the quotient stack $Z^{\sstable}/G$, as distinguished from the GIT quotient in the classical sense, which is the coarse moduli scheme of this stack.

\subsection{The positive GIT quotient}
Choosing $\chi$ as our linearisation, the unstable locus is 
\[
Z_{+}^{\us} = (0 \times V \times L) \cup (V \times 0 \times L).
\]
Recall that by $\cO_{\Sym^{2}\PP(V)}(-1,-1)_{+}$ we mean the $\ZZ_{2}$-equivariant line bundle $\cO_{\PP(V)^{2}}(-1,-1)$ on $\PP(V)^{2}$, equipped with the unique lifting of the $\ZZ_{2}$-action which leaves the restriction of $\cO_{\PP(V)^{2}}(-1,-1)$ to the diagonal of $\PP(V)^{2}$ fixed.
The GIT quotient $\cZ_{+} = (Z \setminus Z_{+}^{\us})/G$ is then the stacky vector bundle 
\[
p : \cO_{\Sym^{2}\PP(V)}(-1,-1)_{+}^{\oplus l} \to \Sym^2\PP(V).
\]

We get a superpotential $W$ on $\cZ_{+}$ by restriction from $\cZ$.
In each fibre of $p$, the function $W$ is linear, and so dually gives a section 
\[
s \in \Gamma(\Sym^2\PP(V), \cO(1,1)_{+}^{\oplus l}).
\]
We let $X \subset \Sym^{2}\PP(V)$ be defined by the vanishing of this $s$.
Equivalently, $X$ is defined by the Cartesian diagram
\[
\begin{tikzcd}
X \arrow{d} \arrow{r} & \PP(L) \arrow{d} \\
\Sym^{2}\PP(V) \arrow{r} & \PP(\Sym^{2} V),
\end{tikzcd}
\]
and therefore corresponds to the $X$ in Theorem \ref{thm:MainTheorem}.

Let $i$ denote the inclusion $p^{-1}(X) \into \cZ_{+}$.
\begin{nprop}
\label{thm:KnorrerPeriodicity}
If $X$ has the expected dimension, then the functor
\[
i_{*} \circ p^{*} : D^{b}(X) \to D^{b}(\cZ_{+}, W)
\]
is an equivalence.
\end{nprop}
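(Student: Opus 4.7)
The plan is to recognise this statement as a direct application of \Knorrer{} periodicity in its LG-model formulation, as proved by Isik \cite{isik_equivalence_2013} and Shipman \cite{shipman_geometric_2012}. Write $\cB = \Sym^{2}\PP(V)$ and let $\cE = \cO_{\cB}(-1,-1)_{+}^{\oplus l}$, so that $\cZ_{+} = \mathrm{Tot}(\cE)$ with projection $p : \cZ_{+} \to \cB$. Unwinding the construction in Section \ref{sec:GIT}, one checks that the superpotential $W$ is fibrewise linear on $\cZ_{+}$, and under the canonical identification of fibrewise linear functions on $\mathrm{Tot}(\cE)$ with sections of $\cE^{\vee}$ it corresponds exactly to the section $s \in \Gamma(\cB, \cE^{\vee})$ whose zero locus is $X$. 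With the $\CC^{*}_{R}$-action scaling the $L$-factor of $Z$ with weight $2$, the fibres of $\cE$ are scaled with weight $1$, so that $W$ has weight $2$ as required.

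The only hypothesis of the Isik/Shipman theorem that needs genuine checking is that $s$ is a regular section. But the expected-dimension assumption says precisely that $\codim_{\cB} X = l = \mathrm{rk}\, \cE$, which is equivalent to regularity of $s$: locally $s$ is given by $l$ functions forming a regular sequence, so the Koszul complex
\[
0 \to \wedge^{l}\cE \to \wedge^{l-1}\cE \to \cdots \to \cE \to \cO_{\cB} \to 0
\]
is a locally free resolution of $\cO_{X}$. With this established, the cited theorem yields an equivalence $D^{b}(X) \cong D^{b}(\cZ_{+}, W)$.

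It remains to identify this equivalence with $i_{*}p^{*}$, but this is essentially formal: the Isik/Shipman functor sends $\cF \in D^{b}(X)$ first to its pullback along the restricted projection $p^{-1}(X) \to X$ (viewed as a factorisation with trivial differential, since $W$ vanishes on $p^{-1}(X)$ because $s|_{X} = 0$), and then pushes forward along the closed immersion $i : p^{-1}(X) \into \cZ_{+}$. The main thing to be careful about is the orbifold structure on $\cB$ and the $\ZZ_{2}$-equivariance of $\cO(-1,-1)_{+}$; however, the cited theorems are formulated for smooth quotient stacks equipped with an equivariant vector bundle and an equivariant regular section, so our setup fits without modification.
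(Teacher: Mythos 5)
Your proposal is correct and takes essentially the same route as the paper, whose proof consists of citing \Knorrer{} periodicity as proved by Isik \cite[4.6]{isik_equivalence_2013} and Shipman \cite[3.4]{shipman_geometric_2012}; your verification of the hypotheses (regularity of $s$ from the expected-dimension assumption, the equivariant/stacky setting, identification of the functor with $i_{*}p^{*}$) just makes explicit what the paper leaves implicit. One small slip in the bookkeeping: in the paper's convention $\CC^{*}_{R}$ scales the $L$-factor, i.e.\ the fibres of $p$, with weight $2$ rather than $1$, which is precisely why the fibrewise-linear $W$ has weight $2$.
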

\begin{proof}
This result is known as \Knorrer{} periodicity, and has been proved independently by Isik \cite[4.6]{isik_equivalence_2013} and Shipman \cite[3.4]{shipman_geometric_2012}, see also \cite[2.3.11]{ballard_variation_2012}.
\end{proof}

\subsubsection{An alternative \Knorrer{} functor}
Let $K$ denote the functor $i_{*}\circ p^{*}$ from Proposition \ref{thm:KnorrerPeriodicity}.
If $\cE \in D^{b}(X)$ is restricted from $\Sym^{2}\PP(V)$, then we may describe $K(\cE)$ differently, a result which will be needed in Section \ref{sec:OrthogonalComplement}.

Let $j_{1} : X \to \Sym^{2}\PP(V)$ be the inclusion, and let $j_{2} : \Sym^{2}\PP(V) \to \cZ_{+}$ be the inclusion along the 0-section of $p$.
Let the functors $\Phi_{1}, \Phi_{2} : D^{b}(\Sym^{2}\PP(V)) \to D^{b}(\cZ_{+},W)$ be given by 
\[
\Phi_{1} = K \circ j_{1}^{*}
\] 
and 
\[
\Phi_{2} = (j_{2})_{*}(- \otimes \cO(-1,-1)_{+}^{\otimes l}[l]).
\]
\begin{nlemma}
\label{thm:FunctorsEquivalent}
Assume that $X$ has the expected codimension. 
Then $\Phi_{1}$ and $\Phi_{2}$ are equivalent.
\end{nlemma}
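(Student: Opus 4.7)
The plan is to realize both $\Phi_1(\cE)$ and $\Phi_2(\cE)$, for $\cE \in D^{b}(\Sym^{2}\PP(V))$, as $p^{*}\cE \otimes^{L} \cK$ for a single Koszul matrix factorization $\cK$ on $\cZ_{+}$, obtained by applying Lemma \ref{thm:Resolutions} to $\cK$ in two different ways. Set $E = \cO_{\Sym^{2}\PP(V)}(-1,-1)_{+}^{\oplus l}$, so that $\cZ_{+}$ is the total space of $p\colon E \to \Sym^{2}\PP(V)$. Write $\tau$ for the tautological section of $p^{*}E$ (whose zero locus is the zero section $j_{2}(\Sym^{2}\PP(V))$) and $p^{*}s$ for the pulled-back section of $p^{*}E^{\vee}$ (whose zero locus is $p^{-1}(X)$). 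Then $\langle \tau, p^{*}s\rangle = W$, and the Koszul matrix factorization
\[
\cK \ :=\ \Bigl(\bigoplus_{k=0}^{l} \wedge^{k} p^{*}E,\ \iota_{p^{*}s} + \tau \wedge (-)\Bigr),
\]
with the $R$-grading on each summand chosen so that the total differential has $R$-weight $1$, is an object of $D^{b}(\cZ_{+}, W)$.

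I would then identify $\cK$ in two ways. Taking $d_{r} = \iota_{p^{*}s}$ in Lemma \ref{thm:Resolutions}, and using the hypothesis that $X$ has the expected codimension (so $p^{*}s$ is a regular section of $p^{*}E^{\vee}$), the ordinary Koszul resolution of $p^{-1}(X)$ gives $\cK \cong i_{*}\cO_{p^{-1}(X)}$ in $D^{b}(\cZ_{+}, W)$. For the second identification, introduce the dual factorization $\cK^{\vee} := (\wedge^{\bullet} p^{*}E^{\vee},\ \iota_{\tau} + p^{*}s \wedge (-))$, which also satisfies $d^{2} = W$; the standard identifications $\wedge^{k} p^{*}E \cong \wedge^{l-k}p^{*}E^{\vee} \otimes \det p^{*}E$ and $\det p^{*}E = p^{*}\cO(-l,-l)_{+}$, swapping the two pieces of the differentials, produce a natural isomorphism of factorizations $\cK \cong \cK^{\vee} \otimes p^{*}\cO(-l,-l)_{+}[l]$. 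Applying Lemma \ref{thm:Resolutions} now to $\cK^{\vee}$ with $d_{r} = \iota_{\tau}$ (using that $\tau$ cuts out the zero section regularly) identifies $\cK^{\vee} \cong (j_{2})_{*}\cO_{\Sym^{2}\PP(V)}$, and hence $\cK \cong (j_{2})_{*}(\cO(-l,-l)_{+}[l])$ by the projection formula along $j_{2}$.

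Finally, base change along the Cartesian square $p^{-1}(X) = \cZ_{+} \times_{\Sym^{2}\PP(V)} X$ together with the projection formula gives $\Phi_{1}(\cE) = i_{*}i^{*}p^{*}\cE \cong p^{*}\cE \otimes^{L} i_{*}\cO_{p^{-1}(X)}$, while the projection formula for $j_{2}$ gives $\Phi_{2}(\cE) \cong p^{*}\cE \otimes^{L} (j_{2})_{*}(\cO(-l,-l)_{+}[l])$. Substituting the two identifications of $\cK$ obtained above yields $\Phi_{1}(\cE) \cong p^{*}\cE \otimes^{L} \cK \cong \Phi_{2}(\cE)$, functorially in $\cE$.

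The main technical point I expect will require care is the comparison $\cK \cong \cK^{\vee} \otimes p^{*}\cO(-l,-l)_{+}[l]$: one must track how the $R$-grading (and hence the homological shift in $D^{b}(\cZ_{+}, W)$) transforms under the reversing identification $\wedge^{k} p^{*}E \leftrightarrow \wedge^{l-k}p^{*}E^{\vee} \otimes \det p^{*}E$ and confirm that the resulting shift is exactly the $[l]$ appearing in the definition of $\Phi_{2}$. The remaining ingredients -- regularity of $p^{*}s$ and $\tau$, the projection formula and base change for factorization categories -- are routine.
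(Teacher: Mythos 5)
Your proof is correct and follows essentially the same route as the paper: both realise $\Phi_{1}(\cE)$ and $\Phi_{2}(\cE)$ as $p^{*}\cE$ tensored with the same Koszul factorisation (the paper writes it on $\cZ$ using coordinates $p_{i}$, $f_{i}$ with $W=\sum p_{i}f_{i}$ and restricts to $\cZ_{+}$), identified in two ways via Lemma \ref{thm:Resolutions} as $i_{*}\cO_{p^{-1}(X)}$ and as $(j_{2})_{*}(\cO(-l,-l)_{+}[l])$, and then conclude by the projection formula. The only difference is cosmetic: you pass through the dual-side Koszul factorisation to track the $\det p^{*}E$-twist and the shift $[l]$, a bookkeeping point the paper simply suppresses (``we ignore the cohomological shifts''), and which does come out as the $[l]$ in the definition of $\Phi_{2}$.
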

\begin{proof}
We ignore the cohomological shifts for notational ease. We have 
\[
\Phi_{1}(\cE) = i_{*}p^{*}j_{1}^{*}(\cE) = p^{*}(\cE) \otimes \cO_{p^{-1}(X)},
\]
where $p^{*}(\cE) \in D^{b}(\cZ,0)$ and $\cO_{p^{-1}(X)} \in D^{b}(\cZ,W)$.
Next we have
\[
\Phi_{2}(\cE) = (j_{2})_{*}(\cE \otimes \cO(-1,-1)_{+}^{\otimes l})) = p^{*}(\cE) \otimes \cO_{\Sym^{2}\PP(V)} \otimes p^{*}(\cO(-1,-1)_{+}^{\otimes l}),
\]
with $p^{*}(\cE), p^{*}(\cO(-1,-1)_{+}^{\otimes l}) \in D^{b}(\cZ,0)$ and $\cO_{\Sym^{2}\PP(V)} \in D^{b}(\cZ_{+},0)$.

We will show that $\cO_{p^{-1}(X)} \cong \cO_{\Sym^{2}\PP(V)} \otimes p^{*}(\cO(-1,-1)_{+}^{\otimes l})$.
Let us first work on $\cZ$.
Choose coordinates $p_{1}, \ldots, p_{l}$ on $L$.
Then the potential $W$ decomposes as $\sum p_{i}f_{i}$, where the $f_{i}$ are symmetric bilinear forms on $V \times V$.
There is a factorisation
\begin{align*}
\cE &= p^{*}(\cO(-1,-1)_{+})^{\otimes l} \mfarrows{d_{r}}{d_{l}} \wedge^{l-1}\left(p^{*}(\cO(-1,-1)_{+})^{\oplus l}\right) \mfarrows{d_{r}}{d_{l}} \cdots \\
& \mfarrows{d_{r}}{d_{l}} \wedge^{2}\left(p^{*}(\cO(-1,-1)_{+})^{\oplus l}\right) \mfarrows{d_{r}}{d_{l}} p^{*}(\cO(-1,-1)_{+})^{\oplus l} \mfarrows{d_{r}}{d_{l}} \cO_{\cZ},
\end{align*}
where the $d_{r}$ are contractions with $(p_{i}) \in \Gamma(p^{*}(\cO(-1,-1)_{+})^{\oplus l})$ and the $d_{l}$ are exterior multiplications with $(f_{i}) \in \Gamma(p^{*}(\cO(-1,-1)_{+})^{\oplus l})$ (see \cite[Sec\ 2.3]{dyckerhoff_compact_2011}).
Restricting $\cE$ to $\cZ_{+}$, we see that since $\Sym^{2}\PP(V)$ is cut out by the $p_{i}$, since $p^{-1}(X)$ is cut out by the $f_{i}$, and since both of these have codimension $l$, it follows from Lemma \ref{thm:Resolutions} that
\[
\cO_{p^{-1}(X)} \cong \cE|_{\cZ_{+}} \cong \cO_{\Sym^{2}\PP(V)} \otimes p^{*}(\cO(-1,-1)^{\otimes{l}}_{+}).
\]
\end{proof}

\subsection{The negative GIT quotient}
\label{sec:NegativeGITQuotient}
If we take the character $\chi^{-1}$ as our linearisation, then the unstable locus is 
\[
Z_{-}^{\us} = V \times V \times 0.
\]
We let $Z_{-}^{\mathrm{ss}} = Z \setminus Z_{-}^{\us}$ and let $\cZ_{-} = (Z_{-}^{\mathrm{ss}}) / G$.

When dealing with $\cZ_{-}$ and its substacks, we will assume the $\CC^{*}_{R}$-action on $Z$ is the one which scales the $V$-factors by $t_{R}$ and leaves $L$ fixed.
The remarks in Section \ref{sec:ChangeOfRGrading} show that the category $D^{b}(\cZ_{-},W)$ is the same for this $\CC^{*}_{R}$-action as for the one described before.

The projection $Z_{-}^{\mathrm{ss}} \to L \setminus 0$ and the character $\chi: G \to \CC^{*}$ together give a map of quotient stacks
\[
f : \cZ_{-} = Z_{-}^{\mathrm{ss}}/G \to (L \setminus 0)/\CC^{*} = \PP(L).
\]
The kernel of $\chi$ is isomorphic to $O(2)$, and each fibre of $f$ is isomorphic to $V \times V/O(2)$.
The action of $O(2)$ on $V \times V$ is the one given by identifying $V \times V$ with $V \otimes \CC^{2}$, where $\CC^{2}$ is the standard representation of $O(2)$.
In particular each fibre of $f$ contains a point with positive-dimensional stabiliser group, so that $\cZ_{-}$ is not a Deligne--Mumford stack.

\subsubsection{Grade restricted objects}
\label{sec:GradeRestrictedObjects}
Because $\cZ_{-}$ contains points with positive-dimensional isotropy groups, the category $D^{b}(\cZ_{-},W)$ is too big to be directly compared with $D^{b}(\cZ_{+},W)$ in the way we would like.
For example, for each stacky point $p$ and each representation $\rho$ of $O(2)$, we have an object $\cO_{p}(\rho) \in D^{b}(\cZ_{-},W)$.
It will therefore be useful to consider a full subcategory of $D^{b}(\cZ_{-},W)$, defined as follows.

Choose a point $p \in \PP(L)$, and let $\CC^{*}$ be the connected component of its isotropy group $O(2)$ in $\cZ_{-}$.
The category $D^{b}_{\CC^{*}}(p)$ splits as $\oplus_{k} \langle \cO_{p}(k) \rangle$.
If $\cE \in D^{b}_{\CC^{*}}(p)$, then we let $\cE_{k}$ denote the projection of $\cE$ to $\langle \cO(k) \rangle$.

The inclusion $i_{p}: p/\CC^{*} \to \cZ_{-}$ induces a restriction functor
\[
i_{p}^{*} : D^{b}(\cZ_{-},W) \to D^{b}_{\CC^{*}}(p).
\]
For $\cE \in D^{b}(\cZ_{-},W)$, we let the ``weights of $\cE$ at $p$'' be the set $\{k\ |\ (i_{p}^{*}\cE)_{k} \not= 0\}$.
We say $\cE$ is \emph{grade restricted} if at every $p$ the weights of $\cE$ are contained in the interval $[-\lfloor n/2\rfloor, \lfloor n/2 \rfloor]$, where $n = \dim V$.
We denote the full subcategory of grade restricted objects in $D^{b}(\cZ_{-},W)$ by $D^{b}(\cZ_{-},W)_{\res}$.

\section{Window categories}
\label{sec:WindowCategories}
Consider the origin $0 \in Z$, which has isotropy group $G$, and recall that $T = (\CC^{*})^{2}$ is the identity component of $G$.
The category $D^{b}_{T}(0)$ splits as
\[
D^{b}_{T}(0) = \oplus_{(i,j) \in \ZZ^{2}} \langle \cO(i,j) \rangle.
\]
For any $\cE \in D^{b}_{T}(0)$, we let $\cE_{(i,j)}$ be the projection of $\cE$ to $\langle \cO(i,j) \rangle$.

\begin{ndefn}
\label{defi:NewWeightsAndWidth}
Let $\cE \in D^{b}_{T}(Z,W)$. The weights of $\cE$ are defined as
\[
\wt (\cE) = \{(i,j)\ |\ (\cE|_{0})_{i,j} \not \cong 0 \}.
\]
If $\cE \in D^{b}_{G}(Z,W)$, we let $\wt(\cE)$ be the weights of $\cE$ considered as a $T$-equivariant object.
\end{ndefn}

Recall that $n = \dim V$ and $l = \dim L$. 
We define subsets $S_{+}, S_{-}$ and $S_{-, \text{res}}$ of $\ZZ^{2} = \chi(T)$ as follows:
\begin{itemize}
\item If $n$ is odd, then $S_{+}$ is the set of pairs $(i,j)$ such that $0 \le i + j \le 2n-1$ and $|i-j| \le (n-1)/2$.
\item If $n$ is even, then $S_{+}$ is the set of pairs $(i,j)$ such that either $0 \le i + j < n$ and $|i-j| \le n/2$, or $n \le i + j \le 2n - 1$ and $|i - j| \le n/2-1$.
\item $S_{-}$ is the set of pairs $(i,j)$ such that $0 \le i + j \le 2l-1$.
\item $S_{-, \text{res}}$ is the set of pairs $(i,j)$ such that $0 \le i + j \le 2l - 1$ and $|i - j| \le \lfloor n/2 \rfloor$.
\end{itemize}
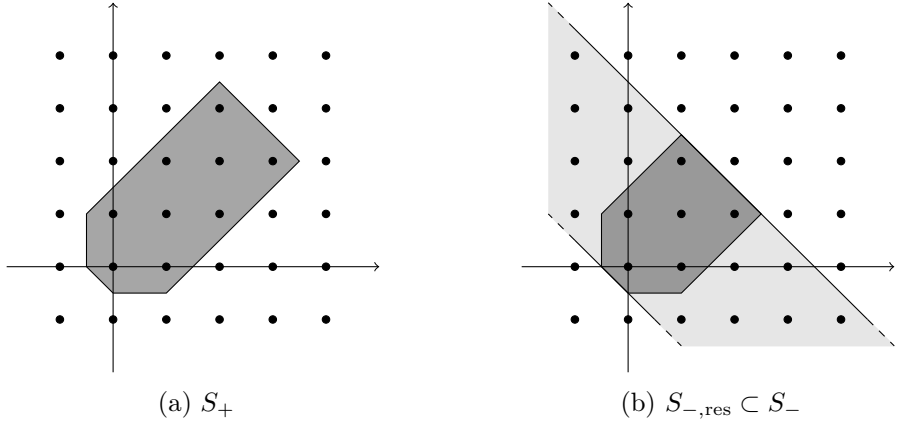
\begin{figure}[h]
\centering
\begin{subfigure}{0.4\textwidth}
\centering
\begin{tikzpicture}[scale = 0.7]
\draw[fill = gray!70] (-0.5, 1) -- (-0.5,0) -- (0, -0.5) -- (1, -0.5) -- ++(2.5, 2.5) -- ++(-1.5, 1.5) -- cycle;
\foreach \x in {-1,..., 4} {\foreach \y in {-1,..., 4} {\filldraw (\x,\y) circle[radius = 2pt];}};
\draw[->] (0,-2) -- (0,5);
\draw[->] (-2,0) -- (5,0);
\end{tikzpicture}
\caption{$S_{+}$}
\end{subfigure}
\qquad
\begin{subfigure}{0.4\textwidth}
\centering
\begin{tikzpicture}[scale = 0.7]
\filldraw[gray!20] (-1.5,5) -- (5, -1.5) -- (1, -1.5) -- (-1.5, 1);
\draw[fill = gray!80] (-0.5, 1) -- (-0.5,0) -- (0, -0.5) -- (1, -0.5) -- ++(1.5, 1.5) -- ++(-1.5, 1.5) -- cycle;
\draw[->] (0,-2) -- (0,5);
\draw[->] (-2,0) -- (5,0);
\draw (-1, 0.5) -- (0.5, -1);
\draw[dashed] (-1, 0.5) -- +(-0.5, 0.5);
\draw[dashed] (0.5, -1) -- +(0.5, -0.5);
\draw (-1, 4.5) -- (4.5, -1);
\draw[dashed] (-1, 4.5) -- +(-0.5, 0.5);
\draw[dashed] (4.5, -1) -- +(0.5, -0.5);

\foreach \x in {-1,..., 4} {\foreach \y in {-1,..., 4} {\filldraw (\x,\y) circle[radius = 2pt];}};
\end{tikzpicture}
\caption{$S_{-,\res} \subset S_{-}$}
\end{subfigure}
\caption{$S_{+}, S_{-}$ and $S_{-,\res}$ for $n = 3, l = 2$}
\label{fig:SPlusMinus}
\end{figure}

We define ``window categories'' inside $D^{b}(\cZ,W)$ as follows:
Let $\cW_{+}$ (resp.\ $\cW_{-}$, $\cW_{-, \res}$) be the full subcategory of $D^{b}(\cZ,W)$ consisting of all objects such that $\wt(\cE) \subseteq S_{+}$ (resp.\ $S_{-}$, $S_{-, \res}$).

We let $j_{\pm} : \cZ_{\pm} \to \cZ$ be the inclusions.
The restriction functors $j_{\pm}^{*}:D^{b}(\cZ,W) \to D^{b}(\cZ_{\pm},W)$ give functors 
\[
\Phi_{\pm}:\cW_{\pm} \into D^{b}(\cZ,W) \stackrel{j_{\pm}^{*}}{\to} D^{b}(\cZ_{\pm},W).
\]
The main result of this section is:
\begin{nprop}
\label{thm:windowsAsGIT}
The functors $\Phi_{\pm} : \cW_{\pm} \to D^{b}(\cZ_{\pm},W)$ are equivalences.
\end{nprop}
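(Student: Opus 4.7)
The plan is to follow the standard window/grade-restriction paradigm of Halpern-Leistner and Ballard--Favero--Katzarkov, adapted to our setup where the weight data of the destabilising torus is two-dimensional. For each sign I would prove (a) that $\Phi_{\pm}$ is fully faithful and (b) that it is essentially surjective, handling separately the unstable locus in each GIT problem: $Z_{+}^{\us} = (0\times V\times L)\cup(V\times 0\times L)$, with its two components exchanged by the $\ZZ_{2}$-factor and destabilised by the $1$-PS's $\lambda_{1}=(t,1)$ and $\lambda_{2}=(1,t)$ respectively, versus $Z_{-}^{\us} = V\times V\times 0$, destabilised by the full torus $T$ with normal bundle $L$ of $\chi$-weight $1$ and rank $l$.

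For fully faithfulness, given $\cE,\cF\in\cW_{\pm}$, I would show that the restriction map $\RHom_{\cZ}(\cE,\cF)\to\RHom_{\cZ_{\pm}}(j_{\pm}^{*}\cE,j_{\pm}^{*}\cF)$ is an isomorphism. By the adjunction $(j_{\pm}^{*},R(j_{\pm})_{*})$ this is equivalent to the vanishing of $\RHom(\cF,\cC)$, where $\cC$ is the cone of the unit $\cE\to R(j_{\pm})_{*}j_{\pm}^{*}\cE$; this cone is built out of factorisations supported on $Z_{\pm}^{\us}$. To control the weights of $\cC$ I would filter it using a Koszul resolution of $\cO_{Z_{\pm}^{\us}}$ along the conormal bundle (the first $V$-factor in the $+$ case, and $L$ in the $-$ case), and reduce to checking that tensoring $\cF^{\vee}$ against weight-shifts of these conormal directions produces only objects with no $G$-invariants; this follows because the defining inequalities for $S_{\pm}$ preclude the differences $\wt(\cF)-\wt(\cE)$ from meeting the weights appearing in the Koszul complex.

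For essential surjectivity, any $\cG\in D^{b}(\cZ_{\pm},W)$ admits some coherent extension $\tilde\cG\in D^{b}(\cZ,W)$. I would then iteratively prune $\tilde\cG$ by forming cones with complexes scheme-theoretically supported on $Z_{\pm}^{\us}$ and equipped with induced factorisation structure, adjusting by Koszul resolutions to kill $T$-weights lying outside $S_{\pm}$. Because a coherent factorisation has only finitely many weights, the procedure terminates, yielding an object of $\cW_{\pm}$ whose restriction to $\cZ_{\pm}$ equals $\cG$. A mild check is that the correction steps leave $j_{\pm}^{*}\tilde\cG$ unchanged, which is immediate as the corrections are supported on $Z_{\pm}^{\us}$.

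The main obstacle, and the reason the off-the-shelf theorems of \cite{ballard_variation_2012,halpern-leistner_derived_2015} do not apply verbatim, is the two-dimensional weight bookkeeping in the $+$ case. The $\ZZ_{2}$-symmetry exchanges the $\lambda_{1}$- and $\lambda_{2}$-strata, so one cannot decouple the windows on each $1$-PS; instead, the grade-restriction condition must be imposed for both $\lambda_{i}$ simultaneously, in a $\ZZ_{2}$-invariant way. This forces the diamond shape of $S_{+}$ (the $|i-j|\leq \lfloor n/2\rfloor$ constraint coupled to $0\leq i+j\leq 2n-1$), and in the even case the asymmetric half-integer boundary becomes necessary to make both fully faithfulness and essential surjectivity go through at the same bound. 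The core technical work therefore lies in a careful weight count on the Koszul complex of each component of $Z_{+}^{\us}$, verifying that the prescribed $S_{+}$ is simultaneously small enough for fully faithfulness and large enough for essential surjectivity.
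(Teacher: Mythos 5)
Your overall route is the paper's own: fully faithfulness via vanishing of maps into objects supported on the unstable locus, controlled by weights along the conormal directions, and essential surjectivity by extending a coherent factorisation from the open substack and then stripping bad weights with cones over objects in $\ker j_{\pm}^{*}$, all with $\ZZ_{2}$-symmetrised two-dimensional weight bookkeeping (the harmless swap of $\cE$ and $\cF$ in your adjunction step aside). However, as written there are two concrete gaps. The more serious one is your termination claim for the pruning: ``because a coherent factorisation has only finitely many weights, the procedure terminates'' is not an argument. Each correction is, by $G$-equivariance, of the form $i_{*}((\cE|_{X})_{\underline{S}}) \oplus \sigma_{*}i_{*}((\cE|_{X})_{\underline{S}})$ with $X = V\times 0\times L$, and the cone acquires \emph{new} weights: the Koszul/conormal shifts $S-(0,i)$ together with their $\sigma$-images $\sigma S-(i,0)$. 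There is no a priori reason these land in $S_{+}$ or that the process does not cycle. The paper's proof supplies exactly the missing mechanism: one first forces $0\le i+j\le 2n-1$ by the standard one-parameter window argument applied to the stratum $0\times 0\times L$ and the diagonal $\CC^{*}$, and then strips only minimal (or maximal) subsets $S$ of extremal-width weights which are ``good'', i.e.\ $\bigl(\bigcup_{i=0}^{n}\sigma S-(i,0)\bigr)\cap S=\varnothing$; goodness guarantees the $\sigma$-symmetrised correction does not reintroduce the weights just removed, and then either the width $|i-j|$ or the number of extremal-width weights strictly drops. Without an invariant of this kind your induction does not close, and this is precisely where the even-$n$ asymmetry of $S_{+}$ is forced.

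The second, smaller, gap is in the fully-faithfulness step: you propose a weight count ``on the Koszul complex of each component of $Z_{+}^{\us}$'', but local cohomology along the union $X\cup\sigma X$ is not just the sum of the contributions of the components; one needs the Mayer--Vietoris triangle involving $\RGamma_{X\cap\sigma X}$, and hence a third weight condition coming from the intersection $0\times 0\times L$ (the set $B_{3}$ in the paper), in addition to the two conormal conditions $B_{1}, B_{2}$. Relatedly, the cone of $\cF\to Rj_{*}j^{*}\cF$ is not coherent, so the reduction to conormal weights should go through the filtration by $\cI_{X}^{k}/\cI_{X}^{k+1}\cong \Sym^{k}$ of the conormal bundle and a filtered-colimit argument rather than a single Koszul resolution; this is routine but needs saying. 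With these two points repaired your plan coincides with the paper's proof, including the citation of the standard results for $\Phi_{-}$.
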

\begin{proof}
The case of $\Phi_{-}$ is an application of \cite[3.3.2]{ballard_variation_2012} and \cite[3.29]{halpern-leistner_derived_2015}.
The proof of the claim for $\Phi_{+}$ occupies the rest of this section, and is split into lemmas as follows.

Let $\cE \in D^{b}(\cZ_{+}, W)$.
By Lemma \ref{thm:restrictionIsSurjective} there is an $\widehat{\cE} \in D^{b}(\cZ,W)$ such that $j_{+}^{*}(\widehat{\cE}) = \cE$.
By Proposition \ref{thm:NewWindowsAndKernelsGenerate}, we may, after modifying $\widehat{\cE}$ by taking cones over maps to or from objects in $\ker j^{*}_{+}$, assume that $\widehat{\cE} \in \cW_{+}$, which means that $\Phi_{+}$ is essentially surjective. 
If $\cE_{1}, \cE_{2} \in \cW_{+}$, then they satisfy the assumptions of Lemma \ref{thm:restrictionIsIsomorphism}, and hence
\[
\RHom_{\cZ}(\cE_{1},\cE_{2}) = \RHom_{\cZ_{+}}(\cE_{1}|_{\cZ_{+}}, \cE_{2}|_{\cZ_{+}}).
\] 
It follows that $\Phi_{+}$ is fully faithful.
\end{proof}

\begin{ncor}
\label{thm:GITCorollary}
The functor $\Phi_{-}$ restricts to give an equivalence 
\[
\cW_{-,\res} \cong D^{b}(\cZ_{-},W)_{\res}.
\]
\end{ncor}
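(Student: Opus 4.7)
The plan is to leverage the equivalence $\Phi_- : \cW_- \xrightarrow{\cong} D^b(\cZ_-, W)$ already provided by Proposition \ref{thm:windowsAsGIT} and show that it restricts to a bijection on the two grade-restricted subcategories. Since $S_{-,\res} = S_- \cap \{(i,j) : |i-j| \le \lfloor n/2 \rfloor\}$, and since $\cW_{-,\res}$ and $D^b(\cZ_-, W)_{\res}$ are both full subcategories defined by weight conditions, the problem reduces to matching weight data on the two sides: for $\cE \in \cW_-$, one needs that $\wt(\cE) \subseteq S_{-,\res}$ if and only if $\Phi_-(\cE) = j_-^*\cE$ has all $\CC^*$-weights in $[-\lfloor n/2 \rfloor, \lfloor n/2 \rfloor]$ at every stacky point of $\PP(L) \subset \cZ_-$.

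The bridge between the two notions of weight is the following observation. The stabiliser at $p \in \PP(L)$ is $O(2) \subset G$, and its connected component coincides with $\ker(\chi) = \{(t, t^{-1})\} \subset T$. The restriction of a $T$-character $(i,j)$ to this subgroup is the $\CC^*$-weight $i - j$. Hence the window $[-\lfloor n/2 \rfloor, \lfloor n/2 \rfloor]$ on $\CC^*$-weights at $p$ translates exactly into the condition $|i - j| \le \lfloor n/2 \rfloor$ on $T$-weights. The key technical claim to establish is then that, for $\cE \in \cW_-$, the set of $\CC^*$-weights of $j_-^*\cE$ at any $p$ equals $\{i - j : (i,j) \in \wt(\cE)\}$. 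I would prove this by passing to a locally free factorisation representative and restricting to the $\ker(\chi)$-fixed subspace $F = \{0\} \times \{0\} \times L \subset Z$, on which $W$ vanishes (since $W(v_1, v_2, l) = l(v_1, v_2)$ is bilinear in $v_1, v_2$). The restriction $\cE|_F$ becomes an honest $\ker(\chi)$-equivariant complex on $F$, and splits as $\bigoplus_k (\cE|_F)^{(k)}$ by $\ker(\chi)$-weight. Each $(\cE|_F)^{(k)}$ is a perfect complex on $F \cong \CC^l$, equivariant for the residual torus $T/\ker(\chi) \cong \CC^*$ acting on $L$ with a single nonzero weight and hence contracting $F$ to the origin.

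A graded Nakayama argument, using the contracting $\CC^*$-action, then shows that $(\cE|_F)^{(k)} = 0$ in $D^b(F)$ iff its derived fibre at $0$ vanishes, which is equivalent by propagation along the contracting action to the vanishing of its derived fibre at any $p \in F$. This gives equality of $\ker(\chi)$-weights at $0$ and at $p$, hence equality with $\{i - j : (i,j) \in \wt(\cE)\}$. With this matching in hand, the corollary is immediate: an $\cE \in \cW_-$ satisfies the $|i-j|$ bound defining $\cW_{-,\res}$ iff $j_-^*\cE$ satisfies the grade restriction window at every $p$, i.e., $j_-^*\cE \in D^b(\cZ_-, W)_{\res}$.

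The principal obstacle is the weight-matching step. Upper semicontinuity of derived fibre dimension takes care of the direction ``$\cW_{-,\res} \to D^b(\cZ_-, W)_{\res}$'' without difficulty, but the converse requires ruling out $\ker(\chi)$-weight pieces on $F$ that are supported purely at $0$; it is precisely here that the Nakayama/attractor argument using the contracting action of $T/\ker(\chi)$ on $F$ is needed. One mild worry is that a skyscraper such as $\cO_0$ has derived fibres at $0$ with many $T$-weights but vanishes on $\cZ_-$, which would break the argument if such an object lay in $\cW_-$; however, a direct computation of $\wt(\cO_0)$ via its Koszul resolution produces weights $(i,j)$ with $i+j$ up to $2l$, violating $S_-$, so no such objects lie in $\cW_-$ and the argument goes through.
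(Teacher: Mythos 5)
Your overall skeleton is the same as the paper's: restrict to $F = 0\times 0\times L$, split the restriction by the $\ker(\chi)$-weight $k = i-j$, use a support/contraction argument to show that a weight piece vanishing at $0$ vanishes identically (this gives the direction $\cW_{-,\res}\Rightarrow$ grade restricted), and for the converse show that a weight piece which is nonzero at $0$ must be nonzero at some $p\in\PP(L)$. The gap is in this converse step. The statement you invoke --- that for the pieces $(\cE|_F)^{(k)}$ the vanishing of the derived fibre at $0$ is ``equivalent by propagation along the contracting action'' to vanishing at points $p\neq 0$ --- is false without a further input: a nonzero complex supported at $\{0\}\subset L$ (a skyscraper-type piece) is perfectly compatible with the contracting action, has nonzero fibre at $0$, and has zero fibre at every $p\neq 0$. (Your stronger claim that the weight set at \emph{every} $p$ equals $\{i-j : (i,j)\in\wt(\cE)\}$ is also false --- e.g.\ a piece of the form $\cO_D$ for a coordinate divisor $D\subset L$ --- but only containment at each $p$ plus existence of some $p$ per weight is needed, so that overstatement is harmless.)

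What actually rules out origin-supported pieces is the $S_-$ constraint on the \emph{diagonal} weights: since $\cE\in\cW_-$, each piece $(\cE|_F)^{(k)}$ has weights at $0$, for the diagonal subtorus acting on $L$ with weight $2$, contained in $[0,2l-1]$, an interval strictly narrower than the spread $2l$ of the Koszul resolution of the origin in $L$; one then needs the windows lemma (the paper cites \cite[3.29]{halpern-leistner_derived_2015}, applied to the diagonal torus) saying that such a complex is acyclic on $L\setminus 0$ if and only if it is acyclic on $L$, equivalently that any nonzero complex supported at $0\in L$ has weight spread $\ge 2l$ in its fibre at $0$. Your attempted patch --- computing $\wt(\cO_0)$ for the skyscraper at the origin of $Z$ and noting it violates $S_-$ --- addresses the wrong object: the danger is not that $\cE$ itself is such a skyscraper, but that some $\ker(\chi)$-weight piece of $\cE|_L$ is quasi-isomorphic to a nonzero complex supported at $0\in L$, and excluding that requires precisely the weight-spread/windows statement on $L$, which your proposal asserts but does not prove or cite. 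With that lemma supplied, your argument closes and coincides with the paper's proof.
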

\begin{proof}
Let $\cE \in D^{b}(\cZ_{-},W)$.
By Proposition \ref{thm:windowsAsGIT}, there is a unique $\wh{\cE} \in \cW_{-}$ such that $j_{-}^{*}(\wh{\cE}) = \cE$.

Assume first that $\wh{\cE} \in \cW_{-, \res}$, and choose a locally free representative of $\wh{\cE}$.
Remembering only the $T$-equivariant structure, it follows from \cite{masuda_equivariant_1996} that the underlying sheaf of $\wh{\cE}$ will have the form $\oplus_{i,j}\cO_{\cZ}(i,j)^{m_{ij}}$ (ignoring cohomological shifts).

On $L/T$, there is a vanishing of Hom spaces
\[
\Hom(\cO_{L}(i,j), \cO_{L}(i^{\pr},j^{\pr})) = 0 \ \ \ \ \text{if } i-j \not= i^{\pr} - j^{\pr},
\]
so we get a splitting $\wh{\cE}|_{L} = \oplus_{k}(\wh{\cE}|_{L})_{k}$, where
\[
(\wh{\cE}|_{L})_{k} = \bigoplus_{i-j = k} \cO_{L}(i,j)^{m_{ij}}
\]
with the induced differential.

Now if $|k| > \lfloor n/2 \rfloor$, then $(\wh{\cE}|_{L})_{k}|_{0}$ vanishes, since $\wh{\cE} \in \cW_{-, \res}$.
The support of the complex $(\wh{\cE}|_{L})_{k}$ is a closed, $T$-invariant subset of $L$.
Since it does not contain $0$ it is empty, and therefore $(\wh{\cE}|_{L})_{k} \cong 0$.

Let $p \in \PP(L)$, and recall that for $\cF \in D^{b}_{\CC^{*}}(p)$, we write $\cF_{k}$ for the projection of $\cF$ to $\langle\cO_{p}(k) \rangle$.
For $p \in \PP(L)$ we have in $D_{\CC^{*}}^{b}(p)$
\[
(\cE|_{p})_{k} \cong ((\wh{\cE}|_{L})_{k})|_{p} \cong 0,
\]
and therefore $\cE \in D(\cZ_{-},W)_{\res}$.

Assume now that $\wh{\cE} \not \in \cW_{-,\res}$.
Then there is a $k$ with $|k| > \lfloor n/2 \rfloor$ such that $(\wh{\cE}|_{L})_{k}|_{0} \not \cong 0$.
Let $T_{d}$ be the diagonal subtorus of $T$.
Then $T_{d}$ acts with weight 2 on $L$.
As a $T_{d}$-equivariant complex, the object $(\cE|_{L})_{k}$ is such that after restriction to $0 \in L$, it has weights in $[0, 2l-1]$.

Applying \cite[3.29]{halpern-leistner_derived_2015}, we see that such a complex is acyclic on $L \setminus 0$ if and only if it is acyclic on $L$, and hence $(\wh{\cE}|_{k})_{L\setminus 0} \not \cong 0$.
This means that there is a $p \in \PP(L)$ and a $k$ with $|k| > \lfloor n/2\rfloor$ such that $(\cE|_{p})_{k} \cong (\wh{\cE}|_{L})_{k}|_{p} \not \cong 0$, and so $\cE \not \in D^{b}(\cZ_{-},W)_{\res}$.
\end{proof}

\subsection{Essential surjectivity}
\label{sec:EssentialSurjectivity}
The goal of this section is to prove that $\Phi_{+} : \cW_{+} \to D^{b}(\cX_{+},W)$ is essentially surjective, which follows from Lemma \ref{thm:restrictionIsSurjective} and Proposition \ref{thm:NewWindowsAndKernelsGenerate}.

\begin{nlemma}
\label{thm:restrictionIsSurjective}
Let $(X/G, W)$ be a gauged LG model, and let $U \subset X$ be a $G \times \CC^{*}_{R}$-invariant open subvariety.
The restriction map $D^{b}(X/G,W) \to D^{b}(U/G,W)$ is essentially surjective.
\end{nlemma}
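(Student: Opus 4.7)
The plan is to extend a representative factorisation on $U$ to a coherent factorisation on $X$ by pushing forward along the open immersion $j : U \hookrightarrow X$ and then cutting down to a coherent subobject which remains closed under the differential.

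First I would take $\cE \in D^{b}(U/G, W)$ and, by the proposition cited just before this lemma, represent it by a finite rank locally free factorisation $(E,d)$, where $E$ is a $G \times \CC_{R}^{*}$-equivariant locally free sheaf on $U$ and $d : E \to E[1]$ satisfies $d^{2} = W \cdot \id$. The task is to construct a coherent factorisation $(\tilde E, \tilde d)$ on $X$ with $j^{*}(\tilde E, \tilde d) \cong (E, d)$.

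The natural starting point is the quasi-coherent pushforward $j_{*}E$ equipped with its inherited differential $j_{*}d$: this is an equivariant quasi-coherent factorisation on $X$, and restricts to $(E,d)$ because $j^{*} j_{*} = \id$ for an open immersion. The obstacle, already flagged in the excerpt, is that $j_{*}E$ is typically not coherent, so $Rj_{*}$ does not directly land in $D^{b}$. To cut down, I would invoke the standard Thomason-type fact --- valid since $X$ is quasi-projective and $G$ is reductive --- that every $G \times \CC_{R}^{*}$-equivariant quasi-coherent sheaf is the filtered union of its equivariant coherent subsheaves. Writing $j_{*}E = \bigcup_{\alpha} F_{\alpha}$ accordingly and restricting to $U$ gives $E = \bigcup_{\alpha} j^{*}F_{\alpha}$; since $E$ is Noetherian, this ascending system stabilises, yielding some coherent $\tilde E_{0} := F_{\alpha}$ with $j^{*}\tilde E_{0} = E$.

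Finally, $\tilde E_{0}$ need not be preserved by $j_{*}d$, so I would replace it by the coherent equivariant subsheaf $\tilde E := \tilde E_{0} + (j_{*}d)(\tilde E_{0}) \subseteq j_{*}E$. This still satisfies $j^{*}\tilde E = E$, and
\[
(j_{*}d)(\tilde E) \subseteq (j_{*}d)(\tilde E_{0}) + (j_{*}d)^{2}(\tilde E_{0}) = (j_{*}d)(\tilde E_{0}) + W \cdot \tilde E_{0} \subseteq \tilde E,
\]
using $d^{2} = W$ and the fact that $\tilde E_{0}$ is an $\cO_{X}$-submodule of $j_{*}E$, so $W \cdot \tilde E_{0} \subseteq \tilde E_{0}$. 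Hence $(\tilde E, j_{*}d|_{\tilde E})$ is a coherent factorisation on $X$ whose restriction to $U$ is $(E,d)$, providing the required preimage of $\cE$. The one step beyond routine formal manipulation is the equivariant exhaustion result, but it is a standard consequence of reductivity in the quasi-projective setting.
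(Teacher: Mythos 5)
Your proposal is correct and follows essentially the same route as the paper: push forward along the open immersion, write the quasi-coherent pushforward as a filtered union of equivariant coherent subsheaves, use coherence of the restriction to find a coherent subsheaf restricting to the given factorisation, and then enlarge it by the image of the differential, which stays coherent and is closed under $d$ because $d^{2}=W$ acts by multiplication by a function. The initial replacement by a locally free factorisation is harmless but not needed, since the argument works for any coherent representative.
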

\begin{proof}
Let $\ol{G} = G \times \CC^{*}_{R}$, let $j$ be the inclusion $U/\ol{G} \into X/\ol{G}$, and let $\cE$ be a coherent factorisation on $U/\ol{G}$.
The pushforward $j_{*}(\cE)$ is a quasi-coherent factorisation on $X/{G}$.
The underlying ($\ol{G}$-equivariant) sheaf of $j_{*}(\cE)$ is the sum of its coherent subsheaves \cite[15.4]{laumon_champs_2000}.
Since $j^{*}j_{*}(\cE) = \cE$, there is therefore a coherent subsheaf $\cF$ of $j_{*}(\cE)$ such that $j^{*}(\cF) \to j^{*}j_{*}\cE \to \cE$ is an isomorphism.
Let $f$ be the composition $\cF[-1] \into j_{*}\cE[-1] \stackrel{d}{\to} j_{*}\cE$, and let $\ol{\cF} = \cF + \im f \subset j_{*}\cE$.
Then the differential on $\cE$ restricts to a differential on $\ol{\cF}$, and the map $j^{*}\ol{\cF} \to j^{*}j_{*}\cE \to \cE$ is an isomorphism of factorisations.
\end{proof}

Recall that $j_{+}$ denotes the inclusion $\cZ_{+} \into \cZ$.
We write $\ker j_{+}^{*}$ for the full subcategory of $D^{b}(\cZ,W)$ consisting of objects $\cE$ such that $j_{+}^{*}(\cE) \cong 0$.
\newcommand{\NewWindowsAndKernelsGenerate}{The subcategories $\cW_{+}$ and $\ker j_{+}^{*}$ generate $D^{b}(\cZ,W)$ as a triangulated category.}
\begin{nprop}
\label{thm:NewWindowsAndKernelsGenerate}
\NewWindowsAndKernelsGenerate{}
\end{nprop}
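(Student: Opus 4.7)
I plan to show that every equivariant line bundle $\cO_\cZ(\rho)$ (for $\rho$ a $G$-character) lies in the triangulated subcategory $\langle \cW_+, \ker j_+^* \rangle$. Since $Z$ is affine and $G$ is reductive, every coherent factorisation on $\cZ$ admits a finite resolution by locally free factorisations whose underlying sheaves are direct sums of such line bundles. So it suffices to treat each $\cO_\cZ(\rho)$ individually.

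The geometric input will be a pair of Koszul-type matrix factorisations associated to the two components $S_1 = \{v_1 = 0\}$ and $S_2 = \{v_2 = 0\}$ of $Z_+^{\us}$. Since $W(v_1, v_2, l) = l(v_1, v_2)$ is bilinear in $(v_1, v_2)$, $W$ vanishes identically on each $S_i$, so the structure sheaf $\cO_{S_i}$ with trivial differential is a factorisation lying in $\ker j_+^*$. Writing $W = \sum_\alpha x^\alpha w_\alpha$ in coordinates on $V_1$, Lemma \ref{thm:Resolutions} produces a matrix factorisation
\[
\cK_1 = \bigl[\wedge^n V \otimes \cO_\cZ(-n,0) \mfarrows{d_r}{d_l} \cdots \mfarrows{d_r}{d_l} V \otimes \cO_\cZ(-1,0) \mfarrows{d_r}{d_l} \cO_\cZ\bigr]
\]
quasi-isomorphic to $\cO_{S_1}$, and similarly a $\cK_2$ resolving $\cO_{S_2}$ with terms $\wedge^k V \otimes \cO_\cZ(0,-k)$. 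Twisting $\cK_1$ by $\cO_\cZ(a, b)$, the associated exact triangle viewed modulo $\ker j_+^*$ expresses $\cO_\cZ(a, b)$ as an iterated cone of $\wedge^k V \otimes \cO_\cZ(a - k, b)$ for $k = 1, \ldots, n$. Solving instead for the top term gives a relation shifting in the $+a$-direction. The construction with $\cK_2$ yields analogous shifts in the $b$-coordinate. Appropriate $\ZZ_2$-symmetrisations upgrade these relations to the $G$-equivariant setting.

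The remaining step is to iterate these Koszul shifts to express any line bundle $\cO_\cZ(\rho)$, modulo $\ker j_+^*$, as an iterated cone of line bundles with weights in $S_+$. I plan to argue this by induction on a suitable measure of distance from $(a, b)$ to $S_+$: if $|a - b|$ exceeds $\lfloor n/2 \rfloor$, a $\cK_1$-shift (when $a - b > 0$) or $\cK_2$-shift (when $a - b < 0$) produces $n$ new weights each strictly closer to $S_+$ in the $|a-b|$ direction, since along a $\cK_i$-diagonal the $|a-b|$-coordinate moves monotonically; if instead only $a + b$ is outside $[0, 2n-1]$, shifting by $\cK_1$ or $\cK_2$ in the right direction decreases this violation. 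After finitely many steps all weights produced lie in $S_+$.

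The main obstacle is the combinatorial verification in this last step. The shape of $S_+$, particularly the constraint $|a-b| \le \lfloor n/2 \rfloor$ and the asymmetric form in the even-$n$ case, must be shown to precisely match what the Koszul shifts of length $n+1$ permit: each shift operation covers a diagonal of length $n+1$ in the $(a+b, a-b)$ plane, and one must verify that every weight outside $S_+$ lies on such a diagonal whose other $n$ terms can all be chosen to lie in $S_+$ (or closer to it). This requires case analysis based on which boundary of $S_+$ the weight $(a, b)$ crosses, together with care that the $\ZZ_2$-averaging never produces a new weight outside $S_+$.
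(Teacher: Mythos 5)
There is a genuine gap, and it occurs already at your first reduction. An equivariant line bundle $\cO_{\cZ}(\rho)$ equipped with the trivial (or indeed any) differential is not in general an object of $D^{b}(\cZ,W)$: a factorisation structure on a single line bundle requires $d^{2} = W\cdot\id$, which is impossible here since $W \neq 0$ on $\cZ$. So the statement ``it suffices to treat each $\cO_{\cZ}(\rho)$ individually'' does not typecheck -- these are not objects of the category you are trying to generate. Nor can you filter a locally free factorisation by its line-bundle summands: the differential has components in both directions among the summands (the $d_{r}$ and $d_{l}$ arrows), so there is no filtration with associated graded pieces $\cO_{\cZ}(\rho)$, and consequently the later manipulation ``the exact triangle viewed modulo $\ker j_{+}^{*}$ expresses $\cO_{\cZ}(a,b)$ as an iterated cone of $\wedge^{k}V \otimes \cO_{\cZ}(a-k,b)$'' is not a statement about objects of $D^{b}(\cZ,W)$ at all. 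The only places where ``sheaf with zero differential'' is legitimate are loci where $W$ vanishes, such as the unstable components $S_{1}, S_{2}$; but that gives you objects of $\ker j_{+}^{*}$, not the line bundles you want to shift.

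The paper's proof avoids this by never leaving the ambient object: for an arbitrary $\cE \in D^{b}(\cZ,W)$ it restricts to the unstable stratum $X = V \times 0 \times L$ (where $W|_{X}=0$, so weight truncations with zero differential are honest factorisations), uses the weight-truncation semiorthogonal decomposition of $D^{b}_{T}(X)$ (Lemma \ref{thm:SemiOrthogonalDecompositionZ1}) to form $i_{*}\bigl((\cE|_{X})_{\underline{S}}\bigr)$ and its $\ZZ_{2}$-symmetrisation -- an object of $\ker j_{+}^{*}$ built from $\cE$ itself -- and then controls the weights of the cone via Lemmas \ref{thm:WeightsOfRestrictions}, \ref{thm:WeightOfConeTCase}, \ref{thm:WeightsOfG} and Proposition \ref{thm:ModifyingWeightsOurWay}, after first bounding $i+j$ in $[0,2n-1]$ by the result of Ballard--Favero--Katzarkov applied to the stratum $0\times 0\times L$. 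Your Koszul ``diagonal shift'' intuition does correspond to the weight bookkeeping in Lemma \ref{thm:WeightsOfRestrictions} (the Koszul resolution of $\cO_{X}$ shifts weights by $(0,-i)$, $0\le i\le n$), but without the mechanism of coning off $\cE$-dependent objects supported on the unstable locus it cannot be made into an argument; moreover the combinatorial step you defer (which diagonals can be traded for weights in $S_{+}$, and compatibility with the $\ZZ_{2}$-symmetrisation) is exactly where the paper's notions of minimal/maximal and \emph{good} weight subsets, and the two-stage procedure of first bounding $i+j$ and then shrinking the width $|i-j|$, do the real work.
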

The proof of this proposition occupies the rest of Section \ref{sec:EssentialSurjectivity}; the main idea is the following.
If $\cE \in D^{b}(\cZ,W)$ and the weights of $\cE$ are not contained in $S_{+}$, then we can construct a factorisation $\cG$ supported on $\cZ \setminus \cZ_{+}$ (so that $\cG \in \ker j_{+}^{*}$), which admits a map to or from $\cE$ with cone $\cF$.
Setting up this correctly, we can ensure that the weights of $\cF$ are closer to being contained in $S_{+}$ than those of $\cE$, and by repeatedly replacing $\cE$ with $\cF$ we eventually arrive at an $\cE$ whose weights are in $S_{+}$.

\subsubsection{Affine spaces with torus actions}
Let $X = \Spec \CC[x_{1}, \ldots, x_{n}]$, and let $T$ be a $k$-dimensional torus acting linearly on $X$.
Assume that the action is such that $\Gamma(X,\cO_{X})^{T} = \CC$.
We will need some simple lemmas about $D_{T}^{b}(X)$.

Let $\rho_{i} \in \chi(T)$ be the character such that $x_{i}$ is a $T$-invariant section of $\cO_{X}(\rho_{i})$.
Define a partial ordering on $\ZZ^{k} = \chi(T)$ by saying $\rho \le \rho^{\pr}$ if there exist $i_{1}, \ldots, i_{k} \ge 0$ such that $\rho^{\pr} = \rho + \sum i_{j}\rho_{k}$.
This is equivalent to saying that $\rho \le \rho^{\pr}$ if and only if $\Hom_{X}(\cO(\rho), \cO(\rho^{\pr}))^{T} \not= 0$.

We say a triangulated category $\cC$ is generated by a set $S \subset Ob(\cC)$ if $\cC$ is the smallest full triangulated subcategory containing all objects in $S$.
\begin{nlemma}
\label{thm:GeneratorsOfDY}
The category $D^{b}_{T}(X)$ is generated by $\{\cO_{X}(\rho)\}_{\rho \in \chi(T)}$, and for any $n \in \ZZ$ we have $\Hom(\cO_{X}(\rho), \cO_{X}(\rho^{\prime})[n]) = 0$ unless $\rho \le \rho^{\pr}$.
\end{nlemma}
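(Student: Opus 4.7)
The Hom vanishing is a cohomology computation: since $X$ is affine and $\cO_X(\rho)$ is a line bundle, $\RHom_{D^b_T(X)}(\cO_X(\rho),\cO_X(\rho'))$ is concentrated in degree zero and equals the $T$-invariant part $\Gamma(X,\cO_X(\rho'-\rho))^T$. This space is spanned by monomials $x_1^{a_1}\cdots x_n^{a_n}$ of $T$-weight $\rho'-\rho$ with $a_i \geq 0$, which exist if and only if $\rho'-\rho$ is a nonnegative integer combination of the $\rho_i$, i.e.\ iff $\rho \le \rho'$. This handles all $n \in \ZZ$ at once: positive $n$ give no Ext on affine $X$, negative $n$ give zero for coherent sheaves, and $n=0$ gives the monomial count above.

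For the generation statement, the plan is to show that every coherent equivariant sheaf $\cF$ admits a finite equivariant resolution by direct sums of characters. Pick finitely many homogeneous $T$-equivariant generators of the graded $\CC[x_1,\ldots,x_n]$-module $\Gamma(X,\cF)$; this produces an equivariant surjection $\bigoplus_i \cO_X(\rho_i) \twoheadrightarrow \cF$. Iterating the construction on the kernel yields a resolution, and Hilbert's syzygy theorem applied to $\CC[x_1,\ldots,x_n]$ guarantees that after at most $n$ steps the syzygy sheaf is locally free. By Masuda's theorem \cite{masuda_equivariant_1996}, any $T$-equivariant vector bundle on $X$ splits as a direct sum of characters $\cO_X(\rho)$ (this uses the hypothesis $\Gamma(X,\cO_X)^T = \CC$). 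Hence $\cF$ lies in the triangulated subcategory generated by $\{\cO_X(\rho)\}_{\rho\in\chi(T)}$, and since every object of $D^b_T(X)$ is represented by a bounded complex of such sheaves, the conclusion follows for every object.

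The only nontrivial input is Masuda's equivariant splitting theorem, the same ingredient invoked in the proof of Corollary \ref{thm:GITCorollary}. Everything else is routine: the cohomology computation for Hom vanishing uses nothing beyond affineness, and the generation argument is a standard equivariant variant of the syzygy resolution.
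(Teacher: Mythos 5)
Your proof is correct and follows essentially the same route as the paper: the Hom vanishing is the identical computation (affineness plus reductivity gives $\Hom_{X/T}(\cO(\rho),\cO(\rho')[n]) = \Ext^n_X(\cO(\rho),\cO(\rho'))^T$, which vanishes for $n\neq 0$ and for $n=0$ is spanned by monomials, nonzero only when $\rho\le\rho'$), and generation again comes down to Masuda's splitting of equivariant bundles \cite{masuda_equivariant_1996}. The only difference is that you build the finite equivariant locally free resolution by hand via graded syzygies, where the paper simply cites \cite[2.2.10]{ballard_variation_2012}; this is fine (and note, as a small aside, that Masuda's theorem is about equivariant bundles on a representation and does not itself need the hypothesis $\Gamma(X,\cO_X)^T=\CC$, which the paper uses elsewhere).
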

\begin{proof}
By \cite[2.2.10]{ballard_variation_2012}, any $T$-equivariant complex of sheaves on $X$ is isomorphic to a complex of locally free $T$-equivariant sheaves.
By \cite{masuda_equivariant_1996}, any $T$-equivariant locally free sheaf on $X$ decomposes as a direct sum of copies of $\cO_{X}(\rho)$.
This proves generation.

For the second statement, note that as $T$ is reductive, we have
\[
\Hom_{X/T}(\cO(\rho), \cO(\rho^{\pr})[n]) = \Ext^{n}_{X}(\cO(\rho), \cO(\rho^{\pr}))^{T}
\] 
As $X$ is affine and $\cO(\rho)$ locally free, this vanishes if $n \not= 0$, and the claim follows.
\end{proof}

Let $\cE \in D_{T}^{b}(X)$.
We let the weights of $\cE$, denoted $\wt(\cE)$, be the set of $\rho \in \chi(T)$ such that $(\cE|_{0})_{\rho} \not= 0$, where the subscript $\rho$ denotes the projection to the subcategory $\langle \cO(\rho) \rangle \subset D^{b}_{T}(0)$.

We say $S \subset \wt(\cE)$ is a maximal set if no element of $S$ is smaller than an element of $\wt (\cE) \setminus S$, and we say $S \subset \wt(\cE)$ is a minimal set if no element of $S$ is bigger than an element of $\wt (\cE) \setminus S$. 

For any $S \subset \chi(T)$, we let $D_{T}^{b}(X)_{S} \subset D_{T}^{b}(X)$ be the full subcategory of objects $\cE$ with weights in $S$.

\begin{nlemma}
\label{thm:VanishingAtOrigin}
If $\cE \in D^{b}_{T}(X)$ is such that $\cE|_{0} = 0 \in D^{b}_{T}(0)$, then $\cE = 0$.
\end{nlemma}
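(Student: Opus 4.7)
My plan is to argue by contradiction: assume $\cE \neq 0$ and produce a nonzero cohomology class in $\cE|_0$. The geometric input I would extract from the hypothesis $\Gamma(X, \cO_X)^T = \CC$ is that every nonempty $T$-invariant closed subscheme of $X$ contains the origin. To see this, note that a $T$-invariant ideal $I \subset \CC[x_1, \ldots, x_n]$ is graded by the character lattice of $T$ and hence generated by $T$-weight vectors. If $I$ is not contained in the maximal ideal $\mathfrak{m}_0$, then it contains some weight vector $f$ with $f(0) \neq 0$; but then $f$ must have weight $0$, so by hypothesis lies in $\CC$ and is a nonzero constant, forcing $I = (1)$.

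Granted this, assume $\cE \neq 0$ and let $i$ be the largest integer with $\cH^i(\cE) \neq 0$. The support of $\cH^i(\cE)$ is a nonempty $T$-invariant closed subset of $X$ and so contains $0$; equivalently, the fibre $\cH^i(\cE) \otimes_{\cO_X} \cO_0$ is nonzero.

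I would then invoke the hyper-Tor spectral sequence
\[
E_2^{p, q} = \mathrm{Tor}^{\cO_X}_{-p}(\cO_0, \cH^q(\cE)) \Rightarrow H^{p+q}(\cE \otimes^L_{\cO_X} \cO_0) = H^{p+q}(\cE|_0),
\]
whose abutment vanishes by assumption. The entry $E_2^{0, i}$ equals $\cH^i(\cE) \otimes \cO_0$ and is nonzero by the previous paragraph. For $r \geq 2$, any outgoing differential from $E_r^{0, i}$ lands in a $\mathrm{Tor}_{-r}$-term and hence vanishes; any incoming differential to $E_r^{0, i}$ originates from a term involving $\cH^{i + r - 1}(\cE)$, which is zero by maximality of $i$. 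Hence $E_\infty^{0, i} = E_2^{0, i} \neq 0$, forcing $H^i(\cE|_0) \neq 0$ and contradicting the hypothesis. The only subtle step is verifying the claim about $T$-invariant ideals in the first paragraph; the spectral sequence computation is essentially formal once the maximality of $i$ is chosen.
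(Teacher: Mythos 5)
Your proof is correct and follows essentially the same route as the paper: the key point in both is that $\Gamma(X,\cO_X)^T = \CC$ forces every nonempty closed $T$-invariant subset (in particular the support of the top nonvanishing cohomology sheaf) to contain the origin, which is incompatible with the derived fibre at $0$ vanishing. You have merely made explicit two steps the paper leaves implicit, namely the graded-ideal argument for the support statement and the Nakayama/hyper-Tor corner argument identifying the top cohomology of $\cE|_0$ with $\cH^i(\cE)\otimes\cO_0$.
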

\begin{proof}
The support of $\cE$ is a closed $T$-invariant subset of $Y$.
Assume that $\cE|_{0} = 0$.
Then the support of $\cE$ does not intersect $0$.
The assumption $\Gamma(X,\cO_{X})^{T} = \CC$ implies that the closure of every $T$-orbit intersects $0$, hence the support of $\cE$ is empty and so $\cE = 0$.
\end{proof}

For any $S \subset \chi(T)$, we let $\overline{S}$ be the set of all $\rho$ such that $\rho \ge s$ for some $s \in S$, and let $\underline{S}$ be the set of $\rho$ such that $\rho \le s$ for some $s \in S$.
\begin{nlemma}
\label{thm:SemiOrthogonalDecompositionZ1}
For any $S \subset \chi(T)$, there are semiorthogonal decompositions
\[
D^{b}_{T}(X) = \langle D^{b}_T(X)_{\chi(T) \setminus \ol{S}}, D^{b}_{T}(X)_{\ol{S}} \rangle.
\]
and
\[
D^{b}_{T}(X) = \langle D^{b}_T(X)_{\underline{S}}, D^{b}_{T}(X)_{\chi(T) \setminus \underline{S}} \rangle.
\]
\end{nlemma}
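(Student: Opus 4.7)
The plan is to handle both claims by first verifying the semiorthogonality axiom on generators using Lemma~\ref{thm:GeneratorsOfDY}, and then producing the required decomposition triangles by explicitly splitting a $T$-equivariant locally free representative of an arbitrary object.

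For semiorthogonality of the first decomposition, let $\rho \in \ol{S}$ and $\rho' \in \chi(T) \setminus \ol{S}$. By Lemma~\ref{thm:GeneratorsOfDY} the group $\Hom(\cO_{X}(\rho), \cO_{X}(\rho')[n])$ is zero unless $\rho \le \rho'$; but $\rho \ge s$ for some $s \in S$ together with $\rho \le \rho'$ forces $\rho' \ge s$, hence $\rho' \in \ol{S}$, a contradiction. The symmetric argument, using that $\underline{S}$ is downward closed, handles the case $\rho \in \chi(T) \setminus \underline{S}$, $\rho' \in \underline{S}$ in the second decomposition. Since each of the relevant subcategories is generated by the corresponding $\cO_{X}(\rho)$-objects, a standard dévissage extends these vanishings to the full categories $D^{b}_{T}(X)_{\ol{S}}$, $D^{b}_{T}(X)_{\chi(T) \setminus \ol{S}}$ and their $\underline{S}$-analogues.

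For the generation step, I take any $\cE \in D^{b}_{T}(X)$ and represent it by a bounded complex $\cF^{\bullet}$ of $T$-equivariant locally free sheaves (existing by the argument in the proof of Lemma~\ref{thm:GeneratorsOfDY}). By \cite{masuda_equivariant_1996} each term splits canonically as $\cF^{i} = \bigoplus_{\rho} \cO_{X}(\rho)^{\oplus m_{i,\rho}}$, so for the first decomposition I partition the summands to get $\cF^{i} = \cF^{i}_{\ol{S}} \oplus \cF^{i}_{\mathrm{out}}$ according to whether $\rho \in \ol{S}$. The component of the differential from $\cF^{i}_{\ol{S}}$ into $\cF^{i+1}_{\mathrm{out}}$ consists of $T$-invariant maps $\cO_{X}(\rho) \to \cO_{X}(\rho')$ with $\rho \in \ol{S}$ and $\rho' \notin \ol{S}$, which must vanish by exactly the same weight comparison as above. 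Thus $\cF^{\bullet}_{\ol{S}}$ is a subcomplex of $\cF^{\bullet}$, and the induced short exact sequence yields a triangle $\cF^{\bullet}_{\ol{S}} \to \cE \to \cF^{\bullet}_{\mathrm{out}}$, which is precisely the decomposition triangle for $\langle D^{b}_{T}(X)_{\chi(T) \setminus \ol{S}}, D^{b}_{T}(X)_{\ol{S}} \rangle$. The second decomposition is proved identically: the vanishing now rules out maps $\cO_{X}(\rho) \to \cO_{X}(\rho')$ with $\rho \notin \underline{S}$ and $\rho' \in \underline{S}$, so this time the complementary summand $\cF^{\bullet}_{\chi(T) \setminus \underline{S}}$ is the subcomplex, giving the triangle $\cF^{\bullet}_{\chi(T) \setminus \underline{S}} \to \cE \to \cF^{\bullet}_{\underline{S}}$.

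The only remaining verification is that the two pieces in each triangle have weights at $0$ contained in the claimed subsets of $\chi(T)$; this is immediate from the construction, since the restriction of $\cF^{\bullet}_{\ol{S}}$ to the origin is a complex of $T$-representations supported in weights from $\ol{S}$, and the same for the other pieces. I do not anticipate any real obstacle: the argument is simply the adaptation, to the partially ordered set $\chi(T)$ with the ordering induced by the $T$-action on $X$, of the standard construction of semiorthogonal decompositions from a staircase of generators.
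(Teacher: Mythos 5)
The half of your argument that produces the decomposition triangles is exactly the paper's construction (split a locally free representative by characters, observe the relevant piece is a subcomplex because the cross maps are forced to vanish), and that part is fine. The gap is in the semiorthogonality step, where you write that ``each of the relevant subcategories is generated by the corresponding $\cO_{X}(\rho)$-objects'' and then invoke dévissage. This generation claim is not standard and not automatic: $D^{b}_{T}(X)_{\ol{S}}$ is defined by a condition on the derived restriction $\cE|_{0}$, not by a condition on the summands of a chosen locally free representative, so a priori an object with weights in $\ol{S}$ could only be presentable by complexes containing summands $\cO_{X}(\rho')$ with $\rho' \notin \ol{S}$ whose contribution disappears only after restricting to the origin. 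To justify generation you must show that for $\cE \in D^{b}_{T}(X)_{\ol{S}}$ the quotient piece $\cF^{\bullet}_{\mathrm{out}}$ of your own triangle is actually zero: its restriction to $0$ vanishes by the weight bookkeeping, and you then need the implication ``$\cG|_{0} \cong 0 \Rightarrow \cG \cong 0$''. That implication is precisely Lemma \ref{thm:VanishingAtOrigin}, and it is where the standing hypothesis $\Gamma(X,\cO_{X})^{T} = \CC$ enters (the closure of every $T$-orbit meets the origin, so a complex with vanishing fibre at $0$ has empty support).

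This is a genuine gap rather than a cosmetic one, because your proof never uses the hypothesis $\Gamma(X,\cO_{X})^{T} = \CC$, and the lemma is false without it: for a trivial $T$-action a skyscraper sheaf supported away from the origin has empty weight set, hence lies in both $D^{b}_{T}(X)_{\ol{S}}$ and $D^{b}_{T}(X)_{\chi(T)\setminus\ol{S}}$, and its nonzero endomorphisms destroy semiorthogonality even though your decomposition triangles still exist. Once you insert the missing step — apply your splitting to $\cE \in D^{b}_{T}(X)_{\ol{S}}$ (resp.\ $\cF \in D^{b}_{T}(X)_{\chi(T)\setminus\ol{S}}$), kill the complementary piece by Lemma \ref{thm:VanishingAtOrigin}, and only then conclude $\Hom(\cE,\cF) = \Hom(\cE_{\ol{S}}, \cF_{\chi(T)\setminus\ol{S}}) = 0$ via Lemma \ref{thm:GeneratorsOfDY} — your argument coincides with the paper's proof.
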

\begin{proof}
We prove the first claim; the second is proved in the same way.
We work in the homotopy category $\cC$ of bounded locally free complexes on $X/T$, which is equivalent to $D^{b}_{T}(X)$.
By \cite{masuda_equivariant_1996}, any locally free sheaf is a direct sum of sheaves of the form $\cO(\rho)$.

For $\cE \in \cC$, let $\cE_{\ol{S}}$ be the subcomplex consisting of those $\cO(\rho)$ with $\rho \in \ol{S}$.
The fact that this is indeed a subcomplex, i.e.\ that $d(\cE_{\ol{S}}) \subset \cE_{\ol{S}}$, follows from the fact that any map
\[
\cO_{X}(\rho) \to \cO_{X}(\rho^{\pr})
\]
with $\rho \in \ol{S}$ and $\rho^{\pr} \not \in \ol{S}$ must be trivial.
The operation $\cE \mapsto \cE_{\ol{S}}$ is functorial.

Now for any $\cE \in \cC$ we have a functorial short exact sequence
\[
0 \to \cE_{\ol{S}} \to \cE \to \cE_{\chi(T) \setminus \ol{S}} \to 0,
\]
where $\cE_{\ol{S}} \in D^{b}_{T}(X)_{\ol{S}}$ and $\cE_{\chi(T) \setminus \ol{S}} \in D^{b}_{T}(X)_{\chi(T) \setminus \ol{S}}$.

If $\cE \in D^{b}_{T}(X)_{\ol{S}}$, then $\cE_{\chi(T) \setminus \ol{S}}|_{0} \cong 0$, and therefore $\cE_{\chi(T) \setminus \ol{S}} \cong 0$ by Lemma \ref{thm:VanishingAtOrigin}.
It follows that $\cE \cong \cE_{\ol{S}}$.
Similarly, if $\cF \in D^{b}_{T}(X)_{\chi(T) \setminus \ol{S}}$, then $\cF \cong \cF_{\chi(T) \setminus \ol{S}}$.
It now follows from Lemma \ref{thm:GeneratorsOfDY} that $\Hom(\cE, \cF) = \Hom(\cE_{\ol{S}}, \cF_{\chi(T) \setminus \ol{S}}) = 0$, which is what we needed.
\end{proof}
\begin{nlemma}
\label{thm:MaximalsGiveMaps}
Let $\cE \in D_{T}^{b}(X)$. If $\rho \in \wt (\cE)$ is maximal, then 
\[
\RHom(\cO_{X}(\rho), \cE) \not= 0.
\]
\end{nlemma}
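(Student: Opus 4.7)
The plan is to apply Lemma \ref{thm:SemiOrthogonalDecompositionZ1} with $S = \{\rho\}$ in order to isolate the part of $\cE$ relevant to mapping in from $\cO_{X}(\rho)$. That lemma supplies a distinguished triangle $\cE_{\ol{S}} \to \cE \to \cE_{\chi(T) \setminus \ol{S}}$. Since $\cO_{X}(\rho)$ lies in $D^{b}_{T}(X)_{\ol{S}}$ and the decomposition is semiorthogonal, one gets $\RHom(\cO(\rho), \cE_{\chi(T) \setminus \ol{S}}) = 0$, and hence an isomorphism of graded vector spaces $\RHom(\cO(\rho), \cE) \cong \RHom(\cO(\rho), \cE_{\ol{S}})$. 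It therefore suffices to show that the right-hand side is nonzero.

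Next I would compute $\wt(\cE_{\ol{S}})$. Restricting the triangle to $0 \in X$ and projecting onto each weight space in $D^{b}_{T}(0) = \bigoplus_{\rho'} \langle \cO(\rho') \rangle$, one has $(\cE_{\ol{S}}|_{0})_{\rho'} \cong (\cE|_{0})_{\rho'}$ for $\rho' \in \ol{S}$ (because $\cE_{\chi(T) \setminus \ol{S}}$ has no weight in $\ol{S}$), while $(\cE_{\ol{S}}|_{0})_{\rho'} = 0$ for $\rho' \notin \ol{S}$ (because $\cE_{\ol{S}}$ has no weight outside $\ol{S}$). Since $\rho$ is maximal in $\wt(\cE)$, every $\rho' \in \ol{S} \setminus \{\rho\}$ satisfies $\rho' > \rho$ and therefore lies outside $\wt(\cE)$, so the only surviving weight is $\rho$, giving $\wt(\cE_{\ol{S}}) = \{\rho\}$ and $(\cE_{\ol{S}}|_{0})_{\rho} \cong (\cE|_{0})_{\rho} \neq 0$.

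Finally, I would appeal to a minimal $T$-equivariant locally free representative of $\cE_{\ol{S}}$, of the form $\bigoplus_{\alpha, n} \cO(\rho_{\alpha}^{n})^{m_{\alpha}^{n}}[-n]$ with all differentials valued in the maximal ideal at the origin. Such a representative exists by Masuda's splitting theorem combined with the standard argument for removing acyclic direct summands. Minimality forces $\cE_{\ol{S}}|_{0}$ to have differential zero, so its weights at the origin coincide with the set of $\rho_{\alpha}^{n}$ appearing with $m_{\alpha}^{n} > 0$; by the previous paragraph these are all equal to $\rho$. But then every differential is a $T$-equivariant map $\cO(\rho) \to \cO(\rho)$ in the maximal ideal, i.e., a weight-zero polynomial with no constant term, and so vanishes. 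We conclude $\cE_{\ol{S}} \cong \bigoplus_{n} \cO(\rho)^{m_{n}}[-n]$ in $D^{b}_{T}(X)$ with at least one $m_{n} > 0$, yielding $\RHom(\cO(\rho), \cE_{\ol{S}}) = \bigoplus_{n} \CC^{m_{n}}[-n] \neq 0$.

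The only mildly delicate point is justifying that a minimal equivariant locally free representative in the sense above exists; this is standard but requires a short Gaussian-elimination argument removing weight-zero constant entries of the differential, using that $\Gamma(X, \cO_{X})^{T} = \CC$ so that weight-zero polynomials are precisely constants.
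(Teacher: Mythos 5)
Your proof is correct, and its first half is the paper's: both reduce, via the semiorthogonal decomposition of Lemma \ref{thm:SemiOrthogonalDecompositionZ1} applied to $\ol{\{\rho\}}$ and the weight bookkeeping at the origin (using maximality of $\rho$), to studying a piece of $\cE$ whose only weight is $\rho$, and both exploit $\Gamma(X,\cO_{X})^{T} = \CC$ to force differentials between copies of $\cO(\rho)$ to be constant. Where you diverge is the extraction of that piece: the paper applies the \emph{second} projection of Lemma \ref{thm:SemiOrthogonalDecompositionZ1} to get $\cE_{\ol{\rho}} \to (\cE_{\ol{\rho}})_{\underline{\rho}}$ and shows it is an isomorphism because the kernel has empty weight set (Lemma \ref{thm:VanishingAtOrigin}), so that $(\cE_{\ol{\rho}})_{\underline{\rho}}$ is already a constant-differential complex of copies of $\cO(\rho)$; you instead pass to a minimal equivariant locally free representative of $\cE_{\ol{S}}$ (Gaussian elimination of invertible constant entries, which by equivariance can only occur between summands of equal character) and conclude $\cE_{\ol{S}} \cong \bigoplus_{n}\cO(\rho)^{m_{n}}[-n]$ outright. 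Your route costs one extra standard ingredient -- the existence of such a minimal model, which you correctly identify and sketch, and which terminates because the representative has finite total rank -- but it buys a cleaner endgame ($\RHom(\cO(\rho),\cE_{\ol{S}})$ computed explicitly) and it makes explicit the semiorthogonality step $\RHom(\cO(\rho),\cE)\cong\RHom(\cO(\rho),\cE_{\ol{S}})$, which in the paper is left implicit in the assertion that composing with $\cE_{\ol{\rho}}\to\cE$ yields a non-trivial map. Both arguments are complete; the paper's stays strictly within lemmas already proved in the text.
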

\begin{proof}
By the semiorthogonal decomposition of Lemma \ref{thm:SemiOrthogonalDecompositionZ1}, we have a map $\cE_{\ol{\rho}} \to \cE$, where $\cE_{\ol{\rho}}$ is locally free and the underlying sheaf is a direct sum of $\cO_{X}(\rho^{\pr})$ with $\rho^{\pr} \ge \rho$.
Next we claim that the map
\[
\cE_{\ol{\rho}} \to (\cE_{\ol{\rho}})_{\underline{\rho}}
\]
is an isomorphism.
This holds because, by construction, the map is surjective and its kernel has weights in $(\ol{\rho} \setminus \rho) \cap \wt(\cE) = \varnothing$.

Now by construction the underlying sheaf of $(\cE_{\ol{\rho}})_{\underline{\rho}}$ is a direct sum of copies of $\cO(\rho)$ (with cohomological shifts).
Since we assume that $\Gamma(X,\cO_{X})^{T} = \CC$, every differential in the complex $(\cE_{\ol{\rho}})_{\underline{\rho}}$ must be constant on $X$.
As the restriction of the complex to $0$ does not vanish, it follows that there is a non-trivial map $\cO(\rho) \to (\cE_{\ol{\rho}})_{\underline{\rho}} \cong \cE_{\ol{\rho}}$, and composing with $\cE_{\ol{\rho}} \to \cE$ induces a non-trivial map from $\cO(\rho)$ to $\cE$.
\end{proof}

\subsubsection{Constructing objects supported on $Z_{+}^{\us}$}
We now return to our example and apply the above results to $X = V \times 0 \times L$, equipped with the previously described action of $T = (\CC^{*})^{2} \subset G$.
We have $\chi(T) = \ZZ^{2}$, and the partial ordering is given by letting $(i,j) \le (i^{\pr},j^{\pr})$ if there exist $l,m \ge 0$ such that $(i^{\pr},j^{\pr}) = (i + l - m, j -m)$.

We let $i : X \into Z$ be the inclusion, let $\cF \in D^{b}_{T}(X,0)$, and let $S = \wt(\cF)$.
\begin{nlemma}
\label{thm:WeightsOfRestrictions}
The weights of $i_{*}\cF$ are contained in 
\[
\bigcup_{i=0}^{n} S - (0,i).
\]
The weights of the cone over $i^{*}i_{*}\cF \to \cF$ are contained in
\[
\bigcup_{i=1}^{n} S - (0,i).
\]
\end{nlemma}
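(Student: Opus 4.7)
The plan is to deduce both claims from the Koszul resolution of $i_{*}\cO_{X}$ as an $\cO_{Z}$-module. Since $X = V \times 0 \times L$ is cut out in $Z = V \times V \times L$ by the $n$ linear coordinates on the second $V$-factor, and these coordinates have $T$-weight $(0,-1)$ (being linear functions on a representation of weight $(0,1)$), there is a $T$-equivariant Koszul resolution
\[
0 \to \cO_{Z}(0,-n) \to \cdots \to \cO_{Z}(0,-1)^{\oplus n} \to \cO_{Z} \to i_{*}\cO_{X} \to 0,
\]
whose $k$-th term is $\wedge^{k}N^{\vee} \otimes \cO_{Z}$, where the conormal bundle $N^{\vee}$ of $X$ in $Z$ is trivial of rank $n$ with $T$-weight $(0,-1)$, so that $\wedge^{k}N^{\vee}$ has weight $(0,-k)$.

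For the first claim, I would compute $(i_{*}\cF)|_{0} = Li_{Z,0}^{*}(i_{*}\cF)$ by tensoring the Koszul resolution with $\cF$ and restricting to $0$; equivalently, by applying the base-change $Li_{Z,0}^{*} = Li_{X,0}^{*} \circ Li^{*}$ and invoking the self-intersection formula. The resulting complex on the origin has terms of the form $\cF|_{0} \otimes \wedge^{k} N^{\vee}|_{0}$ for $k = 0, \ldots, n$, and each such term contributes weights $\wt(\cF) - (0,k)$. Hence $\wt(i_{*}\cF) \subseteq \bigcup_{k=0}^{n}(S - (0,k))$.

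For the second claim, I interpret $i^{*}$ as $Li^{*}$ (otherwise $i^{*}i_{*}\cF = \cF$ and the cone is zero). From the Koszul resolution the self-intersection formula gives $H^{-k}(Li^{*}i_{*}\cF) \cong \cF \otimes \wedge^{k} N^{\vee}$ for $k = 0, 1, \ldots, n$, with $H^{0}(Li^{*}i_{*}\cF) = \cF$. The counit $Li^{*}i_{*}\cF \to \cF$ is the truncation to $H^{0}$, so its cone has cohomology sheaves $\cF \otimes \wedge^{k} N^{\vee}$ for $k \geq 1$ only. Restricting to $0 \in X$, the weights of the cone are contained in $\bigcup_{k=1}^{n}(S - (0,k))$.

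The only real subtlety is bookkeeping: tracking the sign of the $T$-weight on $N^{\vee}$ (distinguishing $V^{(2)}$ as a representation of weight $(0,1)$ from its dual space of coordinate functions of weight $(0,-1)$), and reading $i^{*}$ in the second claim as the derived functor, which is forced if the statement is to be non-vacuous. Both claims are then immediate consequences of the Koszul/self-intersection formula, so I do not anticipate a serious obstacle beyond these conventions.
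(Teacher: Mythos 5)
Your weight bookkeeping (the conormal weight $(0,-1)$, the shift by $(0,-k)$) and both final bounds are right, and for the first claim your computation is essentially the paper's: the paper makes ``tensor the Koszul resolution with $\cF$'' precise by writing $i_{*}\cF \cong p^{*}\cF \otimes i_{*}\cO_{X}$ via the retraction $p : Z \to X$ (so $\cF = i^{*}p^{*}\cF$), and then reads the bound off the terms. The genuine gap is in the second claim: you are arguing in the ordinary derived category of $Z$, but the lemma lives in $D^{b}_{T}(Z,W)$ with $W \ne 0$. There the Koszul complex on the equations $y_{1},\ldots,y_{n}$ of $X$ is not an admissible resolution of $i_{*}\cO_{X}$; it must be completed to a Koszul factorisation with the extra ``leftward'' arrows given by wedging with the $g_{j}$, where $W = \sum_{j} y_{j}g_{j}$ (as in the proofs of Lemmas \ref{thm:Resolutions} and \ref{thm:FunctorsEquivalent}). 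After restricting to $X$ the contraction arrows vanish (the $y_{j}$ do), but the wedge-with-$g_{j}$ arrows survive, since the $g_{j}$ are nonzero functions on $X = V \times 0 \times L$. Consequently $i^{*}i_{*}\cF$ is not formal, the self-intersection statement $H^{-k}(i^{*}i_{*}\cF) \cong \cF \otimes \wedge^{k}N^{\vee}$ is false in general, and ``the counit is the truncation to $H^{0}$'' is not available: the relevant grading is the $\CC^{*}_{R}$-grading of factorisations and the surviving differential mixes the exterior degrees, so there is no such t-structure argument.

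The conclusion nevertheless holds, and the repair is mild. One option staying close to your route: with the representative $i^{*}(\cK \otimes p^{*}\cF)$, whose underlying object is $\bigoplus_{k}\cF \otimes \wedge^{k}N^{\vee}$ with differential $d_{\cF} \pm g\wedge(-)$, the counit is represented by the projection onto the $\wedge^{0}$-summand; this is a closed, surjective map of factorisations precisely because the contraction components vanish on $X$, so the cone is (a shift of) its kernel, whose underlying terms are $\cF \otimes \wedge^{k}N^{\vee}$ with $k \ge 1$ — and an upper bound on weights needs only the terms, not the cohomology. The paper instead avoids analysing the counit altogether: using $p \circ i = \id_{X}$ and the triangle identity, the cone over the counit $i^{*}i_{*}\cF \to \cF$ agrees up to shift with the cone over the unit $i^{*}\cG \to i^{*}i_{*}i^{*}\cG = i^{*}(\cG \otimes \cO_{X})$ for $\cG = p^{*}\cF$, i.e.\ with $i^{*}(\cG \otimes \cI_{X})[1]$, and the factorisation representing $\cI_{X}$ has weights $(0,-k)$, $1 \le k \le n$. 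Either fix gives the asserted bound, but as written the self-intersection/truncation step is where your proof breaks.
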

\begin{proof}
Let $p$ be the projection $Z \to X$.
Then $p\circ i = \id_{X}$, and so $\cF = i^{*}p^{*}(\cF)$.
It follows that the weights of $p^{*}(\cF)$ are equal to those of $\cF$, hence are also contained in $S$.
We have $i_{*}(\cF) = i_{*}i^{*}p^{*}(\cF) = p^{*}(\cF) \otimes i_{*}(\cO_{X})$, where $p^{*}(\cF) \in D^{b}_{T}(Z,0)$ and $i_{*}(\cO_{X}) \in D^{b}_{T}(Z,W)$.
The weights of $i_{*}(\cO_{X})$ are easy to compute using a Koszul resolution like the one in the proof of Lemma \ref{thm:FunctorsEquivalent}; they are $(0,-i), 0 \le i \le n$.
The first claim follows.

For the second claim, let $\cG = p^{*}\cF \in D^{b}_{T}(Z,0)$, so that we have $i^{*}\cG = \cF$.
The cone is taken over the map $i^{*}i_{*}i^{*}(\cG) \to i^{*}(\cG)$.
Since the composition
\[
i^{*}(\cG) \to i^{*}i_{*}i^{*}(\cG) \to i^{*}(\cG)
\]
is the identity (\cite[p.\ 85]{mac_lane_categories_1998}), the cone is isomorphic up to shift to the cone over $i^{*}(\cG) \to i^{*}i_{*}i^{*}(\cG) = i^{*}(\cG \otimes \cO_{X})$, i.e.\ to $i^{*}(\cG \otimes \cI_{X})[1]$.
The weights of $\cI_{X}$ are easily computed; they are $(0,-i), 1 \le i \le n$.
The second claim follows.
\end{proof}

Let now $\cE \in D^{b}_{T}(Z,W)$, and let $S$ be a minimal subset of $\wt(\cE)$.
\begin{nlemma}
\label{thm:WeightOfConeTCase}
The weights of the cone over the natural map $\cE \to i_{*}((\cE|_{X})_{\underline{S}})$ are contained in
\[
(\wt (\cE) \setminus S) \cup \left (\bigcup_{i = 1}^{n} S - (0,i) \right)
\]
\end{nlemma}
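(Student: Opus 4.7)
My plan is to write the map $\cE \to i_{*}((\cE|_{X})_{\underline{S}})$ as the composition of the adjunction unit $\cE \to i_{*}i^{*}\cE = i_{*}(\cE|_{X})$ with the pushforward of the semiorthogonal projection $\cE|_{X} \to (\cE|_{X})_{\underline{S}}$ coming from Lemma \ref{thm:SemiOrthogonalDecompositionZ1}, and then to bound $\wt(C)$ in two passes: a coarse pass from the distinguished triangle, followed by a refinement using the derived fibre at $0$.

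First I invoke the minimality of $S$ to show $\wt((\cE|_{X})_{\underline{S}}) \subseteq S$. Since $0 \in X$ and $\cE$ may be taken locally free, $\wt(\cE|_{X}) = \wt(\cE)$. Any $\rho \in \wt(\cE)$ with $\rho \le s \in S$ lies in $S$ by minimality, hence $\wt(\cE) \cap \underline{S} = S$, and so $\wt((\cE|_{X})_{\underline{S}}) = \wt(\cE|_{X}) \cap \underline{S} \subseteq S$. Applying Lemma \ref{thm:WeightsOfRestrictions} to $\cF = (\cE|_{X})_{\underline{S}}$ then gives $\wt(i_{*}((\cE|_{X})_{\underline{S}})) \subseteq \bigcup_{i=0}^{n} S - (0,i)$, and combining with the triangle $\cE \to i_{*}((\cE|_{X})_{\underline{S}}) \to C$ yields the preliminary bound $\wt(C) \subseteq \wt(\cE) \cup \bigcup_{i=1}^{n} S - (0,i)$.

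The crux is to sharpen $\wt(\cE)$ to $\wt(\cE) \setminus S$, which amounts to showing $\rho \not\in \wt(C)$ for any $\rho \in S$ with $\rho + (0,i) \not\in S$ for all $i \ge 1$. For this I would use that $i : X \into Z$ is a regular closed immersion whose normal bundle $N$ is trivial of rank $n$ and of $T$-weight $(0,1)$, so that for any $\cF$ on $X$ and $j : \{0\} \into Z$ one has the standard identification $L j^{*} i_{*} \cF \simeq \bigoplus_{k=0}^{n} (\cF|_{0} \otimes \wedge^{k} N^{\vee})[k]$. Taking the weight-$\rho$ part with $\cF = (\cE|_{X})_{\underline{S}}$, the $k \ge 1$ summands require $\rho + (0,k) \in S$ and hence vanish by hypothesis, leaving only the $k = 0$ term, which equals $(\cE|_{0})_{\rho}$ since $\rho \in \underline{S}$. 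The induced map $(\cE|_{0})_{\rho} \to (\cE|_{0})_{\rho}$ is the identity, because the adjunction unit is the identity on the degree-$0$ part of $L j^{*} i_{*} i^{*} \cE$ and the semiorthogonal projection from Lemma \ref{thm:SemiOrthogonalDecompositionZ1} acts as the identity on weights in $\underline{S}$. Therefore $(C|_{0})_{\rho} = 0$, which completes the refinement.

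The main obstacle is the final identification of the composite map on the derived fibre at $0$ and weight $\rho$ as the identity: this requires pairing the Koszul description of the unit $\cE \to i_{*}i^{*}\cE$ (coming from the Koszul resolution of $i_{*}\cO_{X}$ on $Z$) with the observation that at the fibre $0$ the decomposition of Lemma \ref{thm:SemiOrthogonalDecompositionZ1} degenerates into a direct sum by characters of $T$, so each weight component of the map can be tracked in isolation.
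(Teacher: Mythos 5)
Your proposal is correct, but it proves the lemma by a different mechanism than the paper. The paper never identifies any map on a fibre: it pulls the triangle back along $i$, checks via the triangle identities that the projection $f : i^{*}\cE \to (i^{*}\cE)_{\underline{S}}$ factors as the counit-induced map $h$ composed with $g = i^{*}(\text{unit followed by } i_{*}(\mathrm{proj}))$, and then applies the octahedral axiom, so the weight bound falls out of $\wt(C(f)) \subseteq \wt(\cE)\setminus S$ together with the counit-cone estimate of Lemma \ref{thm:WeightsOfRestrictions}. You instead take the coarse bound from the defining triangle and the first half of Lemma \ref{thm:WeightsOfRestrictions}, and then kill the remaining weights $\rho \in S$ one at a time by computing the derived fibre at $0$ through the Koszul splitting $Lj^{*}i_{*}\cF \simeq \oplus_{k}(\cF|_{0}\otimes\wedge^{k}N^{\vee})[k]$ and showing the map is an isomorphism in weight $\rho$. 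This is a valid route, and your weight bookkeeping (minimality giving $\wt(\cE)\cap\underline{S}=S$, the shift by $(0,-k)$ coming from $\wedge^{k}N^{\vee}$) is right; note only that the reduction should be stated for $1 \le i \le n$ rather than all $i\ge 1$, which is what your Koszul argument actually uses since $\wedge^{k}N^{\vee}=0$ for $k>n$. The one point you must genuinely nail down is the step you flag yourself: the identity map $\cE \to p^{*}(\cE|_{X})$ on underlying sheaves is not a chain map, so the unit has to be lifted to the Koszul model with correction terms landing in the $k\ge 1$ summands, and only then does its $k=0$ component restrict to the identity on $(\cE|_{0})_{\rho}$; this fibrewise statement is exactly the shadow of the paper's identity $f = h\circ g$, which the paper dispatches once and for all with the triangle equalities instead of an explicit model. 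What each approach buys: the paper's argument is shorter, handles all weights simultaneously, and reuses the second statement of Lemma \ref{thm:WeightsOfRestrictions}; yours is more concrete and makes visible why the offending weights cancel, at the cost of an explicit Koszul lift of the unit.
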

\begin{proof}
Let $\cF$ be the cone.
Pulling back the triangle defining $\cF$ along $i$, we get a triangle
\[
i^{*}(\cE) \stackrel{g}{\to} i^{*}i_{*}i^{*}(\cE_{\underline{S}}) \to C(g) = i^{*}\cF
\]
The weights of $\cF$ equal those of $C(g)$, so it is enough to prove the claim for the weights of $C(g)$.

We claim that the map
\[
f: i^{*}(\cE) {\to} (i^{*}(\cE))_{\underline{S}}
\]
induced by the semiorthogonal decomposition of Lemma \ref{thm:SemiOrthogonalDecompositionZ1} is equal to the composition
\[
i^{*}(\cE) \stackrel{g}{\to} i^{*}(i_{*}(i^{*}\cE)_{\underline{S}})) \stackrel{h}{\to} (i^{*}(\cE))_{\underline{S}},
\]
where $h$ is induced by the counit $i^{*}i_{*} \to \id$. 

To see this, let $P$ the functor $D^{b}_{T}(X,W) \to D^{b}_{T}(X,W)$ given by $\cE \mapsto \cE_{\underline{S}}$.
The map $f$ is induced by the natural transformation $i^{*} \to Pi^{*}$, which equals the composition of unit and counit transformations $i^{*} \to i^{*}i_{*}i^{*} \to i^{*} \to Pi^{*}$, by the triangle equalities of \cite[p.\ 85]{mac_lane_categories_1998}.
The composition $h \circ g$ is induced by the natural transformation $i^{*} \to i^{*}i_{*}i^{*} \to i^{*}i_{*}Pi^{*} \to Pi^{*}$.
We see that the two natural transformations are the same, hence $f = h \circ g$.

By the octahedral axiom, there is a distinguished triangle whose vertices are $C(f)$, $C(g)$ and $C(h)$.
The weights of $C(f)$ are contained in $\wt(\cE) \setminus S$, while the weights of $C(h)$ are contained in 
\[
S - (0,1), \ldots, S - (0,n)
\]
by Lemma \ref{thm:WeightsOfRestrictions}.
The claim about the weights of $C(g)$ follows.
\end{proof}

Now let $\cE \in D^{b}_{G}(Z,W)$, and let $\cE^{\pr} \in D^{b}_{T}(Z,W)$ be the underlying $T$-equivariant object.
Let $\sigma \in G$ be the order 2 element which permutes the factors of $V$ in $Z$.
The $G$-equivariant object induced from $(\cE^{\pr}|_{X})_{\underline{S}}$ is
\[
i_{*}((\cE^{\pr}|_{X})_{\underline{S}}) \oplus \sigma_{*} i_{*}((\cE^{\pr}|_{X})_{\underline{S}})
\]
and there is a canonical $G$-equivariant map
\[
\phi : \cE \to i_{*}((\cE^{\pr}|_{X})_{\underline{S}}) \oplus \sigma_{*} i_{*}((\cE^{\pr}|_{X})_{\underline{S}}).
\]

We also denote by $\sigma$ the involution of $\ZZ^{2}$ which permutes the $\ZZ$-factors.
We say $S \subset \ZZ^{2}$ is \emph{good} if
\[
\left (\bigcup_{i=0}^{n} \sigma S - (i,0) \right ) \cap S = \varnothing.
\]
\begin{nlemma}
\label{thm:WeightsOfG}
If $S \subset \wt(\cE)$ is minimal and good, then the weights of $C(\phi)$ are contained in
\[
\left(\bigcup_{i=1}^{n}(S - (0,i)) \cup (\sigma S - (i,0)) \right) \cup \left(\wt(\cE) \setminus (S \cup \sigma S)\right).
\]
\end{nlemma}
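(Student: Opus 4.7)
The plan is to bound $\wt(C(\phi))$ by applying the octahedral axiom in two different ways, via the two projections $A \oplus B \to A$ and $A \oplus B \to B$, producing a pair of upper bounds whose intersection, under the goodness hypothesis, is exactly the target set. In what follows write $\psi : \cE \to A = i_*((\cE'|_X)_{\underline S})$ for the canonical $T$-equivariant map of Lemma \ref{thm:WeightOfConeTCase} and $\psi_\sigma$ for its $\sigma$-conjugate, so that $\phi = (\psi,\psi_\sigma)$.

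I first factor $\phi$ through the projection $\mathrm{pr}_A : A \oplus B \to A$, whose composite with $\phi$ is $\psi$ (with cone $C_1$), and whose own cone is $B[1]$. The octahedral axiom yields a distinguished triangle $B \to C(\phi) \to C_1 \to B[1]$, giving $\wt(C(\phi)) \subseteq \wt(B) \cup \wt(C_1)$. Lemma \ref{thm:WeightOfConeTCase} bounds $\wt(C_1) \subseteq (\wt(\cE) \setminus S) \cup \bigcup_{i=1}^n (S - (0, i))$, while minimality of $S$ implies $\wt((\cE'|_X)_{\underline S}) \subseteq \underline{S} \cap \wt(\cE) = S$, and Lemma \ref{thm:WeightsOfRestrictions} applied to the inclusion $\sigma X \hookrightarrow Z$ (whose conormal weights are $(-i,0)$, $0 \le i \le n$) gives $\wt(B) \subseteq \bigcup_{i=0}^n (\sigma S - (i, 0))$. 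Thus
$$\wt(C(\phi)) \subseteq B_1 := (\wt(\cE) \setminus S) \cup \bigcup_{i=1}^n (S - (0, i)) \cup \bigcup_{i=0}^n (\sigma S - (i, 0)).$$

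I then repeat the argument with $\mathrm{pr}_B$ in place of $\mathrm{pr}_A$. This requires the $\sigma X$-analogue of Lemma \ref{thm:WeightOfConeTCase}, which applies because $\sigma$ induces an isomorphism between the $X$- and $\sigma X$-partial orders, and $\wt(\cE)$ is $\sigma$-stable by $G$-equivariance; hence $\sigma S$ is minimal in $\wt(\cE)$ for the $\sigma X$-order. The symmetric bound reads
$$\wt(C(\phi)) \subseteq B_2 := (\wt(\cE) \setminus \sigma S) \cup \bigcup_{i=1}^n (\sigma S - (i, 0)) \cup \bigcup_{i=0}^n (S - (0, i)).$$

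Finally I check that $B_1 \cap B_2$ is contained in the target $T := (\wt(\cE) \setminus (S \cup \sigma S)) \cup \bigcup_{i=1}^n (S - (0, i)) \cup \bigcup_{i=1}^n (\sigma S - (i, 0))$. Any $x \in B_1 \cap B_2$ that already lies in one of the two shifted unions is automatically in $T$, so suppose it does not. Membership in $B_1$ then forces $x \in (\wt(\cE) \setminus S) \cup \sigma S$ (the $\sigma S$ coming from the $i = 0$ term of the third union of $B_1$), and membership in $B_2$ forces $x \in (\wt(\cE) \setminus \sigma S) \cup S$. Distributing the intersection of these two sets yields $(\wt(\cE) \setminus (S \cup \sigma S)) \cup (S \cap \sigma S)$, and the $i = 0$ case of goodness kills the second summand. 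The main subtlety — and the reason the goodness hypothesis is present — is that neither $B_1$ nor $B_2$ is contained in $T$ on its own: each overshoots by a copy of $\sigma S$ or $S$, and only when we intersect the two bounds do the overshoots cancel.
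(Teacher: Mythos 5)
Your proposal is correct and takes essentially the same route as the paper's proof: both decompose $\phi$ through the two projections of its codomain, apply the octahedral axiom twice to bound $\wt(C(\phi))$ by the intersection of the two resulting sets (using Lemmas \ref{thm:WeightsOfRestrictions} and \ref{thm:WeightOfConeTCase} together with the $\sigma$-symmetry of the second factor), and then invoke goodness of $S$. Your explicit set-theoretic check at the end simply fills in what the paper compresses into ``the claim now follows using the goodness of $S$'', correctly noting that only the $i=0$ instance, $S \cap \sigma S = \varnothing$, is needed there.
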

\begin{proof}
Let $\phi = (\phi_{1}, \phi_{2})$ be the decomposition of $\phi$ corresponding to the splitting of its codomain
\[
i_{*}((\cE^{\pr}|_{X})_{\underline{S}}) \oplus \sigma_{*} i_{*}((\cE^{\pr}|_{X})_{\underline{S}}),
\]
and let $p_{1}$ and $p_{2}$ be the projections of this object onto each factor.
Then the identity $\phi_{i} = p_{i}\phi$ and the octahedral axiom imply that the cones $C(\phi_{i})$, $C(\phi)$ and $C(p_{i})$ fit into a distinguished triangle for $i = 1, 2$.
We thus have 
\[
\wt(C(\phi)) \subset \bigcap_{i = 1, 2}\left(\wt(C(\phi_{i})) \cup \wt(C(p_{i}))\right).
\]
We have $\sigma\wt( C(\phi_{1})) = \wt(C(\phi_{2}))$ and $\sigma \wt(C(p_{1})) = \wt(C(p_{2}))$.
By Lemmas \ref{thm:WeightsOfRestrictions} and \ref{thm:WeightOfConeTCase}, we get
\[
\wt (C(p_{1})) \subset \bigcup_{i=0}^{n} \left(\sigma S - (i,0)\right)
\]
and 
\[
\wt (C(\phi_{1})) \subset (\wt (\cE) \setminus S) \cup \left (\bigcup_{i = 1}^{n} S - (0,i) \right).
\]
The claim now follows using the goodness of $S$.
\end{proof}

\begin{nprop}
\label{thm:ModifyingWeightsOurWay}
Let $\cE \in D^{b}_{G}(\cZ,W)$.
If $S \subset \wt(\cE)$ is minimal and good (resp. maximal and $-S$ is good), then there is a distinguished triangle in $D^{b}_{G}(Z,W)$
\[
\cF \to \cE \to \cG \ \ \ \ (\text{resp. } \cG \to \cE \to \cF)
\]
such that $\cG$ is supported on $Z \setminus Z_{+} = X \cup \sigma(X)$ and such that
\begin{align*}
\wt(\cF) &\subset \left(\bigcup_{i = 1}^{n} (S - (0,i)) \cup \sigma (S - (i,0))\right) \cup \left(\wt(\cE) \setminus (S \cup \sigma S)\right) \\
(\text{resp. } \wt(\cF) &\subset \left(\bigcup_{i = 1}^{n} (S + (0,i)) \cup \sigma (S + (i,0))\right) \cup \left(\wt(\cE) \setminus (S \cup \sigma S)\right)).
\end{align*}
\end{nprop}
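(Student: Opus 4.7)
The plan is to deduce the minimal case directly from Lemma \ref{thm:WeightsOfG} and to obtain the maximal case by a duality argument.

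For the minimal case, let
\[
\cG = i_{*}((\cE^{\pr}|_{X})_{\underline{S}}) \oplus \sigma_{*}i_{*}((\cE^{\pr}|_{X})_{\underline{S}})
\]
and let $\phi : \cE \to \cG$ be the canonical $G$-equivariant map constructed immediately before Lemma \ref{thm:WeightsOfG}. Define $\cF$ to be the fibre of $\phi$, so that we have a distinguished triangle $\cF \to \cE \to \cG$. Since $X = V \times 0 \times L$ and $\sigma(X) = 0 \times V \times L$, the object $\cG$ is set-theoretically supported on $X \cup \sigma(X) = Z_{+}^{\us} = Z \setminus Z_{+}$. Finally, $\wt(\cF) = \wt(C(\phi))$, and the claimed weight bound is precisely the content of Lemma \ref{thm:WeightsOfG}.

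For the maximal case, I pass to the dual object $\cE^{\vee} = \Rhom(\cE, \cO_{\cZ}) \in D^{b}_{G}(Z,-W)$. Since $\cZ$ is smooth and $\cE$ is coherent, $\cE^{\vee}$ is again a coherent factorisation; its weights at the origin are the negatives of those of $\cE$. Thus if $S$ is maximal in $\wt(\cE)$, then $-S$ is minimal in $\wt(\cE^{\vee})$, and the hypothesis that $-S$ be good is precisely what is needed to apply the minimal case to the pair $(\cE^{\vee}, -S)$. Doing so yields a distinguished triangle
\[
\cF_{0} \to \cE^{\vee} \to \cG_{0}
\]
in $D^{b}_{G}(Z,-W)$ with $\cG_{0}$ supported on $Z \setminus Z_{+}$ and $\wt(\cF_{0})$ bounded as dictated by the minimal-case statement applied to $-S$. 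Dualising produces a distinguished triangle $\cG_{0}^{\vee} \to \cE \to \cF_{0}^{\vee}$ in $D^{b}_{G}(Z,W)$; set $\cG = \cG_{0}^{\vee}$ and $\cF = \cF_{0}^{\vee}$. The support of $\cG$ equals that of $\cG_{0}$, and $\wt(\cF) = -\wt(\cF_{0})$, which after substitution yields the bound claimed in the proposition.

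The only real work is bookkeeping: that the involution $\sigma$ on $\ZZ^{2}$ commutes with negation and translation as expected, that double dualisation returns $\cE$, and that the good-set condition for $-S$ in $\wt(\cE^{\vee})$ is literally the hypothesis of the maximal case. None of this is a genuine obstacle, since the conceptual content of the argument is contained in Lemma \ref{thm:WeightsOfG}.
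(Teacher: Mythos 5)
Your proof is correct and is essentially the paper's own argument: take $\cG = i_{*}((\cE^{\pr}|_{X})_{\underline{S}}) \oplus \sigma_{*}i_{*}((\cE^{\pr}|_{X})_{\underline{S}})$ with the weight bound supplied by Lemma \ref{thm:WeightsOfG}, and deduce the maximal case from the minimal one by dualising (noting, as you do, that the dual lives in $D^{b}_{G}(Z,-W)$, where the same minimal-case argument applies verbatim). You simply spell out the dualisation bookkeeping that the paper leaves implicit.
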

\begin{proof}
The statements for maximal $S$ follow from those of minimal $S$ by dualising.
So assume that $S$ is minimal, in which case we may take 
\[
\cG = i_{*}((\cE^{\pr}|_{X})_{\underline{S}}) \oplus \sigma_{*} i_{*}((\cE^{\pr}|_{X})_{\underline{S}})
\]
by Lemma \ref{thm:WeightsOfG}.
\end{proof}

We can now give the proof of Proposition \ref{thm:NewWindowsAndKernelsGenerate}.
\theoremstyle{plain}
\newtheorem*{thm:NewWindowsAndKernelsGenerate}{Proposition \ref*{thm:NewWindowsAndKernelsGenerate}}
\begin{thm:NewWindowsAndKernelsGenerate}
\NewWindowsAndKernelsGenerate
\end{thm:NewWindowsAndKernelsGenerate}
\begin{proof}
Let $\cE \in D^{b}(\cZ,W)$, and suppose that $\cE^\pr$ is a cone over a map between $\cE$ and an object in $\ker j_{+}^{*}$.
If $\cE^{\prime} \in \langle \cW_{+}, \ker j_{+}^{*}\rangle$, then $\cE \in \langle \cW_{+}, \ker j_{+}^{*}\rangle$.
This means that if we can always after replacing $\cE$ with such a $\cE^{\pr}$ a finite number of times get $\cE \in \cW_{+}$, then the claim of the proposition follows.

We first see that we can get $\wt(\cE) \subset \{(i,j)\ |\ i + j \in [0, 2n-1]\}$.
This is an application of \cite[3.3.2]{ballard_variation_2012}, where we apply the result to the unstable locus $0 \times 0 \times L \subset Z$ and the 1-parameter diagonal subgroup $\CC^{*} \subset G$.

Define the width of $\cE$ as the maximal value of $|i - j|$ for $(i,j) \in \wt(\cE)$.
Let $k$ be the width of $\cE$, assume that $k > n/2$, and let $\wt(\cE)_{k}$ be the set of $(i,j) \in \wt(\cE)$ such that $|i-j| = k$.
Then either $\max_{(i,j)\in \wt(\cE)_{k}}(i + j) \ge n$ or $\min_{(i,j) \in \wt(\cE)_{k}}(i + j) < n$.

In the first case let $S \subset \wt(\cE)_{k}$ be the set of $(i,j)$ such that $i + j \ge n$ and $i < j$.
The set $S$ is minimal and good, and we modify $\cE$ by taking a cone over $\cG$ as in Proposition \ref{thm:ModifyingWeightsOurWay}.
In the second case we let $S$ be the set of $(i,j) \in \wt(\cE)_{k}$ with $i + j < n$ and such that $i > j$.
This $S$ is maximal and $-S$ is good, and again we modify $\cE$ by taking a cone over $\cG$ as in Proposition \ref{thm:ModifyingWeightsOurWay}.

In either case, we see that the replacing $\cE$ by this cone will either decrease the width of $\cE$ or leave it unchanged.
If the width of $\cE$ is unchanged, the cardinality of $\wt(\cE)_{k}$ will decrease.
Furthermore, we still keep the property that $\wt(\cE) \subset \{(i,j)\ |\ i + j \in [0, 2n-1]\}$.
Thus by repeating this procedure a finite number of times we obtain an $\cE$ with width $\le n/2$.
If $n$ is odd, we then have $\wt (\cE) \subseteq S_{+}$, hence $\cE \in \cW_{+}$, and we are done.

If $n$ is even, let $S$ be the set of $(i,j) \in \wt(\cE)_{n/2}$ with $i + j > n$ and such that $i < j$.
This $S$ is minimal and good, and by taking the cone over $\cG$ as in Proposition \ref{thm:ModifyingWeightsOurWay} we get $\wt(\cE) \subseteq S_{+}$.
\end{proof}

\subsection{Fully faithfulness}
Lemma \ref{thm:restrictionIsIsomorphism} below implies that the functor $\cW_{+} \into D^{b}(\cZ,W) \stackrel{|_{\cZ_{+}}}{\to} D^{b}(\cZ_{+},W)$ is fully faithful.

Let $B \subset \ZZ^{2}$ be the union of the sets
\begin{align*}
B_{1} = &\{(j-i, -n - i)\ |\ i, j \ge 0\}\\
B_{2} = &\{(-n - i, j-i)\ |\ i, j \ge 0\}\\
B_{3} = &\{(-n - i,-n - j)\ |\ i, j \ge 0\}. 
\end{align*}
\begin{nlemma}
\label{thm:restrictionIsIsomorphism}
Let $\cE_{1}, \cE_{2}$ be such that if $(i_{1}, j_{1}) \in \wt(\cE_{1})$ and $(i_{2},j_{2}) \in \wt(\cE_{2})$, then $(i_{2}-i_{1}, j_{2}-j_{1}) \not \in B$.
Then the restriction map
\[
\RHom_{\cZ}(\cE_{1}, \cE_{2}) \to \RHom_{\cZ_{+}}(j_{+}^{*}\cE_{1}, j_{+}^{*}\cE_{2})
\]
is an isomorphism.
\end{nlemma}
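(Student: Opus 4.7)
My plan is to view the restriction map as fitting into the localisation triangle
\[
R\Gamma_{Z_{+}^{\us}}(\cZ, \Rhom(\cE_{1},\cE_{2})) \to \RHom_{\cZ}(\cE_{1},\cE_{2}) \to \RHom_{\cZ_{+}}(j_{+}^{*}\cE_{1},j_{+}^{*}\cE_{2}) \to [1],
\]
so it suffices to show that the left-hand local cohomology term vanishes. The statement is about a derived Hom in a factorisation category, but since $\Rhom(\cE_{1},\cE_{2})$ lives on $(\cZ,0)$ and local cohomology is computed from the underlying equivariant sheaf, the vanishing may be checked at the level of the underlying $T$-equivariant graded sheaf; the $W$-twisted differential plays no role in this step.

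To get to that level I will replace $\cE_{1}$ and $\cE_{2}$ by minimal $T$-equivariant locally free representatives whose underlying graded sheaves are direct sums $\bigoplus_{(a,b)\in \wt(\cE_{i})}\cO_{Z}(a,b)^{\oplus m_{a,b}}$. This is the one place that needs care: a naive locally free replacement may introduce weights outside $\wt(\cE_{i})$, so I have to invoke a Nakayama-style minimal free resolution on the $T$-affine space $Z$, using that all nonzero $T$-weights of coordinates on $Z$ are nonzero, to ensure only weights forced by $\cE_{i}|_{0}$ appear. Given such representatives, $\Rhom(\cE_{1},\cE_{2})$ is represented at the $T$-level by a direct sum of $\cO_{Z}(a_{2}-a_{1},b_{2}-b_{1})$ with $(a_{i},b_{i}) \in \wt(\cE_{i})$, and the hypothesis $(a_{2}-a_{1},b_{2}-b_{1}) \notin B$ reduces the lemma to showing $R\Gamma_{Z_{+}^{\us}}(\cZ,\cO_{Z}(a,b)) = 0$ whenever $(a,b) \notin B$.

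For this vanishing I will apply Mayer--Vietoris for local cohomology along $Z_{+}^{\us} = A_{1} \cup A_{2}$ with $A_{1} = 0 \times V \times L$, $A_{2} = V \times 0 \times L$, and $A_{1}\cap A_{2} = 0 \times 0 \times L$, giving
\[
R\Gamma_{A_{1}\cap A_{2}}(\cO_{Z}) \to R\Gamma_{A_{1}}(\cO_{Z}) \oplus R\Gamma_{A_{2}}(\cO_{Z}) \to R\Gamma_{Z_{+}^{\us}}(\cO_{Z}) \to [1].
\]
Each of $A_{1},A_{2},A_{1}\cap A_{2}$ is cut out by a regular sequence of coordinate functions, so a Čech computation gives that $R\Gamma_{A_{1}}(\cO_{Z})$ is concentrated in degree $n$ with module $x_{1}^{-1}\cdots x_{n}^{-1}\CC[x_{1}^{-1},\dots,x_{n}^{-1}]\otimes \CC[y,p]$, whose monomials have $T$-weights $(\sum\alpha_{i}-\sum\gamma_{k}, \sum\beta_{j}-\sum\gamma_{k})$ with $\alpha_{i} \le -1$, $\beta_{j},\gamma_{k} \ge 0$. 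A direct bookkeeping identifies these weights as exactly $B_{2}$; symmetrically the $T$-weights of $R\Gamma_{A_{2}}(\cO_{Z})$ and $R\Gamma_{A_{1}\cap A_{2}}(\cO_{Z})$ are $B_{1}$ and $B_{3}$. Hence the $T$-weights of $R\Gamma_{Z_{+}^{\us}}(\cO_{Z})$ lie in $B$, and extracting the $(a,b)$-isotypic component yields zero; since $B$ is stable under the swap $(a,b)\mapsto(b,a)$, the further step of taking $\ZZ_{2}$-invariants preserves this vanishing.

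The main obstacle is the minimal-resolution step of paragraph two: without it, the differences of weights appearing in a generic locally free replacement of $\Rhom(\cE_{1},\cE_{2})$ can leak outside the set controlled by $\wt(\cE_{i})$, and the hypothesis of the lemma would no longer be enough. Once this point is dealt with, the rest is a routine Mayer--Vietoris plus monomial weight count, and matches the shapes of $B_{1},B_{2},B_{3}$ essentially by inspection.
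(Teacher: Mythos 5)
Your proposal shares its skeleton with the paper's proof: both express the cone of the restriction map as $\RGamma$ of local cohomology along $Z_{+}^{\us} = X \cup \sigma X$ (with $X = V \times 0 \times L$), both pass to the underlying $T$-equivariant objects and recover the $G$-statement from swap-invariance of $B$, and both split the unstable locus by Mayer--Vietoris into the three pieces whose contributions are $B_{1}, B_{2}, B_{3}$ (your identification of the weights of $\RGamma_{A_{i}}(\cO_{Z})$ with these sets is correct). Where you genuinely diverge is the mechanism for the vanishing on each piece: the paper never controls the summand weights of a representative on all of $Z$; it writes $\RGamma\RGamma_{X}$ as a colimit of $\RHom(\cO_{Z}/\cI_{X}^{k},-)$, reduces by adjunction to $\RHom_{X/T}(\cO_{X}(0,-i-n),(\cE_{1}^{\vee}\otimes\cE_{2})|_{X})$, and kills this using the weight semiorthogonal decomposition of Lemma \ref{thm:SemiOrthogonalDecompositionZ1} on $X$, which uses only the weights at the origin --- i.e.\ exactly the hypothesis of the lemma. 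Your route instead needs weight-controlled locally free representatives on $Z$ plus an explicit \v{C}ech computation.

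The gap is precisely the step you flag, and the reason you give for it does not work: ``all nonzero $T$-weights of coordinates on $Z$ are nonzero'' is not a Nakayama-type hypothesis here, because $\Gamma(Z,\cO_{Z})^{T} \neq \CC$ --- the coordinate weights are $(1,0)$, $(0,1)$, $(-1,-1)$, so there are many nonconstant invariants ($W$ itself, and all $x_{i}y_{j}p_{k}$), and the $T$-grading alone does not force a minimal model. (This is exactly why the paper's affine-space lemmas are stated under the hypothesis $\Gamma(X,\cO_{X})^{T}=\CC$ and are applied only on $X$, $\sigma X$, $X\cap\sigma X$, where it holds, never on $Z$.) Your step can nevertheless be repaired using the $\CC^{*}_{R}$-grading: any entry of the differential joining summands of distinct $T$-weights is a function of nonzero $T$-weight and hence vanishes at the origin, while an entry that is nonzero at the origin must be a genuine constant sitting between summands of equal $T\times\CC^{*}_{R}$-weight (after the shift $[1]$); such a unit entry lets you split off a trivial rank-two factorisation summand by Gaussian elimination, and since each step drops the rank of a finite-rank locally free representative, the process terminates in a representative with $d|_{0}=0$, whose summand $T$-weights are then exactly $\wt(\cE_{i})$. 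You should also add a word at the assembly stage: the termwise invariants of the \v{C}ech (stable Koszul) complex do not vanish, only the local cohomology modules have weights in $B$, so you need the finite filtration by \v{C}ech degree and its spectral sequence to conclude that the total complex, with the factorisation differential included, is acyclic. With these two points made precise your argument goes through; alternatively, the paper's ideal-adic route avoids the minimal-model question altogether.
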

\begin{proof}
Let $X = V \times 0 \times L$.
The complement of $\cZ_{+}$ in $\cZ$ is $(X \cup \sigma X)/G$.
We have $\RHom(\cE_{1}, \cE_{2}) = \RGamma(\cZ, \cE_{1}^{\vee} \otimes \cE_{2})$, and a distinguished triangle in $D^{b}(\cZ,0)$
\[
\RGamma_{X\cup \sigma X} (\cE_{1}^{\vee}\otimes \cE_{2}) \to \cE_{1}^{\vee} \otimes \cE_{2} \to (j_{+})_{*}j_{+}^{*}(\cE^{\vee}_{1}\otimes \cE_{2}),
\]
where $\RGamma_{X\cup \sigma X}$ is the derived sheafy sections with support functor (see \cite[2.3.9]{ballard_variation_2012}).

Letting $\cE = \cE_{1}^{\vee} \otimes \cE_{2}$, it suffices to show $\RGamma(\cZ,\RGamma_{X \cup \sigma X}(\cE)) = 0$.
It is enough to prove this after replacing $\cE$ by its underlying $T$-equivariant factorisation on $Z$, so let us from this point on work in the category $D^{b}_{T}(Z,0)$.
We have a distinguished triangle
\[
\RGamma_{X \cap \sigma X}(\cE) \to \RGamma_{X}(\cE) \oplus \RGamma_{\sigma{X}}(\cE) \to \RGamma_{X \cup \sigma{X}}(\cE),
\]
and thus it suffices to show the vanishing of $\RGamma(Z/T,\cF)$ for $\cF$ equal to $\RGamma_{X}(\cE), \RGamma_{\sigma{X}}(\cE)$ or $\RGamma_{X \cap \sigma{X}}(\cE)$.

Let $\cI_{X}$ be the ideal sheaf of $X \subset Z$.
We first claim that 
\[
\RHom(\cI_{X}^{n-1}/\cI_{X}^{n}, \cE) \cong 0
\]
for all $n$.
We have $\cI_{X}^{k}/\cI_{X}^{k+1} = \Sym^{k}(\cO_{Z}(0,-1)^{\oplus n}) \otimes \cO_{X}$.
To prove the claim it therefore suffices to show
\[
\RHom_{Z/T}(\cO_{X}(0, -i), \cE) = 0\ \ \ \ \text{for all } i \ge 0.
\]
We have
\begin{align*}
\RHom_{Z/T}(\cO_{X}(0, -i), \cE) &\cong \RHom_{X/T}(\cO_{X}(0, -i), i^{!}(\cE)) \\
&\cong \RHom_{X/T}(\cO_{X}(0, n-i), \cE|_{X})[n].
\end{align*}
Now since we know the weights of $\cE$ are not contained in $B_{1}$, the same is true for the weights of $\cE|_{X}$.
In the terminology of Lemma \ref{thm:SemiOrthogonalDecompositionZ1} we have $\wt (\cE|_{X}) \subset \ZZ^{2} \setminus \overline{\{(0,-n-i)\}}$ for all $i \ge 0$, hence by that lemma we get $\RHom(\cO_{X}(0,-i), \cE) = 0$.

Now $\RHom(\cI_{X}^{n-1}/\cI_{X}^{n}, \cE) \cong 0$ for all $n$ implies $\RHom(\cO_{Z}/\cI_{X}^{n}, \cE) \cong 0$.
For any sheaf $\cF$ we have $\varinjlim_{n} \Hom(\cO_{Z}/\cI_{X}^{n}, \cF) \cong \Gamma(Z/T,\Gamma_{X}(\cF))$ \cite{grothendieck_cohomologie_2005}.
Taking an injective replacement $\cI$ of $\cE$ and using the exactness of filtered colimits we get
\[
\RGamma(Z/T,\RGamma_{X}(\cE)) \cong \Gamma(Z/T,\Gamma_{X}(\cI)) \cong \varinjlim_{n}\Hom(\cO_{Z}/\cI_{X}^{n}, \cI) \cong 0,
\]
which is what we wanted.
The proof that $\RGamma(Z/T,\RGamma_{\sigma{X}}(\cE)) = 0$ is exactly the same.

Arguing in the same way for $X \cap \sigma{X}$, we reduce to showing that 
\[
\RHom_{X \cap \sigma{X}}(\cO_{X \cap \sigma{X}}(-n - i,-n - j), \cE|_{X\cap \sigma{X}}) = 0
\]
for all $i, j \ge 0$.
This claim follows by Lemma \ref{thm:SemiOrthogonalDecompositionZ1} and the fact that $\wt(\cE|_{X\cap \sigma{X}}) \cap B_{3} = \varnothing$.
\end{proof}

Let $C = \{(i,j)\ |\ i + j \ge 2l \}$, and recall that $j_{-} : \cZ_{-} \to \cZ$ is the inclusion map.
The following lemma can be proved as above, but is also a consequence of \cite[3.29]{halpern-leistner_derived_2015}.
\begin{nlemma}
\label{thm:FullyFaithfulMinusCase}
Let $\cE_{1}, \cE_{2}$ be such that if $(i_{1}, j_{1}) \in \wt(\cE_{1})$ and $(i_{2},j_{2}) \in \wt(\cE_{2})$, then $(i_{2}-i_{1}, j_{2}-j_{1}) \not \in C$.
Then the restriction map
\[
\Hom_{\cZ}(\cE_{1}, \cE_{2}) \to \Hom_{\cZ_{-}}(j_{-}^{*}\cE_{1}, j_{-}^{*}\cE_{2})
\]
is an isomorphism.
\end{nlemma}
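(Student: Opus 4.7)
My plan is to follow the template of Lemma~\ref{thm:restrictionIsIsomorphism}, applied now to the single $G$-invariant unstable substack $Y = V \times V \times 0 \subset Z$ of codimension $l$. Because $Y$ is already $G$-stable, there is no need to symmetrise over $\sigma$, and the whole analysis is a streamlined version of what was done there.

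Setting $\cE = \cE_{1}^{\vee} \otimes \cE_{2} \in D^{b}(\cZ,0)$, I would first use the distinguished triangle $\RGamma_{Y}(\cE) \to \cE \to (j_{-})_{*}j_{-}^{*}(\cE)$ to reduce the claim to the vanishing of $\RGamma(\cZ, \RGamma_{Y}(\cE))$. Passing to the underlying $T$-equivariant factorisation, I would write $\RGamma_{Y} = \varinjlim_{n} \RHom(\cO_{Z}/\cI_{Y}^{n},-)$ and use the filtration on $\cO_{Z}/\cI_{Y}^{n}$ to reduce the problem to showing $\RHom_{Z}(\cI_{Y}^{k-1}/\cI_{Y}^{k}, \cE) = 0$ for every $k \ge 1$. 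Because $L$ carries the $T$-weight $(-1,-1)$, the coordinates $p_{1},\ldots,p_{l}$ on $L$ generating $\cI_{Y}$ give $\cI_{Y}/\cI_{Y}^{2} \cong \cO_{Y}(1,1)^{\oplus l}$, so $\cI_{Y}^{k-1}/\cI_{Y}^{k}$ is a direct sum of copies of $\cO_{Y}(k-1,k-1)$, while $\omega_{Y/Z} = \det N_{Y/Z} = \cO_{Y}(-l,-l)$.

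Grothendieck duality then gives
\[
\RHom_{Z}(\cO_{Y}(k-1,k-1), \cE) \cong \RHom_{Y}(\cO_{Y}(k-1+l, k-1+l), \cE|_{Y})[-l],
\]
and by the analog of Lemma~\ref{thm:SemiOrthogonalDecompositionZ1} on $Y = V \times V$ (where the partial order on $\ZZ^{2}$ is componentwise), this vanishes whenever no $(i,j) \in \wt(\cE|_{Y}) \subseteq \wt(\cE)$ satisfies $i \ge k-1+l$ and $j \ge k-1+l$. Any such weight would give $i+j \ge 2(k-1+l) \ge 2l$, contradicting $\wt(\cE) \cap C = \varnothing$; the vanishing therefore holds for every $k \ge 1$.

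The main thing to get right is the sign in $\omega_{Y/Z}$: it is this sign that makes the shift $(+l,+l)$ arising from Grothendieck duality align with the half-space $C = \{i+j \ge 2l\}$, and once that is correctly computed the combinatorics is immediate. Alternatively, as indicated in the statement of the lemma, one may bypass the direct calculation by invoking \cite[3.29]{halpern-leistner_derived_2015} applied to the diagonal one-parameter subgroup $\lambda(t) = (t,t) \subset T$: its contracting locus is precisely $Y$, the $\lambda$-weight of $\det N_{Y/Z}^{\vee}$ is $2l$, and the hypothesis of the lemma is exactly the grade-restriction bound appearing in that theorem.
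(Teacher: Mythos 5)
Your proposal is correct and follows exactly the two routes the paper itself indicates for this lemma (which it does not prove in detail): adapting the local-cohomology argument of Lemma \ref{thm:restrictionIsIsomorphism} to the single $\sigma$-invariant unstable locus $V\times V\times 0$, or citing \cite[3.29]{halpern-leistner_derived_2015}. Your weight bookkeeping ($\cI_{Y}/\cI_{Y}^{2}\cong\cO_{Y}(1,1)^{\oplus l}$, $\det N_{Y/Z}=\cO_{Y}(-l,-l)$, componentwise order on $V\times V$) is consistent with the conventions used in the paper's proof of Lemma \ref{thm:restrictionIsIsomorphism}, so the details you fill in are the intended ones.
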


\subsection{The orthogonal complement in $\cW_{+}$}
\label{sec:OrthogonalComplement}
Assume that $\dim L = l < n = \dim V$, which is equivalent to assuming that $S_{+} \not \subset S_{-,\res}$.
Let $\cC$ be the category $\cC = \cW_{+} \cap \cW_{-,\res} \subset \cW_{+}$.

We define a partial ordering on $\ZZ^{2} = \chi(T)$ by $(i,j) \le (i^{\pr},j^{\pr})$ if $i \le i^{\pr}$ and $j \le j^{\pr}$.
Given representations $\rho, \rho^{\pr}$ of $G$, we say $\rho \le \rho^{\pr}$ if there are $T$-weights $(i,j)$ of $\rho$ and $(i^{\pr},j^{\pr})$ of $\rho^{\pr}$ such that $i \le i^{\pr}$ and $j \le j^{\pr}$.

Let $K = i_{*} \circ p^{*} : D^{b}(X) \to D^{b}(\cZ_{+}, W)$ be the equivalence from Proposition \ref{thm:KnorrerPeriodicity}.
Recall that an object $\cE$ in a triangulated category is exceptional if $\Hom(\cE,\cE) = \CC$ and $\Hom(\cE, \cE[n]) = 0$ for $n \not= 0$.

\begin{nprop}
\label{thm:HPDualityDecomposition}
The subcategory $\tensor[^{\perp}]{\cC}{} \subseteq \cW_{+}$ is generated by a set of exceptional objects $\cE_{\rho}$ indexed by irreducible $G$-representations $\rho$ such that $\wt(\rho) \in S_{+} \setminus S_{-,\res}$.
We have $\RHom(\cE_{\rho},\cE_{\rho^{\pr}}) = 0$ unless $\rho \le \rho^{\pr}$.
Furthermore, under the equivalence $\cW_{+} \stackrel{\Phi_{+}}{\to} D^{b}(\cZ_{+},W) \stackrel{K^{-1}}{\to} D^{b}(X)$, the objects $\cE_{\rho}$ are sent to $\cO_{X}(\rho)$.
\end{nprop}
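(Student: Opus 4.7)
The approach is to construct the objects $\cE_\rho$ explicitly as Koszul-type factorisations on $\cZ$, then verify in turn: (a) $\cE_\rho \in \cW_+$; (b) $\cE_\rho$ corresponds to $\cO_X(\rho)$ under the equivalence $K^{-1}\circ \Phi_+ \colon \cW_+ \stackrel{\sim}{\to} D^b(X)$; (c) the $\cE_\rho$ are exceptional and semi-orthogonal under the stated partial order; (d) $\cE_\rho \in {}^{\perp}\cC$ whenever $\wt(\rho)\in S_+\setminus S_{-,\res}$; and (e) these objects together with $\cC$ generate $\cW_+$.

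For the construction, I would mimic the proof of Lemma \ref{thm:FunctorsEquivalent} and define
\[
\cE_\rho := \cO(\rho) \otimes \bigl(p^*\cO(-1,-1)_+^{\otimes l} \mfarrows{d_r}{d_l} \cdots \mfarrows{d_r}{d_l} \cO_{\cZ}\bigr),
\]
with the Koszul differentials of that lemma. Its weights form the diagonal family $\wt(\rho)+\{(-k,-k)\}_{0\le k\le l}$. The hypothesis $\wt(\rho)\in S_+\setminus S_{-,\res}$ forces the top weight $(a,b)$ to satisfy $a+b\ge 2l$; combined with $\wt(\rho)\in S_+$ and the shape of $S_+$, all weights stay inside $S_+$, giving (a), and the argument of Lemma \ref{thm:FunctorsEquivalent} identifies $\Phi_+(\cE_\rho)$ with $K(\cO_X(\rho))$, giving (b). Ingredient (c) follows by translating to $D^b(X)$ via the equivalence and computing $\RHom_X(\cO_X(\rho),\cO_X(\rho'))$ after resolving $\cO_X$ by the Koszul complex on $\Sym^2\PP(V)$ (which is a genuine resolution since $X$ has the expected dimension) and pulling back to $\PP(V)^2$, where a K\"unneth plus Bott computation yields the vanishing unless $\rho\le\rho'$ and exceptionality for $\rho=\rho'$.

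The main obstacle is ingredient (d). Given $\cF\in\cC\subseteq\cW_{-,\res}$, one evaluates $\RHom_\cZ(\cE_\rho,\cF)$ using the explicit Koszul model for $\cE_\rho$, obtaining a convergent spectral sequence whose $E_1$-terms are weight-graded pieces $\RHom_\cZ(\cO_\cZ(\rho-(k,k)),\cF)$ for $0\le k\le l$. The weights of $\cF$ satisfy $i+j\le 2l-1$, while the Koszul shifts contribute $a+b-2k$ with $a+b\ge 2l$; a combinatorial analysis of $G$-invariant monomials in the coordinate ring of $Z = V\times V\times L$, carried out in the spirit of Lemmas \ref{thm:restrictionIsIsomorphism} and \ref{thm:FullyFaithfulMinusCase}, shows that no compatible global sections can exist and forces every $E_1$-term to vanish. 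The subtlety is that both window conditions must be exploited simultaneously, and in particular one must handle carefully the interaction with the two affine strata $X$ and $\sigma X$ inside $Z$, as in the proof of Lemma \ref{thm:restrictionIsIsomorphism}.

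For (e), I would argue inductively in the spirit of Proposition \ref{thm:NewWindowsAndKernelsGenerate}: for $\cF\in\cW_+$ with $\wt(\cF)\not\subseteq S_{-,\res}$, pick an extremal weight $(i,j)\in\wt(\cF)\cap(S_+\setminus S_{-,\res})$, use a variant of Lemma \ref{thm:MaximalsGiveMaps} to produce a non-trivial map between $\cF$ and the corresponding $\cE_\rho$, and replace $\cF$ by the cone. The cone has strictly fewer extremal weights outside $S_{-,\res}$, and after finitely many steps the modified object lies in $\cC$. Combined with (c) and (d) this gives the semi-orthogonal decomposition $\cW_+ = \langle\{\cE_\rho\},\cC\rangle$, with $^{\perp}\cC$ generated by the $\cE_\rho$.
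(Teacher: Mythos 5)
Your construction of $\cE_{\rho}$, the weight computation, the identification $\Phi_{+}(\cE_{\rho}) \cong K(\cO_{X}(\rho))$ via Lemma \ref{thm:FunctorsEquivalent}, and the exceptionality and semiorthogonality computations on $X$ all agree with the paper's proof. The genuine gap is your ingredient (d). First, the proposed spectral sequence is not well formed: the wedge-degree filtration of the Koszul factorisation does not have factorisations as graded pieces --- $\cO_{\cZ}(\rho-(k,k))$ with zero differential is not an object of $D^{b}(\cZ,W)$, since a differential must square to $W$ --- and on the associated graded of $\Hom^{\bullet}(\cE_{\rho},\cF)$ the surviving differential $d_{\cF}\circ(-)$ squares to $W\circ(-)\neq 0$. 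Second, and more fundamentally, no term-by-term vanishing can hold: for large $k$ the weight $\rho-(k,k)$ re-enters $S_{-,\res}$ (for instance $\rho=(l,l)$ and $k=l$ gives $(0,0)$), and nothing prevents an object $\cF\in\cC$ from receiving nonzero $G$-invariant maps from $\cO_{\cZ}(\rho-(k,k))$ at the sheaf level; the vanishing of $\RHom_{\cZ}(\cE_{\rho},\cF)$ is produced by the interplay with the differentials, not by the graded pieces, so a combinatorial analysis of invariant monomials cannot close this step.

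The paper's argument for (d) is much simpler and geometric: in $D^{b}(\cZ,W)$ one has $\cE_{\rho}\cong \cO_{V\times V\times 0}(\rho)\otimes\cO(-l,-l)_{+}[l]$, an object supported on the unstable locus $V\times V\times 0$ of the negative GIT quotient, so $\cE_{\rho}|_{\cZ_{-}} = 0$. Since every weight of $\cE_{\rho}$ has $i+j\ge 0$ (here is where $\wt(\rho)\notin S_{-,\res}$, i.e.\ $i+j\ge 2l$, is used), while every weight of $\cF\in\cW_{-,\res}$ has $i+j\le 2l-1$, Lemma \ref{thm:FullyFaithfulMinusCase} applies and gives $\RHom_{\cZ}(\cE_{\rho},\cF)\cong\RHom_{\cZ_{-}}(\cE_{\rho}|_{\cZ_{-}},\cF|_{\cZ_{-}})=0$. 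Your step (e) is also weaker than the paper's: an induction on cones is delicate, since a single nonzero map need not remove a weight from $\wt(\cF)$, and the map produced by Lemma \ref{thm:MaximalsGiveMaps} is only $T$-equivariant. The paper instead uses Bondal's results on exceptional collections to reduce generation to showing that $\RHom(\cE_{\rho},\cE)=0$ for all $\rho$ forces $\cE\in\cC$, and obtains a $G$-equivariant nonzero map at a maximal weight by replacing $\rho$ with $\rho\otimes\tau$ when necessary; you would need to incorporate both points to make (e) rigorous.
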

Note the special case where $l = 0$, in which case $\cC = 0$. 
Proposition \ref{thm:HPDualityDecomposition} then describes a full exceptional collection on $D^{b}(\Sym^{2}\PP(V))$ -- see Section \ref{sec:HPDOurCase}.
\begin{proof}
Let $\rho$ be such that $\wt (\rho) \in S_{+} \setminus S_{-,\res}$.
Using Lemma \ref{thm:FunctorsEquivalent}, we find that $K(\cO_{X}(\rho))$ equals $\cO_{\Sym^2\PP(V)}(\rho) \otimes \cO(-l,-l)_{+}[l] \in D^{b}(\cZ_{+},W)$.

We now claim that
\begin{equation}
\label{eqn:EquivalentObjects}
\Phi_{+}^{-1}(\cO_{\Sym^2\PP(V)}(\rho) \otimes \cO(-l,-l)_{+}[l]) \cong \cO_{V \times V \times 0}(\rho) \otimes \cO(-l,-l)[l]_{+}.
\end{equation}
Let $\cE_{\rho}$ be the object on the right hand side of \eqref{eqn:EquivalentObjects}.
Using a Koszul resolution of $V \times V \times 0$ as in the proof of Lemma \ref{thm:FunctorsEquivalent} we find that $\wt (\cE_{\rho})$ is
\[
\{\rho, \rho - (1,1), \ldots, \rho - (l,l) \} \subset S_{+},
\]
so that $\cE_{\rho}$ lies in $\cW_{+}$.
We can then show \eqref{eqn:EquivalentObjects} by applying $\Phi_{+}$ to both sides, and the last claim of the proposition follows.

We next show that $\cE_{\rho}$ lies in $^{\perp}\cC$. 
Applying Lemma \ref{thm:FullyFaithfulMinusCase}, we find that for any $\cE \in \cC$,
\[
\Hom_{\cZ}(\cE_{\rho}, \cE) = \Hom_{\cZ_{-}}(\cE_{\rho}|_{\cZ_{-}}, \cE|_{\cZ_{-}}).
\]
But as $\cE_{\rho}$ is supported on $V \times V \times 0$, we have $\cE_{\rho}|_{\cZ_{-}} = 0$, and thus $\cE_{\rho} \in{} ^\perp\cC$.

An easy computation shows that $\cO_{\Sym^{2}\PP(V)}(\rho) \in D^{b}(\Sym^{2}\PP(V))$ is exceptional.
It now follows by the arguments of \cite{kuznetsov_homological_2007} that $\cO_{X}(\rho)$ is exceptional.
\[
\RHom(\cO_{X}(\rho), \cO_{X}(\rho)) = \RHom(\cO_{\Sym^{2}\PP(V)}(\rho), i_{*}i^{*}\cO_{\Sym^{2}\PP(V)}(\rho)),
\]
where $i$ is $X \into \Sym^{2}\PP(V)$.
Using a Koszul resolution of $\cO_{X}$ we see that $i_{*}i^{*}\cO_{\Sym^{2}\PP(V)}(\rho)$ is contained in $\langle \cO(\rho - (l,l)), \cdots, \cO(\rho) \rangle$, and the vanishing of the right hand side now follows using our assumption $l < n$.
Therefore $\cE_{\rho} = \Phi_{+}(\cO_{X}(\rho))$ is exceptional.

The fact that $\RHom(\cE_{\rho}, \cE_{\rho^{\pr}}) = 0$ unless $\rho \le \rho^{\pr}$ is proved in the same way, using the assumption that the weights of $\rho$ and $\rho^{\pr}$ are in $S_{+} \setminus S_{-,\res}$.

Finally, we must show that the objects $\cE_{\rho}$ generate $^{\perp}\cC$.
By \cite[3.1, 3.2]{bondal_representations_1989}, it suffices to show that if $\cE \in \cW^{+}$ and $\RHom(\cE_{\rho},\cE) = 0$ for all $\rho$, then $\cE \in \cC$.

Let $\cE \in \cW^{+}$ and assume $\cE \not\in \cC$.
Now let $\rho$ be such that $\wt (\rho)$ is a maximal subset of $\wt(\cE)$ with respect to the partial ordering on $\ZZ^{2}$ defined above.
Since $\cE \not \in \cC$, we have $\wt (\rho) \not \in S_{-,\res} \cap S_{+}$.
We have
\begin{align*}
\RHom_{Z/T}(\cE_{\rho}, \cE) &= \RHom_{Z/T}(i_{*}(\cO_{V\times V \times 0}(\rho))(-l,-l)), \cE) \\
&= \RHom_{(V \times V)/T}(\cO(\rho), \cE|_{V\times V}),
\end{align*}
and the latter space is non-vanishing by Lemma \ref{thm:MaximalsGiveMaps}.

The space $\RHom_{Z/T}(\cE_{\rho},\cE)$ splits into 2 eigenspaces where $\sigma \in G$ acts by $\pm 1$, and the elements in the $+1$ eigenspace are the $G$-invariant maps.
If there are no $G$-invariant maps, we replace $\rho$ with $\rho \otimes \tau$, where $\tau$ is the natural character $G \to \ZZ_{2} \into \CC^{*}$.
This switches the 2 eigenspaces, and so after doing this we will have $\RHom_{Z/G}(\cE_{\rho}, \cE) \not= 0$.
\end{proof}

\subsection{Why the strange windows?}
\label{sec:StrangeWindows}
The papers \cite{ballard_variation_2012, halpern-leistner_derived_2015} provide window categories inside $D^{b}(\cZ,W)$ equivalent to the categories $D^{b}(\cZ_{\pm},W)$.
The reader familiar with these results may want to know why we do not use the general construction from these papers.
For the case of $\cZ_{-}$, things work as expected, and the category $\cW_{-}$ corresponds to the one given in \cite{ballard_variation_2012, halpern-leistner_derived_2015}.
The need to consider $\cW_{-,\res} \subset \cW_{-}$ is then explained by the fact that $\cZ_{-}$ is an Artin stack, cf.\ \cite{addington_pfaffian-grassmannian_2014}.

For the other window category $\cW_{+}$, our definition is dictated by the proof of Theorem \ref{thm:MainTheorem}, which requires $\cW_{+}$ to either contain or be contained in $\cW_{-,\res}$, depending on the values of $l$ and $n$.

Let us recall the definition of the corresponding category from \cite{ballard_variation_2012, halpern-leistner_derived_2015}, to see that it does not behave as well in this respect:
For $i = 1,2$, let $T_{i}$ be the $i$-th factor of $T = (\CC^{*})^{2}$.
Choose two integers $m_{1},m_{2}$.
The window category is then the full subcategory $\cW_{+,m_{1},m_{2}} \subset D^{b}(\cZ,W)$ such that for any object $\cE \in D^{b}(\cZ,W)$ we have $\cE \in \cW_{+,m_{1},m_{2}}$ if and only if
\begin{itemize}
\item For all $T$-weights $(i,j)$ of $\cE|_{0}$, we have $m_{1} \le i + j \le m_{1} + 2n$.
\item After restricting to $(V \setminus 0) \times 0 \times 0 \subset Z$ (resp.\ $0 \times (V \setminus 0) \times 0 \subset Z$), the weights of $\cE$ with respect to $T_{2}$ (resp.\ $T_{1}$) lie in $[m_{2}, m_{2} + n]$.
\end{itemize}
Taking for instance $m_{1} = m_{2} = 0$, we obtain the subcategory of factorisations whose underlying $T$-equivariant sheaf decomposes as a sum of $\cO(i,j)$ with $(i, j)$ contained in the square $[0,n] \times [0,n]$.
It is easy to check that $\cW_{+,0,0}$ in general neither contains nor is contained in $\cW_{-,\res}$, and with a bit more work one can show that this remains true when replacing $\cW_{+,0,0}$ with the general $\cW_{+,m_{1},m_{2}}$.

\section{Sheaves of dg algebras}
In this section, we fix some notation and recall some results about sheaves of dg algebras which will be used in the remaining sections.
See for instance \cite{isik_equivalence_2013, mirkovic_linear_2010, riche_koszul_2010} for further details.

Let $\cX$ be an algebraic stack.
A sheaf of dg algebras on $\cX$ is a graded quasi-coherent algebra $R = \oplus_{i \in \ZZ} R^{i}$ on $\cX$, equipped with a differential map $d : R \to R[1]$ satisfying the Leibniz rule and such that $d^{2} = 0$.
A dg module over $R$ is a quasi-coherent sheaf $M$ on $\cX$, equipped with an action of $R$ and a differential map $M \to M[1]$ which squares to 0, where these structures satisfy the usual compatibility relations.
We denote by $C(\cX,R)$ the dg category of right dg modules over $R$.
We let $K(X,R)$ be the homotopy category of $C(\cX,R)$; this is a triangulated category.

Given a dg module $M$, we get a graded cohomology module $H(M) = \ker d/\im d$.
There is a triangulated subcategory of dg modules such that $H(M) = 0$, and we take the Verdier quotient by this subcategory to obtain the derived category $D(\cX,R)$.
We let $D^{b}(\cX,R) \subset D(\cX,R)$ be the full subcategory consisting of those dg modules whose cohomology module is a coherent $\cO_{\cX}$-module.

Assume from this point on that $\cX = X/G$, where $X$ is a quasi-projective variety and $G$ is an algebraic group.
Then by the results of  \cite{thomason_equivariant_1987}, the stack $\cX$ has the resolution property, which means that any coherent sheaf admits a surjection from a finite rank locally free sheaf.
By the construction in \cite{thomason_equivariant_1987}, a generating set $\{L_{s}\}$ of locally free sheaves can be found such that there is a single atlas of $\cX$ on which they are all trivialised.
It follows that arbitrary direct sums of the $L_{s}$ are locally free, hence any quasi-coherent sheaf on $\cX$ admits a surjection from a locally free sheaf.

We say a dg module $M$ is $K$-flat if for any acyclic $R^{\opposite}$-module $N$, the $\cO_{\cX}$ dg-module $M \otimes_{R} N$ is acyclic.
We say the category $K(\cX,R)$ has enough $K$-flat objects if for every dg module $M$ there exists a quasi-isomorphism $P \to M$ where $P$ is $K$-flat.
\begin{nlemma}
The category $K(\cX,\cO_{\cX})$ has enough $K$-flat objects.
\end{nlemma}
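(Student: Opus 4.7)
The plan is to produce, for any dg module $M$ over $\cO_\cX$ --- which since the differential on $\cO_\cX$ is trivial is just a complex of quasi-coherent sheaves on $\cX$ --- a quasi-isomorphism $P \to M$ from a $K$-flat complex $P$ of locally free sheaves. I do this in two stages: bounded above complexes first via the resolution property, then the general case by filtered colimit of good truncations.

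Suppose first that $M$ is bounded above, say $M^i = 0$ for $i > b$. Using the resolution property (recalled just before the lemma), I would iteratively build a termwise surjective quasi-isomorphism $P \to M$ with each $P^i$ locally free and $P^i = 0$ for $i > b$, starting from the top and descending. Such a $P$ is $K$-flat: writing $P = \varinjlim_m \sigma_{\ge m} P$ as the filtered colimit of its brutal truncations from below, each $\sigma_{\ge m} P$ is a bounded complex of flat sheaves, and so tensoring it with any acyclic complex $A$ gives an acyclic complex by a finite induction on length. Since filtered colimits in $\mathrm{QCoh}(\cX)$ commute with the tensor product and preserve acyclicity, $P \otimes_{\cO_\cX} A = \varinjlim_m (\sigma_{\ge m} P \otimes_{\cO_\cX} A)$ is acyclic.

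For general $M$, write $M = \varinjlim_n \tau_{\le n} M$ as the filtered colimit of its good truncations, each of which is bounded above and injects into the next. Inductively construct $K$-flat resolutions $P_n \to \tau_{\le n} M$ together with degreewise split monomorphisms $P_n \hookrightarrow P_{n+1}$ extending the subcomplex inclusions: the inductive step uses the bounded above case to take a locally free resolution of the quotient $\tau_{\le n+1} M / \tau_{\le n} M$ (itself bounded above) and then assembles $P_{n+1}$ as a termwise split extension of $P_n$ by this resolution via a horseshoe argument. Setting $P := \varinjlim_n P_n$ yields a complex of locally free sheaves with a quasi-isomorphism $P \to M$, as filtered colimits in $\mathrm{QCoh}(\cX)$ are exact; and $P$ is $K$-flat since tensor products commute with filtered colimits and each $P_n$ is $K$-flat.

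The principal technical obstacle is the horseshoe step that keeps $P_n \hookrightarrow P_{n+1}$ compatible with the resolutions, but this is a standard construction in the dg-module literature for Grothendieck categories with the resolution property, carried out in detail in the references \cite{mirkovic_linear_2010, riche_koszul_2010} cited at the beginning of the section.
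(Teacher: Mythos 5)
Your argument is correct and is essentially the paper's own proof: the paper likewise produces surjective quasi-isomorphisms from bounded above locally free complexes via the resolution property and then invokes the colimit-of-truncations argument of Spaltenstein (\cite[5.6]{spaltenstein_resolutions_1988}) for the unbounded case, which is exactly the construction you spell out. The only difference is that you write out Spaltenstein's step explicitly where the paper cites it.
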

\begin{proof}
For any bounded above complex of $\cO_{\cX}$-modules $\cE^{\bullet}$ there is a locally free bounded above complex $\cP^{\bullet}$ and a surjective quasi-isomorphism $\cP^{\bullet} \to \cE^{\bullet}$.
The complex $\cP^{\bullet}$ is $K$-flat, and arguing as in the proof of \cite[5.6]{spaltenstein_resolutions_1988} the claim follows.
\end{proof}

\begin{nlemma}
\label{thm:KFlatTensorAcyclic}
The category $K(\cX,R)$ has enough $K$-flat objects.
\end{nlemma}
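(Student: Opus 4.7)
The plan is to reduce the statement for dg $R$-modules to the already-established case for dg $\cO_{\cX}$-modules by inducing $K$-flat $\cO_{\cX}$-resolutions up to $R$. The whole argument rests on a single elementary observation (Step 1), after which one imitates Spaltenstein's construction term-by-term.

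\emph{Step 1 (Induction preserves $K$-flatness).} I would first check that if $P$ is a $K$-flat dg $\cO_{\cX}$-module, then $P \otimes_{\cO_{\cX}} R$, endowed with the obvious right $R$-action, is a $K$-flat dg $R$-module. Indeed, for any acyclic dg $R^{\opposite}$-module $N$ one has a natural isomorphism
\[
(P \otimes_{\cO_{\cX}} R) \otimes_{R} N \ \cong\ P \otimes_{\cO_{\cX}} N,
\]
and since $N$ is in particular acyclic as an $\cO_{\cX}$-module, the right-hand side is acyclic by $K$-flatness of $P$ over $\cO_{\cX}$.

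\emph{Step 2 (Bar-type resolution).} Given a dg $R$-module $M$, I would construct a resolution by free-induced $R$-modules using the adjunction $(-\otimes_{\cO_{\cX}} R) \dashv \text{forget}$. The counit $FU(M) \to M$ extends to the standard simplicial/bar resolution
\[
\cdots\ \to\ M \otimes_{\cO_{\cX}} R \otimes_{\cO_{\cX}} R\ \to\ M \otimes_{\cO_{\cX}} R\ \to\ M.
\]
At each stage I would replace the relevant $\cO_{\cX}$-module factor by a $K$-flat $\cO_{\cX}$-module resolution (provided by the previous lemma) in a way compatible with the simplicial face and degeneracy maps, and then apply $-\otimes_{\cO_{\cX}} R$. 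By Step 1 each term is then a $K$-flat dg $R$-module, and totalising this bi-complex produces a dg $R$-module $P$ equipped with a map $P \to M$ which is a quasi-isomorphism of underlying $\cO_{\cX}$-dg-modules, hence a quasi-isomorphism in $C(\cX,R)$.

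\emph{Step 3 (Unbounded case).} The main obstacle is that neither $M$ nor $R$ is assumed bounded, so the totalisation of the bar resolution is an unbounded double complex and one must worry whether the totalisation actually preserves $K$-flatness and quasi-isomorphism. I would handle this exactly as in the proof of the previous lemma, following Spaltenstein: truncate $M$ to get a tower $\cdots \to \tau_{\ge -n-1}M \to \tau_{\ge -n}M$, produce compatible bounded-above $K$-flat resolutions $P_n$ of the truncations via the finite bar construction above, and take a homotopy inverse limit $P = \varprojlim_{n} P_n$. Termwise surjectivity of the transition maps (arranged by the resolution property of $X/G$) ensures the Mittag-Leffler condition, so $P \to M$ remains a quasi-isomorphism; and since each $P_n$ is $K$-flat and tensor product over $R$ commutes with the kinds of filtered limits that appear in Spaltenstein's construction, $P$ is $K$-flat. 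This step is where the input of the previous lemma and the hypothesis that $\cX = X/G$ has the resolution property are really used.
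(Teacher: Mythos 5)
Your overall strategy -- reduce to the $\cO_{\cX}$-case by inducing along $-\otimes_{\cO_{\cX}}R$ -- is sound, and Step 1 is exactly the right elementary observation (it is also the engine of the argument the paper invokes, namely Riche's Lemma 1.3.3, whose proof builds resolutions out of induced modules $F\otimes_{\cO_{\cX}}R$ with $F$ locally free, using the resolution property of $X/G$). But Step 3 contains a genuine error: you run the limiting procedure in the wrong direction. $K$-flatness does not pass to (homotopy) inverse limits: the defining test involves $-\otimes_{R}N$, and tensor products commute with colimits, not with limits, so acyclicity of $P_{n}\otimes_{R}N$ for each $n$ tells you nothing about $(\varprojlim_{n}P_{n})\otimes_{R}N$; your parenthetical claim that the tensor product commutes with ``the kinds of filtered limits that appear in Spaltenstein's construction'' is precisely what fails. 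Moreover, the tower of truncations $\tau_{\ge -n}M$ with $\varprojlim$ and Mittag--Leffler is the template for Spaltenstein's \emph{K-injective} resolutions; his $K$-flat construction (Prop.\ 5.6, the one the paper's previous lemma already imitates) goes the other way: exhaust $M$ by bounded-above pieces, choose compatible bounded-above termwise-flat (here: induced-from-locally-free) resolutions, and take the \emph{direct} limit, which is $K$-flat because a filtered colimit of $K$-flat modules is $K$-flat and filtered colimits are exact.

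The repair is straightforward and brings you back to the intended argument: using the resolution property (allowing infinite sums of the generating bundles $L_{s}$), every dg $R$-module admits a surjection from an induced module $F\otimes_{\cO_{\cX}}R$ with $F$ a $K$-flat $\cO_{\cX}$-module; iterating gives a left resolution by such modules, and the colimit of its brutal truncations is a $K$-flat dg $R$-module quasi-isomorphic to $M$ (each truncation is a finite extension of induced $K$-flat modules, hence $K$-flat by your Step 1, and the colimit is filtered). This also lets you dispense with the bar resolution of Step 2, whose requirement of $K$-flat $\cO_{\cX}$-replacements compatible with all simplicial face and degeneracy maps you left unaddressed and which would need a functorial resolution to justify.
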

\begin{proof}
The proof in \cite[1.3.3]{riche_koszul_2010} goes through in our setting.
\end{proof}

\begin{nlemma}[\cite{spaltenstein_resolutions_1988}, 5.7]
\label{thm:TensorKFlatAcyclic}
If $M \in K(\cX,R)$ is $K$-flat and acyclic, and $N \in K(\cX,R^{\opposite})$, then $M \otimes_{R} N$ is acyclic.
\end{nlemma}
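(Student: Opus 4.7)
The plan is to follow Spaltenstein's classical two-step argument, reducing the general $N$ to a $K$-flat representative. First, I would invoke the symmetric version of Lemma \ref{thm:KFlatTensorAcyclic}: the proof given there for $K(\cX,R)$ uses only that $\cX$ has the resolution property and the existence of $K$-flat resolutions for $\cO_{\cX}$-modules, both of which are left/right symmetric; so the same argument produces, for any $N \in K(\cX,R^{\opposite})$, a quasi-isomorphism $P \to N$ with $P$ a $K$-flat object of $K(\cX,R^{\opposite})$.

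With this in hand, complete $P \to N$ to a distinguished triangle $P \to N \to C$ in $K(\cX,R^{\opposite})$, where the cone $C$ is acyclic. Applying $M \otimes_R -$ yields a distinguished triangle
\[
M \otimes_R P \to M \otimes_R N \to M \otimes_R C
\]
in $K(\cX,\cO_{\cX})$. Since $M$ is $K$-flat by hypothesis, and $C$ is acyclic as an $R^{\opposite}$-module, the definition of $K$-flatness forces $M \otimes_R C$ to be acyclic. Hence the map $M \otimes_R P \to M \otimes_R N$ is a quasi-isomorphism, and it suffices to prove that $M \otimes_R P$ is acyclic.

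For the last step I would invoke $K$-flatness of $P$ from the opposite side: by definition, $P$ being $K$-flat in $K(\cX,R^{\opposite})$ means that tensoring $P$ with any acyclic object of $K(\cX,R)$ produces an acyclic $\cO_{\cX}$-module. Taking $M$ itself (which is acyclic by hypothesis) as the acyclic $R$-module gives $M \otimes_R P$ acyclic, completing the argument.

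The only real subtlety, and the step I would treat most carefully in a write-up, is the symmetry of the $K$-flatness machinery: one must verify both that the definition of $K$-flat for $R^{\opposite}$-modules is the evident mirror (tensoring with acyclic right $R$-modules yields acyclic $\cO_{\cX}$-modules), and that the proof of Lemma \ref{thm:KFlatTensorAcyclic}, imported from \cite{riche_koszul_2010}, transposes without modification to produce $K$-flat resolutions of $R^{\opposite}$-modules. Once this left/right symmetry is granted, the rest is formal manipulation of triangles and the two applications of $K$-flatness described above — which is exactly the content of \cite[5.7]{spaltenstein_resolutions_1988}, adapted from complexes of sheaves to dg modules on the stack $\cX$.
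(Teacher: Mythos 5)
Your proof is correct: the paper offers no argument of its own for this lemma, citing \cite[5.7]{spaltenstein_resolutions_1988} instead, and your two-step reduction — resolve $N$ by a $K$-flat $P$, use $K$-flatness of $M$ to kill the tensor with the acyclic cone, then use $K$-flatness of $P$ together with acyclicity of $M$ to kill $M \otimes_{R} P$ — is exactly Spaltenstein's argument transposed to dg modules on $\cX$. The left/right symmetry you flag as the delicate point is in fact automatic: $R^{\opposite}$ is again a sheaf of dg algebras on $\cX$, so Lemma \ref{thm:KFlatTensorAcyclic} and the definition of $K$-flatness apply to it verbatim, giving the required $K$-flat resolution $P \to N$.
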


Let $\phi : R \to S$ be a homomorphism of dg algebras on $\cX$.
We then get an induced functor $\phi_{*} = - \otimes_{R} S : K(\cX,R) \to K(\cX,S)$, which can be derived on the left, since we have $K$-flat resolutions, and a functor $\phi^{*} = (-)_{R} : K(\cX,S) \to K(\cX,R)$, which is exact.

We say $\phi$ is a quasi-isomorphism if induced map of of cohomology algebras $H(\phi) : H(R) \to H(S)$ is an isomorphism.
The derived category of a sheaf of dg algebras is invariant under quasi-isomorphisms of dg algebras:
\begin{nlemma}
\label{thm:QuasiIsoImpliesEquivalence}
If $\phi : R \to S$ is a quasi-isomorphism of sheaves of dg algebras, then the functors $L\phi_{*}$ and $R\phi^{*}$ are inverse equivalences giving $D(X,R) \cong D(X,S)$, and they restrict to give $D^{b}(X,R) \cong D^{b}(X,S)$.
\end{nlemma}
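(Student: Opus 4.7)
The plan is to verify this by the standard quasi-isomorphism invariance argument, reducing everything to one key computation: tensoring a $K$-flat module over $R$ with the quasi-isomorphism $R \to S$ preserves quasi-isomorphisms. First I would observe that $\phi^{*}$ is exact (restriction of scalars preserves acyclicity), so $R\phi^{*} = \phi^{*}$, and the underived adjunction between $\phi_{*} = -\otimes_{R}S$ and $\phi^{*}$ passes to a derived adjunction $(L\phi_{*}, \phi^{*})$, with $L\phi_{*}$ well-defined via the $K$-flat resolutions guaranteed by Lemma \ref{thm:KFlatTensorAcyclic}.

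The central step is this: view $\phi$ as a morphism of $R$-$R$ bimodule complexes $R \to S$, where $S$ is given the $R$-bimodule structure via $\phi$, and let $C$ denote its cone. The quasi-isomorphism hypothesis means $C$ is acyclic as a complex of sheaves, so for any $K$-flat right $R$-dg module $P$, Lemma \ref{thm:TensorKFlatAcyclic} gives that $P \otimes_{R} C$ is acyclic. Equivalently, the natural map $P = P\otimes_{R}R \to P\otimes_{R}S$ is a quasi-isomorphism.

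Using this, I would check the counit and unit of the adjunction. For the counit $L\phi_{*}\phi^{*}N \to N$, I choose a $K$-flat resolution $P \to \phi^{*}N$ over $R$, so $L\phi_{*}\phi^{*}N = P\otimes_{R}S$; the counit is the $S$-action map $P\otimes_{R}S \to N$, whose precomposition with $P \to P\otimes_{R}S$ recovers the original resolution $P \to N$. Since both this resolution and the map $P \to P\otimes_{R}S$ are quasi-isomorphisms, so is the counit by $2$-out-of-$3$. The unit $M \to \phi^{*}L\phi_{*}M$ is treated identically, being represented by the zigzag $M \xleftarrow{\sim} P \to P\otimes_{R}S$ with second arrow a quasi-isomorphism.

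Finally, for the restriction to $D^{b}$: $\phi^{*}$ preserves coherent cohomology trivially, since it does not change the underlying complex of sheaves. For $L\phi_{*}M = P\otimes_{R}S$, the same key computation shows it is quasi-isomorphic to $P$, hence to $M$, as a complex of $\cO_{\cX}$-modules, so $H(L\phi_{*}M) \cong H(M)$ as $\cO_{\cX}$-modules and coherence is preserved. The one step I expect to require care is the bimodule cone argument itself -- specifically, matching the handedness of the $K$-flatness condition in Lemma \ref{thm:TensorKFlatAcyclic} with the natural side on which $C$ carries its $R$-action -- but this is essentially a bookkeeping matter once the bimodule structures are written out.
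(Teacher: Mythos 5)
Your argument is correct, and it is worth noting that the paper does not actually prove this lemma itself: its ``proof'' is a citation to \cite[2.6]{isik_equivalence_2013}, so your write-up supplies the standard argument that reference contains. The structure is right: $\phi^{*}$ is exact, $L\phi_{*}$ exists by Lemma \ref{thm:KFlatTensorAcyclic}, the cone of $R \to S$ is an acyclic bimodule, and the quasi-isomorphism $P \to P\otimes_{R}S$ for $K$-flat $P$ makes both unit and counit into isomorphisms by two-out-of-three; the $D^{b}$ statement then follows since neither functor changes the cohomology as an $\cO_{\cX}$-module. One small bookkeeping correction: the acyclicity of $P\otimes_{R}C$ does not come from Lemma \ref{thm:TensorKFlatAcyclic}, which treats the configuration where the $K$-flat factor is itself acyclic; in your situation ($P$ $K$-flat but not acyclic, $C$ acyclic as a left $R$-module, i.e.\ as an $R^{\opposite}$-module) it is exactly the definition of $K$-flatness given in the paper that applies. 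With that citation adjusted, and the handedness of the bimodule structure on $C$ written out as you indicate, the proof is complete.
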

\begin{proof}
See \cite[2.6]{isik_equivalence_2013}.
\end{proof}

Let $\cX$ and $\cY$ be gauged LG models with vanishing superpotential, i.e. $\cX = X/(G_{1} \times \CC^{*}_{R})$ and $\cY = Y/(G_{2} \times \CC^{*}_{R})$.
Let $\pi: \cX \to \cY$ be a morphism such that $\pi^{*}\cO_{\cY}[1] = \cO_{\cX}[1]$.

Let $R$ be a dg algebra on $\cY$, i.e.\ an algebra $R$ with a differential $R \to R \otimes \cO_{\cY}[1]$ which squares to 0, satisfying the usual compatibility axioms.
We then get a dg algebra $\pi^{*}R$ on $\cX$.

We will need a projection formula.
We say $\pi$ is equivariantly affine if fppf locally on $\cY$ the morphism $\pi$ is of the form $\Spec A/G \to \Spec B$, where $G$ is an algebraic group.
The functors in the following lemma are underived.
\begin{nlemma}
\label{thm:ProjectionFormula}
Assume $\pi$ is equivariantly affine.
For any right dg $R$-module $M$ on $\cY$ and left dg $\pi^{*}R$-module $N$ on $\cX$, the natural map
\[
M \otimes_{R} \pi_{*}(N) \to \pi_{*}(\pi^{*}(M) \otimes_{\pi^{*}(R)} N)
\]
is an isomorphism.
\end{nlemma}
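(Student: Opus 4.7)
My strategy is to prove the formula fppf-locally on $\cY$, reducing to an affine computation in which $\pi_{*}$ becomes the formation of $G$-invariants. First I would observe that both sides of the proposed map commute with fppf base change on $\cY$, so by the definition of equivariantly affine it suffices to treat the case $\cY = \Spec B$ and $\cX = \Spec A/G$, with $A$ a $B$-algebra carrying a $G$-action and $G$ acting trivially on $B$. In this local model, $R$ is a dg $B$-algebra, $M$ is a right dg $R$-module, $N$ is a $G$-equivariant left dg $(A\otimes_{B}R)$-module, and the functors simplify to $\pi^{*}(-) = A\otimes_{B}-$ and $\pi_{*}(-) = (-)^{G}$. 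Note in particular that $M$ and $R$ carry trivial $G$-actions.

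Next I would rewrite the right-hand side using associativity of the tensor product. Since $\pi^{*}M = A\otimes_{B}M$ is the right $\pi^{*}R$-module $M\otimes_{R}(A\otimes_{B}R)$, there is a natural $G$-equivariant isomorphism
\[
\pi^{*}M \otimes_{\pi^{*}R} N = (A\otimes_{B}M)\otimes_{A\otimes_{B}R} N \cong M\otimes_{R}\bigl((A\otimes_{B}R)\otimes_{A\otimes_{B}R} N\bigr) \cong M\otimes_{R}N,
\]
where on the rightmost term $G$ acts solely through its action on $N$. Applying $\pi_{*} = (-)^{G}$ identifies the right-hand side of the projection formula with $(M\otimes_{R}N)^{G}$, so the task is reduced to exhibiting a natural isomorphism $M\otimes_{R}N^{G} \cong (M\otimes_{R}N)^{G}$.

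For this last step, I would choose a presentation $\bigoplus_{J}R \to \bigoplus_{I}R \to M \to 0$ of $M$ as a right dg $R$-module and exploit the fact that the $G$-action on $M\otimes_{R}N$ is confined to the $N$-factor. Using exactness of $(-)^{G}$ and its compatibility with direct sums (which holds since $G$ is reductive and we work over $\CC$), tensoring the presentation with either $N$ or $N^{G}$ and then taking $G$-invariants produces the same right-exact sequence, giving the desired identification, which under the earlier identification is precisely the projection formula map. The main obstacle is this final step: without reductivity of $G$ the invariants functor is merely left exact and the formula can genuinely fail, but all groups appearing in the gauged LG models of this paper are reductive, so the argument goes through.
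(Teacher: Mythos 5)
Your proposal is correct and follows essentially the same route as the paper: restrict to an affine chart where $\pi$ has the form $\Spec A/G \to \Spec B$ and $\pi_{*}$ becomes $(-)^{G}$, cancel the tensor product so that the right-hand side is $(M\otimes_{R}N)^{G}$ with $G$ acting only through $N$, and identify this with $M\otimes_{R}N^{G}$. The only real difference is that you justify this last identification explicitly (free presentation of $M$ together with exactness of $(-)^{G}$ and its compatibility with direct sums), whereas the paper compresses it into the remark that the $G$-equivariant isomorphism restricts to the invariant submodules; your point that linear reductivity is what makes this step work is well taken, and it holds for all the groups to which the lemma is applied in the paper.
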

\begin{proof}
Restrict to an affine chart $\Spec B \to \cY$ such that $\cX|_{\Spec B} \to \Spec B$ has the form $\Spec A/G \to \Spec B$.
The claim is then that
\[
M \otimes_{R} N^{G} \to \left(M \otimes_{R} (R \otimes_{B} A) \otimes_{R \otimes_{B} A} N\right)^{G}
\]
is an isomorphism.
If we omit taking $G$-invariants, the map is an isomorphism of dg $R$-modules, and since the map is $G$-equivariant the same is true for the associated $G$-invariant submodules.
\end{proof}

\section{Equivalence with the category of Clifford modules}
\label{sec:CliffordAlgebras}
Let $\cY = (L \setminus 0)/G$, which we recall is an $O(2)$-gerbe over $\PP(L)$.
This is the space denoted by $\cY_{L}$ in Theorem \ref{thm:MainTheorem}; as $L$ is fixed, we omit it from the notation.
Recall that $\cZ_{-} = Z_{-}^{\mathrm{ss}}/G$ is the GIT quotient from Section \ref{sec:NegativeGITQuotient}, and let $\pi : \cZ_{-} \to \cY$ be the projection.

The morphism $\pi$ is a rank $2n$ vector bundle over $\cY$, and we let $E$ be the dual of its sheaf of sections.
The function $W$ then induces a section of $\Sym^{2}E$.
We define a sheaf of Clifford algebras on $\cY$ by
\[
C = \Sym^{\bullet}(E)/\cI,
\]
where $\cI$ is the two-sided ideal generated by $s \otimes s - W(s) \cdot 1$ for sections $s$ of $E$.
Considering $C$ as a coherent sheaf and ignoring the algebra structure, we have $C \cong \wedge^{\bullet}(E)$.

For every point $p \in \PP(L)$, there is a functor $D^{b}(\cY,C) \to D^{b}_{\CC^{*}}(p)$ given by forgetting the $C$-module structure and pulling back along $p/\CC^{*} \to \cY$.
The category $D^{b}_{\CC^{*}}(p)$ splits as $\oplus_{i\in \ZZ} \langle \cO(i) \rangle$.
We define the grade restricted subcategory $D^{b}(\cY,C)_{\res} \subset D^{b}(\cY,C)$ to be the full subcategory of those objects which, after restriction to $D^{b}_{\CC^{*}}(p)$, lie in
\[
\langle \cO(-\lfloor n/2 \rfloor), \ldots, \cO(\lfloor n/2 \rfloor) \rangle.
\]
for all $p \in \PP(L)$.

The purpose of this section is to prove the following proposition.
\begin{nprop}
\label{thm:MainPropositionClifford}
There is an equivalence $D^{b}(\cZ_{-},W) \cong D^{b}(\cY,C)$, which induces an equivalence $D^{b}(\cZ_{-},W)_{\res} \cong D^{b}(\cY,C)_{\res}$.
\end{nprop}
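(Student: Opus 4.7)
The plan is to construct an explicit matrix factorization $K \in D^{b}(\cZ_{-}, W)$ whose derived endomorphism dg algebra, pushed down along $\pi$, is quasi-isomorphic to the Clifford algebra $C$, and then invoke a dg-Morita argument together with a generation statement to deduce the full equivalence. A comparison of weights will then upgrade this to the grade-restricted variant.

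Since $\cZ_{-}$ is the total space of the vector bundle $E^{\vee} \to \cY$, there is a tautological section $\tau \in \Gamma(\cZ_{-}, \pi^{*}E^{\vee})$, and the superpotential $W$ corresponds to a section of $\Sym^{2} E$, or equivalently a symmetric map $W^{\flat} : E^{\vee} \to E$. I will take $K$ to be the Koszul-type factorization with underlying sheaf $\pi^{*}\wedge^{\bullet} E$ (with appropriate $\CC^{*}_{R}$-shifts) and differential $d = \iota_{\tau} + W^{\flat}(\tau) \wedge (-)$. A Cartan identity gives $d_{r}d_{l} + d_{l}d_{r} = \langle \tau, W^{\flat}(\tau) \rangle = W$, so $K$ is a factorization on $(\cZ_{-}, W)$, globalizing the construction already used in the proof of Lemma \ref{thm:FunctorsEquivalent}.

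The next step is to identify $\pi_{*}\sEnd(K)$ with $C$. For $e \in E^{\vee}$, the operator $\alpha(e) := \iota_{e} - W^{\flat}(e) \wedge (-)$ on $\pi^{*}\wedge^{\bullet} E$ anticommutes with $d$: the two cross-terms produce $-\langle \tau, W^{\flat}(e)\rangle + \langle e, W^{\flat}(\tau)\rangle$, which vanishes by the symmetry of $W^{\flat}$. Moreover, $\alpha(e)^{2} = -W(e,e)\,\id$, so up to a sign $\alpha$ realizes the Clifford relations, and we obtain a map of dg algebras $C \to \pi_{*}\sEnd(K)$ with $C$ carrying the zero differential. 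Via the projection formula (Lemma \ref{thm:ProjectionFormula}), verifying that this map is a quasi-isomorphism is a local question on $\cY$, where it reduces to the standard affine statement that the Koszul matrix factorization of a non-degenerate quadratic form has endomorphism dg algebra quasi-isomorphic to the associated Clifford algebra — this is essentially the computation of \cite{dyckerhoff_compact_2011}, also invoked in \cite{ballard_derived_2014}. Lemma \ref{thm:QuasiIsoImpliesEquivalence} then identifies $D^{b}(\cY, \pi_{*}\sEnd(K))$ with $D^{b}(\cY, C)$.

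Set $\Phi := \pi_{*}\Rhom(K, -)$; fully faithfulness of $\Phi$ on the triangulated subcategory generated by the twists $\{K(\rho)\}_{\rho \in \chi(G)}$ follows from the dg-algebra identification above together with the projection formula, and essential surjectivity reduces to verifying that these twists generate $D^{b}(\cZ_{-}, W)$. By Lemma \ref{thm:BoundedsGenerate} this generation is a matter of checking that $\RHom(K(\rho), \cE) = 0$ for all $\rho$ forces $\cE = 0$, which is a standard support-and-weight argument on the Artin stack $\cZ_{-}$. Finally, since $\pi$ is $G$-equivariant and the underlying sheaf of $K$ is $\pi^{*}\wedge^{\bullet} E$ — whose $\CC^{*}$-weights at each $p \in \PP(L)$ are symmetric about $0$ with magnitude at most $\lfloor n/2 \rfloor$ once one tracks the $\CC^{*}_{R}$-shifts — a direct weight computation shows that $\Phi$ interchanges the windows defining $D^{b}(\cZ_{-}, W)_{\res}$ and $D^{b}(\cY, C)_{\res}$. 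I expect the main obstacle to be the globally coherent identification $C \simeq \pi_{*}\sEnd(K)$: the local computation is routine, but coordinating it with the $O(2)$-gerbe structure of $\cY$ and the extra $\ZZ_{2}$-symmetry of our setup — so that the quasi-isomorphism is genuinely $G$-equivariant and compatible with the $\CC^{*}_{R}$-grading of the factorization category — requires careful bookkeeping.
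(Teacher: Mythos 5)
Your generating object and your identification of $C$ with the cohomology of $\pi_{*}(\sHom(\cK,\cK))$ agree with the paper's (which resolves $i_{*}\cO_{\cY}$ by the same Koszul factorisation and cites \cite[5.7]{ballard_derived_2014} for $H(R)\cong C$), but the way you deduce the equivalence has a genuine gap at essential surjectivity. The twists $K(\rho)$ with $\rho\in\chi(G)$ do not generate $D^{b}(\cZ_{-},W)$: every character of $G=T\rtimes\ZZ_{2}$ restricts to $T$ as $(t_{1}t_{2})^{a}$, hence is trivial on the identity component $\CC^{*}$ of the stabiliser $O(2)$ of a point $p$ of the zero section. So all the $K(\rho)$ have the same $\CC^{*}$-weights at $p$, contained in $[-n,n]$ (the weights of $\wedge^{\bullet}E$), and any object of the thick subcategory they generate keeps its weights at $p$ inside that window, since cones, shifts and direct summands cannot enlarge it. But $D^{b}(\cZ_{-},W)$ contains $\cO_{p}(\rho')$ for two-dimensional $O(2)$-representations $\rho'$ of arbitrarily large weight, so generation fails; and even twisting by all irreducible $G$-representations (which would repair this) would force you to control $\Rhom$ between distinct twists, which your dg-algebra identification of $\Rhom(K,K)$ alone does not give. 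Note also that Lemma \ref{thm:BoundedsGenerate} does not do the job you assign it: it says an object orthogonal to all of $D^{b}(\cZ_{-},W)$ vanishes, not that orthogonality to a chosen family of objects forces vanishing. The paper avoids both problems by never asking $K$ to generate in this sense: it shows the left adjoint $G=\pi^{*}(-)\otimes_{\pi^{*}R}\cK$ of $F=\pi_{*}\sHom(\cK,-)$ is fully faithful via the projection formula (Lemmas \ref{thm:CliffordLeftAdjoint}, \ref{thm:CliffordFullyFaithful}), invokes \cite[Thm.\ 3.3]{kuznetsov_homological_2007} to get $D^{b}(\cZ_{-},W)=\langle \im G, \ker F\rangle$, and kills $\ker F$ by the support argument of Lemma \ref{thm:CliffordGenerates} — crucially $F$ pushes forward only to the gerbe $\cY$, so $F(\cE)\cong\wedge^{2n}E^{\vee}\otimes\cE|_{\cY}$ retains every $O(2)$-isotypic piece of $\cE|_{\cY}$, and its vanishing pushes the support of $\cE$ off the zero section, hence (by closedness and $\CC^{*}_{R}$-invariance) makes it empty.

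A smaller but real error is your mechanism for the grade-restricted statement: the $\CC^{*}$-weights of $\wedge^{\bullet}E$ at a point of $\PP(L)$ fill $[-n,n]$, not $[-\lfloor n/2\rfloor,\lfloor n/2\rfloor]$, so the window is not preserved because "the kernel has small weights". What makes it work (Lemma \ref{thm:CliffordGradeRestriction}, via Lemma \ref{thm:ShapeOfFClifford}) is that $F(\cE)\cong\wedge^{2n}E^{\vee}\otimes\cE|_{\cY}$ with $\wedge^{2n}E^{\vee}$ of weight $0$ at every point, so the weights of $F(\cE)$ at $p$ coincide with those of $\cE$, and $F$ both preserves and reflects the grade restriction.
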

\begin{proof}
We define below a locally free object $\cK \cong \cO_{\cY} \in D^{b}(\cZ_{-},W)$, and let $R = \pi_{*}(\sHom(\cK,\cK))$.
In Lemma \ref{thm:CliffordIsCohomology}, we show that $H(R) = C$, and so $D^{b}(\cY,R) \cong D^{b}(\cY,C)$ by Lemma \ref{thm:QuasiIsoImpliesEquivalence}.
We let $F : D^{b}(\cZ_{-},W) \to D^{b}(\cY,R)$ be given by $F = \pi_{*}(\sHom(\cK,-))$, and in Lemma \ref{thm:CliffordLeftAdjoint} show that it has a left adjoint $G$ given by the left derived functor of $\pi^{*}(-)\otimes_{\pi^{*}R} \cK$.
We show that $G$ is fully faithful in Lemma \ref{thm:CliffordFullyFaithful}.

Applying \cite[Thm.\ 3.3]{kuznetsov_homological_2007}, there is then a semiorthogonal decomposition
\[
D^{b}(\cZ_{-},W) = \langle \im G, \ker F \rangle.
\]
By Lemma \ref{thm:CliffordGenerates}, $\ker F = 0$, and so we see that $\im G = D^{b}(\cZ_{-},W)$. 
So $G$ is essentially surjective, hence gives an equivalence $D^{b}(\cY,C) \cong D^{b}(\cY,R) \cong D^{b}(\cZ_{-},W)$.
The fact that $G$ restricts to an equivalence of the grade-restricted subcategories is Lemma \ref{thm:CliffordGradeRestriction}.
\end{proof}

Consider $\cY$ as a substack of $\cZ_{-}$ via the inclusion $i : L\setminus 0 \into V \times V \times (L \setminus 0)$.
Let $K = i_{*}(\cO_{\cY}) \in D^{b}(\cZ_{-},W)$.
\begin{nlemma}
\label{thm:CliffordResolutionOfK}
We have an isomorphism
\[
K^{\vee} \cong i_{*}(\wedge^{2n}(E^{\vee}))
\]
of objects in $D^{b}(\cZ_{-},-W)$.
\end{nlemma}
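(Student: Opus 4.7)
The natural approach is Grothendieck--Verdier duality for the zero section $i\colon \cY \to \cZ_-$, adapted to the factorisation category. In the ordinary derived category one has
\[
\Rhom(i_*\cO_\cY, \cO_{\cZ_-}) \cong i_*(\wedge^{2n} E^\vee)[-2n],
\]
since the normal bundle of $\cY$ in $\cZ_-$ is $E^\vee$, so $i^!\cO_{\cZ_-}\cong\wedge^{2n}E^\vee[-2n]$. The statement in the lemma says that in the matrix factorisation category the cohomological shift $[-2n]$ is absorbed into the intrinsic $\CC^*_R$-grading of $\wedge^{2n}E^\vee$, which carries $R$-weight $2n$ due to the $\CC^*_R$-action from Section \ref{sec:NegativeGITQuotient} scaling the fibre directions of $\pi$.

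Concretely, I would first exhibit an explicit locally free factorisation representing $K$. Let $\tau \in \Gamma(\cZ_-, \pi^*E^\vee)$ be the tautological section cutting out $\cY$, and view $W \in \Gamma(\cY, \Sym^2 E)$ as a self-adjoint map $E^\vee \to E$, so that $W(\tau) \in \Gamma(\cZ_-, \pi^*E)$ is defined. Equip $\wedge^\bullet \pi^*E$ with the differential $d = d_r + d_l$, where $d_r$ is contraction with $\tau$ and $d_l$ is exterior multiplication by $W(\tau)$; a direct Cartan-type calculation yields $d^2 = W\cdot \id$. Since the Koszul complex $\wedge^\bullet \pi^*E \to \cO_{\cZ_-}$ with differential $d_r$ is a resolution of $i_*\cO_\cY$, Lemma \ref{thm:Resolutions} shows this factorisation is quasi-isomorphic to $K$.

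Next, I would dualise termwise. The underlying sheaf becomes $\wedge^\bullet \pi^*E^\vee$, with the differentials transposed: $d_r^\vee$ is exterior multiplication by $\tau$ (now viewed in $\pi^*E^\vee$) and $d_l^\vee$ is contraction with $W(\tau)$; together they give a factorisation for $-W$. To identify this with $i_*(\wedge^{2n} E^\vee)$, apply Lemma \ref{thm:Resolutions} to the exact sequence
\[
0 \to \cO_{\cZ_-} \to \pi^*E^\vee \to \wedge^2\pi^*E^\vee \to \cdots \to \wedge^{2n}\pi^*E^\vee \to i_*(\wedge^{2n}E^\vee) \to 0,
\]
whose differentials are wedge with $\tau$. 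This sequence is exact because it is the Koszul resolution of $i_*\cO_\cY$ tensored with the line bundle $\wedge^{2n}\pi^*E^\vee$, re-indexed in the ``wedge-up'' direction. The lemma then identifies the dual factorisation with $i_*(\wedge^{2n}E^\vee)$ equipped with trivial differential, viewed as a factorisation for $-W$ via the support on $\{W=0\}$.

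The main technical obstacle is bookkeeping with the $\CC^*_R$-grading: one must verify that with the chosen $\CC^*_R$-action, the natural $R$-weight of $\wedge^{2n}E^\vee$ exactly compensates the cohomological $[-2n]$ shift expected from classical Grothendieck duality, so that no explicit $[\,l\,]$ twist appears in the final identification. Beyond this, the argument is essentially a routine dualisation of the Koszul matrix factorisation; no further geometric input is required.
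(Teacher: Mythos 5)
Your proposal is correct and follows essentially the same route as the paper: represent $K$ by the explicit Koszul-type factorisation on $\wedge^{\bullet}\pi^{*}E$ built from the tautological section (the paper cites \cite{ballard_derived_2014} for the leftward arrows you write down explicitly), dualise termwise, and apply Lemma \ref{thm:Resolutions} a second time to the wedge-with-$\tau$ Koszul sequence to identify $\cK^{\vee}$ with $i_{*}(\wedge^{2n}E^{\vee})$. The Grothendieck--Verdier duality discussion at the start is only motivation and is not needed; the paper likewise absorbs the shift bookkeeping into the factorisation conventions.
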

\begin{proof}
The stack $\cY$ is cut out of $\cZ_{-}$ by the canonical section of $\pi^{*}(E^{\vee})$, which has degree $1$ with respect to the $\CC^{*}_{R}$-action.
This induces the following Koszul resolution of $i_{*}\cO_{\cY}$ as a ($\CC^{*}_{R}$-equivariant) sheaf on $\cZ_{-}$:
\[
i_{*}\cO_{\cY} = \pi^{*}(\wedge^{2n} E)[-2n] \to \cdots \to \pi^{*}E[-1] \to \cO_{\cZ_{-}}.
\]
We may add leftwards arrows to this resolution to obtain an object $\cE$ in $D^{b}(\cZ_{-},W)$:
\begin{equation}
\label{eqn:DefOfK}
\cK = \pi^{*}(\wedge^{2n} E) \mfarrows{}{} \cdots \mfarrows{}{} \pi^{*}(E) \mfarrows{}{} \cO_{\cZ_{-}}.
\end{equation}
See \cite[p.\ 14]{ballard_derived_2014} for explicit formulas for the leftwards arrows.
The disappearance of the cohomological shifts is because of our conventions for writing factorisations, see Section \ref{sec:Resolutions}.
Dualising, we get the factorisation
\[
\cK^{\vee} = \pi^{*}(\wedge^{2n} E^{\vee}) \mfarrows{}{} \cdots \mfarrows{}{} \pi^{*}(E^{\vee}) \mfarrows{}{} \cO_{\cZ_{-}}.
\]

By Lemma \ref{thm:Resolutions}, $\cK \cong K$, and so $\cK^{\vee} \cong K^{\vee}$.
Considering only the leftwards arrows in the resolution $\cK^{\vee}$, these form a Koszul resolution of $i_{*}(\wedge^{2n}(E^{\vee}))$, so by Lemma \ref{thm:Resolutions} again we get $K^{\vee} \cong i_{*}(\wedge^{2n}(E^{\vee}))$.
\end{proof}

\begin{nlemma}
\label{thm:ShapeOfFClifford}
For any $\cE \in D^{b}(\cZ_{-},W)$, we have 
\[
\pi_{*}\Rhom_{\cZ_{-}}(K,\cE) \cong \wedge^{2n}(E^{\vee}) \otimes \cE|_{\cY}.
\]
\end{nlemma}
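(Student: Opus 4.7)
The plan is to rewrite $\pi_{*}\Rhom(K,\cE)$ using the formula $\Rhom(K,\cE) \cong K^{\vee} \otimes^{L} \cE$, then invoke the description of $K^{\vee}$ from Lemma \ref{thm:CliffordResolutionOfK}, apply the projection formula for the closed immersion $i$, and finally use $\pi \circ i = \id_{\cY}$.

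In detail, I would first note that since $K$ is isomorphic to the finite rank locally free factorisation $\cK$ constructed in \eqref{eqn:DefOfK}, we have a canonical identification
\[
\Rhom_{\cZ_{-}}(K,\cE) \cong K^{\vee} \otimes^{L}_{\cZ_{-}} \cE,
\]
which is stated at the end of Section 3 on functors for factorisation categories. By Lemma \ref{thm:CliffordResolutionOfK}, $K^{\vee} \cong i_{*}(\wedge^{2n}(E^{\vee}))$ in $D^{b}(\cZ_{-},-W)$. Substituting, we get $\Rhom(K,\cE) \cong i_{*}(\wedge^{2n}(E^{\vee})) \otimes^{L} \cE$.

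Next I would apply the projection formula for the closed immersion $i : \cY \into \cZ_{-}$, which gives
\[
i_{*}(\wedge^{2n}(E^{\vee})) \otimes^{L} \cE \cong i_{*}\bigl(\wedge^{2n}(E^{\vee}) \otimes^{L} Li^{*}\cE\bigr).
\]
Since $\wedge^{2n}(E^{\vee})$ is locally free the derived tensor product on the right is underived. Finally, because $\pi \circ i = \id_{\cY}$ (the inclusion $i$ is a section of $\pi$), we have $\pi_{*} i_{*} = \id$, and since $Li^{*}\cE$ computes the restriction $\cE|_{\cY}$ (which is just a complex of sheaves on $\cY$ because $W$ vanishes along the zero section of $\pi$), applying $\pi_{*}$ yields
\[
\pi_{*}\Rhom_{\cZ_{-}}(K,\cE) \cong \wedge^{2n}(E^{\vee}) \otimes \cE|_{\cY},
\]
as claimed.

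The main thing to verify carefully is the projection formula in the factorisation setting, together with the compatibility of $Li^{*}$ with the restriction of factorisations. Both are standard since $i$ is a regular closed immersion and $W|_{\cY} = 0$, so the functors in question can be computed with the explicit locally free resolution $\cK$; in particular one can check the isomorphism directly by substituting $\cK$ for $K$ in $\Rhom(K,\cE)$ and pushing forward term by term, which is essentially the content of the Koszul computation already carried out in the proof of Lemma \ref{thm:CliffordResolutionOfK}.
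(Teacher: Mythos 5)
Your proof is correct and takes essentially the same route as the paper: both arguments come down to the identification $K^{\vee} \cong i_{*}(\wedge^{2n}(E^{\vee}))$ from Lemma \ref{thm:CliffordResolutionOfK} together with $\pi \circ i = \id_{\cY}$. The only cosmetic difference is that you move $\cE$ across $i_{*}$ via $\Rhom(K,\cE) \cong K^{\vee}\otimes^{L}\cE$ and the projection formula, whereas the paper writes $\Rhom_{\cZ_{-}}(K,\cE) \cong \Rhom_{\cZ_{-}}(\cE^{\vee},K^{\vee})$ and uses the adjunction for the closed immersion $i$ — equivalent manipulations, both legitimate in the factorisation setting since $i_{*}$ is exact and one may compute with the locally free representative $\cK$.
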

\begin{proof}
This follows from
\begin{align*}
\Rhom_{\cZ_{-}}(K, \cE) & \cong \Rhom_{\cZ_{-}}(\cE^{\vee}, K^{\vee}) \cong \Rhom_{\cZ_{-}}(\cE^{\vee},i_{*}(\wedge^{2n}(E^{\vee}))) \\
& \cong i_{*}\Rhom_{\cY}(\wedge^{2n}(E), \cE|_{\cY}) \cong i_{*}(\wedge^{2n}(E^{\vee}) \otimes \cE|_{\cY}).
\end{align*}
\end{proof}

Given an object $\cE \in D^{b}(\cZ_{-},W)$, we define the support of $\cE$ to be the support of the cohomology of $\cE \otimes \cE^{\vee}$, considered as a subset of $Z_{-}^{\mathrm{ss}}$.
The support of $\cE$ is closed and $G\times \CC^{*}_{R}$-invariant.
Furthermore, if $\cE \otimes \cE^{\vee} \cong 0$, then we must have $1 = 0 \in \Hom(\cE,\cE)$, so that $\cE \cong 0$.
Therefore the support of $\cE$ is empty if and only if $\cE \cong 0$.

\begin{nlemma}
\label{thm:CliffordGenerates}
If $\cE \in D^{b}(\cZ_{-},W)$ is such that $\pi_{*}\Rhom(K, \cE) = 0$, then $\cE = 0$.
\end{nlemma}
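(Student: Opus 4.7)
The plan is to combine Lemma \ref{thm:ShapeOfFClifford} with a support/contraction argument using the $\CC^{*}_{R}$-action described in Section \ref{sec:NegativeGITQuotient}. First I will apply Lemma \ref{thm:ShapeOfFClifford} to the hypothesis to get
\[
0 = \pi_{*}\Rhom(K,\cE) \cong \wedge^{2n}(E^{\vee})\otimes \cE|_{\cY}.
\]
Since $\wedge^{2n}(E^{\vee})$ is a line bundle on $\cY$, this immediately gives $\cE|_{\cY} = 0$ in $D^{b}(\cY)$, and hence also $\cE|_{p} = 0$ for every point $p \in \cY$.

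Next I would argue that the support of $\cE$, defined as the support of the cohomology sheaves of $\sHom(\cE,\cE)\in D(\cZ_{-})$ (equivalently, of $\cE\otimes\cE^{\vee}$), coincides with the locus of points $p \in Z_{-}^{\sstable}$ where $\cE|_{p}\not\cong 0$ in $D^{b}(p,0)$. This support is closed and $G\times\CC^{*}_{R}$-invariant. If $\cE\not\cong 0$, then by the paragraph preceding Lemma \ref{thm:CliffordGenerates} the support is nonempty. Under the $\CC^{*}_{R}$-action relevant to $\cZ_{-}$, namely the one which scales the two $V$-factors by $t_{R}$ and fixes $L$, every point $(v_{1},v_{2},l)\in Z_{-}^{\sstable}$ has orbit closure containing $(0,0,l)$, i.e.\ a point in the image of $i\colon L\setminus 0 \hookrightarrow V\times V\times(L\setminus 0)$. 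Thus any nonempty closed $\CC^{*}_{R}$-invariant subset of $Z_{-}^{\sstable}$ must meet the preimage of $\cY$.

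Combining the two observations gives the contradiction: on the one hand, the support of $\cE$ contains some point $p$ lying over $\cY$; on the other hand, $\cE|_{\cY} = 0$ forces $\cE|_{p} = 0$, so $p$ cannot lie in the support. Hence $\cE = 0$.

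The main thing to verify is that ``support'' behaves as expected in the factorisation setting, i.e.\ that $p\notin\mathrm{supp}(\cE)$ precisely when $\cE|_{p}\cong 0$; this follows because $\sHom(\cE,\cE)|_{p}\cong\mathrm{End}(\cE|_{p})$ via the projection formula for the closed point inclusion, and the latter is nonzero exactly when $\cE|_{p}$ is. The only other mildly delicate point is the $\CC^{*}_{R}$-contraction step, but given the explicit description of the action this is direct. Note that we are implicitly using the fact, mentioned in Section \ref{sec:ChangeOfRGrading}, that for the purposes of the category $D^{b}(\cZ_{-},W)$ we may use the $\CC^{*}_{R}$-action that scales $V\times V$ rather than the one that scales $L$.
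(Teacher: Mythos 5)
Your proposal is correct and follows essentially the same route as the paper: apply Lemma \ref{thm:ShapeOfFClifford} to conclude $\cE|_{\cY}\cong 0$, hence $(\cE\otimes\cE^{\vee})|_{\cY}\cong 0$, and then use that the support of $\cE$ is a closed $\CC^{*}_{R}$-invariant subset of $Z^{\sstable}_{-}$ which, by the contracting action scaling the $V$-factors, must meet $0\times 0\times(L\setminus 0)$ unless it is empty. Your extra remarks (the line-bundle twist, the pointwise characterisation of support, and the change of $\CC^{*}_{R}$-grading from Section \ref{sec:ChangeOfRGrading}) just make explicit what the paper leaves implicit.
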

\begin{proof}
By Lemma \ref{thm:ShapeOfFClifford} we have $\cE|_{\cY} \cong 0$, hence $\cE \otimes \cE^{\vee}|_{\cY} \cong 0$.
It follows that the support of $\cE$ does not intersect $L \setminus 0$.
Since it is a closed $\CC^{*}_{R}$-invariant subset of $Z^{\mathrm{ss}}_{-}$, it must then be empty, which implies $\cE \cong 0$.
\end{proof}

Let $\cK$ be the locally free representative of $K$ given in \eqref{eqn:DefOfK} and define the dg algebra $R$ on $\cY$ by
\[
R = \pi_{*}(\cEnd(\cK)).
\]
We define the functor $F : D(\cZ_{-},W) \to D(\cY,R)$ by
\[
\cE \mapsto \pi_{*}(\sHom_{\cZ_{-}}(\cK,\cE)).
\]
Here $\sHom_{\cZ_{-}}(\cK, -)$ and $\pi_{*}$ are both exact, and so $F$ is exact.

\begin{nlemma}
\label{thm:CliffordFBounded}
The functor $F$ takes $D^{b}(\cZ_{-},W)$ to $D^{b}(\cY,R)$.
\end{nlemma}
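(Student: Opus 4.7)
The plan is to verify that $H(F(\cE))$ is coherent as an $\cO_{\cY}$-module, which by definition places $F(\cE)$ in $D^{b}(\cY,R)$. Since the forgetful functor $D(\cY,R)\to D(\cY,\cO_{\cY})$ is the identity on underlying sheaves and commutes with taking cohomology, the coherence question reduces to a computation in $D(\cY,0)$, and Lemma~\ref{thm:ShapeOfFClifford} is essentially tailor-made to carry it out.

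First I would pick a finite rank locally free representative for $\cE\in D^{b}(\cZ_{-},W)$. Since $\cK$ is also finite rank locally free, the underived $\sHom_{\cZ_{-}}(\cK,-)$ computes $\Rhom_{\cZ_{-}}(K,-)$, and since $\pi$ is a vector bundle its underived push-forward of quasi-coherent sheaves agrees with $R\pi_{*}$. Consequently the underlying $\cO_{\cY}$-object of $F(\cE)$ is isomorphic in $D(\cY,0)$ to $\pi_{*}\Rhom_{\cZ_{-}}(K,\cE)$, which by Lemma~\ref{thm:ShapeOfFClifford} is $\wedge^{2n}(E^{\vee})\otimes \cE|_{\cY}$.

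Next, $W$ vanishes along $\cY$, and under the $\CC^{*}_{R}$-action fixed in Section~\ref{sec:NegativeGITQuotient} — which scales the two $V$-factors — the torus $\CC^{*}_{R}$ acts trivially on $\cY$ (the zero section of the $V$-directions). Proposition~\ref{thm:StandardDerivedCategory} therefore identifies $D^{b}(\cY,0)\cong D^{b}(\cY)$, and under this identification $\cE|_{\cY}$ becomes a bounded complex of coherent sheaves: the $\CC^{*}_{R}$-weight decomposition of the coherent factorisation $\cE|_{\cY}$ is a direct sum of coherent sheaves, only finitely many of which are nonzero since the total sheaf is coherent. Tensoring with the line bundle $\wedge^{2n}(E^{\vee})$ preserves boundedness and coherence, so $H(F(\cE))$ is coherent as an $\cO_{\cY}$-module and hence as an $R$-module, giving $F(\cE)\in D^{b}(\cY,R)$.

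There is no real obstacle here beyond Lemma~\ref{thm:ShapeOfFClifford}; the only thing worth pausing on is the passage from $D(\cY,0)$ to $D(\cY,R)$, which is justified by the observation in the first paragraph. One might reasonably worry about the need to derive $\pi_{*}$ or replace $\cE$ by an acyclic representative, but both are circumvented by the fact that $\pi$ is affine and $\cK$ and $\cE$ can be taken locally free, so all relevant functors are exact on the nose.
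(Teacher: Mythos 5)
Your proof is correct and follows the paper's route exactly: the paper's entire proof of this lemma is ``Follows from Lemma \ref{thm:ShapeOfFClifford}'', and your argument just spells out the bookkeeping that reference leaves implicit (exactness of $\sHom(\cK,-)$ and of $\pi_{*}$ for the affine map $\pi$, and coherence of the cohomology of $\wedge^{2n}(E^{\vee})\otimes\cE|_{\cY}$ via the trivial $\CC^{*}_{R}$-action on $\cY$ and Proposition \ref{thm:StandardDerivedCategory}).
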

\begin{proof}
Follows from Lemma \ref{thm:ShapeOfFClifford}.
\end{proof}

\begin{nlemma}
\label{thm:CliffordIsCohomology}
The cohomology algebra of $R$ is isomorphic to $C$, and there is a quasi-isomorphism $C \cong \ker d_{R} \cap R^{0} \subset R$.
\end{nlemma}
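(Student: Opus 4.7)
The plan is to compute $R = \pi_{*}(\sEnd(\cK))$ both as a graded sheaf and as a dg algebra, then identify its cohomology with the Clifford algebra via an explicit map. By Lemma \ref{thm:ProjectionFormula} applied to the affine morphism $\pi$, together with the fact that $\pi_{*}\cO_{\cZ_{-}} \cong \Sym^{\bullet}E$, the underlying $\cO_{\cY}$-module of $R$ is
\[
\bigoplus_{j,k} \wedge^{j}E^{\vee} \otimes \wedge^{k}E \otimes \Sym^{\bullet}E.
\]
The induced differential $d_{R}$ has two parts coming from $d_{\cK} = d_{r} + d_{l}$: $d_{r}$ (Koszul contraction with the tautological section of $\pi^{*}E^{\vee}$) and $d_{l}$ (exterior multiplication by the vector-valued potential encoding $W$), as in the proof of Lemma \ref{thm:FunctorsEquivalent}.

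Next I would construct an algebra homomorphism $\phi : C \to R^{0} \cap \ker d_{R}$. Sending $e \in E$ to the endomorphism of $\cK = \pi^{*}(\wedge^{\bullet}E)$ acting by Clifford multiplication $a \mapsto e \wedge a + \iota_{q(e,-)}(a)$ — where $q \in \Sym^{2}(E^{\vee})$ is the quadratic form extracted from $W$ and $\iota$ denotes contraction after using $q$ to turn $e$ into an element of $E^{\vee}$ — defines a map from the generating subspace $E \subset C$. A direct calculation with the Clifford relation $e^{2} = q(e)$ shows that the two ``mixed'' terms from $(e\wedge)\iota_{q(e,-)} + \iota_{q(e,-)}(e\wedge)$ cancel against the potential to give a $d_{R}$-cocycle, and that $\phi$ extends to a well-defined algebra map from all of $C$. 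Injectivity is clear by testing against the bottom summand $\cO_{\cZ_{-}} \subset \cK$.

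It remains to show that the inclusion $\ker d_{R} \cap R^{0} \hookrightarrow R$ is a quasi-isomorphism, which then also forces $\phi$ to be an isomorphism onto $\ker d_{R} \cap R^{0}$ by comparing ranks. I would verify this étale-locally on $\cY$, where $E$ trivialises and the potential becomes a fixed (generically non-degenerate) quadratic form on a $2n$-dimensional vector space; there $\cK$ specialises to the classical Koszul-type factorisation attached to a single quadratic form, and the corresponding statement — that the endomorphism dg algebra has cohomology concentrated in $R$-degree zero, equal to the Clifford algebra — is the content of \cite{dyckerhoff_compact_2011} and also appears in the argument of \cite{ballard_derived_2014} for the case $d = 2$. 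As a sanity check, this is consistent with the abstract computation: Lemma \ref{thm:ShapeOfFClifford} gives $H(R) \cong \wedge^{2n}E^{\vee} \otimes Li^{*}K$, and the derived self-intersection $Li^{*}K$ is $\wedge^{\bullet}E^{\vee}$ (with the appropriate $R$-shifts absorbed) since the normal bundle of $\cY$ in $\cZ_{-}$ is $E^{\vee}$; pairing against $\wedge^{2n}E^{\vee}$ yields $\wedge^{\bullet}E$.

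The main obstacle will be bookkeeping the $\CC^{*}_{R}$-grading and equivariant signs so that Clifford multiplication is genuinely a cocycle of $R$-weight zero, and checking that the local reduction to a single Clifford algebra glues correctly across étale charts on the gerbe $\cY$; the algebraic content of the cohomology computation itself is classical.
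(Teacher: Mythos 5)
Correct, and essentially the paper's own route: the paper establishes the first claim by citing the degree-$2$ computation of \cite[5.7]{ballard_derived_2014}, which is precisely your explicit Clifford-multiplication cocycles on $\cK$ together with the verification that they exhaust the cohomology, and it gets the second claim from the observation that $R$ is concentrated in non-negative cohomological degrees, so degree-zero cocycles are exactly degree-zero cohomology classes. One small correction to your local step: étale-locally on $\cY$ the superpotential is still a varying family of quadratic forms, degenerate along the corank locus (no genericity on $L$ is assumed here), not a single fixed form, so the reduction should be to the family computation of \cite{ballard_derived_2014} (as the paper does) rather than to the single nondegenerate quadric of \cite{dyckerhoff_compact_2011}.
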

\begin{proof}
For the first claim, see \cite[5.7]{ballard_derived_2014}; the computation there goes through in our case.
The second claim is true because $R$ is concentrated in positive cohomological degrees.
\end{proof}
One can explain why the computation of Lemma \ref{thm:CliffordIsCohomology} works in the following way.
Suppose we turned off the superpotential, and computed the algebra $H(\pi_{*}\Rhom(\cO_{\cY},\cO_{\cY}))$ with $\cO_{\cY}$ in the category $D^{b}(\cZ_{-},0)$ instead of in $D^{b}(\cZ_{-},W)$.
In general, if $X \into Y$ is a closed immersion of nonsingular varieties, then we have $\sExt^{i}(\cO_{X},\cO_{X}) = \wedge^{i}N_{X/Y}$ with the natural algebra structure on $\sExt^{\bullet}(\cO_{X},\cO_{X})$.
In our case, the same computation, coupled with the observation that the $\CC^{*}_{R}$-action changes the cohomological degrees, gives an isomorphism of sheaves of algebras (concentrated in cohomological degree 0):
\[
H(\pi_{*}\Rhom(\cO_{\cY},\cO_{\cY})) = \wedge^{\bullet} E.
\]
Turning on the superpotential, we can deform a locally free resolution of $\cO_{\cY} \in D^{b}(\cZ_{-},0)$ to a locally free factorisation for $\cO_{\cY} \in D^{b}(\cZ_{-},W)$.
This gives a deformation of $H(\pi_{*}\Rhom(\cO_{\cY},\cO_{\cY}))$ from an exterior algebra to a Clifford algebra.

We now claim that $F$ has a left adjoint functor $G : D^{b}(\cY,R) \to D^{b}(\cZ_{-},W)$.
We begin by defining a functor $UG : K(\cY,R) \to K(\cZ_{-},W)$ by
\[
UG(M) = \pi^{*}(M)\otimes_{\pi^{*}(R)} \cK,
\]
for any $R$-module $M$.
Here $\cK$ is a $\pi^{*}(R)$-module through the canonical map $\pi^{*}(R) = \pi^{*}\pi_{*}(\cEnd(\cK)) \to \cEnd(\cK)$.

We claim that $UG$ has a left derived functor $G : D(\cY,R) \to D(\cZ_{-},W)$.
Since $K(\cY,R)$ has enough $K$-flat objects by Lemma \ref{thm:KFlatTensorAcyclic}, the following lemma proves that $G$ is defined on $D(\cY,R)$.
\begin{nlemma}
\label{thm:CliffordDerived}
If $P_{1} \to P_{2}$ is a quasi-isomorphism of $K$-flat objects of $K(\cY,R)$, then the map $UG(P_{1}) \to UG(P_{2})$ is an isomorphism in $D(\cZ_{-},W)$.
\end{nlemma}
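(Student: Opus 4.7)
I would form the cone $C = \mathrm{Cone}(P_1 \to P_2) \in K(\cY, R)$. Since the class of $K$-flat dg modules is closed under cones (a routine consequence of the definition) and the cone of a quasi-isomorphism is acyclic, $C$ is both $K$-flat and acyclic. The functor $UG$ is triangulated at the homotopy-category level, so $UG(C)$ is the cone of $UG(P_1) \to UG(P_2)$ in $K(\cZ_-, W)$; thus it suffices to show that $UG(C) \cong 0$ in $D(\cZ_-, W)$.

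The plan is then to proceed by adjunction. Combining the tensor-hom adjunction for $\cK$ with the $(\pi^*, \pi_*)$ adjunction for the affine morphism $\pi : \cZ_- \to \cY$ gives a homotopy-level adjunction
\[
\Hom_{K(\cZ_-, W)}(UG(M), \cE) \cong \Hom_{K(\cY, R)}(M, F(\cE)),
\]
where $F = \pi_*\sHom(\cK, -)$. Since $\cK$ is locally free and $\pi$ is affine, $F$ is exact, so it descends to a functor $F : D(\cZ_-, W) \to D(\cY, R)$. By the standard theory of derived tensor products via $K$-flat resolutions (cf.\ \cite{spaltenstein_resolutions_1988}), for $K$-flat $P$ the object $UG(P)$ represents the derived left adjoint and the adjunction descends to
\[
\Hom_{D(\cZ_-, W)}(UG(P), \cE) \cong \Hom_{D(\cY, R)}(P, F(\cE)).
\]

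Taking $P = C$ and $\cE = UG(C)$: since $C$ is acyclic, $C \cong 0$ in $D(\cY, R)$, so the right-hand side vanishes. The descended adjunction therefore gives $\Hom_{D(\cZ_-, W)}(UG(C), UG(C)) = 0$, which forces $\mathrm{id}_{UG(C)} = 0$ in $D(\cZ_-, W)$ and hence $UG(C) \cong 0$, as required.

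The main obstacle is the descent of the adjunction from the homotopy to the derived level, which is where the $K$-flatness of $C$ enters essentially: it is precisely this hypothesis that guarantees $UG(C)$ has the homotopy type of the derived left adjoint applied to $C$ and that the $K$-level bijection of Hom groups is compatible with passing to the localisations. This is the standard content of deriving a tensor product via $K$-flat resolutions; the only new point is that the target is the factorisation derived category $D(\cZ_-, W)$ rather than an ordinary derived category of sheaves, and one must verify that the usual arguments extend — the abundance of $K$-flat objects provided by Lemma \ref{thm:KFlatTensorAcyclic} is what makes this extension routine.
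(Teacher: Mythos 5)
Your opening reduction (take the cone $C$, which is acyclic and $K$-flat, and show $UG(C)\cong 0$) is exactly the paper's starting point, but the pivotal step afterwards -- descending the homotopy-level adjunction to $\Hom_{D(\cZ_{-},W)}(UG(P),\cE)\cong\Hom_{D(\cY,R)}(P,F(\cE))$ for $K$-flat $P$ -- is a genuine gap, and in fact circular. Saying that ``for $K$-flat $P$ the object $UG(P)$ represents the derived left adjoint'' is precisely the assertion that $UG$ sends quasi-isomorphisms between $K$-flat objects to isomorphisms in $D(\cZ_{-},W)$, i.e.\ the lemma being proved; only once this is known can $LG$ be defined on $K$-flat resolutions and the adjunction be derived (which is how the paper later proves Lemma \ref{thm:CliffordLeftAdjoint}). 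The ``standard content of deriving a tensor product via $K$-flat resolutions'' that you invoke establishes such statements when the target is an ordinary derived category, by checking that the image of an acyclic $K$-flat object is acyclic; that check does not transfer here, because in $D(\cZ_{-},W)$ with $W\neq 0$ the differentials do not square to zero, so ``zero in the derived category'' is not a cohomological condition one can verify termwise, and $K$-flatness does not give $\Hom_{K}=\Hom_{D}$ ($K$-flat is not $K$-projective). Concretely, if you try to prove your descended adjunction by replacing $\cE$ with an injective factorisation $\cI$, you reduce to showing $\Hom_{K(\cY,R)}(A,F(\cI))=0$ for acyclic $K$-flat $A$, which by the $K$-level adjunction is $\Hom_{K(\cZ_{-},W)}(UG(A),\cI)\cong\Hom_{D(\cZ_{-},W)}(UG(A),\cI)$ -- again the lemma itself.

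The paper supplies the missing mechanism for detecting vanishing in $D(\cZ_{-},W)$: for every $\cE\in D^{b}(\cZ_{-},W)$ it computes $\RHom(\cE,UG(P))$ as global sections of $\pi_{*}(\pi^{*}(P)\otimes_{\pi^{*}R}\cK\otimes\cE^{\vee})$, uses the projection formula (Lemma \ref{thm:ProjectionFormula}) to rewrite this as $P\otimes_{R}\pi_{*}(\cK\otimes\cE^{\vee})$ -- an honest complex on $\cY$, where $W=0$ and acyclicity is cohomological -- applies Lemma \ref{thm:TensorKFlatAcyclic} (acyclic $K$-flat tensor anything is acyclic) there, and concludes $UG(P)\cong 0$ from Lemma \ref{thm:BoundedsGenerate}. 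Note also that Lemma \ref{thm:BoundedsGenerate} requires maps \emph{from} bounded objects \emph{into} $UG(P)$, the opposite direction to what the adjunction $UG\dashv F$ controls; your workaround via $\mathrm{id}_{UG(C)}=0$ is fine in principle, but it rests entirely on the circular descent step, so the argument as proposed does not close.
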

\begin{proof}
We must check that for any acyclic $K$-flat $P$, we have $UG(P) \cong 0$ in $D(\cZ_{-},W)$.
Let $\cE \in D^{b}(\cZ_{-},W)$, and compute
\begin{align*}
\RHom(\cE,UG(P)) &\cong \RGamma(\cZ_{-}, \pi^{*}(P) \otimes_{\pi^{*}R} \cK \otimes \cE^{\vee}) \\
&\cong \RGamma(\cY,\pi_{*}(\pi^{*}(P) \otimes_{\pi^{*}R} \cK \otimes \cE^{\vee})).
\end{align*}
Now by the projection formula of Lemma \ref{thm:ProjectionFormula} we have
\[
\pi_{*}(\pi^{*}(P) \otimes_{\pi^{*}R} \cK \otimes \cE^{\vee}) \cong P \otimes_{R} \pi_{*}(\cK \otimes \cE^{\vee}).
\]
Since $P$ is acyclic and $K$-flat, the right hand side is acyclic by Lemma \ref{thm:TensorKFlatAcyclic}.
Hence $\RHom(\cE, UG(P)) = 0$ for all $\cE \in D^{b}(\cZ_{-},W)$.
By Lemma \ref{thm:BoundedsGenerate} it follows that $UG(P) \cong 0$.
\end{proof}

\begin{nlemma}
\label{thm:CliffordLeftAdjoint}
The functor $G$ is left adjoint to $F$.
\end{nlemma}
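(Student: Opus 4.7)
The plan is to establish the adjunction in two stages: first prove it at the level of dg categories (or equivalently at the homotopy level with $K$-flat objects on one side), then pass to the derived category using the $K$-flat resolutions provided by Lemma \ref{thm:KFlatTensorAcyclic}.

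First I would verify the underived, dg-level statement: for any right $R$-module $M$ and any factorisation $\cE \in C(\cZ_-, W)$, there is a natural isomorphism of dg vector spaces
\[
\Hom_{\cZ_-}\bigl(\pi^*(M) \otimes_{\pi^*R} \cK,\, \cE\bigr) \;\cong\; \Hom_{\cY, R}\bigl(M,\, \pi_*\sHom_{\cZ_-}(\cK, \cE)\bigr).
\]
This is obtained by composing two standard adjunctions. The tensor-hom adjunction, applied to $\cK$ viewed as a $(\pi^*R, \cO_{\cZ_-})$-bimodule, gives
\[
\Hom_{\cZ_-}\bigl(\pi^*(M) \otimes_{\pi^*R} \cK, \cE\bigr) \cong \Hom_{\pi^*R}\bigl(\pi^*(M), \sHom_{\cZ_-}(\cK, \cE)\bigr).
\]
The morphism $\pi$ is equivariantly affine (it is a vector bundle on the gerbe $\cY$), so $\pi_*$ is exact and the $(\pi^*, \pi_*)$ adjunction for sheaves of dg modules over $R$ then gives
\[
\Hom_{\pi^*R}\bigl(\pi^*(M), \sHom_{\cZ_-}(\cK, \cE)\bigr) \cong \Hom_{R}\bigl(M, \pi_*\sHom_{\cZ_-}(\cK, \cE)\bigr) = \Hom_R(M, F(\cE)).
\]
Composing these gives the required isomorphism. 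Standard checks show that the isomorphism is compatible with the differentials (the one induced by $d_{\cK}, d_M, d_\cE$) and with the $R$- and $\pi^*R$-actions, since all structure maps in sight are natural.

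Next I would pass to the homotopy category $K$ by observing that the adjunction isomorphism, being an isomorphism of dg vector spaces, induces an isomorphism on $H^0$, and is natural in both variables. So $UG \dashv F$ at the level of $K(\cY, R) \to K(\cZ_-, W)$. Finally, to descend to the derived category, I would use the existence of $K$-flat resolutions. For any $M \in D(\cY, R)$, represent it by a $K$-flat $P \to M$; then $G(M) = UG(P)$ in $D(\cZ_-, W)$ by Lemma \ref{thm:CliffordDerived}. Since $F$ is already exact on $K(\cZ_-, W)$ (no derivation needed, as both $\sHom(\cK, -)$ applied to a locally free factorisation and $\pi_*$ are exact here), the underived adjunction passes to the derived level:
\[
\Hom_{D(\cZ_-, W)}(G(M), \cE) \cong \Hom_{K(\cZ_-, W)}(UG(P), \cE) \cong \Hom_{K(\cY, R)}(P, F(\cE)) \cong \Hom_{D(\cY, R)}(M, F(\cE)),
\]
where the middle isomorphism is the one already established and the outer ones use the $K$-flat (resp.\ no-op) replacement.

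The main subtlety is verifying that the tensor-hom adjunction really is an isomorphism of dg modules rather than only of ordinary sheaves, which comes down to the usual sign-rule bookkeeping, and that $\pi_*$ commutes with $\sHom_{\cZ_-}(\cK, -)$ in the required way; this is guaranteed because $\cK$ is locally free of finite rank and $\pi$ is equivariantly affine. Beyond that the argument is formal.
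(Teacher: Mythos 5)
Your underived step is fine and matches what the paper does implicitly: the paper simply asserts that the underived functors $UG$ and $F$ are adjoint, and your composition of the tensor--hom adjunction for $\cK$ as a $(\pi^{*}R,\cO_{\cZ_{-}})$-bimodule with the $(\pi^{*},\pi_{*})$ adjunction is a correct way to see this.

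The gap is in your descent to the derived category. In your chain
\[
\Hom_{D(\cZ_{-},W)}(G(M),\cE)\cong\Hom_{K(\cZ_{-},W)}(UG(P),\cE)\cong\Hom_{K(\cY,R)}(P,F(\cE))\cong\Hom_{D(\cY,R)}(M,F(\cE))
\]
the middle isomorphism is the homotopy-level adjunction and is fine, but the two outer identifications are not justified. Morphisms in a Verdier quotient are roofs, so $\Hom_{D}(UG(P),\cE)=\Hom_{K}(UG(P),\cE)$ requires either $UG(P)$ to be homotopically projective (i.e.\ $\Hom_{K}(UG(P),-)$ kills acyclics) or $\cE$ to be replaced by a K-injective object; similarly $\Hom_{K}(P,F(\cE))=\Hom_{D}(M,F(\cE))$ requires $P$ to be homotopically projective, and K-flatness of $P$ (which is all Lemma \ref{thm:KFlatTensorAcyclic} provides) does not give this. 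Nor does the obvious repair work: if you replace $\cE$ by a K-injective $\cI$, you are left needing $\Hom_{K}(P,F(\cI))=\Hom_{D}(P,F(\cI))$, which would follow if $F(\cI)$ were K-injective over $R$, but that in turn would need the left adjoint $UG$ to preserve acyclics, which is false (that is exactly why $UG$ must be derived). The paper closes this gap not by a pointwise Hom computation but by invoking the general theorem that derived functors of an adjoint pair are again adjoint once they exist (Deligne, \cite[Exp.\ 17, Thm.\ 2.3.7]{SGA43}), whose proof goes through the universal property of derived functors rather than through identifications of homotopy and derived Hom-sets. So you should either cite such a general result, or work with genuinely homotopically projective/injective resolutions rather than K-flat ones.
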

\begin{proof}
The underived versions of these functors, i.e.\ $F : K(\cZ_{-},W) \to K(B,R)$ and $UG : K(B,R) \to K(\cZ_{-},W)$ are clearly adjoint.
Then by \cite[Exp.\ 17, Thm.\ 2.3.7]{SGA43} their derived functors are adjoint as well.
\end{proof}

\begin{nlemma}
\label{thm:CliffordFullyFaithful}
The functor $G$ is fully faithful.
\end{nlemma}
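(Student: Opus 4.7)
Since $G$ is left adjoint to $F$ by Lemma \ref{thm:CliffordLeftAdjoint}, fully faithfulness of $G$ is equivalent to the unit $\eta_{M} : M \to FG(M)$ being an isomorphism in $D(\cY, R)$ for every $M \in D^{b}(\cY, R)$. So the plan is to fix such an $M$, compute $FG(M)$ explicitly by unwinding both functors on a convenient representative, and verify that under this identification the unit is the identity (up to the chosen quasi-isomorphism).

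The first step is to choose a $K$-flat resolution $P \to M$, which exists by Lemma \ref{thm:KFlatTensorAcyclic}. By Lemma \ref{thm:CliffordDerived}, the object $G(M) \in D(\cZ_{-}, W)$ is then represented by the underived $UG(P) = \pi^{*}P \otimes_{\pi^{*}R} \cK$. Next I would use that $\cK$ is a finite rank locally free factorisation (see \eqref{eqn:DefOfK}), so $\sHom_{\cZ_{-}}(\cK, -) \cong (-) \otimes \cK^{\vee}$ is exact, and that $\pi$ is an affine morphism, so $\pi_{*}$ is exact on the underlying sheaves. In particular $F$ needs no further resolution to be computed on $UG(P)$, giving
\[
FG(M) \;\cong\; \pi_{*}\bigl(\pi^{*}P \otimes_{\pi^{*}R} \cK \otimes \cK^{\vee}\bigr) \;\cong\; \pi_{*}\bigl(\pi^{*}P \otimes_{\pi^{*}R} \cEnd(\cK)\bigr),
\]
where $\cEnd(\cK) = \cK \otimes \cK^{\vee}$ is regarded as a $(\pi^{*}R, \pi^{*}R)$-bimodule via the tautological map $\pi^{*}R = \pi^{*}\pi_{*}\cEnd(\cK) \to \cEnd(\cK)$.

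Since $\pi : \cZ_{-} \to \cY$ is a vector bundle of quotient stacks, it is equivariantly affine, so the projection formula of Lemma \ref{thm:ProjectionFormula} applies and yields
\[
\pi_{*}\bigl(\pi^{*}P \otimes_{\pi^{*}R} \cEnd(\cK)\bigr) \;\cong\; P \otimes_{R} \pi_{*}(\cEnd(\cK)) \;=\; P \otimes_{R} R \;\cong\; P \;\cong\; M.
\]
The final step is to check that this chain of isomorphisms really does implement the unit $\eta_{M}$. This is a formal check: the underived adjunction $(UG, F)$ on the level of homotopy categories has unit $M \to FUG(M)$ sending $m \mapsto (\phi \mapsto \phi \otimes m)$, and tracing this through the projection formula isomorphism recovers the identity of $P$.

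The main technical obstacle will be the projection formula step; one must verify that Lemma \ref{thm:ProjectionFormula} applies not merely to ordinary modules but to $\pi^{*}R$-modules equipped with factorisation differentials, where the bimodule $\cEnd(\cK)$ carries the non-trivial twisted differential coming from $W$. Everything else amounts to keeping track of $K$-flatness (which passes from $P$ to $\pi^{*}P \otimes_{\pi^{*}R}\cK$ since $\cK$ is locally free) and exactness of the underived $F$; once these are in hand the argument is essentially a formal manipulation.
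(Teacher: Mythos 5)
Your proposal is correct and follows essentially the same route as the paper: reduce full faithfulness of the left adjoint $G$ to the unit $\id \to FG$ being an isomorphism, evaluate on a $K$-flat representative where $UG$ computes $G$ and the (already exact) $F$ needs no further resolution, and then apply the projection formula of Lemma \ref{thm:ProjectionFormula} to identify $\pi_{*}(\pi^{*}M \otimes_{\pi^{*}R} \cK \otimes \cK^{\vee})$ with $M \otimes_{R} R = M$. The only difference is that you flag explicitly the compatibility of the projection formula with the factorisation differentials and the identification of the unit, points the paper treats as immediate.
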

\begin{proof}
We must show that the transformation of functors $\id \to FG$ is an equivalence.
That is to say, we must show that for a $K$-flat $M \in K(\cY,R)$, the natural map 
\[
M \to \pi_{*}\sHom(\cK,\pi^{*}(M) \otimes_{\pi^{*}(R)} \cK)
\]
is a quasi-isomorphism.
Applying the projection formula of Lemma \ref{thm:ProjectionFormula}, the right hand side may be rewritten as 
\[
\pi_{*}(\pi^{*}(M) \otimes_{\pi^{*}(R)} \cK \otimes_{\cZ_{-}} \cK^{\vee}) \cong M \otimes_{R} \pi_{*}(\cK \otimes_{\cZ_{-}} \cK^{\vee}) = M \otimes_{R} R = M.
\]
\end{proof}

The factorisation $\cK$ admits a left action of $\pi^{*}C$ through the inclusion $\pi^{*}C \into \pi^{*}R$.
\begin{nlemma}
\label{thm:KIsC}
As a left $\pi^{*}C$-module, we have $\cK \otimes \pi^{*}(\wedge^{2n}E^{\vee}) \cong \pi^{*}C $.
\end{nlemma}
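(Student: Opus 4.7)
The plan is to construct an explicit isomorphism using the canonical cyclic vector provided by the top exterior power of the factorisation~\eqref{eqn:DefOfK} representing $\cK$.

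First, I would fix the representative of $\cK$ from~\eqref{eqn:DefOfK}, whose underlying graded sheaf is $\pi^{*}(\wedge^{\bullet}E)$, and unpack the $\pi^{*}C$-action coming from the inclusion $\pi^{*}C \cong \ker d_{\pi^{*}R} \cap (\pi^{*}R)^{0} \hookrightarrow \pi^{*}R = \pi^{*}\pi_{*}\cEnd(\cK)$ supplied by Lemma~\ref{thm:CliffordIsCohomology}.  At the cochain level, this identifies the action of a local section $e$ of $E \subseteq C$ on $\cK$ with the standard spin-module operator $L_{e} + I_{e}$ on $\pi^{*}(\wedge^{\bullet}E)$, where $L_{e}$ denotes left wedge multiplication by $e$ and $I_{e}$ is the contraction determined by $W$.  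The Clifford relation $e^{2} = W(e)$ is then precisely the identity $(L_{e} + I_{e})^{2} = W(e)\cdot \id$, which in turn reflects the factorisation relation $d^{2} = W$ on $\cK$.

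Second, I would identify a canonical cyclic vector.  After tensoring with $\pi^{*}(\wedge^{2n}E^{\vee})$, the top summand of $\cK$ becomes $\pi^{*}(\wedge^{2n}E) \otimes \pi^{*}(\wedge^{2n}E^{\vee}) \cong \cO_{\cZ_{-}}$; let $\sigma$ denote its canonical section corresponding to $1$.  The formula $c \mapsto c \cdot \sigma$ then defines a morphism
\[
\phi \colon \pi^{*}C \longrightarrow \cK \otimes \pi^{*}(\wedge^{2n}E^{\vee})
\]
of left $\pi^{*}C$-modules by construction.

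Third, I would check $\phi$ is an isomorphism.  Both sides are locally free of rank $2^{2n}$, so it suffices to check surjectivity on stalks.  Working in a local frame $e_{1},\ldots,e_{2n}$ of $E$, the Clifford monomials $\{e_{I} = e_{i_{1}}\cdots e_{i_{k}} : I \subseteq \{1,\ldots,2n\}\}$ form a local basis of $\pi^{*}C$.  Because $L_{e}$ annihilates the top element, iterated application of the spin action to $\sigma$ yields $\phi(e_{I}) = \pm e_{I^{c}} \otimes \sigma$ with $I^{c}$ the complement of $I$, exhibiting a local basis of the target inside the image of $\phi$.

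The main obstacle is step one: matching the abstract $\pi^{*}C$-action on $\cK$ with the concrete operators $L_{e} + I_{e}$.  This amounts to tracing through the cochain realisation of the inclusion $C \hookrightarrow R$ from Lemma~\ref{thm:CliffordIsCohomology} and comparing it against the differential $d = d_{r} + d_{l}$ in~\eqref{eqn:DefOfK}; once this identification is in place, the remaining steps reduce to a standard computation with Clifford algebras and their spin representations.
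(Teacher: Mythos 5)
The morphism you construct is, in substance, the paper's morphism: acting with $\pi^{*}C$ on the canonical section $\sigma$ of the top summand $\pi^{*}(\wedge^{2n}E)\otimes\pi^{*}(\wedge^{2n}E^{\vee})\cong\cO_{\cZ_{-}}$ is the same as composing the action with the dual $\psi^{\vee}$ of the inclusion of the left-most factor of \eqref{eqn:DefOfK}, so steps one and two set up the right map. The gap is in the verification, and it hides exactly in the step you defer. The formula $\phi(e_{I})=\pm e_{I^{c}}\otimes\sigma$ presupposes an orthonormal local frame for the quadratic form; no such frame exists along the degeneracy locus of $q$, which is the corank $\ge 1$ stratum of $\PP(L)$ and cannot be avoided (Proposition \ref{thm:MainPropositionClifford}, and hence this lemma, carries no genericity hypothesis on $L$, and the discriminant is where all the interesting geometry lives). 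Worse, with the action exactly as you describe it -- $q$-independent wedge $L_{e}$ plus $q$-dependent contraction $I_{e}$ -- and the top summand as cyclic vector, the claim is simply false at degenerate points: in the rank-one model $C=\CC[e]/(e^{2}-a)$ acting on $\CC\oplus\CC e$ by $L_{e}+a\,\iota_{e^{\vee}}$, the submodule generated by the top vector $e$ is spanned by $e$ and $a\cdot 1$, which is proper when $a=0$; any radical element kills the top vector. In fact, if one writes down the degree-zero cocycles of $\pi_{*}\sEnd(\cK)$ anticommuting with $d=\iota_{\tau}+s_{W}\wedge$, the Clifford generators act by operators of the shape ``$q$-dependent wedge plus $q$-independent contraction'' (e.g.\ $L_{q^{\flat}\xi}-\iota_{\xi}$, up to the duality bookkeeping), i.e.\ the opposite distribution of $q$-dependence from the one you assert; it is precisely this that makes the top summand the correct cyclic vector over all of $\PP(L)$. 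So the ``main obstacle'' you set aside is where the argument actually breaks, and the concluding pointwise computation does not survive its correction as written.

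For comparison, the paper never uses an explicit formula for the action: $\phi$ is a map of $\CC^{*}_{R}$-equivariant locally free sheaves, so since $\CC^{*}_{R}$ scales the fibres of $\pi$ positively it suffices to check it after restriction to the zero section $\cY$; there $\phi|_{\cY}$ factors through $\pi_{*}\sHom(\cK^{\vee},\cK^{\vee})\to\pi_{*}\sHom(\cK^{\vee},i_{*}(\wedge^{2n}E^{\vee}))$, each map is a quasi-isomorphism by Lemma \ref{thm:CliffordResolutionOfK}, and since the differentials on source and target vanish, $\phi|_{\cY}$ is an isomorphism. If you want to keep a fibrewise argument you must (i) actually establish the cochain representatives of the generators, and (ii) replace the orthonormal-frame computation by a filtration-by-exterior-degree argument: the $q$-independent contraction symbols alone carry $\sigma$ onto a basis (contraction of the top form against monomials is a perfect pairing), and the $q$-dependent wedge terms are triangular with respect to this filtration, so they do not affect invertibility. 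Both points are doable, but they are exactly the content missing from your proposal.
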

\begin{proof}
Forgetting the differential on $\cK$ for the moment, let $\psi : \pi^{*}(\wedge^{2n}E) \to \cK$ be the inclusion into the left-most factor in \eqref{eqn:DefOfK}.
The composition 
\[
\phi : \pi^{*}C \to \sHom(\cK^{\vee},\cK^{\vee}) \stackrel{\psi^{\vee}\circ}{\to} \sHom(\cK^{\vee}, \pi^{*}(\wedge^{2n}E^{\vee})) = \cK \otimes \pi^{*}(\wedge^{2n}E^{\vee})
\]
is a map of left $\pi^{*}C$-modules, which we claim is an isomorphism.

The map $\phi$ is a map of $\CC^{*}_{R}$-equivariant locally free sheaves.
Since $\CC^{*}_{R}$ scales the fibres of $\pi$ positively, it suffices to check that the map is an isomorphism after restriction to the 0-section $\cY$.

After restriction to $\cY$, the map factors as
\[
\phi|_{\cY} : C \to \pi_{*}\sHom(\cK^{\vee},\cK^{\vee}) \to \pi_{*}\sHom(\cK^{\vee}, i_{*}(\wedge^{2n}E)) = \cK|_{\cY} \otimes \wedge^{2n}E^{\vee}.
\]
Putting the natural differentials on the objects in this sequence of maps, we see that every map is a quasi-isomorphism by Lemma \ref{thm:CliffordResolutionOfK}.
As the differentials vanish on both source and target, $\phi|_{\cY}$ must be an isomorphism.
\end{proof}

\begin{nlemma}
\label{thm:CliffordGBounded}
The functor $G$ sends $D^{b}(\cY,R)$ to $D^{b}(\cZ_{-},W)$.
\end{nlemma}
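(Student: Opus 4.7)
The plan is to use the quasi-isomorphism $C \hookrightarrow R$ from Lemma \ref{thm:CliffordIsCohomology} to transfer the problem to $D^b(\cY, C)$, where $G$ admits a manifestly bounded description. By Lemma \ref{thm:QuasiIsoImpliesEquivalence}, this quasi-isomorphism induces an equivalence $D^b(\cY, C) \cong D^b(\cY, R)$. Under this equivalence, a standard change-of-rings computation identifies $G$ with the functor
\[
G' : D^b(\cY, C) \to D^b(\cZ_-, W), \qquad M \mapsto \pi^* M \otimes^L_{\pi^* C} \cK,
\]
where $\cK$ is viewed as a left $\pi^* C$-module through the inclusion $\pi^* C \hookrightarrow \pi^* R$. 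Concretely, if $P \to M$ is a $K$-flat resolution over $C$, then $P \otimes_C R$ is a $K$-flat resolution of $M \otimes_C^L R$ over $R$, and $(P \otimes_C R) \otimes_{\pi^* R} \cK \cong P \otimes_{\pi^* C} \cK$, using that the $\pi^* R$-action on $\cK$ factors through the canonical map $\pi^* R \to \cEnd(\cK)$.

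The key simplification comes from Lemma \ref{thm:KIsC}: the module $\cK$ is locally free of rank one (up to a twist by the line bundle $\pi^*(\wedge^{2n} E)$) as a left $\pi^* C$-module, hence flat. Consequently the derived tensor product agrees with the underived one, and
\[
G'(M) \;\cong\; \pi^* M \otimes_{\pi^* C} \cK \;\cong\; \pi^*\bigl(M \otimes_{\cO_\cY} \wedge^{2n} E\bigr)
\]
as a sheaf on $\cZ_-$, equipped with the factorisation differential inherited from $\cK$.

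Any object of $D^b(\cY, C)$ is represented by a bounded complex of coherent $C$-modules, obtained by induction on cohomological length from its coherent cohomology; since $D^b(\cZ_-, W)$ is triangulated, it suffices to treat the case where $M$ is a single coherent $C$-module. Then $\pi^*(M \otimes \wedge^{2n} E)$ is visibly a coherent sheaf on $\cZ_-$, so $G'(M)$ is a coherent factorisation and lies in $D^b(\cZ_-, W)$. The main point requiring care is the change-of-rings identification of $G$ with $G'$, specifically that $-\otimes_C R$ transfers $K$-flat resolutions correctly and that the resulting formula computes $G$ on the nose; this is a routine consequence of the fact that $C \hookrightarrow R$ is a quasi-isomorphism of dg algebras and that $\pi$ is equivariantly affine, so the projection formula of Lemma \ref{thm:ProjectionFormula} applies.
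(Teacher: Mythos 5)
Your proof is correct and follows essentially the same route as the paper: transfer along the equivalence $D^{b}(\cY,C)\cong D^{b}(\cY,R)$, identify $G$ after change of rings with $\pi^{*}(-)\otimes_{\pi^{*}C}\cK$, and use Lemma \ref{thm:KIsC} to see this functor is exact (so computable on a bounded coherent representative), giving a coherent result up to a line-bundle twist. The appeal to the projection formula (Lemma \ref{thm:ProjectionFormula}) is unnecessary here — only associativity of the tensor product and exactness of $\pi^{*}$ are used — but this does not affect the argument.
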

\begin{proof}
We have an equivalence $\Phi : D^{b}(\cY,C) \to D^{b}(\cY,R)$, by Lemmas \ref{thm:CliffordIsCohomology} and \ref{thm:QuasiIsoImpliesEquivalence}, where $\Phi$ is given by $M \mapsto M \otimes^{L}_{C} R$.

Let $U\Phi$ be the underived functor $- \otimes_{C} R : K(\cY,C) \to K(\cY,R)$, and let $UH = UG \circ U\Phi$.
Then for any $M \in K(\cY,C)$, we have
\begin{align*}
UH(M) &= \pi^{*}(M \otimes_{C} R) \otimes_{\pi^{*}R} \cK \\
&= \pi^{*}(M) \otimes_{\pi^{*}(C)} \pi^{*}(R) \otimes_{\pi^{*}R} \cK = \pi^{*}(M) \otimes_{\pi^{*}C} \cK.
\end{align*}
The underlying sheaf of $\cK$ (forgetting the differential) is locally isomorphic to $\pi^{*}(C)$ as a left $\pi^{*}(C)$-module, by Lemma \ref{thm:KIsC}.

It now follows that if 
\[
M^{1} \to M^{2} \to \cdots \to M^{n}
\]
is an exact sequence of dg $C$-modules, then the induced sequence of factorisations $\pi^{*}(M^{\bullet}) \otimes_{\pi^{*}C} \cK$ is exact as well.

On $K^{b}(\cY,C)$, every complex $M$ has a left replacement by a bounded, coherent complex $N$.
If $N^{\pr}$ is a different such complex, then $UH(N) \cong UH(N^{\prime})$ in $D(\cZ_{-},W)$ by the above calculations.
Therefore we may compute the left derived functor $H$ of $UH$ on $K^{b}(\cY,C)$ by such replacements.
Now if $M$ is a coherent $C$-module, then $H(M)$ is coherent by
\[
H(M) = \pi^{*}(M) \otimes_{\pi^{*}C} \cK = \pi^{*}(M) \otimes_{\pi^{*}C} \pi^{*}C  \otimes_{\cZ_{-}} \wedge^{2n}E^{\vee} = \pi^{*}M \otimes_{\cZ_{-}} \wedge^{2n}E^{\vee},
\]
and it follows that $H(M)$ is coherent for all $M \in D^{b}(\cY,C)$.

Since $H = G \circ \Phi$ and $\Phi : D^{b}(\cY,C) \to D^{b}(\cY,R)$ is an equivalence, the claim follows.
\end{proof}

\begin{nlemma}
\label{thm:CliffordGradeRestriction}
For $\cE \in D^{b}(\cZ_{-},W)$, we have $F(\cE) \in D^{b}(\cY,C)_{\res}$ if and only if $\cE \in D(\cZ_{-},W)_{\res}$.
\end{nlemma}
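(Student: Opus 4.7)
The plan is a direct computation of $\CC^{*}$-weights using Lemma~\ref{thm:ShapeOfFClifford}. That lemma gives an isomorphism of the underlying sheaves $F(\cE) \cong \wedge^{2n}(E^{\vee}) \otimes \cE|_{\cY}$ in $D(\cY,0)$. Both grade-restriction conditions depend only on the $\CC^{*}$-weights at each stacky point $p \in \PP(L)$ (after forgetting the module and LG structures), so it suffices to compare the $\CC^{*}$-weights of $i_{p}^{*}(F\cE)$ and of $i_{p}^{*}(\cE)$ for every such $p$.

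The inclusion $i_{p}\colon p/\CC^{*} \into \cZ_{-}$ factors through the zero-section $\cY \into \cZ_{-}$, so $i_{p}^{*}(\cE) = (\cE|_{\cY})|_{p/\CC^{*}}$, and hence
\[
i_{p}^{*}(F\cE) \cong \wedge^{2n}(E^{\vee})|_{p/\CC^{*}} \otimes i_{p}^{*}(\cE).
\]
The key step is to show that the line bundle $\wedge^{2n}(E^{\vee})|_{p/\CC^{*}}$ has $\CC^{*}$-weight $0$. The connected component $\CC^{*} \subset O(2) = \ker(\chi)$ sits inside $T$ as the subtorus $\{(s,s^{-1})\}$, so this $\CC^{*}$ acts on $V \oplus V$ with weight $+1$ on the first summand and $-1$ on the second. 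Dualizing gives the analogous statement for $E^{\vee}|_{p} \cong V \oplus V$, and therefore $\wedge^{2n}(E^{\vee})|_{p/\CC^{*}} \cong (\det V)^{\otimes 2}$ has $\CC^{*}$-weight $n + (-n) = 0$.

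Tensoring by a $\CC^{*}$-weight-zero line bundle preserves the weight decomposition, so $i_{p}^{*}(F\cE)$ and $i_{p}^{*}(\cE)$ have the same set of $\CC^{*}$-weights at every $p$. This is exactly the assertion that $F(\cE)$ is grade restricted if and only if $\cE$ is. The only minor caveat is that $F$ a priori lands in $D^{b}(\cY, R)$ rather than $D^{b}(\cY, C)$, but the equivalence $D^{b}(\cY, C) \cong D^{b}(\cY, R)$ from Lemmas~\ref{thm:CliffordIsCohomology} and \ref{thm:QuasiIsoImpliesEquivalence} preserves the underlying graded sheaves, so the grade-restriction condition transports across this equivalence. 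I do not expect any serious obstacle; the whole argument is essentially an exterior-algebra weight count.
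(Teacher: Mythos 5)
Your proposal is correct and follows essentially the same route as the paper: both invoke Lemma \ref{thm:ShapeOfFClifford} to write $F(\cE) \cong \wedge^{2n}E^{\vee} \otimes \cE|_{\cY}$ and conclude from the $\CC^{*}$-weight of $\wedge^{2n}E^{\vee}$ at each $p \in \PP(L)$ being zero that the two grade-restriction conditions agree. Your explicit weight count (weights $+1$ and $-1$ each with multiplicity $n$, summing to zero) just spells out what the paper asserts without computation.
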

\begin{proof}
By Lemma \ref{thm:ShapeOfFClifford} we have $F(\cE) \cong \wedge^{2n}E^{\vee} \otimes \cE|_{\cY}$.
Since $\wedge^{2n}E^{\vee}$ has weight 0 at every point $p \in \PP(L)$, it follows that for any point $p \in \PP(L)$, the restriction of $\cE$ to $\pt/\CC^{*}$ satisfies the grade restriction condition if and only if $F(\cE)$ does.
In other words, $F(\cE) \in D^{b}(\cY, C)_{\res}$ if and only if $\cE \in D^{b}(\cZ_{-},W)_{\res}$.
\end{proof}

\section{Geometric interpretation of $D^{b}(\cZ_{-},W)_{\res}$, odd case}
\label{sec:GeometricInterpretation}
In this section we assume that $n = \dim V$ is odd, that the corank of the quadratic form at each point of $\PP(L) \subset \PP(\Sym^{2}(V^{\vee}))$ is at most $2$, and that the loci of points of corank 1 and 2 within $\PP(L)$ are nonsingular and of codimension 1 and 3, respectively.
This assumption holds for a generic $L$ of dimension $\le 6$.

We begin by describing the variety $f : Y \to \PP(L)$.
On the open locus in $\PP(L)$ where the quadratic form has corank 0 or 1, the variety $Y$ is an étale double cover of the corank 1 locus.
The 2 points in the fibre $f^{-1}(q)$ at a corank 1 point $q \in \PP(L) \subset \PP(\Sym^{2}V^{\vee})$ correspond to the connected components of the moduli space of maximal isotropic subspaces in $(V,q)$.
The variety $Y$ is nonsingular, and ramified in the locus of corank 2 points.

We aim to prove the following proposition:
\begin{nprop}
\label{thm:GeometricInterpretationOfMFCategory}
There is an equivalence of categories 
\[
D^{b}(Y) \cong D^{b}(\cZ_{-},W)_{\res}.
\]
\end{nprop}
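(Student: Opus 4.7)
The plan is to mimic the proof of Proposition~\ref{thm:MainPropositionClifford}, replacing the Clifford generator $K = i_*\cO_{\cY}$ with a ``spinor-type'' factorisation $\cK' \in D^b(\cZ_-, W)_\res$ whose sheaf of endomorphism algebras descends along the $O(2)$-gerbe to $f_*\cO_Y$ on $\PP(L)$. The motivation is classical: fibrewise over $q \in \PP(L)$, the quadratic form on the fibre $V \times V$ of $\pi : \cZ_- \to \cY$ is the hyperbolic-type form $l(v_1, v_2)$, and the two maximal isotropic subspaces $V \times 0$ and $0 \times V$ are swapped by $\ZZ_2 \subset O(2) \subset G$. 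For nondegenerate $q$ these correspond to the two connected components of the variety of maximal isotropic subspaces of $(V, q)$, i.e.\ the two points of $Y|_q$, while on the corank~$1$ locus they collide and produce the expected ramification of $Y \to \PP(L)$.

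Concretely, I would first build $\cK'$ as (a locally free factorisation representing) the pushforward of the structure sheaf of the $G$-invariant subvariety $(V \times 0 \cup 0 \times V) \cap Z_-^\sstable$, with an equivariant twist chosen so that all $\CC^*$-weights of $\cK'|_p$ lie in $[-\lfloor n/2 \rfloor, \lfloor n/2 \rfloor]$ for every $p \in \PP(L)$. A Koszul-type factorisation analogous to~\eqref{eqn:DefOfK}, built from the subbundles $E^{(1)}, E^{(2)} \subset E$ coming from the two $V^\vee$-summands of $E$, provides an explicit locally free model. Next I would compute $R' := \pi_*\sHom(\cK', \cK')$ and identify its cohomology with $f_*\cO_Y$ as a sheaf of algebras on $\cY$ (which, being $O(2)$-invariant, descends to $\PP(L)$). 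On the nondegenerate locus, the $\ZZ_2$-decomposition of the Lagrangian into its two components yields $R'$ as a rank~$2$ commutative algebra whose spectrum is an étale double cover; on the corank~$1$ locus the two components collide producing ramification; the hypothesis that $\PP(L)$ avoids the corank~$\ge 3$ stratum and meets the corank~$1$ and~$2$ strata transversely keeps the resulting cover nonsingular. The global identification $R' \simeq f_*\cO_Y$ can be pinned down using the characterisation of $Y$ via Lemma~\ref{thm:DeterminedVariety}.

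With $\cK'$ and $R' \simeq f_*\cO_Y$ in hand, the adjunction formalism of Section~\ref{sec:CliffordAlgebras} applies almost verbatim. I would set $F'(\cE) = \pi_*\sHom(\cK', \cE)$ with left adjoint $G'$ the left derived functor of $\pi^*(-) \otimes_{\pi^*R'} \cK'$, then run through analogues of Lemmas~\ref{thm:ShapeOfFClifford}--\ref{thm:CliffordGradeRestriction}. The key points are that $G'$ is fully faithful via the projection formula (Lemma~\ref{thm:ProjectionFormula}), that $F'$ is compatible with grade restriction in both directions, and that $\cK'$ locally generates $D^b(\cZ_-, W)_\res$; the last is a fibrewise Morita-equivalence statement reflecting that the half-spinor modules generate the grade-restricted Clifford module category of a hyperbolic form of rank~$2n$. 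Applying \cite[Thm.\ 3.3]{kuznetsov_homological_2007} then upgrades this to the desired equivalence $D^b(Y) \cong D^b(\cZ_-, W)_\res$.

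The main obstacle is the identification $R' \simeq f_*\cO_Y$. The generic-point computation is a classical unravelling of the splitting of the Clifford algebra of a nondegenerate even-dimensional form into half-spinor blocks, but the subtle content is in checking (a) that this splitting glues correctly across the ramification locus to produce precisely the nonsingular double cover $Y$, rather than, say, $\Spec$ of a reduced quotient or some noncommutative enhancement, and (b) that the grade-restriction cut matches the size of $D^b(Y)$ exactly rather than a strict sub- or supercategory. The transversality assumptions on $\PP(L)$ versus the corank strata are what make these checks tractable.
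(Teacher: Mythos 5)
There is a genuine gap: the generating object you propose is the right one for the \emph{even} case, but not for this proposition, which is the odd case, where $Y$ is a double cover of the corank~$1$ locus and in particular has \emph{empty} fibre over nondegenerate $q$ (your phrase ``the two points of $Y|_q$'' for nondegenerate $q$ describes the dual cover of Proposition \ref{thm:InterpretationOfCliffordNEvenLater}, where the analogue of your $\cK'$, namely $\cO_{Y_{1}}(n/2,0)\oplus\cO_{Y_{2}}(0,n/2)$, is indeed the generator used). Two concrete failures for odd $n$. First, the restriction of $\cO_{V\times 0}$ (or $\cO_{0\times V}$) to a point $p \in \PP(L)$ is, via its Koszul factorisation, an exterior algebra on $n$ weight-one generators with vanishing differential, so its $\CC^{*}$-weights form $n+1$ consecutive integers; since the window $[-\lfloor n/2\rfloor,\lfloor n/2\rfloor]$ contains only $n$ integers when $n$ is odd, no twist makes $\cK'$ grade restricted, and the step where you need $F'$ and $G'$ to be ``compatible with grade restriction in both directions'' (the analogue of Lemma \ref{thm:KIsRestricted} and the lemma following it) breaks down. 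Second, $\pi_{*}\sEnd(\cK')$ contains the identity endomorphisms of the two summands over every point of $\PP(L)$, and these are nonzero in cohomology over the corank~$0$ locus (non-equivariantly $\cO_{V\times 0}$ is the nonzero \Knorrer{} generator there), so $H(R')$ has full support on $\PP(L)$; it therefore cannot be isomorphic to $f_{*}\cO_{Y}$, which in the odd case is supported on the corank~$1$ divisor. So the central identification your argument rests on is false for the object you chose.

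The paper's proof instead takes $K = \cO_{Y_{1}}((n-1)/2,0)\oplus\cO_{Y_{2}}(0,(n-1)/2)$, where $Y_{i}$ is fibrewise $\ker q\oplus V$, resp.\ $V\oplus\ker q$, over the degenerate locus and empty over nondegenerate $q$; this object is grade restricted (Lemma \ref{thm:KIsRestricted}) and is supported exactly where $Y$ lives. The identification $H(R)\cong f_{*}\cO_{Y}$ is not a formal unravelling of the Clifford algebra of a hyperbolic form: it is obtained by reducing, via smooth base change to the standard corank~$\le 2$ models and equivariant \Knorrer{} periodicity, to the explicit computation of Lemma \ref{thm:localIso}, which exhibits the ramified double cover, and then glueing globally using maximal isotropic subbundles (Lemma \ref{thm:Corank1Fibre}) together with Lemma \ref{thm:DeterminedVariety}. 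Finally, your last step --- that $\cK'$ locally generates $D^{b}(\cZ_{-},W)_{\res}$ by a ``fibrewise Morita equivalence'' --- is asserted rather than proved, and as stated it is false over the nondegenerate locus, where the grade-restricted category vanishes while your spinor-type object does not lie in it; the corresponding step in the paper, $\ker F = 0$ (Lemma \ref{thm:GeneratorDoubleCover}), is genuinely delicate and is proved indirectly, using the Calabi--Yau case $\dim V = \dim L = 5$ (where $D^{b}(\cZ_{-},W)_{\res}\cong D^{b}(X)$ admits no nontrivial semiorthogonal decomposition) and transferring the statement to the local standard models.
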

\begin{proof}
The strategy is the same as in the proof of Proposition \ref{thm:MainPropositionClifford}.
We define a certain factorisation $\cK$ on $(\cZ_{-},W)$ and a sheaf of dg algebras $R = \pi_{*}(\sHom(\cK,\cK))$ on $\PP(L)$, and then consider the functor $F : D^{b}(\cZ_{-},W)_{\res} \to D^{b}(\PP(L),R)$ given by $\pi_{*}(\sHom(\cK,-))$.

A somewhat involved computation gives Proposition \ref{thm:globalIso}, which says that $H(R) \cong f_{*}\cO_{Y}$. 
By Lemma \ref{thm:QuasiIsoImpliesEquivalence} we thus get $D^{b}(\PP(L),R) \cong D^{b}(Y)$.

By Lemma \ref{thm:GDerivedExists}, $F$ admits a left adjoint $G$ which is fully faithful.
As $\ker F = 0$ by Lemma \ref{thm:GeneratorDoubleCover}, we find that $G$ is an equivalence by the same argument as in the proof of Proposition \ref{thm:MainPropositionClifford}.
\end{proof}

\subsection{The generating object}
We let $\pi : \cZ_{-} \to \PP(L)$ and $\ol{\pi} : Z^{\mathrm{ss}}_{-} \to L\setminus 0$ be the projections.
We define some natural $T$-invariant subvarieties of $Z^{\mathrm{ss}}_{-}$ as follows.

For any point $q \in L \setminus 0$, the fibre $\pi^{-1}([q])$ is isomorphic to $V \oplus V/O(2)$.
The superpotential induces a bilinear form on $V$, which we abusively denote by $q$ as well.
We let $Y_{1}, Y_{2} \subset Z^{\sstable}_{-}$ be the reduced subvarieties such that $Y_{1}|_{\ol{\pi}^{-1}(q)} = \ker q \oplus V$ and $Y_{2}|_{\ol{\pi}^{-1}(q)} = V \oplus \ker q$ if $q$ is singular, and $Y_{i}|_{\ol{\pi}^{-1}(q)} = \varnothing$ if $q$ is nonsingular.
We get objects $\cO_{Y_{1}}$ and $\cO_{Y_{2}}$ in $D_{T}^{b}(Z^{\sstable}_{-},W)$, and let 
\[
K = \cO_{Y_{1}}((n-1)/2,0) \oplus \cO_{Y_{2}}(0,(n-1)/2),
\]
which is an object of $D_{G}^{b}(Z^{\mathrm{ss}}_{-},W)$, the $G$-structure being induced by the identification of $\sigma(Y_{1})$ with $Y_{2}$.

Choose a locally free resolution $\cK$ of $K \in D^{b}(\cZ_{-},W)$, and define a sheaf of dg algebras on $\PP(L)$ by
\[
R = \pi_{*}(\Rhom(\cK, \cK)).
\]
\begin{nprop}
\label{thm:globalIso}
There is an isomorphism of $\cO_{\PP(L)}$-algebras $H(R) \cong f_{*}\cO_{Y}$.
\end{nprop}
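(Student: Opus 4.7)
The plan is to compute $H(R)$ by writing down an explicit locally free Koszul--MF resolution $\cK$ of $K$, evaluating $\pi_*\sHom(\cK,\cK)$ directly, and identifying the resulting cohomology algebra with $f_*\cO_Y$.

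First I would build Koszul-type matrix factorisations of $\cO_{Y_1}$ and $\cO_{Y_2}$ mirroring Lemma~\ref{thm:CliffordResolutionOfK}. The subscheme $Y_1 \subset Z^{\sstable}_-$ is the vanishing locus of the natural section $s_1 = q(v_1,-)$ of $V^\vee \otimes \cO_{Z^{\sstable}_-}$ (appropriately twisted), and the superpotential $W$ is precisely the pairing of $s_1$ with the coordinate $v_2$. Hence the Koszul complex for $Y_1$ extends to a factorisation
\[
\cK_1 = \pi^*(\wedge^n V) \mfarrows{}{} \cdots \mfarrows{}{} \pi^*(V) \mfarrows{}{} \cO_{\cZ_-}
\]
whose leftward differentials are wedging with $v_2$ and whose rightward ones are contraction with $s_1$; Lemma~\ref{thm:Resolutions} then gives $\cK_1 \simeq \cO_{Y_1}$. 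A parallel construction with $v_1$ and $v_2$ swapped yields $\cK_2 \simeq \cO_{Y_2}$, and together, with the twists from the definition of $K$ absorbed into the indexing, they assemble into the desired $\cK$.

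With these in hand, $R$ decomposes as a $2\times 2$ block of dg sheaves $R_{ij} = \pi_*\sHom(\cK_i,\cK_j)$ on $\PP(L)$. Each block is the pushforward of a totalised bicomplex built from $\wedge^\bullet V \otimes \wedge^\bullet V^\vee$, and $\pi_*$ converts the $V, V^\vee$-coordinates into symmetric-algebra generators on $\PP(L)$, producing an explicit complex whose cohomology I can compute. I expect the diagonal blocks $R_{ii}$ to have cohomology concentrated in degree zero, equal to the structure sheaf of the image of $f$ (the corank $\ge 1$ locus in $\PP(L)$); the precise twists $((n-1)/2,0)$ and $(0,(n-1)/2)$, together with $n$ being odd, are exactly what make the $\CC^*_R$-grading balance so that the cohomology lands in degree zero. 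For the off-diagonal blocks the analogous computation should yield mutually inverse line bundles on the corank $\ge 1$ locus encoding the nontrivial sheet of the double cover.

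The final step is to identify the algebra structure. The composition product makes $H(R)$ a sheaf of commutative $\cO_{\PP(L)}$-algebras, generically étale of degree $2$ over the corank-$1$ locus; checking ramification along the corank-$2$ locus and combining with the universal characterisation of $Y$ (Lemma~\ref{thm:DeterminedVariety}) forces $\Spec H(R) \cong Y$ over $\PP(L)$, giving the claimed isomorphism $H(R) \cong f_*\cO_Y$. The main obstacle is the off-diagonal cohomology calculation near the corank-$2$ locus: there the Koszul complex fails to be a resolution in the classical sense, so the ramification of the resulting double cover must be read off from the MF differentials rather than from a naive Ext computation, and handling this cleanly requires careful local analysis in standard-form coordinates around a corank-$2$ point of $\PP(L)$.
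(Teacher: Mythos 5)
There is a genuine error at the first step, and it propagates through the whole computation. The zero scheme of the section $s_{1} = q(v_{1},-)$ of $V^{\vee}\otimes\cO$ is $\{(v_{1},v_{2},q)\ |\ v_{1} \in \ker q\}$, which always contains the component $0 \times V \times (L\setminus 0)$ lying over \emph{all} of $\PP(L)$; the variety $Y_{1}$ is only the part of this scheme lying over the discriminant (it is empty over nonsingular $q$ by definition). Both components have codimension $n$, so $s_{1}$ is a regular section and Lemma \ref{thm:Resolutions} applied to your Koszul factorisation gives $\cK_{1} \cong \cO_{Z(s_{1})}$, \emph{not} $\cO_{Y_{1}}$. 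Consequently the dg algebra $\pi_{*}\sHom(\cK,\cK)$ you compute is the endomorphism algebra of the wrong object: its cohomology is nonzero over corank~$0$ points of $\PP(L)$ (where your object restricts to $\cO_{0\times V}$, a nontrivial spinor-type object), whereas $f_{*}\cO_{Y}$ is supported on the corank~$\ge 1$ divisor. A correct resolution of $\cO_{Y_{1}}$ must involve the determinantal equation cutting out the discriminant — see the local model \eqref{eqn:ResolutionOfK}, whose rightward differential contains $su-t^{2}$ and whose ranks $(2,3,1)$ are not those of a Koszul complex — and no such resolution of Koszul type exists globally; this is precisely why the paper avoids a global computation and instead reduces étale locally to standard corank $\le 2$ models (Lemma \ref{thm:LocalForm}), lowers the rank by equivariant \Knorrer{} periodicity (Lemma \ref{thm:O2Knorrer}), and computes $P=\widetilde{P}^{\ZZ_{2}}$ explicitly in the $n=2$ model (Lemma \ref{thm:localIso}).

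A second, smaller gap is in your final step. Knowing that $H(R)$ is commutative, generically étale of degree $2$ over the corank~$1$ locus and ramified along the corank~$2$ locus does not determine the cover: inequivalent double covers are étale-locally isomorphic, so Lemma \ref{thm:DeterminedVariety} can only be invoked once one has a \emph{global} isomorphism $\Spec H(R) \cong Y$ over the complement of the corank~$2$ locus. Producing that identification is the content of Lemma \ref{thm:Corank1Fibre}, which matches the two idempotent components of $H(R)$ with the two components of the family of maximal isotropics by testing against the objects $\cO_{L_{i}\oplus L_{i}}$ (via Lemma \ref{thm:MaximalIsotropicLocalFormMaps}); some argument of this kind is needed and is missing from your outline.
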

The proposition is proved by combining the local description of $H(R)$ in Corollary \ref{thm:HRIsYLocally} with the global description of $H(R)$ over the corank 1 locus in Lemma \ref{thm:Corank1Fibre}.

\subsection{Standard local form}
\label{sec:StandardForm}
Our strategy of computation is to apply the fact that, étale locally on $\PP(L)$, we can put $\cZ_{-} \to \PP(L)$ in a standard form, which we now describe.

Let $B$ be a nonsingular variety.
A quadratic vector bundle $(F,q)$ is a bundle $F$ on $B$ together with a section $q$ of $\Sym^{2}F^{\vee}$.
From this data we can define a gauged LG model as follows.

\begin{nconstruction}
\label{defi:QuadraticConstruction}
Let $X$ be the total space of $F \oplus F \to B$, let $O(2) = \CC^{*}\rtimes \ZZ_{2}$ act on $X$ by scaling the first $F$-factor by $t$, the second $F$-factor by $t^{-1}$, and let the $\ZZ_{2}$ act by permuting the $F$.
We let $\CC^{*}_{R}$ act via scaling by $t_{R}$.
Thinking of $q$ as a symmetric bilinear form on $F$, we define the superpotential by $W(f_{1},f_{2}) = q(f_{1},f_{2})$ for points $f_{1}, f_{2} \in F_{p}$.
We thus get an LG model $(X/O(2), W)$, as well as an $SO(2)$-equivariant version $(X/SO(2),W)$.
\end{nconstruction}

Conversely, if $\cX \to B$ is an LG model which locally on $B$ is of the form $F \oplus F/O(2) \to B$ with fibre-wise quadratic superpotential, then it is locally obtained by the above procedure for some $(F,q)$.

\begin{ndefn}
\label{defi:StandardSpaces}
We define some standard LG models, which will be local models for other LG models of the above form near a point where the quadratic form has corank $\le 2$:
\begin{itemize}
\item Corank 0: Let $B = \pt$, and let $(F,q)$ be given by $F = \cO_{B}^{n}$ and $q = \sum_{k=1}^{n} z_{k}^{2}$.
\item Corank 1: Let $B = \AA^{1}_{s}$, and let $(F,q)$ be given by $F = \cO_{B}^{n}$ and $q = sz_{1}^{2} + \sum_{k=2}^{n} z_{k}^{2}$.
\item Corank 2: Let $B = \AA^{3}_{s,t,u}$, and let $(F,q)$ be given by $F = \cO_{B}^{n}$ and $q = sz_{1}^{2} + 2tz_{1}z_{2} + uz_{2}^{2} + \sum_{k=3}^{n} z_{k}^{2}$.
\end{itemize}

We refer to the $O(2)$-equivariant LG model $\pi_{n} : (X_{n},W_{n}) \to B$ obtained by Construction \ref{defi:QuadraticConstruction} from the above quadratic bundles as the standard models of corank 0, 1 and 2, respectively.

We let $Y_{n,1}, Y_{n,2} \subset X_{n}$ be the reduced subvarieties such that for every $p \in B$, we have $Y_{n,1}|_{\pi_{n}^{-1}(p)} = \ker q_{p} \oplus F$ and $Y_{n,2}|_{\pi_{n}^{-1}(p)} = F \oplus \ker q_{p}$ if $q_{p}$ is singular, and $Y_{n,i}|_{\pi_{n}^{-1}(p)} = \varnothing$ if $q_{p}$ is nonsingular.
We get objects $\cO_{Y_{n,1}}, \cO_{Y_{n,2}} \in D^{b}_{SO(2)}(X_{n},W_{n})$, and define
\[
K_{n} = \cO_{Y_{n,1}}((n-1)/2) \oplus \cO_{Y_{n,2}}(-(n-1)/2) \in D^{b}_{O(2)}(X_{n},W_{n}).
\]
We define the subcategory $D^{b}_{O(2)}(X_{n},W_{n})_{\res} \subset D^{b}_{O(2)}(X_{n},W_{n})$ as in Section \ref{sec:GradeRestrictedObjects}.
\end{ndefn}

\begin{nlemma}
\label{thm:QuadraticLocalForm}
Let $(F,q) \to B$ be a quadratic vector bundle of rank $n$, such that $q$ point-wise has corank $\ge k$.
Étale locally on $B$ we may choose a trivialisation of $F$ such that $q = q_{k} \oplus q_{\triv}$, where $q_{k}$ has dimension $k$ and $q_{\triv} = z_{1}^{2} + \cdots + z_{n-k}^{2}$.
\end{nlemma}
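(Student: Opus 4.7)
The plan is to work étale-locally around a chosen point $p \in B$. Since $F$ is locally free, I may pass to a Zariski neighborhood of $p$ on which $F$ is trivial. At $p$, the corank hypothesis lets me pick a decomposition $F_p = W_p \oplus V_p$ with $\dim W_p = n-k$ and $q_p|_{W_p}$ non-degenerate. Choose a basis $e_1,\ldots,e_n$ of $F_p$ adapted to this splitting and extend to local sections of $F$ on a Zariski neighborhood of $p$. Non-degeneracy of $q$ on $\langle e_1,\ldots,e_{n-k}\rangle$ is cut out by the non-vanishing of a determinant, so after shrinking it holds throughout the neighborhood.

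The bulk of the proof is an inductive Gram--Schmidt procedure. At stage $i \le n-k$, the function $q(e_i,e_i)$ is invertible near $p$, and I first replace $e_i$ by $e_i/\sqrt{q(e_i,e_i)}$; this is the only step requiring an étale (rather than Zariski) base change, since over $\CC$ a function non-vanishing at $p$ admits a square root after an étale extension. I then subtract appropriate $e_i$-multiples from $e_{i+1},\ldots,e_n$ to make them $q$-orthogonal to $e_i$. After $n-k$ stages, the first $n-k$ sections form an orthonormal frame of a non-degenerate subspace and the remaining $k$ sections are $q$-orthogonal to that subspace, so in these coordinates the matrix of $q$ is block diagonal and $q = q_{\triv} \oplus q_{k}$ as required.

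The main obstacle, such as it is, is simply the étale-locality of square root extraction, which explains why an étale rather than Zariski trivialisation is needed. The remainder is the standard diagonalisation of a non-degenerate symmetric form, carried out over the local ring of $B$ at $p$ instead of over a field. Only finitely many square roots are extracted during the procedure, and they assemble into a single étale cover of a Zariski neighborhood of $p$, completing the argument.
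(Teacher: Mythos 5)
Your proof is correct and follows essentially the same route as the paper: both peel off one non-isotropic section at a time, normalise it by an étale-locally extracted square root, and pass to its orthogonal complement, iterating $n-k$ times. The only point worth making explicit is that the initial basis at $p$ should be chosen $q_p$-orthogonal on the non-degenerate part (possible over $\CC$), so that each $q(e_i,e_i)$ really is non-vanishing at $p$ when its turn in the Gram--Schmidt procedure comes.
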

\begin{proof}
Choose a local section $s$ of $F$ such that $q(s) = a \not= 0$.
Étale locally, we can replace $s$ with $a^{-1/2}s$ and so assume $q(s) = 1$.
We then have $F = \langle s \rangle^{\perp} \oplus \langle s \rangle$ as a quadratic bundle. 
The claim follows by repeating this procedure $n-k$ times with $F$ each time replaced by $\langle s \rangle^{\perp}$.
\end{proof}

Let now $X_{n}(i) \to B_{i}$ be the standard local LG model of corank $i$.
\begin{nlemma}
\label{thm:SmoothMorphism}
Let $p \in \PP(L)$ be a corank $i$ point. Then there exists an étale neighbourhood $U$ of $p$ and a smooth morphism $f : U \to B$, such that we have a 2-Cartesian diagram
\[
\begin{tikzcd}
\cZ_{-}|_{U} \arrow{d} \arrow{r}{g} & X_{n}(i)/O(2) \arrow{d}{\pi_{n}} \\
U \arrow{r}{f} & B
\end{tikzcd}
\]
where $g$ is compatible with the $\CC^{*}_{R}$-actions and superpotentials on $X_{n}/O(2)$ and $\cZ_{-}$.
\end{nlemma}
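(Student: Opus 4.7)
The plan is, near the fixed corank-$i$ point $p \in \PP(L)$, to reduce $\cZ_- \to \PP(L)$ étale-locally to the shape produced by Construction \ref{defi:QuadraticConstruction}, then rewrite the resulting family of quadratic forms in a standard form so that the whole diagram becomes a pullback of $X_n(i)/O(2) \to B$ for $B = \Sym^2(\CC^i)^\vee$. The main work is (i) identifying the trivialized fibration and (ii) proving smoothness of the resulting classifying map $f$.

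First I would trivialize the fibration over $\PP(L)$. Choose a Zariski-open $U_0 \ni p$ admitting a section $s: U_0 \to L\setminus 0$ of the principal $\CC^*$-bundle $L\setminus 0 \to \PP(L)$ (where $\CC^* = G/O(2)$ acts via $\chi$). Pulling back $\cZ_- = [V \times V \times (L\setminus 0)/G]$ along $U_0 \to \PP(L)$ and factoring through $s$ leaves exactly the residual group $\ker\chi = O(2)$ acting, giving
\[
\cZ_-|_{U_0} \cong [V \oplus V \oplus U_0/O(2)]
\]
with $O(2) = \CC^* \rtimes \ZZ_2$ acting as in Construction \ref{defi:QuadraticConstruction} and fibrewise superpotential $(v_1,v_2,u)\mapsto s(u)(v_1,v_2)$. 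This is exactly Construction \ref{defi:QuadraticConstruction} applied to the quadratic vector bundle $(V\otimes \cO_{U_0}, s)$, with $\CC^*_R$-action matching our convention from Section \ref{sec:NegativeGITQuotient}.

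Next I would reduce to the standard model. Since $s(p)$ has corank exactly $i$, Lemma \ref{thm:QuadraticLocalForm} (applied near $p$) provides an étale neighborhood $U \to U_0$ and a trivialization of $V\otimes \cO_U$ in which $s = q_i(u) \oplus \sum_{k=i+1}^{n} z_k^2$, with $q_i: U \to B := \Sym^2(\CC^i)^\vee$ vanishing at $p$. Inspection of Definition \ref{defi:StandardSpaces} shows that this $B$ is precisely the base of the standard corank-$i$ model and that $X_n(i)\to B$ itself arises from Construction \ref{defi:QuadraticConstruction} applied to the tautological quadratic bundle on $B$. Functoriality of Construction \ref{defi:QuadraticConstruction} under pullback of quadratic vector bundles then makes the required square 2-Cartesian with $f := q_i$, and $g$ is automatically compatible with $W$ and $\CC^*_R$ by construction.

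The main obstacle is smoothness of $f$ at $p$. Since $U$ is smooth at $p$, it is enough to show $df_p$ is surjective, equivalently that $\ker df_p = T_p f^{-1}(0)$ has dimension $\dim U - \dim B$. Set-theoretically $f^{-1}(0) \subset U$ is the locus where $s$ has corank $\ge i$; by upper semicontinuity of corank and the fact that $p$ has corank exactly $i$, after shrinking $U$ this fiber coincides with the corank $= i$ stratum of $\PP(L)$ near $p$. By the standing hypothesis of the section, this stratum is smooth of codimension $i(i+1)/2 = \dim B$, so $T_p f^{-1}(0)$ has exactly the expected dimension and $df_p$ is surjective. One subtlety worth flagging: Lemma \ref{thm:QuadraticLocalForm} is stated under a pointwise ``corank $\ge k$'' hypothesis that we do not have near $p$, but its inductive proof goes through verbatim here because at each of the $n-i$ steps we only need the remaining quadratic form to be nonzero at the single basepoint $p$, which follows from the rank at $p$ dropping from $n-i$ to $0$ in exactly $n-i$ steps.
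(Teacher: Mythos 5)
Your proposal is correct and takes essentially the same route as the paper: étale-locally trivialise $\cZ_{-} \to \PP(L)$ as Construction \ref{defi:QuadraticConstruction} applied to a quadratic bundle, use Lemma \ref{thm:QuadraticLocalForm} to split off the nondegenerate part and obtain the classifying map $f = q_{i}$ to $B$ (Cartesianness being functoriality of the construction), and deduce smoothness of $f$ at $p$ from the standing genericity assumption that the corank strata of $\PP(L)$ are nonsingular of the expected codimension. The paper's proof is exactly this, stated in three lines, with the same implicit reading of the genericity hypothesis at the smoothness step; your remarks making explicit the choice of a local section of $L\setminus 0 \to \PP(L)$ and the fact that the inductive proof of Lemma \ref{thm:QuadraticLocalForm} only needs nonvanishing of the form at the basepoint are accurate fillings-in of details the paper leaves tacit.
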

\begin{proof}
By restricting to some étale neighbourhood $U$ of $p$ and using Lemma \ref{thm:QuadraticLocalForm}, we may assume that $\cZ_{-} \to \PP(L)$ is constructed from $(F,q)$ where $F = \cO^{n}$ and $q = q_{i} \oplus q_{\triv}$.
The factor $q_{i}$ gives a map to $f : U \to \Sym^{2}(\CC^{i}) = B_{i}$, which gives rise to the correct Cartesian diagram.
By our genericity assumption on $L$, $f^{-1}(0)$ is nonsingular, so since $p \in f^{-1}(0)$ we find that $f$ is smooth in a neighbourhood of $p$.
\end{proof}

\begin{nlemma}
\label{thm:LocalForm}
Around every point $p \in \PP(L)$, there exists an étale neighbourhood $U \to \PP(L)$ and a morphism $f : U \to B_{2}$, such that we have a 2-Cartesian diagram
\[
\begin{tikzcd}
\cZ_{-}|_{U} \arrow{d} \arrow{r}{g} & X_{n}(2)/O(2) \arrow{d}{\pi_{n}} \\
U \arrow{r}{f} & B_{2}
\end{tikzcd}
\]
where $g$ is compatible with the $\CC^{*}_{R}$-actions and superpotentials on $X_{n}/O(2)$ and $\cZ_{-}$.
Furthermore we have $g^{*}\cO_{Y_{n,i}} = \cO_{Y_{i}}$ for $i = 1, 2$ and $g^{*}K_{n} = K$.
\end{nlemma}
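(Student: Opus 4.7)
The plan is to realize $\cZ_-|_U \to U$ as a pullback of the standard corank-$2$ LG model $X_n(2)/O(2) \to B_2$ along a morphism $f : U \to B_2$. The starting point is that, after passing to an étale cover trivialising the $O(2)$-gerbe $\cY \to \PP(L)$, the LG model $\cZ_- \to \PP(L)$ arises by Construction \ref{defi:QuadraticConstruction} from a rank-$n$ quadratic vector bundle $(F, q)$ whose fibre at a point $[q_0] \in \PP(L)$ is $(V, q_0)$. By the standing assumption the corank of $q$ at $p$ is at most $2$, so the rank of $q_p$ is at least $n - 2$.

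The first step is to apply the orthogonal-splitting argument of Lemma \ref{thm:QuadraticLocalForm} iteratively. I would pick a section $s$ of $F$ with $q(s) \ne 0$ at $p$, pass to an étale cover extracting a square root of $q(s)$, replace $s$ by $s/\sqrt{q(s)}$, and write $F = \langle s \rangle \oplus \langle s \rangle^{\perp}$ as an orthogonal sum of quadratic bundles. After $n-2$ such iterations I obtain an étale neighbourhood $U$ of $p$ and a trivialisation $F|_U \cong F_2 \oplus \cO_U^{n-2}$ with $q|_U = q_2 \oplus q_{\triv}$ and $q_{\triv} = z_3^2 + \cdots + z_n^2$.

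The second step is to construct $f$. After shrinking $U$ if needed to trivialise $F_2 \cong \cO_U^{2}$, the form $q_2$ is given by a triple of functions $(s,t,u)$ on $U$, and these assemble into a morphism $f : U \to B_2 = \AA^3_{s,t,u}$ such that $(F,q)|_U$ is the pullback along $f$ of the universal quadratic bundle defining $X_n(2)$. Because Construction \ref{defi:QuadraticConstruction} is functorial under base change of quadratic bundles, applying it to both sides yields the desired $2$-Cartesian square, with $g$ automatically compatible with the $\CC^*_R$-actions and fibrewise superpotentials on both sides.

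Finally, I would verify $g^* \cO_{Y_{n,i}} = \cO_{Y_i}$ and $g^* K_n = K$. Both $Y_i$ and $Y_{n,i}$ are the reduced closed subvarieties cut out fibrewise by the kernel of the quadratic form in the corresponding copy of $F$, and under the orthogonal decomposition $q|_U = q_2 \oplus q_{\triv}$ with $q_{\triv}$ nondegenerate one has $\ker q_u = \ker(q_2|_u) \oplus 0$ inside $F_u$, which matches the pullback along $f$ of the kernel of the standard corank-$2$ form. The identification $g^* K_n \cong K$ then reduces to matching characters: the weight-$(n-1)/2$ twists appearing in $K$ on the two $T$-factors restrict along $T \cap O(2) = \{(t, t^{-1})\}$ to the weights $\pm (n-1)/2$ used in the definition of $K_n$. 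I do not foresee a serious obstacle; the mildest subtlety is that Lemma \ref{thm:QuadraticLocalForm} is phrased under a uniform corank hypothesis whereas I only have a bound at $p$, but its proof only requires a nonvanishing section at $p$, which the pointwise bound provides.
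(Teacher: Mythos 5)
Your construction of the $2$-Cartesian square is fine, and it is essentially the mechanism of the paper's Lemma \ref{thm:SmoothMorphism}: split off $n-2$ orthogonal squares near $p$ (possible since the corank is at most $2$), and use the coefficients $(s,t,u)$ of the residual binary form as the map $f : U \to B_{2} = \AA^{3}_{s,t,u}$; Construction \ref{defi:QuadraticConstruction} commutes with base change along $f$. The genuine gap is in the ``furthermore'' clause. The identities $g^{*}\cO_{Y_{n,i}} = \cO_{Y_{i}}$ and $g^{*}K_{n} = K$ are needed for the \emph{derived} pullback of factorisations (that is how they are used later, e.g.\ to deduce $H(R) \cong f^{*}P$ in Corollary \ref{thm:HRIsYLocally}), and your $g$ is in general neither smooth nor flat: at a point $p$ of corank $\le 1$ nothing forces $ds, dt, du$ to be linearly independent (when $\dim \PP(L) < 3$ this is impossible), and $f(U)$ then sits inside $\AA^{3}$ in a non-flat way. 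Matching the fibrewise kernels only identifies the set-theoretic preimage $g^{-1}(Y_{n,i})$ with $Y_{i}$; it does not exclude higher $\operatorname{Tor}$ sheaves, or a wrong scheme structure, in $Lg^{*}\cO_{Y_{n,i}}$.

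What is missing is exactly the input the paper supplies through its case analysis. At a corank $2$ point, $f$ \emph{is} smooth, but only because of the genericity assumption (the corank $2$ locus $f^{-1}(0)$ is nonsingular of codimension $3 = \dim B_{2}$, so $df_{p}$ is surjective) --- an assumption your argument never invokes, and which is essential: over the vertex of the cone $\{su = t^{2}\}$ the sheaf $\cO_{Y_{n,i}}$ is not flat over the base, so without smoothness of $f$ there the pullback claim can fail. At corank $\le 1$ points, where $f$ is not smooth, one needs a Tor-independence argument: the paper factors $f$ through a smooth map to $B_{1}$ (or $B_{0}$) followed by $s \mapsto (s,0,1)$ (or the point $(1,0,1)$), and checks that $Y_{n,i}$ meets $\pi_{n}^{-1}(i(\AA^{1}_{s}))$ in a regular intersection, after which the remaining pullback is along a smooth morphism. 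In your direct setup one could instead argue that near $f(p)$, with $p$ of corank $\le 1$, $\cO_{Y_{n,i}}$ is flat over the hypersurface $\{su = t^{2}\}$ and that $su - t^{2}$ pulls back to a nonzerodivisor on $U$ (after shrinking $U$ away from the corank $2$ locus); but some such argument must be made --- the fibrewise description of the kernels is not enough. The character bookkeeping identifying $K_{n}$ with $K$ is fine once the structure-sheaf statement is in place.
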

\begin{proof}
Assume first that $p \in \PP(L)$ is a corank 2 point.
Then the previous lemma gives the correct $f$.
The morphism $g$ is smooth, and so since $g^{-1}(Y_{n,i}) = Y_{i}$ the claims about $g^{*}$ hold.

If $p$ is a corank 1 point, let $h : U \to \AA^{1}_{s}$ be the smooth map produced by Lemma \ref{thm:SmoothMorphism}.
Let $i : \AA^{1}_{s} \into \AA^{3}_{s,t,u}$ be the inclusion given by $s \mapsto (s,0,1)$, and let $f = i \circ h$, which gives rise to the correct Cartesian diagram.

Let $\ol{h}$ be the map $\cZ_{-}|_{U} \to \pi_{n}^{-1}(i(\AA^{1}_{s}))$.
Since the intersection $V_{i} = \pi_{n}^{-1}(i(\AA^{1}_{s})) \cap Y_{n,i}$ is regular for $i = 1,2$, we have $\cO_{Y_{n,i}}|_{\pi_{n}^{-1}(i(\AA^{1}_{s}))} = \cO_{V_{i}}$.
Then since $\ol{h}^{-1}(V_{i}) = Y_{i}$ and $\ol{h}$ is smooth, the claims about $g^{*}$ again hold.

If $p$ is a corank 0 point, let $f$ be the map to $(1,0,1) \in \AA^{3}_{s,t,u} = B$.
\end{proof}

\subsubsection{Standard local form with maximal isotropics}
We will need a version of this standard local form which includes a standardisation of maximal isotropic subbundles of the quadratic bundle.

Let $(H,q_{H}) \to \AA^{1}_{s}$ be given by $H = \cO^{n}$ and $q_{H} = sz_{1}^{2} + z_{2}z_{3} + \cdots z_{n-1}z_{n}$.
Over $0 \in \AA^{1}_{s}$ we define two maximal isotropic subspaces $L_{1},L_{2}$ of $(H|_{0},q_{H})$, where $L_{1}$ is defined by the vanishing of $z_{2k}$ and $L_{2}$ by the vanishing of $z_{2k+1}$ for all $1 \le k \le (n-1)/2$.

Let $(F,q) \to B$ be a quadratic vector bundle of rank $n$ such that $q$ pointwise has corank $\le 1$.
Let $B_{1} \subset B$ be the locus where $q$ has corank 1, and assume $B_{1}$ is a nonsingular divisor.
Let $M_{1}, M_{2} \subset F|_{B_{1}}$ be subbundles giving families of maximal isotropics over the corank 1 locus.
Assume that $M_{1} \cap M_{2} = \ker q$.
\begin{nlemma}
\label{thm:MaximalIsotropicLocalForm}
Étale locally on $B$, there is a smooth morphism $f : B \to \AA^{1}_{s}$ and an isomorphism $\phi : f^{*}(H,q_{H}) \to (F,q)$ such that $\phi(f^{*}(L_{i})) = M_{i}$.
\end{nlemma}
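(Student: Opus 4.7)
The claim is étale local on $B$, so I fix $p \in B$ and work near it. The case $p \notin B_1$ is easy: $q$ is nondegenerate near $p$, the condition on $M_i$ is vacuous, and one can pick any smooth $f$ with image avoiding $0 \in \AA^1_s$ and apply Lemma \ref{thm:QuadraticLocalForm} to realize $(F,q)$ as a pullback of a nondegenerate rank $n$ form. From now on assume $p \in B_1$. The plan is to construct a distinguished section $\tilde e_1$ generating $\ker q|_{B_1}$, use $s := q(\tilde e_1)$ itself as the coordinate on $\AA^1_s$, and then build a standard hyperbolic basis for the orthogonal complement to $\tilde e_1$ that is compatible with $M_1, M_2$.

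First, pick $\tilde e_1 \in \Gamma(F)$ over an étale neighborhood of $p$ whose restriction to $B_1$ trivializes $\ker q|_{B_1}$, and set $s := q(\tilde e_1)$. Since $\tilde e_1|_{B_1} \in \ker q$, $s$ vanishes on $B_1$; its derivative in a direction $\partial \in T_pB$ transverse to $B_1$ is essentially $(\partial q)(e_1, e_1)$, which is nonzero because our hypothesis that $B_1$ is a nonsingular divisor of the expected codimension is equivalent to transversality of the classifying map $B \to \Sym^2 F^\vee$ to the discriminant at $q_p$. Hence $s$ is a local equation for $B_1$, and $f := s: B \to \AA^1_s$ is smooth near $p$. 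Next, the morphism $q(\tilde e_1, -): F \to \cO_B$ vanishes on $B_1$, so factors uniquely as $s \cdot \alpha$ with $\alpha \in F^\vee$, and $\alpha(\tilde e_1) = 1$ shows $\alpha$ is surjective. Thus $F' := \ker \alpha$ is a rank $n-1$ subbundle, $F = F' \perp \langle \tilde e_1 \rangle$, and $q|_{F'}$ is nondegenerate everywhere: on $B \setminus B_1$ because $q(\tilde e_1) = s$ is invertible, and on $B_1$ because $F'|_{B_1}$ is complementary to $\ker q|_{B_1} = \langle e_1\rangle$.

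On $B_1$ the subspaces $M_i' := M_i \cap F'|_{B_1}$ are rank $(n-1)/2$ maximal isotropics of $q|_{F'|_{B_1}}$ with $M_1' \cap M_2' = 0$ and $M_1' \oplus M_2' = F'|_{B_1}$, by a straightforward dimension count using $M_1 \cap M_2 = \ker q$. Since the relative orthogonal Grassmannian of maximal isotropics in $(F', q|_{F'})$ is smooth over $B$, each $M_i'$ extends étale locally to a maximal isotropic subbundle $N_i \subset F'$; after shrinking, $F' = N_1 \oplus N_2$ and $q$ identifies $N_2$ with $N_1^\vee$. Trivialize $N_1$ by a basis $\tilde e_3, \tilde e_5, \ldots, \tilde e_n$ and let $\tilde e_2, \tilde e_4, \ldots, \tilde e_{n-1}$ be the dual basis of $N_2$ under $q$. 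In the resulting basis of $F$, the form becomes $sz_1^2 + z_2z_3 + \cdots + z_{n-1}z_n = f^* q_H$; the morphism $\phi: f^*(H, q_H) \to (F, q)$ sending $e_i \mapsto \tilde e_i$ is then the desired isomorphism of quadratic bundles, and $\phi(f^*L_i) = \langle \tilde e_1|_{B_1}, N_i|_{B_1}\rangle = \langle \ker q, M_i'\rangle = M_i$ by dimension count and obvious containment. The main obstacles are controlling the vanishing order of $s = q(\tilde e_1)$ in the second paragraph, where the genericity hypothesis on $B_1$ is essential, and the étale-local extension of the isotropic subbundles $M_i'$ from $B_1$, which relies on smoothness of the orthogonal Grassmannian.
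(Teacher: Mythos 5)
Your proposal is correct and arrives at the same normal form as the paper, but builds the splitting in the opposite order: the paper chooses complements $K_{i} \subset M_{i}$ of $\ker q$ over $B_{1}$, extends them (not necessarily isotropically) to subbundles $\ol{K_{i}} \subset F$, takes the line $N = (\ol{K_{1}} \oplus \ol{K_{2}})^{\perp}$, and reads the map to $\AA^{1}_{s}$ off $q|_{N}$; you extend the kernel direction first, take $s = q(\tilde e_{1})$ itself as the coordinate, split off $F' = \ker \alpha$ via the factorisation $q(\tilde e_{1},-) = s\alpha$, and only then extend the isotropics $M_{i} \cap F'$ inside $F'$. What your route buys is that it makes explicit two points the paper leaves silent: that $\{s = 0\} = B_{1}$ (a zero of $s$ off $B_{1}$ would put $\tilde e_{1}$ in the radical) and that $f = s$ is a smooth morphism. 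Two caveats, neither fatal: the hypothesis must be read as saying the scheme-theoretic degeneracy divisor is nonsingular (with the purely set-theoretic reading your ``equivalence with transversality'' fails, e.g.\ $q = t^{2}z_{1}^{2} + z_{2}z_{3}$ over $\AA^{1}_{t}$, and the lemma's conclusion fails too -- the paper needs the same reading for smoothness of its $f$); and smoothness of the relative orthogonal Grassmannian alone does not extend the sections $M_{i}'$ from $B_{1}$ -- you need, say, an étale-local retraction of $B$ onto the smooth divisor $B_{1}$, or a relative Gram--Schmidt correction of arbitrary extensions, which is essentially what the paper's own terse ``extend these to bundles $\ol{K_{i}} \subset F$'' and trivialisation step also tacitly require.
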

\begin{proof}
We work locally on $B$.
On $B_{1}$, let $K_{i} \subset M_{i}$ be subbundles with $\rk K_{i} = \rk M_{i} - 1$, not intersecting $\ker q$.
Extend these to bundles $\ol{K_{i}} \subset F$ on $B$.
Let $N = (\ol{K_{1}} \oplus \ol{K_{2}})^{\perp}$; we then get a splitting
\[
F = \ol{K_{1}} \oplus \ol{K_{2}} \oplus N.
\]
The fact that $M_{1} \cap M_{2} = \ker q$ implies that $q$ induces an isomorphism $K_{1} \to K_{2}^{\vee}$.
Choosing trivialisations of the $\ol{K_{i}}$ which respect this equivalence and trivialising $N$, we find that the splitting becomes
\[
F = \cO_{x_{i}}^{(n-1)/2} \oplus \cO^{(n-1)/2}_{y_{i}} \oplus \cO_{z},
\]
with $q = \sum x_{i}y_{i} + fz^{2}$ for some function $f$, and with $M_{1} = \{f = x_{i} = 0\}$ and $M_{2} = \{f = y_{i} = 0\}$.
The morphism defined by the function $f$ together with our chosen trivialisation give the conclusions of the lemma.
\end{proof}

Let $\pi : (\cX,W) \to B$ be the $O(2)$-equivariant LG model created from $(F,q)$ by the construction \ref{defi:QuadraticConstruction}.
Let $Z_{i} = M_{i} \oplus M_{i} \subset F \oplus F = X$, so we get $\cO_{Z_{i}} \in D^{b}(\cX,W)$.
\begin{nlemma}
\label{thm:MaximalIsotropicLocalFormMaps}
Étale locally on $B$, we have 
\[
\pi_{*}\Rhom(\cO_{Z_{1}},\cO_{Z_1}) \cong \pi_{*}\Rhom(\cO_{Z_2},\cO_{Z_2}) \cong \cO_{B_{1}}
\]
and 
\[
\Rhom(\cO_{Z_1},\cO_{Z_2}) \cong \Rhom(\cO_{Z_2},\cO_{Z_1}) \cong 0.
\]
\end{nlemma}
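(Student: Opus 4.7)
The plan is to reduce to the standard local model furnished by Lemma~\ref{thm:MaximalIsotropicLocalForm} and then carry out an explicit Koszul matrix factorisation computation. Étale locally on $B$ we may assume $F = \cO_B^{\oplus n}$ with $q = sz_1^2 + \sum_{k=1}^{(n-1)/2} z_{2k}z_{2k+1}$ on $B = \AA^1_s$, and $M_1 = \{z_{2k}=0\}_{k\ge 1}$, $M_2 = \{z_{2k+1}=0\}_{k\ge 1}$ the standard maximal isotropics over $B_1 = \{s=0\}$. Coordinatising $X = F \oplus F$ by $(x_i, y_i)$, the potential becomes
$$W = s x_1 y_1 + \sum_{k=1}^{(n-1)/2}\bigl(x_{2k} y_{2k+1} + x_{2k+1} y_{2k}\bigr),$$
and each $Z_i$ is a codimension-$n$ regular complete intersection in $X$.

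The key observation is that $W$ factors along the ideal generators of each $Z_i$. For $Z_1$, with ideal generators $A = (s, x_2, y_2, \ldots, x_{n-1}, y_{n-1})$ and dual sequence $B = (x_1 y_1, y_3, x_3, \ldots, y_n, x_n)$, we have $W = A \cdot B$. Lemma~\ref{thm:Resolutions} then produces a Koszul matrix factorisation $\cK_1$ of shape $\wedge^\bullet \cE_1$ (with $\cE_1$ of rank $n$, differential the sum of contraction with $A$ and wedge with $B$) that resolves $\cO_{Z_1}$ in $D^b(\cZ_-, W)$, and analogously a resolution $\cK_2$ of $\cO_{Z_2}$. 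Since $\cK_i$ is locally free, $\sHom(\cK_i, \cO_{Z_j}) = \cK_i^\vee \otimes \cO_{Z_j}$ is computed by a complex on $Z_j$ whose underlying sheaf is $\wedge^\bullet \cE_i^\vee \otimes_{\cO_X} \cO_{Z_j}$ and whose differential is the sum of wedge with $A|_{Z_j}$ and contraction with $B|_{Z_j}$. I would compute its cohomology directly.

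In the diagonal case $i = j$, $A$ vanishes identically on $Z_i$ and only the contraction-with-$B|_{Z_i}$ piece of the differential survives. The resulting complex is the Koszul complex of the sequence $(x_1 y_1, y_{2k+1}, x_{2k+1})$ on $\cO_{Z_i}$; the $n-1$ linear entries form a regular sequence cutting $\cO_{Z_i}$ down to $\cO_{B_1}[x_1, y_1]$, on which multiplication by $x_1 y_1$ is injective. The total cohomology is $\cO_{B_1}[x_1, y_1]/(x_1 y_1)$ sitting in wedge-degree $0$, whose $\CC^*$-weight-zero part is the constants and is $\sigma$-fixed, giving $\pi_*\Rhom(\cO_{Z_i}, \cO_{Z_i}) \cong \cO_{B_1}$. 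In the off-diagonal case, say $i = 1$, $j = 2$, both $A|_{Z_2}$ and $B|_{Z_2}$ are nonzero: only the $s$-entry of $A$ vanishes, while the remaining $A$-entries restrict to the coordinates $x_{2k}, y_{2k}$ free on $Z_2$, and only the $x_1 y_1$-entry of $B$ survives. I would split $\wedge^\bullet \cE_1^\vee = \wedge^\bullet \cE' \oplus e_0^* \wedge \wedge^\bullet \cE'$ where $\cE' = \cE_1^\vee / \cO e_0^*$, view the whole as a mapping cone of two copies of the Koszul complex for the regular sequence $(x_{2k}, y_{2k})_{k\ge 1}$ acting on $\cE'$, joined by the map $e_0^* \wedge \omega' \mapsto x_1 y_1 \omega'$, and check that the mapping-cone cohomology collapses to $\cO_{B_1}[x_1, y_1]/(x_1 y_1) \cdot \omega$ in wedge-degree $n - 1$, where $\omega$ is the top form of $\wedge^{n-1}\cE'$.

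The main obstacle is then to verify that this cohomology has trivial $O(2)$-invariants. After taking $\CC^*$-invariants one is reduced to $\cO_{B_1} \cdot \omega$, and the $\sigma$-action on $\omega$ (induced from the swap $e_k^* \leftrightarrow f_k^*$) combined with the $\sigma$-swap of $x_1$ and $y_1$ in the coefficient ring should produce a net sign making the $\sigma$-invariants vanish. Tracking this sign through the dualisation of the Koszul resolution is the delicate part of the argument, and the parallel claim for $\sHom(\cO_{Z_2}, \cO_{Z_1})$ follows by the same computation. The remaining diagonal identity $\pi_*\Rhom(\cO_{Z_2}, \cO_{Z_2}) \cong \cO_{B_1}$ is obtained from the $i = j = 1$ argument by interchanging the roles of $(x_i)$ and $(y_i)$.
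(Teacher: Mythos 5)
Your overall route is the same as the paper's: reduce to the standard model $(H,q_H)$ over $\AA^1_s$ via Lemma \ref{thm:MaximalIsotropicLocalForm} and then carry out a Koszul matrix factorisation computation there (the paper simply cites \cite{segal_quintic_2014, addington_d-brane_2014} for that check). Two remarks on the reduction: Lemma \ref{thm:MaximalIsotropicLocalForm} does not let you assume $B = \AA^1_s$; it produces a smooth morphism $f : B \to \AA^1_s$ along which $(F,q,M_1,M_2)$ is pulled back, and you then need the compatibility of $\Rhom$ and $\pi_*$ with this smooth base change -- this is exactly the displayed chain of isomorphisms in the paper's proof and should not be skipped. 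Your diagonal computation is correct, and it matches what one gets from the standard Koszul complex.

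The genuine gap is the off-diagonal vanishing, which you explicitly leave open. You have correctly observed that the underlying complex is \emph{not} acyclic -- its cohomology is $\CC[x_1,y_1]/(x_1y_1)\cdot\omega$ in top wedge degree, so the claim (which must in any case be read with $\pi_*$, as in the paper's own proof and its application) is purely a statement about $O(2)$-invariants, and everything rests on the sign by which $\sigma$ acts on the single $\CC^*$-weight-zero class. That sign is not produced by the swap $x_1 \leftrightarrow y_1$ in the coefficient ring, as you suggest: after passing to weight zero the coefficient is the constant $1$, which is $\sigma$-fixed. The only contribution comes from $\sigma$ permuting the $(n-1)/2$ pairs of Koszul generators dual to $(x_{2k},y_{2k})$, giving $\sigma(\omega) = (-1)^{(n-1)/2}\omega$ for the natural linearisations of $\cO_{Z_1},\cO_{Z_2}$. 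So the computation as you have set it up yields the required vanishing only when $(n-1)/2$ is odd; when $n \equiv 1 \pmod 4$ (which includes $n=5$, the case needed for the Hosono--Takagi application) the obvious generator is $\sigma$-invariant and vanishing does not follow from what you wrote. This ``delicate part'' that you defer is the entire content of the off-diagonal statement: to close it you must pin down the $O(2)$-linearisations involved, and how the hypothesis $M_1 \cap M_2 = \ker q$ interacts with the two components of the family of maximal isotropics, carefully enough that the invariant class actually cancels for every odd $n$, rather than asserting that ``a net sign'' appears.
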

\begin{proof}
Suppose first that $B = \AA^{1}_{s}$ and $(F,q) = (H,q_{H})$. 
In this case, the claim is straightforward to check using a standard Koszul resolution of $\cO_{Z_{i}}$, see also \cite[4.1]{segal_quintic_2014}, \cite[A.4]{addington_d-brane_2014}.

For the general case, we use Lemma \ref{thm:MaximalIsotropicLocalForm}, by which we get a Cartesian diagram
\[
\begin{tikzcd}
\cX \arrow{d}{\pi} \arrow{r}{g} & (H \oplus H)/O(2) \arrow{d}{\pi_{H}} \\
B \arrow{r}{f} & \AA^{1}_{s},
\end{tikzcd}
\]
such that $f$ and hence $g$ is smooth, and such that $g^{-1}(L_{i} \oplus L_{i}) = M_{i} \oplus M_{i}$. 
We therefore have $g^{*}(\cO_{L_{i} \oplus L_{i}}) = \cO_{M_{i}\oplus M_{i}} = \cO_{Z_{i}}$, and the claim now follows by
\begin{align*}
\pi_{*}(\Rhom(\cO_{Z_{i}}, \cO_{Z_{j}})) &= \pi_{*}g^{*}\Rhom(\cO_{L_{i} \oplus L_{i}}, \cO_{L_{j} \oplus L_{j}}) \\
&= f^{*}(\pi_{H})_{*}(\Rhom(\cO_{L_{i} \oplus L_{i}}, \cO_{L_{j} \oplus L_{j}})) \\
&= f^{*}(\cO_{0}^{\delta_{ij}}) = \cO_{B_{1}}^{\delta_{ij}},
\end{align*}
using the smoothness of $f$.
\end{proof}

\subsection{\Knorrer{} periodicity}
Let $(X_{n},W_{n}) \to B$ denote the corank 2 standard LG model described in Definition \ref{defi:StandardSpaces}.
We choose coordinates such that $B = \AA^{3}_{s,t,u}$, $X_{n} = \AA^{n}_{x_{i}} \times \AA^{n}_{y_{i}} \times B$, and let the superpotential be
\[
W_{n} = sx_{1}y_{1} + t(x_{1}y_{2} + x_{2}y_{1}) + ux_{2}x_{2} + \sum_{k=3}^{n}x_{k}y_{k}.
\]
We will show that the category $D^{b}_{SO(2)}(X_{n},W_{n})$ (resp.\ $D^{b}_{O(2)}(X_{n},W_{n})$) is invariant under $n \mapsto n+1$ (resp.\ $n \mapsto n+2$).

\subsubsection{The $SO(2)$ case}
Let $p : X_{n+1} \to X_{n}$ be the projection which collapses the $x_{n+1}$ and $y_{n+1}$ directions.
Let $W^{\pr} = x_{n+1}y_{n+1}$, and let $\cF \in D^{b}_{SO(2)}(X_{n+2},W^{\prime})$ be the factorisation
\[
\cO(-1) \mfarrows{x_{n+1}}{y_{n+1}} \cO.
\]
Let $\Phi : D^{b}_{SO(2)}(X_{n},W_{n}) \to D^{b}_{SO(2)}(X_{n+1},W_{n+1})$ be the functor $p^{*}(-) \otimes \cF$.
\begin{nlemma}
\label{thm:SO2Knorrer}
The functor $\Phi$ is an equivalence.
The weights of $\Phi(\cE)$ satisfy $\wt(\Phi(\cE)) = \wt(\cE) + \{-1,0\}.$
We have $\Phi(\cO_{Y_{n,1}}) = \cO_{Y_{n+1,1}}$ and $\Phi(\cO_{Y_{n,2}}) = \cO_{Y_{n+1,2}}(-1)$.
\end{nlemma}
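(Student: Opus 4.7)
My plan is to recognize $\Phi$ as a form of Knörrer periodicity, read off the weight statement from the underlying sheaf of $\cF$, and identify $\Phi$ on the generating objects $\cO_{Y_{n,i}}$ by applying Lemma \ref{thm:Resolutions} in the two possible ``resolution directions'' of $p^{*}\cO_{Y_{n,i}} \otimes \cF$.

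First I would observe that $X_{n+1} \cong X_{n} \times \AA^{2}_{x_{n+1}, y_{n+1}}$ with $W_{n+1} = W_{n} + x_{n+1}y_{n+1}$, so that $\cF$ is a locally free $SO(2)$-equivariant factorisation of the auxiliary quadratic potential $x_{n+1}y_{n+1}$, quasi-isomorphic to $\cO_{\{x_{n+1}=0\}}$ by Lemma \ref{thm:Resolutions}. The functor $\Phi = p^{*}(-) \otimes \cF$ is then the standard $SO(2)$-equivariant Knörrer equivalence, and the argument of Proposition \ref{thm:KnorrerPeriodicity}, which is carried out in \cite{isik_equivalence_2013, shipman_geometric_2012}, goes through with the $SO(2)$-equivariance tracked throughout. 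The weight statement follows immediately: $p$ collapses the $T$-invariant factor $\AA^{2}_{x_{n+1}, y_{n+1}}$ and therefore preserves $T$-weights at the $T$-fixed origin, while the underlying sheaf $\cO(-1) \oplus \cO$ of $\cF$ contributes weights $\{-1, 0\}$ to any tensor product.

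For the identification on $\cO_{Y_{n,i}}$, I would note that the added coordinate $z_{n+1}$ contributes a nondegenerate summand to the quadratic form, so $\ker q$ at a point is unchanged under $X_{n} \leadsto X_{n+1}$, and $p^{-1}(Y_{n,i}) = Y_{n,i} \times \AA^{2}$ contains $Y_{n+1,1}$ as the regular zero locus of $x_{n+1}$ and $Y_{n+1,2}$ as the regular zero locus of $y_{n+1}$. For $i = 1$, the $d_{r}$-direction of $\Phi(\cO_{Y_{n,1}}) = (\cO_{p^{-1}(Y_{n,1})}(-1) \mfarrows{x_{n+1}}{y_{n+1}} \cO_{p^{-1}(Y_{n,1})})$ is precisely the Koszul resolution of $\cO_{Y_{n+1,1}}$ inside $p^{-1}(Y_{n,1})$, and Lemma \ref{thm:Resolutions} gives $\Phi(\cO_{Y_{n,1}}) \cong \cO_{Y_{n+1,1}}$. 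For $i = 2$, I would reread the same underlying factorisation by swapping which piece plays the role of $\cE_{0}$, so that $\cO_{p^{-1}(Y_{n,2})}(-1)$ becomes the right-most term and $y_{n+1}$ plays the role of $d_{r}$. The $d_{r}$-sequence is now a Koszul resolution of $\cO_{Y_{n+1,2}}(-1)$ inside $p^{-1}(Y_{n,2})$, the twist $\cO(-1)$ arising because the cokernel inherits the twist of the new ambient $\cE_{0}$.

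The main subtlety is the bookkeeping of $\CC^{*}_{R}$- and $SO(2)$-weights needed to verify that the relabelling of the two pieces of $\cF$ is a harmless reinterpretation of the same factorisation, introducing no extraneous cohomological shift. Once this is checked, the clean asymmetric twist by $\cO(-1)$ in the $i = 2$ case is forced by the shape of $\cF$, and the identification of $\Phi(\cO_{Y_{n,2}})$ follows symmetrically from the $i = 1$ case.
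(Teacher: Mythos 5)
Your proposal is correct and takes essentially the same route as the paper: non-equivariant \Knorrer{} periodicity plus equivariance of the unit and counit for the equivalence, the weight claim read off from the underlying sheaf $\cO(-1)\oplus\cO$ of $\cF$, and the identification of the images of $\cO_{Y_{n,i}}$ via the two readings of the two-term Koszul factorisation. The paper packages that last step as the isomorphisms $\cO_{B}(-1)\cong\cF\cong\cO_{A}$ with $A=\{x_{n+1}=0\}$, $B=\{y_{n+1}=0\}$, and then tensors with $p^{*}\cO_{Y_{n,i}}$, which is the same computation you perform after tensoring first.
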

\begin{proof}
Ignoring the $SO(2)$- and $\CC^{*}_{R}$-actions, the claim that $\Phi$ is an equivalence is \Knorrer{} periodicity, see e.g.\ \cite[3.4]{shipman_geometric_2012}.
The inverse of $\Phi$ is given by $\Psi = p_{*}\Rhom(\cF, -)$.
It follows from this that the corresponding $SO(2) \times \CC^{*}_{R}$-equivariant functors are equivalences, since the isomorphisms
\[
\cE \to \Psi\Phi(\cE)\ \ \ \ \ \Phi\Psi(\cE) \to \cE
\]
are $SO(2)\times \CC_{R}^{*}$-equivariant.

The claim about the weights holds because the weights of $\cF$ are $\{-1,0\}$.

Let $A = \{x_{n+1} = 0\}$ and $B = \{y_{n+1} = 0\}$. 
Working in the category $D^{b}_{SO(2)}(X_{n+1},W^{\pr})$, we have
\[
\cO_{B}(-1) \cong \cF \cong \cO_{A},
\]
and the final claim follows from this.
\end{proof}

\subsubsection{The $O(2)$ case}
Let now $p : X_{n+2} \to X_{n}$ be the projection which collapses the $x_{n+1}$, $x_{n+2}$, $y_{n+1}$ and $y_{n+2}$ directions.
Let $W^{\pr} = x_{n+1}y_{n+1} + x_{n+2}y_{n+2}$, and let $\cF \in D^{b}_{O(2)}(X_{n+2},W^{\prime})$ be given by the $O(2)$-equivariant resolution
\[
\cO(-1) \oplus \cO(1) \mfarrows{d_{r}}{d_{l}} \cO_{+} \oplus \cO_{-},
\]
with
\[
d_{r} = \begin{pmatrix}
x_{n+1} & y_{n+1} \\
x_{n+2} & -y_{n+2}
\end{pmatrix}
\]
and
\[
d_{l} = \begin{pmatrix}
y_{n+1} & y_{n+2} \\
x_{n+1} & -x_{n+2}
\end{pmatrix}.
\]
Let $\Phi : D^{b}_{O(2)}(X_{n},W) \to D^{b}_{O(2)}(X_{n+2},W)$ be the functor $p^{*}(-) \otimes \cF$.
\begin{nlemma}
\label{thm:O2Knorrer}
The functor $\Phi$ is an equivalence, and it restricts to give an equivalence $D^{b}_{O(2)}(X_{n},W_{n})_{\res} \cong D^{b}_{O(2)}(X_{n+2},W_{n+2})_{\res}$.
Furthermore, we have $\Phi(K_{n}) = K_{n+2}$.
\end{nlemma}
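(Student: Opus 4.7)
The plan is to adapt the template of Lemma~\ref{thm:SO2Knorrer} to the $O(2) = SO(2) \rtimes \ZZ_{2}$ setting, verifying the three claims in turn.

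\emph{Equivalence.} Define the candidate inverse $\Psi := p_{*}\Rhom_{X_{n+2}}(\cF, -) : D^{b}_{O(2)}(X_{n+2}, W_{n+2}) \to D^{b}_{O(2)}(X_{n}, W_{n})$; this is well-defined because $W_{n+2} = p^{*}W_{n} + W^{\pr}$ and $\cF$ is a factorisation for $W^{\pr}$, so $\Rhom(\cF, -)$ sends $W_{n+2}$-factorisations to $W_{n}$-factorisations pulled back from $X_{n}$. The adjunction $\Psi\Phi \cong \id$ follows from the projection formula together with a direct local computation $p_{*}\Rhom(\cF, \cF) \cong \cO_{X_{n}}$ using the explicit form of $\cF$. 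The other adjunction is handled most efficiently by reducing to the $SO(2)$-equivariant case: after restriction along the faithful forgetful functor $D^{b}_{O(2)} \to D^{b}_{SO(2)}$, the factorisation $\cF$ is derived-equivalent to the external tensor product of two rank-one $SO(2)$-equivariant Knörrer factorisations (one for each variable pair $(x_{n+i}, y_{n+i})$), so $\Phi|_{SO(2)}$ is an equivalence by two applications of Lemma~\ref{thm:SO2Knorrer}. Since $\cF$ is manifestly $\ZZ_{2}$-equivariant, the equivalence promotes to an $O(2)$-equivariant equivalence.

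\emph{Grade restriction.} The $\CC^{*}$-weights of $\cF$ at the origin in the four new directions $(x_{n+1}, x_{n+2}, y_{n+1}, y_{n+2})$ are $\{-1, 0, 0, 1\}$, coming from the summands $\cO(-1), \cO_{+}, \cO_{-}, \cO(1)$. Since $\lfloor (n+2)/2 \rfloor = \lfloor n/2 \rfloor + 1$, the relation $\Phi(\cE)|_{0} \cong \cE|_{0} \otimes \cF|_{0}$ immediately shows that $\Phi$ sends $D^{b}_{O(2)}(X_{n}, W_{n})_{\res}$ into $D^{b}_{O(2)}(X_{n+2}, W_{n+2})_{\res}$. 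For the converse, if the weight-$(\lfloor n/2 \rfloor + 1)$ component of $\cE|_{0}$ were non-zero, then $\Phi(\cE)|_{0}$ would contain a non-zero weight-$(\lfloor n/2 \rfloor + 2)$ summand obtained by tensoring with the weight-$1$ part of $\cF|_{0}$, which would violate grade restriction of $\Phi(\cE)$; a symmetric argument rules out the opposite extreme, showing that $\Psi$ also preserves grade restriction.

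\emph{Identification $\Phi(K_{n}) = K_{n+2}$.} At the $SO(2)$-level I would identify $\cF$ with $\cO_{Z_{1}}(1) \cong \cO_{Z_{2}}(-1)$ in $D^{b}_{SO(2)}(X_{n+2}, W^{\pr})$, where $Z_{1} = \{x_{n+1} = x_{n+2} = 0\}$ and $Z_{2} = \{y_{n+1} = y_{n+2} = 0\}$ are the two 2-planes through the origin in the quadric cone $\{W^{\pr} = 0\}$, swapped by the $\ZZ_{2}$ of $O(2)$; this identification follows from the rightward differential of $\cF$ as a Koszul-type resolution, via Lemma~\ref{thm:Resolutions}. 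The set-theoretic equalities $p^{-1}(Y_{n, i}) \cap Z_{i} = Y_{n+2, i}$ are immediate from the definitions of $Y_{n, i}$ and the fact that $\ker q_{n} = \ker q_{n+2}$ both sit inside the $(z_{1}, z_{2})$-plane; these intersections are transverse, so $p^{*}\cO_{Y_{n, i}} \otimes \cO_{Z_{i}} \cong \cO_{Y_{n+2, i}}$. Combined with the $\CC^{*}$-twists, this produces $\Phi(\cO_{Y_{n, 1}}((n-1)/2)) \cong \cO_{Y_{n+2, 1}}((n+1)/2)$ and $\Phi(\cO_{Y_{n, 2}}(-(n-1)/2)) \cong \cO_{Y_{n+2, 2}}(-(n+1)/2)$, which assemble $\ZZ_{2}$-equivariantly into $\Phi(K_{n}) \cong K_{n+2}$. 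The main technical obstacle is the $SO(2)$-equivariant identification of $\cF$ with $\cO_{Z_{1}}(1)$: while plausible given the two-ruling structure of the quadric $\{W^{\pr} = 0\}$, the explicit comparison demands careful Koszul bookkeeping tracking $\CC^{*}$-weights and the signs built into the matrices $d_{r}, d_{l}$.
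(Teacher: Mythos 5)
Your proof is correct and takes essentially the same route as the paper: the paper likewise establishes the equivalence by (non-equivariant) Kn\"orrer periodicity together with the $O(2)\times\CC^{*}_{R}$-equivariance of the unit and counit, deduces the grade-restriction statement from the fact that the weights of $\cF$ at each base point are $\{-1,0,1\}$ so that weights shift by exactly one, and proves $\Phi(K_{n}) = K_{n+2}$ via the $SO(2)$-equivariant identifications $\cO_{Z_{2}}(-1) \cong \cF \cong \cO_{Z_{1}}(1)$ exactly as you propose. One small slip: the external tensor product of the two rank-one $SO(2)$ Kn\"orrer factorisations has weights $\{-2,-1,-1,0\}$, so it agrees with $\cF$ only after twisting by $\cO(1)$ — harmless for the equivalence claim, and in fact consistent with your later identification $\cF \cong \cO_{Z_{1}}(1)$.
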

\begin{proof}
The proof that $\Phi$ is an equivalence is the same as in Lemma \ref{thm:SO2Knorrer}.

The weights of $\cF$ at any point $p$ of the base are $\{-1,0, 1\}$.
Therefore $\cE \in D^{b}_{O(2)}(X_{n},W)$ has weights in $[-\lfloor \frac{n}{2} \rfloor, \lfloor \frac{n}{2} \rfloor]$ if and only if $\Phi(\cE)$ has weights in $[-\lfloor \frac{n}{2} \rfloor - 1, \lfloor \frac{n}{2} \rfloor + 1]$, which proves that $\Phi$ restricts to give $D^{b}_{O(2)}(X_{n},W)_{\res} \cong D^{b}_{O(2)}(X_{n+2},W)_{\res}$.

Working $SO(2)$-equivariantly, we have
\[
\cO_{B}(-1) \cong \cF \cong \cO_{A}(1),
\]
where $A = \{x_{n+1} = x_{n+2} = 0\}$ and $B = \{y_{n+1} = y_{n+2} = 0 \}$, and the isomorphism is by projection to the $\cO(1)$ and $\cO(-1)$ factors in the resolution of $\cF$.
It follows that we have $\Phi(K_{n}) \cong \cO_{Y_{n+2,1}}(\frac{n+1}{2}) \oplus \cO_{Y_{n+2,2}}(-\frac{n+1}{2}) = K_{n+2}$, and this isomorphism is $O(2)$-equivariant.
\end{proof}

\subsection{Computing $H(R)$ locally}
\label{sec:ComputingEndRing}
We now compute $H(R)$ on the standard corank 2 model from Definition \ref{defi:StandardSpaces}.

Let $\pi_{n} : (X_{n},W_{n}) \to B = \AA^{3}_{s,t,u}$ be the standard corank 2 model.
Let $S = \CC[s,t,u]$ be the coordinate ring of $B$, and let $P = (\pi_{n})_{*}\Rhom(K_{n},K_{n})$.
This is the local analogue of $H(R)$, in the sense that if $f : \PP(L) \to B$ is the locally defined map from Lemma \ref{thm:LocalForm}, then we have $f^{*}(P) \cong H(R)$. 

Our goal is now to compute $P$ as an $S$-algebra.
We show that $P$ is commutative, concentrated in cohomological degree 0, and that $\Spec P$ is a double cover of the corank 1 locus $su=t^{2}$, ramified in the corank 2 point $s = t = u = 0$:
\begin{nlemma}
\label{thm:localIso}
We have 
\[
P \cong \CC[s,t,u,\theta_{1},\theta_{2}]/(\theta_{1}^{2}-s, \theta_{1}\theta_{2} - t, \theta_{2}^{2}- u, su-t^{2}).
\]
\end{nlemma}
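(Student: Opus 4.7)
The plan is to reduce to $n = 3$ via Knörrer periodicity and then compute $P$ directly. For the reduction, the $O(2)$-equivariant Knörrer functor $\Phi = p^*(-) \otimes \cF$ from Lemma \ref{thm:O2Knorrer} sends $K_n$ to $K_{n+2}$, and its fully faithfulness forces $p_*\Rhom(\cF,\cF) \cong \cO_{X_n}$. Combining this with $\pi_{n+2} = \pi_n \circ p$ and the projection formula yields an isomorphism of sheaves of dg algebras
\[
\pi_{n+2,*}\sHom(\cK_{n+2}, \cK_{n+2}) \cong \pi_{n,*}\sHom(\cK_n, \cK_n),
\]
so $P$ is independent of odd $n \ge 3$ and we may assume $n = 3$.

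For $n = 3$, decompose $K_3 = \cO_{Y_{3,1}}(1) \oplus \cO_{Y_{3,2}}(-1)$ and analyse the four summands of $\pi_{3,*}\Rhom(K_3,K_3)$, keeping careful track of $SO(2)$-weights. Away from the corank-$2$ origin, Lemma \ref{thm:MaximalIsotropicLocalFormMaps} identifies each diagonal summand $\pi_{3,*}\Rhom(\cO_{Y_{3,i}},\cO_{Y_{3,i}})$ with $\cO_{B_1}$, where $B_1 = \{su = t^2\}$ is the corank-$1$ divisor; one checks that, étale locally over $B \setminus \{0\}$, the sheaves $\cO_{Y_{3,i}}$ are line-bundle twists of the maximal-isotropic sheaves $\cO_{Z_i}$ appearing in that lemma, and then takes $\ZZ_2$-invariants to merge the two copies of $\cO_{B_1}$ into $\CC[s,t,u]/(su - t^2) \subset P$. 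The off-diagonal summands, after the $SO(2)$-weight shift by $\mp 2$, contribute the new generators $\theta_1, \theta_2$: these are realised by explicit factorisation maps $\cO_{Y_{3,1}} \to \cO_{Y_{3,2}}$ built from the basis vectors $z_1, z_2$ of the rank-$2$ non-trivial subspace $\langle z_1, z_2 \rangle \subset F$, and a direct Yoneda-composition calculation yields the relations $\theta_1^2 = s$, $\theta_1 \theta_2 = t$, $\theta_2^2 = u$.

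The main obstacle is controlling the answer across the corank-$2$ origin, where the ideal $(Qv_1)$ defining $Y_{3,1}$ fails to be a regular sequence and the naive Koszul factorisation does not cleanly resolve $\cO_{Y_{3,1}}$. One circumvents this by assembling the generators above into a candidate map of $S$-algebras $\CC[s,t,u,\theta_1,\theta_2]/(\theta_1^2 - s,\, \theta_1 \theta_2 - t,\, \theta_2^2 - u) \to P$ and arguing it is an isomorphism by verifying the identification on the open complement of the origin (via Lemma \ref{thm:MaximalIsotropicLocalFormMaps}) and then invoking coherence of both sides as finitely generated $S$-modules, together with the absence of sections supported only at the origin, to extend equality globally.
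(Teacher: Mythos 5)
Your reduction to $n=3$ via Lemma \ref{thm:O2Knorrer} matches the paper's first step, and your identification of the four summands of $\pi_{3*}\Rhom(K_3,K_3)$ over the corank $\le 1$ locus via Lemma \ref{thm:MaximalIsotropicLocalFormMaps} is believable (though note that $Y_{3,i}$, with fibre $\ker q \oplus F$, and $Z_i = M_i \oplus M_i$ are genuinely different subvarieties, so the claimed isomorphism $\cO_{Y_{3,i}} \cong \cO_{Z_i}(\mathrm{twist})$ itself needs a Knörrer-type manipulation rather than being a tautology). The genuine gap is the last step, the extension across the corank-2 origin. Your argument rests on two assertions that are never justified: (i) that the relations $\theta_1^2 = s$, $\theta_1\theta_2 = t$, $\theta_2^2 = u$, verified only on $B\setminus 0$, hold in $P$ globally, and (ii) that the resulting algebra map is surjective and injective, which you reduce to ``the absence of sections supported only at the origin''. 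Injectivity is indeed cheap (the source is $\CC[\theta_1,\theta_2]$, a domain), but (i) and surjectivity both come down to the statement that $H(\REnd(K_3))$ has no classes, in any cohomological degree, supported at the corank-2 point. That is not automatic: for the cohomology of an endomorphism dg-algebra in a factorisation category there is no general depth or torsion-freeness principle, and extra Ext classes concentrated over the most degenerate point are exactly the phenomenon the lemma has to exclude; ``coherence of both sides'' does not exclude it, since a map of coherent $S$-modules can be an isomorphism off the origin and still have skyscraper cokernel (or the relations can fail by a skyscraper section). Ruling this out is the actual content of the lemma, and your proposal replaces it with an appeal to the conclusion.

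For comparison, the paper closes precisely this hole by computing at the origin: after a further $SO(2)$-equivariant Knörrer reduction to an $n=2$ model (Lemmas \ref{thm:SO2Knorrer} and \ref{thm:RTildeIsR}, with $P = \widetilde{P}^{\ZZ_2}$), it writes down explicit locally free factorisations \eqref{eqn:ResolutionOfK}, \eqref{eqn:ResolutionOfK2} of $\cO_{Y_1}$ and $\cO_{Y_2}(-1)$ valid over all of $B = \AA^3$ including the corank-2 point (this is exactly where the naive Koszul complex you worry about is replaced by a genuine matrix-factorisation resolution), computes $\ker d/\im d$ of the Hom-complexes as $S$-modules, and determines the multiplication by lifting the degree-zero generators to chain maps and composing (Lemma \ref{thm:CompositionLemma}). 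Unless you supply a substitute for this step -- an explicit resolution of $\cO_{Y_{3,i}}$ through the origin, or a depth/Cohen--Macaulay argument showing the relevant Ext sheaves have no sections supported at the origin -- your argument establishes the isomorphism only over $B \setminus 0$ and does not prove the lemma.
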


Recall that $f: Y \to \PP(L)$ is the ramified double cover of the corank 1 locus in $\PP(L)$.
Combining Lemmas \ref{thm:LocalForm} and \ref{thm:localIso} gives:
\begin{ncor}
\label{thm:HRIsYLocally}
Étale locally on $\PP(L)$, we have $H(R) \cong f_{*}\cO_{Y}$.
\end{ncor}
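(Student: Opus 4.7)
The plan is to chain the two preceding lemmas and then match the resulting local double cover with $Y|_U$ via the universal property of $Y$. First I would invoke Lemma \ref{thm:LocalForm} to produce, around any point $p \in \PP(L)$, an étale neighbourhood $U$ together with a smooth morphism $\varphi : U \to B_{2}$ and a 2-Cartesian square in which $g^{*}K_{n} \cong K$. Since $\varphi$ is smooth, hence flat, flat base change (applied to a locally-free representative of $K$ pulled back from $X_{n}(2)/O(2)$) gives an isomorphism of sheaves of dg algebras
\[
R|_{U} \;=\; \pi_{*}\sHom(K,K)|_{U} \;\cong\; \varphi^{*}\bigl((\pi_{n})_{*}\sHom(K_{n},K_{n})\bigr).
\]
Since $\varphi$ is flat, passing to cohomology commutes with $\varphi^{*}$, so $H(R)|_{U} \cong \varphi^{*} P$ as sheaves of $\cO_{U}$-algebras.

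Next I would feed in Lemma \ref{thm:localIso}, which describes $P$ explicitly as $S[\theta_{1},\theta_{2}]/(\theta_{1}^{2}-s,\,\theta_{1}\theta_{2}-t,\,\theta_{2}^{2}-u,\,su-t^{2})$. Geometrically this presents $\Spec P \to B_{2}$ as a double cover of the corank 1 locus $\{su=t^{2}\} \subset B_{2}$, ramified over the corank 2 point $\{s=t=u=0\}$. By construction of the standard local model, $\varphi$ pulls the corank $i$ stratum of $B_{2}$ back to the corank $i$ stratum of $\PP(L)|_{U}$; hence $\Spec \varphi^{*} P \to U$ is étale locally a double cover of the corank 1 stratum of $U$, ramified in the corank 2 stratum.

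To finish I need to identify this double cover with $Y|_{U}$. Here I would appeal to the uniqueness statement Lemma \ref{thm:DeterminedVariety}: once I check that the two points of $\Spec\varphi^{*} P$ over a corank 1 point $q \in U$ correspond to the two connected components of the variety of maximal isotropic subspaces of $(V,q)$, the double cover is characterised. The isomorphism $\Spec \varphi^{*}P \cong Y|_{U}$ then gives $\varphi^{*}P \cong f_{*}\cO_{Y}|_{U}$, completing the proof.

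The main obstacle is the final matching of sheets with connected components of maximal isotropics. Over the ramified stratum the double cover is determined by its branch locus alone, but away from it one must genuinely see the spinor-like splitting. I would expect to handle this by interpreting the $\theta_{i}$ concretely in the standard corank 2 model: they arise from the generators of $K_{n} = \cO_{Y_{n,1}}((n-1)/2) \oplus \cO_{Y_{n,2}}(-(n-1)/2)$ via Koszul type resolutions, where $Y_{n,1}$ and $Y_{n,2}$ encode the kernel subbundle of $q$. Tracing through this — or alternatively deferring the identification to the global Lemma \ref{thm:Corank1Fibre} quoted in Proposition \ref{thm:globalIso} — establishes the correspondence and completes Step 4.
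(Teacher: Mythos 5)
Your overall route is the paper's: invoke Lemma \ref{thm:LocalForm} to put $\cZ_{-}$ in standard form étale locally with $g^{*}K_{n}\cong K$, deduce $H(R)|_{U}\cong \varphi^{*}P$, feed in the computation of $P$ from Lemma \ref{thm:localIso}, and recognise the answer as the local double cover. However, your justification of the base-change step is not available as stated: the morphism $\varphi : U \to B_{2}$ produced by Lemma \ref{thm:LocalForm} is smooth (hence flat) only near corank $2$ points; near corank $0$ and corank $1$ points it factors through the non-flat inclusions $(1,0,1)\into \AA^{3}_{s,t,u}$ and $\AA^{1}_{s}\into \AA^{3}_{s,t,u}$, so ``flat base change'' does not apply. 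The isomorphism $H(R)|_{U}\cong \varphi^{*}P$ should instead be extracted, as the paper does, from the final clause of Lemma \ref{thm:LocalForm}: since $g^{*}K_{n}\cong K$ one may take $\cK|_{U}=g^{*}\cK_{n}$, and because $\pi_{n}$ is (equivariantly) affine its pushforward commutes with arbitrary base change, giving $R|_{U}\cong\varphi^{*}\bigl((\pi_{n})_{*}\sEnd(\cK_{n})\bigr)$ at the dg level; one then passes to cohomology (or, at corank $0,1$ points, simply uses the smooth maps of Lemma \ref{thm:SmoothMorphism} onto the corank-$i$ standard models and the corresponding easier computations) rather than appealing to flatness of $\varphi$.

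Your last step also does more than the statement requires, and your proposed fallback is problematic. For the \emph{étale-local} isomorphism no matching of the two sheets with the two components of maximal isotropics is needed: by Lemma \ref{thm:localIso}, $\Spec_{\PP(L)}H(R)|_{U}$ is pulled back from $\AA^{2}_{\theta_{1},\theta_{2}}\to\{su=t^{2}\}$, i.e.\ it is étale-locally the connected nonsingular double cover of the corank $1$ locus ramified exactly in the corank $2$ locus, and the defining properties of $Y$ (nonsingular, ramified in the corank $2$ locus) force $Y|_{U}$ to be the same cover — a trivial (disconnected) cover of the locally $A_{1}$-singular corank $1$ locus would be neither. The identification of sheets with spinor components is exactly the additional global input of Lemma \ref{thm:Corank1Fibre}, needed for Proposition \ref{thm:globalIso}, not here; and invoking Lemma \ref{thm:Corank1Fibre} to prove this corollary would be circular, since its proof uses the local equivalence $F : D^{b}(\cZ_{-}|_{U},W)_{\res}\cong D^{b}(H(R)|_{U})$, whose construction (commutativity and nonsingularity of $\Spec_{\PP(L)}H(R)$, boundedness of $G$) already rests on Corollary \ref{thm:HRIsYLocally}.
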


\begin{proof}[Proof of Lemma \ref{thm:localIso}]
By Lemma \ref{thm:O2Knorrer}, we reduce to proving the claim when $n = 3$, and now show that we may reduce further to a computation where $n = 2$.
Let $\cO_{Y_{i}} = \cO_{Y_{2,i}}$, let $K_{2}^{\pr} = \cO_{Y_{1}} \oplus \cO_{Y_{2}}(-1) \in D^{b}_{SO(2)}(X_{2},W_{2})$, and let $\widetilde{P}$ be the $S$-algebra $\RHom(K^{\prime}_{2}, K^{\prime}_{2})^{SO(2)}$. 
It decomposes as
\begin{align*}
\widetilde{P} &= \Hom(\cO_{Y_{1}}, \cO_{Y_{1}})^{SO(2)} \oplus \Hom(\cO_{Y_{1}}, \cO_{Y_{2}}(-1))^{SO(2)}\\
 &\oplus \Hom(\cO_{Y_{2}}(-1), \cO_{Y_{1}})^{SO(2)} \oplus \Hom(\cO_{Y_{2}}(-1),\cO_{Y_{2}}(-1))^{SO(2)}.
\end{align*}
Using the involution $\sigma$ of $X_{2}$, which permutes $Y_{1}$ and $Y_{2}$, we find natural isomorphisms
\begin{align*}
\Hom(\cO_{Y_{1}},\cO_{Y_{1}}) &\cong \Hom(\sigma^{*}(\cO_{Y_{2}}), \sigma^{*}(\cO_{Y_{2}})) \\
&\cong \Hom(\cO_{Y_{2}},\cO_{Y_{2}}) \cong \Hom(\cO_{Y_{2}}(-1),\cO_{Y_{2}}(-1)).
\end{align*}
In a similar way we can define an isomorphism $\Hom(\cO_{Y_{1}},\cO_{Y_{2}}(-1)) \cong \Hom(\cO_{Y_{2}}(-1), \cO_{Y_{1}})$, and this defines an action of $\ZZ_{2}$ on $\widetilde{P}$.
Lemmas \ref{thm:RTildeIsR} and \ref{thm:ComputingRTilde} now complete the proof.
\end{proof}

\begin{nlemma}
\label{thm:RTildeIsR}
We have an isomorphism of $S$-algebras $\widetilde{P}^{\ZZ_{2}} \cong P$.
\end{nlemma}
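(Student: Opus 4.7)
The plan is to identify both $P$ and $\widetilde{P}^{\ZZ_{2}}$ as $O(2)$-invariants of the same $SO(2)$-equivariant sheaf of graded algebras on $B$, using $SO(2)$-Knörrer periodicity to bridge the $n=3$ and $n=2$ setups and the standard identity $V^{O(2)} = (V^{SO(2)})^{\ZZ_{2}}$ for $O(2)$-representations.

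First I would apply Lemma \ref{thm:SO2Knorrer} with $n = 2$ to obtain an $SO(2)$-equivariant, $B$-linear equivalence $\Phi : D^{b}_{SO(2)}(X_{2},W_{2}) \to D^{b}_{SO(2)}(X_{3},W_{3})$ satisfying $\Phi(\cO_{Y_{2,1}}) = \cO_{Y_{3,1}}$ and $\Phi(\cO_{Y_{2,2}}) = \cO_{Y_{3,2}}(-1)$. Since $\Phi$ commutes with twisting, this gives $\Phi(K'_{2} \otimes \cO(1)) = K_{3}$, and hence an isomorphism of $SO(2)$-equivariant sheaves of graded $S$-algebras on $B$
\[
(\pi_{3})_{*}\sHom(K_{3},K_{3}) \;\cong\; (\pi_{2})_{*}\sHom(K'_{2} \otimes \cO(1), K'_{2} \otimes \cO(1)) \;\cong\; (\pi_{2})_{*}\sHom(K'_{2},K'_{2}),
\]
the last isomorphism by untwisting. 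Taking $SO(2)$-invariants and cohomology then identifies $\widetilde{P}$ with the $SO(2)$-invariant algebra underlying $P$.

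Second, since $O(2) = SO(2) \rtimes \ZZ_{2}$, the $O(2)$-invariant algebra $P$ is the $\ZZ_{2}$-fixed subalgebra of its $SO(2)$-invariant part. Thus $P \cong \widetilde{P}^{\ZZ_{2}}$ once the two $\ZZ_{2}$-actions on $\widetilde{P}$ are shown to agree: the one coming from $O(2)/SO(2)$, whose generator acts on $X_{n}$ as the involution $\sigma$ and permutes the two summands of $K_{n}$ by the $O(2)$-equivariant structure; and the one introduced in the proof of Lemma \ref{thm:localIso}, built from $\sigma^{*}$ together with the swap of the index pair $(i,j)$ in $\Hom(\cO_{Y_{2,i}},\cO_{Y_{2,j}})$. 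These agree essentially by definition of the $O(2)$-structure on $K_{n}$.

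The main technical point is verifying $\ZZ_{2}$-equivariance of the isomorphism in Step 1. The Knörrer factorisation $\cF = (\cO(-1) \to \cO)$ defining $\Phi$ depends on the coordinates $(x_{3}, y_{3})$, which $\sigma$ interchanges, so $\Phi$ need not commute strictly with $\sigma^{*}$. I expect an isomorphism $\sigma^{*}\cF \cong \cF \otimes \cO(1)$, coming from the identifications $\cF \cong \cO_{\{x_{3}=0\}}$ and $\cO_{\{y_{3}=0\}} \cong \cF \otimes \cO(1)$ that appear in the proof of Lemma \ref{thm:SO2Knorrer}; this extra twist is exactly cancelled by the compensating $\cO(1)$ in the identity $\Phi^{-1}(K_{3}) = K'_{2} \otimes \cO(1)$. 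With this compatibility verified, Steps 1 and 2 combine to yield the desired $S$-algebra isomorphism $\widetilde{P}^{\ZZ_{2}} \cong P$.
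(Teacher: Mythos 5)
Your proposal is correct and follows essentially the same route as the paper: use the $SO(2)$-equivariant Knörrer functor of Lemma \ref{thm:SO2Knorrer} to identify $\REnd(K_3)^{SO(2)}$ with $\widetilde{P}$, then match the involution induced by the $O(2)$-structure on $K_{3}$ with the $\ZZ_{2}$-action on $\widetilde{P}$ and take fixed points. In fact you are slightly more careful than the paper on two points it leaves implicit, namely the twist $\Phi(K'_{2}\otimes\cO(1)) \cong K_{3}$ and the compatibility $\sigma^{*}\cF \cong \cF \otimes \cO(1)$ needed for $\ZZ_{2}$-equivariance.
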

\begin{proof}
The \Knorrer{} functor of Lemma \ref{thm:SO2Knorrer} sends $K_{2}^{\pr} = \cO_{Y_{2,1}} \oplus \cO_{Y_{2,2}}(-1)$ to $K_{3} = \cO_{Y_{3,1}}(1) \oplus \cO_{Y_{3,2}}(-1)$, so we have an isomorphism of $S$-algebras $\REnd(K_{3})^{SO(2)} \cong \REnd(K^{\prime}_{2})^{SO(2)}$.

Endowing $K_{3}$ with the involution coming from its $O(2)$-structure, we obtain an involution on $\REnd(K_{3})$, and this corresponds to the involution on $H(\widetilde{R}) = \REnd(K_{2}^{\pr})$ defined above.
Hence $P = \REnd(K_{3})^{\ZZ_{2}} \cong \REnd(K^{\pr}_{2})^{\ZZ_{2}} = \widetilde{P}^{\ZZ_{2}}$, which is what we wanted.
\end{proof}

\begin{nlemma}
\label{thm:ComputingRTilde}
We have 
\[
\widetilde{P}^{\ZZ_{2}} \cong \CC[s,t,u,\theta_{1},\theta_{2}]/(\theta_{1}^{2}-s, \theta_{1}\theta_{2} - t, \theta_{2}^{2}- u, su-t^{2}).
\]
\end{nlemma}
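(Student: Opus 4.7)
The plan is to compute $\widetilde P$ by constructing explicit matrix factorisation representatives for $\cO_{Y_{2,1}}$ and $\cO_{Y_{2,2}}$ on the standard corank-$2$ model $(X_2, W_2) \to B$, and then to extract the $\ZZ_2$-invariants. Writing $W_2 = x^T M y$ with $M = \left(\begin{smallmatrix} s & t \\ t & u \end{smallmatrix}\right)$, the reduced subvarieties $Y_{2,1}, Y_{2,2}$ are determinantal: each is cut out by the $2 \times 2$ minors of
$\left(\begin{smallmatrix} s & t & x_1 \\ t & u & x_2 \end{smallmatrix}\right)$
and
$\left(\begin{smallmatrix} s & t & y_1 \\ t & u & y_2 \end{smallmatrix}\right)$
respectively. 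Hilbert--Burch then gives length-$2$ locally free resolutions, which I would periodise into $SO(2)$-equivariant matrix factorisations $\cF_1 \simeq \cO_{Y_{2,1}}$ and $\cF_2 \simeq \cO_{Y_{2,2}}$ in $D^b_{SO(2)}(X_2, W_2)$, with the $\CC^*_R$- and $SO(2)$-weights fixed by requiring $W_2$ to be of $\CC^*_R$-degree $2$ and by matching the natural $SO(2)$-action on the $Y$'s.

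The algebra $\widetilde P$ splits into four $SO(2)$-invariant Hom summands $A_{ij}$, each of which becomes the cohomology of an explicit complex of free $S$-modules under the chosen resolutions. A direct computation identifies the diagonal summands $A_{11}, A_{22}$ with copies of $S$ generated by the identities of $\cF_1, \cF_2$, while the off-diagonal summands $A_{12}, A_{21}$ appear as rank-$2$ $S$-modules with generators coming from the extra $x$- or $y$-column in the determinantal matrix above. Composing chain-level lifts of these off-diagonal generators produces explicit elements of the diagonal summands which, modulo the $\ZZ_2$-identifications, reproduce the entries $s$, $t$, $u$ of $M$.

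To conclude, I would take $\ZZ_2$-invariants: the involution $\sigma$ swaps $A_{11} \leftrightarrow A_{22}$ and $A_{12} \leftrightarrow A_{21}$, so the invariants consist of a diagonal copy of $S$ together with two elements $\theta_1, \theta_2$ coming from symmetrising the off-diagonal generators. The composition formulae from the previous step then yield exactly $\theta_1^2 = s$, $\theta_1 \theta_2 = t$, $\theta_2^2 = u$; the relation $su = t^2$ follows formally from $(\theta_1\theta_2)^2 = \theta_1^2 \theta_2^2$. The main technical obstacle is the bookkeeping here: correctly periodising the Hilbert--Burch resolutions with all $SO(2)$- and $\CC^*_R$-weights, lifting the off-diagonal generators to the factorisation level, and composing them while tracking signs through the $\ZZ_2$-swap so that the two symmetrisations pair with $s$ and $u$ respectively and their product lands on $t$. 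A useful consistency check is that, on the complement of the corank-$2$ origin, $\Spec \widetilde P^{\ZZ_2}$ must restrict to an étale double cover of $B_1 \setminus 0$, which constrains the answer up to its behaviour at the origin and thereby pins down the final isomorphism.
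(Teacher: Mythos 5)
Your plan is essentially the paper's own argument: the paper's explicit factorisations are precisely periodised Hilbert--Burch resolutions of the determinantal ideals of $Y_{2,1}$ and $Y_{2,2}$, the four $SO(2)$-invariant Hom summands are computed as $S$-modules from these resolutions, the off-diagonal generators are lifted to chain maps $\phi_{1},\phi_{2}$, their compositions are shown to reproduce the entries of $\begin{pmatrix} s & t \\ t & u \end{pmatrix}$, and the invariant generators are $\theta_{i} = \phi_{i} + \sigma\phi_{i}$.

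Two details in your setup need repair, though neither changes the outcome. First, your determinantal matrix is the wrong one: the $2\times 2$ minors of $\begin{pmatrix} s & t & x_{1} \\ t & u & x_{2}\end{pmatrix}$ are $su-t^{2}$, $sx_{2}-tx_{1}$, $tx_{2}-ux_{1}$, which over a corank $1$ point cut out the locus where $(x_{1},x_{2})$ lies in the \emph{image} of the form, whereas $Y_{2,1}$ is the \emph{kernel} locus; its ideal is $(su-t^{2},\, sx_{1}+tx_{2},\, tx_{1}+ux_{2})$, the maximal minors of the syzygy matrix $\begin{pmatrix} x_{1} & x_{2} \\ -u & t \\ t & -s \end{pmatrix}$, and it is to this matrix that Hilbert--Burch applies (this is exactly the paper's resolution \eqref{eqn:ResolutionOfK}). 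Following your matrix literally you would resolve a different sheaf and end up with the presentation twisted by $s \leftrightarrow u$, $t \mapsto -t$. Second, the diagonal summands are not free: $\cO_{Y_{2,1}}$ is annihilated by $su-t^{2}$, so $\Hom(\cO_{Y_{2,1}},\cO_{Y_{2,1}})^{SO(2)} \cong S/(su-t^{2})$ rather than $S$. This is harmless for the final presentation, since, as you observe, $su-t^{2}$ follows from the other relations once one has checked commutativity, i.e.\ that both compositions $\theta_{1}\theta_{2}$ and $\theta_{2}\theta_{1}$ equal $t$ -- a point that must be verified at the chain level along with $\theta_{1}^{2}=s$ and $\theta_{2}^{2}=u$. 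With these corrections your plan reproduces the paper's proof.
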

\begin{proof}
Let $Y_{i} = Y_{2,i} \subset X_{2}$.
We introduce explicit resolutions of the objects $\cO_{Y_{i}}$ and compute.

Let 
\[
U = \Gamma(X_{2},\cO_{X_{2}}) = \CC[s,t,u,x_{1}, x_{2}, y_{1}, y_{2}].
\]
The $SO(2)$-action gives the $x_{i}$ degree 1, the $y_{i}$ degree $-1$, and gives $s,t,u$ degree 0.
The $x_{i}$ and $y_{i}$ have cohomological (i.e.\ $\CC^{*}_{R}$-) degree 1, while $s,t,u$ have cohomological degree 0.

First note that $\cO_{Y_{1}}$ has a locally free representative
\begin{equation}
\label{eqn:ResolutionOfK}
M_{1} = U(-1)^{2}[1] \overset{d_{1r}}{\underset{d_{1l}}{\rightleftarrows}} U[1] \oplus U(-1)^{2} \overset{d_{0r}}{\underset{d_{0l}}{\rightleftarrows}} U,
\end{equation}
where
\begin{align*}
\label{eqn:MFResolution}
d_{0r} &= \begin{pmatrix} su-t^2 & sx_1 + tx_2 & tx_1 + ux_2 \end{pmatrix} \\
d_{1r} &= \begin{pmatrix} x_1 & x_2 \\ -u & t \\ t & -s \end{pmatrix} \\
d_{0l} &= \begin{pmatrix} 0 \\ y_1 \\ y_2 \end{pmatrix} \\
d_{1l} &= \begin{pmatrix} sy_1 + ty_2 & -x_2y_2 & x_2y_1 \\ ty_1 + uy_2 & x_1y_2 & -x_1y_1 \end{pmatrix}.
\end{align*}
This follows by Lemma \ref{thm:Resolutions}, since the complex formed by the rightwards arrows forms a resolution of the sheaf $\cO_{Y_{1}}$.

We label the generators of the $U$-factors in \eqref{eqn:ResolutionOfK} as follows.
Let $e \in U$ be the generator of the rightmost factor, and let $g_{1},g_{2}$ be generators of the $U(-1)$-factors in the middle, ordered in the way in which they appear in the matrix.
Denote the generator of the middle $U[1]$ factor by $f_{1}$, and the generators of the two $U(-1)[1]$ factors by $f_{2}, f_{3}$.

Exchanging the $x_{i}$ and $y_{i}$ and shifting by $(-1)$, we obtain a locally free representative of $\cO_{Y_{2}}(-1)$:
\begin{equation}
\label{eqn:ResolutionOfK2}
M_{2} = U^{2}[1] \overset{}{\underset{}{\rightleftarrows}} U(-1)[1] \oplus U^{2} \overset{}{\underset{}{\rightleftarrows}} U(-1),
\end{equation}
Let $\sigma e, \sigma g_{i}, \sigma f_{i}$ be the generators of the $U$-factors in this resolution, defined as the corresponding generators for the resolution \eqref{eqn:ResolutionOfK} above.

Ignoring the algebra structure, we have 
\[
\widetilde{P}^{\ZZ_{2}} \cong \RHom(\cO_{Y_{1}}, \cO_{Y_{1}}) \oplus \RHom(\cO_{Y_{2}}(-1), \cO_{Y_{1}}).
\]
We first compute the components of this splitting as $S$-modules.

Let $Q$ be the coordinate ring of $Y_{1}$, that is
\[
Q = U/(sx_{1} + tx_{2}, tx_{1} + ux_{2}, su-t^{2}).
\]
For any factorisation $E$ we have an isomorphism
\begin{align*}
\RHom(E, \cO_{Y_{1}}) &= \RHom(E, \cO_{Y_{1}}) \\
&\cong \RHom_{Y_{1}}(E|_{Y_{1}}, \cO_{Y_{1}}) = \RGamma(Y_{1},(E|_{Y_{1}})^{\vee}).
\end{align*}
Applying this observation first to $E = \cO_{Y_{1}}$, we want to compute 
\begin{align*}
\RHom(\cO_{Y_{1}}, \cO_{Y_{1}}) &= \RGamma(Y_{1}, (\cO_{Y_{1}}^{\vee})|_{Y_{1}}) = H(M_{1}^{\vee}|_{Y_{1}})
\end{align*}
where $M_{1}$ is defined in \eqref{eqn:ResolutionOfK}.
The object $(M_{1}^{\vee})|_{Y_{1}}$ can be written as
\[
(M_{1}^{\vee})|_{Y_{1}} \cong Q[-1] \oplus Q(1)^{2} \overset{d_{r}}{\underset{d_{l}}{\rightleftarrows}} Q \oplus Q(1)^{2}[-1],
\]
where the differentials are
\[
d_{r} = \begin{pmatrix} 
0 & y_{1} & y_{2} \\
x_{1} & -u & t \\
x_{2} & t & -s
\end{pmatrix}
\]
and
\[
d_{l} = \begin{pmatrix}
0 & sy_{1} + ty_{2} & ty_{1} + uy_{2} \\
0 & -x_{2}y_{2} & x_{1}y_{2} \\
0 & x_{2}y_{1} & -x_{1}y_{1}.
\end{pmatrix}
\]
Using e.g.\ Macaulay2 \cite{M2}, we find that $\ker d_{r}/\im d_{l} = 0$ and that, as a $Q$-module, $\ker d_{l}/\im d_{r}$ is generated by $e^{\vee}$, subject to the relations 
\[
(x_{1}y_{1}, x_1y_2, x_{2}y_{1}, x_{2}y_{2}, y_1t+y_2u, y_1s+y_2t).
\]
With respect to the $SO(2)$-grading, the degree 0 part of $\ker d_{l}/\im d_{r}$ is therefore 
\[
Q_{0}/(x_{1}y_{1}, x_{1}y_{2}, x_{2}y_{1}, x_{2}y_{2})
\]
where $Q_{0}$ is the degree 0 part of $Q$.
As an $S$-module this is $S/(su-t^{2})$.
We have thus shown 
\[
\RHom(\cO_{Y_{1}},\cO_{Y_{1}}) = H(\Hom(M_{1},\cO_{Y_{1}})) = S/(su-t^{2}).
\]
This module is generated by $e^{\vee} \in \Hom(M_{1},\cO_{Y_{1}})$, which corresponds to the identity map on $\cO_{Y_{1}}$.

We can compute $\RHom(\cO_{Y_{2}}(-1), \cO_{Y_{1}})$ similarly.
Here 
\[
M_{2}^{\vee}|_{Y_{1}} = Q(1)[-1] \oplus Q^{2}  \overset{d_{l}}{\underset{d_{r}}{\leftrightarrows}} Q(1) \oplus Q^{2}[-1]
\]
with
\[
d_{r} = \begin{pmatrix} 
0 & x_{1} & x_{2} \\
y_{1} & -u & t \\
y_{2} & t & -s
\end{pmatrix}
\]
and
\[
d_{l} = \begin{pmatrix}
0 & 0 & 0 \\
sy_{1} + ty_{2} & -x_{2}y_{2} & x_{2}y_{1} \\
ty_{1} + uy_{2} & x_{1}y_{2} & -x_{1}y_{1}.
\end{pmatrix}
\]
Again we verify that $\ker d_{l}/\im d_{r} = 0$.
We further find that $\ker d_{r}$ is generated as a $Q$-module by 3 elements, $h_{1} = s\sigma g_{1}^{\vee} + t\sigma g_{2}^{\vee}$, $h_{2} = t\sigma g_{1}^{\vee} + u\sigma g_{2}^{\vee}$ and $h_{3} = x_{2}\sigma g_{1}^{\vee} - x_{1} \sigma g_{2}^{\vee}$.

We have $y_{1}h_{3}, y_{2}h_{3} \in \im d_{l}$, so the degree 0 part of $\ker d_{r}/\im d_{l}$ is generated by $h_{1}, h_{2}$.
For all $i,j \in \{1, 2\}$, we can find relations $x_{i}h_{j} = r_{ij}h_{3}$ with $r_{ij} \in Q$, and therefore $x_{i}y_{j}h_{k} \in \im d_{l}$.

It follows that the degree 0 part of $\ker d_{r}/\im d_{l}$ is generated by $h_{1}$ and $h_{2}$ as an $S$-module.
Since all elements of $\im d_{l}$ have higher cohomological degree than $h_{1},h_{2}$, in fact $(\ker d_{r}/\im d_{l})_{0}$ equals the $S$-submodule of $Q^{2} = Q(\sigma g_{1})^{\vee} \oplus Q(\sigma g_{2})^{\vee}$ generated by $h_{1}, h_{2}$. 
The only relations are then $sh_{1} - th_{2}$ and $th_{1} - uh_{2}$.
In conclusion, as an $S$-module we have 
\[
\RHom(\cO_{Y_{2}}(-1),\cO_{Y_{1}}) = H(\Hom(M_{2},\cO_{Y_{1}})) = S^{2}/((s,-t),(t,-u)).
\]

We have computed the $S$-module structure of $\widetilde{P}^{\ZZ_{2}}$; it remains to compute the algebra structure.
We choose a lifting of the elements $h_{i}$ to maps $\phi_{i} : M_{2} \to M_{1}$. 
Let $I \subset U$ be the ideal such that $Q = U/I$.
Working modulo $I$, the $\phi_{i}$ satisfy
\begin{align}
\label{eqn:PhiIEquation}
\begin{split}
e^\vee\phi_{1}(\sigma g_{1}) = s,\ e^\vee\phi_{1}(\sigma g_{2}) = t,\ e^\vee\phi_{1}(\sigma e) = e^\vee\phi_{1}(\sigma f_{i}) = 0, \\
e^\vee\phi_{2}(\sigma g_{2}) = t,\ e^\vee\phi_{2}(\sigma g_{2}) = u,\ e^\vee\phi_{2}(\sigma e) = e^\vee\phi_{2}(\sigma f_{i}) = 0.
\end{split}
\end{align}
\begin{nlemma}
\label{thm:CompositionLemma}
We have
\[
\begin{pmatrix}
e^{\vee}\phi_{1}(\sigma \phi_{1})(e) & e^{\vee}\phi_{1}(\sigma \phi_{2})(e) \\
e^{\vee}\phi_{1}(\sigma \phi_{2})(e) & e^{\vee}\phi_{2}(\sigma \phi_{2})(e)
\end{pmatrix}
= \begin{pmatrix}
s & t \\
t & u
\end{pmatrix} \mod I.
\]
\end{nlemma}
\begin{proof}
We prove $e^{\vee}\phi_{1}(\sigma \phi_{1})(e) = s$; the other cases are similar.
Note first that since $\sigma \phi_{1}$ preserves cohomological and $SO(2)$-degrees, we have 
\begin{equation*}
(\sigma \phi_{1})(e) = v_{1}\sigma g_{1} + v_{2} \sigma g_{2} + (v_{3}y_{1} + v_{4}y_{2})\sigma f_{1},\ \ \ \ v_{i} \in S.
\end{equation*}
Using \eqref{eqn:PhiIEquation} we get
\begin{equation*}
(\sigma e)^{\vee}d(\sigma \phi_{1})(e) = v_{1}(sy_{1} + ty_{2}) + v_{2}(ty_{1} + uy_{2}) \mod{(y_{1},y_{2})I}.
\end{equation*}
and
\begin{equation*}
(\sigma e)^{\vee}d(\sigma \phi_{1})(e) = (\sigma e)^{\vee}(\sigma \phi_{1})(de) = sy_{1} + ty_{2} \mod{(y_{1},y_{2})I}.
\end{equation*}
Combining these two equations we find that $v_{1} = 1$ and $v_{2} = 0$ mod $I$.
By \eqref{eqn:PhiIEquation} we then get
\begin{equation*}
e^{\vee}\phi_{1} (\sigma \phi_{1})(e) = v_{1}s + v_{2} t = s \mod{I}.
\end{equation*}
\end{proof}

Now since $\phi_{i}(\sigma\phi_{i}) \in \RHom(\cO_{Y_{1}},\cO_{Y_{1}}) \cong S/(su - t^{2})$, Lemma \ref{thm:CompositionLemma} shows
\begin{align*}
\phi_{1}\sigma\phi_{1} &\cong s\id \\
\phi_{1}\sigma\phi_{2} \cong \phi_{2}\sigma\phi_{1} &\cong t\id \\
\phi_{2}\sigma\phi_{2} &\cong u\id.
\end{align*}
Let $\theta_{i} = \phi_{i} + \sigma\phi_{i}$. 
We know that the algebra $\widetilde{P}^{\ZZ_{2}}$ is generated over $S/(su-t^{2})$ by the $\theta_{i}$.
By the above computation these satisfy
\begin{align*}
\theta_{1}\theta_{1} &= s \\
\theta_{1}\theta_{2} = \theta_{2}\theta_{1} &= t \\
\theta_{2}\theta_{2} &= u.
\end{align*}
This concludes the proof of Lemma \ref{thm:ComputingRTilde}.
\end{proof}

\subsection{The equivalence $D^{b}(\cZ_{-},W)_{\res} \cong D^{b}(\PP(L),R)$}
Let $F : D(\cZ_{-},W)_{\res} \to D(\PP(L),R)$ be given by $F(\cE) = \pi_{*}\sHom_{\cZ_{-}}(\cK,\cE)$.
The goal of this section is to show that $F$ gives an equivalence $D^{b}(\cZ_{-},W)_{\res} \cong D^{b}(\PP(L),R)$, as explained in the proof of Proposition \ref{thm:GeometricInterpretationOfMFCategory}.
\begin{nlemma}
The functor $F$ sends $D^{b}(\cZ_{-},W)_{\res}$ to $D^{b}(\PP(L),R)$.
\end{nlemma}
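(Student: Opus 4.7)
The claim is étale-local on $\PP(L)$, so by Lemma \ref{thm:LocalForm} we may replace $(\cZ_-,W) \to \PP(L)$ with the standard corank 2 local model $\pi_n : X_n(2)/O(2) \to B_2$ pulled back along a smooth morphism $f : U \to B_2$, with $\cK|_U$ the pullback of $\cK_n$. Flat base change along $f$ reduces the claim to showing that the corresponding functor $F_n := (\pi_n)_*\sHom(\cK_n,-)$ sends $D^b_{O(2)}(X_n(2),W_n)_{\res}$ to $D^b(B_2, R_n)$, where $R_n := (\pi_n)_*\sEnd(\cK_n)$.

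Any coherent factorisation $\cE \in D^b_{O(2)}(X_n(2),W_n)_{\res}$ admits a finite-rank locally free $O(2)$-equivariant resolution. Repeating the Masuda-style argument of Corollary \ref{thm:GITCorollary} in the standard-model setting, the grade restriction bounds the $\CC^* \subset O(2)$-weights appearing in such a resolution by $\lfloor n/2 \rfloor$ in absolute value; hence each term has the form $\pi_n^*\cG \otimes \cO(k)$ with $\cG$ coherent on $B_2$ and $|k| \le \lfloor n/2 \rfloor$. The projection formula gives
\[
F_n(\pi_n^*\cG \otimes \cO(k)) \cong \cG \otimes (\pi_n)_*(\cK_n^\vee \otimes \cO(k)),
\]
so the problem reduces to showing that $(\pi_n)_*(\cK_n^\vee \otimes \cO(k))$ is a bounded coherent $\cO_{B_2}$-complex for each $|k| \le \lfloor n/2 \rfloor$.

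For this last step one performs a direct Koszul computation using resolutions of $\cO_{Y_{n,1}}$ and $\cO_{Y_{n,2}}$ of the shape \eqref{eqn:ResolutionOfK}--\eqref{eqn:ResolutionOfK2}; after twisting by $\cO(k)$, the fibrewise pushforward picks out specific weight spaces of the coordinate ring of $V \oplus V$ over $B_2$, and the bound $|k| \le \lfloor n/2 \rfloor$ forces each such weight space to be finitely generated over $\cO_{B_2}$. Alternatively, $O(2)$-Knörrer periodicity (Lemma \ref{thm:O2Knorrer}), which sends $\cK_n$ to $\cK_{n+2}$ and preserves the grade-restricted subcategory, reduces the claim to the base case $n = 3$ where the relevant modules appear in the proof of Lemma \ref{thm:localIso}. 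The main technical subtlety throughout is that $\pi_n$ is not proper, so $(\pi_n)_*$ generally loses boundedness and coherence; the role of grade restriction is precisely to cut $(\pi_n)_*(\cK_n^\vee \otimes -)$ down to a bounded range of weights in which it takes finite-rank values.
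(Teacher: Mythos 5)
There is a genuine gap, and it lies in the mechanism you use to get coherence. Your reduction treats $F_n(\cE)$ termwise over a locally free resolution, but the summands $\pi_n^{*}\cG \otimes \cO(k)$ of the underlying sheaf are not themselves factorisations of $W_n$ (their zero differential does not square to $W_n$), so $F_n$ cannot be applied to them; at best you get a filtration of the underlying sheaf of $\pi_{n*}\sHom(\cK_n,\cE)$. And at that level the key assertion fails: $(\pi_n)_{*}$ of a twist $\cO_{X_n}(k)$ is the weight-$k$ isotypic part of $\Sym^{\bullet}(F^{\vee}\oplus F^{\vee})$ over $B_2$, which is \emph{not} finitely generated as an $\cO_{B_2}$-module for any $k$ (already in weight $0$ it contains all monomials in the $x_iy_j$). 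Bounding $|k|\le \lfloor n/2\rfloor$ does nothing here, because $\pi_n$ has $2n$-dimensional affine fibres and each individual weight space of the fibrewise coordinate ring is infinite-dimensional. So the final step of your argument, "the bound on $k$ forces each weight space to be finitely generated", is false, and the Knörrer/Lemma \ref{thm:localIso} alternative only computes $\pi_{*}\sHom(K_n,K_n)$, not $\pi_{*}\sHom(\cK_n,\cE)$ for arbitrary grade-restricted $\cE$. (A secondary issue: your claim that grade restriction yields a locally free resolution all of whose weights lie in $[-\lfloor n/2\rfloor,\lfloor n/2\rfloor]$ is not what Corollary \ref{thm:GITCorollary} proves -- there the window condition is a condition on derived restriction to the origin, not on the terms of a resolution -- and would itself need an argument.)

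What actually makes $F$ land in $D^{b}(\PP(L),R)$ is a support statement, not a weight bound: the cohomology of $\sHom_{\cZ_{-}}(\cK,\cE)$ is supported on $\Crit(W)$, and $\Crit(W)$ is small over the base. The paper's proof runs exactly this way: it factors the projection $\Crit(W) \to \PP(L)$ through the coarse space, checks (via Lemma \ref{thm:LocalForm} and the standard models of Definition \ref{defi:StandardSpaces}, using that forming the critical locus commutes with smooth base change) that the coarse space is finite over $\PP(L)$, and then notes that pushing forward from the stack $\Crit(W)$ to its coarse space is taking $O(2)$-invariants, which preserves coherence. Note in particular that the grade restriction plays no role in this coherence statement; it is needed elsewhere, not here. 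If you want to salvage your approach, you would have to replace the termwise coherence claim by this support-on-the-critical-locus argument, at which point the weight bookkeeping becomes unnecessary.
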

\begin{proof}
Let $\cE \in D^{b}(\cZ_{-},W)_{\res}$.
The cohomology of $\sHom_{\cZ_{-}}(\cK, \cE))$ is supported on the stack $\cC := \Crit(W)$ \cite[2.3.iii]{addington_pfaffian-grassmannian_2014}, \cite[Sec.\ 2]{shipman_geometric_2012}.
Let $f : \cC \to B$ be the projection.
The functor $f_{*}$ is exact, and it suffices to show that $f_{*}$ preserves coherent sheaves.

Let $C$ be the coarse space for $\cC$, i.e.\ the universal scheme admitting a map from $\cC$.
Locally on $B$, we may assume that $\cZ_{-} \to B$ has the form $(\AA_{x_{i}}^{n} \times \AA_{y_{i}}^{n})/O(2) \times B \to B$ with a superpotential and $O(2)$-action as in Construction \ref{defi:QuadraticConstruction}.
Let $S = k[B][x_{1}, \ldots x_{n}, y_{1}, \ldots y_{n}]$, and let $I \subset S$ be the Jacobi ideal of $W$.
We then get $C = \Spec (S/I)^{O(2)}$.

The morphism $f$ factors as $\cC \stackrel{g}{\to} C \stackrel{h}{\to} B$.
We first claim that $h$ is finite.
Applying Lemma \ref{thm:LocalForm}, the fact that finiteness is an fppf local property, and the fact that computing the critical locus commutes with smooth base change, we reduce to showing this for the standard models of coranks $\le 2$.
Then this claim follows from a straightforward computation.
Thus $h$ is finite and hence $h_{*}$ preserves coherent sheaves.

We next claim that $g_{*}$ preserves coherent sheaves.
This claim is Zariski local on $B$, so assume that we are in the affine setting described above.
Using this description the functor $g_{*}$ consists of taking $O(2)$-invariants.
The claim now follows from the fact that any coherent module on $\cC$ admits a surjection from a sheaf of the form $\oplus_{\rho} \cO(\rho)$, where $\rho \in \Irr(O(2))$ and that $g_{*}(\cO(\rho))$ is coherent for all such $\rho$.
It follows that $f_{*} = h_{*}g_{*}$ preserves coherent sheaves.
\end{proof}

We construct an adjoint to $F$ just as in the previous section.
Let the functor $UG: K(\PP(L),R) \to D(\cZ_{-},W)$ be given by $UG(M) = \pi^{*}(M) \otimes_{\pi^{*}(R)} \cK$.
The following lemma is shown in the same way as Lemmas \ref{thm:CliffordDerived}, \ref{thm:CliffordLeftAdjoint} and \ref{thm:CliffordFullyFaithful}.

\begin{nlemma}
\label{thm:GDerivedExists}
The functor $UG$ has a left derived functor $G : D(\PP(L),R) \to D(\cZ_{-},W)$ which is left adjoint to $F$ and is fully faithful.
\end{nlemma}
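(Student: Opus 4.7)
The plan is to imitate verbatim the three-step argument used for Lemmas \ref{thm:CliffordDerived}, \ref{thm:CliffordLeftAdjoint} and \ref{thm:CliffordFullyFaithful}, since all the structural ingredients remain available in the present setting: enough $K$-flat resolutions for a sheaf of dg algebras (Lemma \ref{thm:KFlatTensorAcyclic}), the projection formula (Lemma \ref{thm:ProjectionFormula}) applied to the equivariantly affine morphism $\pi : \cZ_{-} \to \PP(L)$, and the tautological identity $\pi_{*}(\cK \otimes \cK^{\vee}) = R$ coming from the definition $R = \pi_{*}\Rhom(\cK, \cK)$ with $\cK$ a locally free factorisation.

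First I would construct $G$ as the left derived functor of $UG$. Using $K$-flat resolutions in $K(\PP(L),R)$ it is enough to check $UG(P) \cong 0$ in $D(\cZ_{-},W)$ for every $K$-flat acyclic $P$. Copying the computation in Lemma \ref{thm:CliffordDerived}, for any test object $\cE \in D^{b}(\cZ_{-},W)$ the projection formula yields
\[
\RHom_{\cZ_{-}}(\cE, UG(P)) \cong \RGamma(\PP(L), P \otimes_{R} \pi_{*}(\cK \otimes \cE^{\vee})),
\]
and Lemma \ref{thm:TensorKFlatAcyclic} shows this is acyclic. Lemma \ref{thm:BoundedsGenerate} then forces $UG(P) \cong 0$, so $G$ descends.

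Adjointness of $G$ and $F$ goes through exactly as in Lemma \ref{thm:CliffordLeftAdjoint}: the underived functors $UG$ and $F$ are adjoint by tensor-hom adjunction, and the general principle \cite[Exp.~17, Thm.~2.3.7]{SGA43} carries this adjunction to the derived level. For fully faithfulness I would verify that the unit $M \to FG(M)$ is a quasi-isomorphism for $K$-flat $M$; the projection formula rewrites
\[
FG(M) = \pi_{*}\sHom(\cK, \pi^{*}M \otimes_{\pi^{*}R} \cK) \cong M \otimes_{R} \pi_{*}(\cK \otimes \cK^{\vee}) = M \otimes_{R} R \cong M,
\]
exactly as in Lemma \ref{thm:CliffordFullyFaithful}. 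Since every step is a direct transcription of the Clifford-module argument, I do not anticipate a genuine obstacle; the only minor point worth checking is that the projection formula still applies to $\pi$, which it does because the fibres of $\cZ_{-} \to \PP(L)$ are of the form $V \oplus V/O(2)$, so $\pi$ is equivariantly affine.
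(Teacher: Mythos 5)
Your proposal is correct and matches the paper's proof exactly: the paper disposes of this lemma with the single remark that it is shown in the same way as Lemmas \ref{thm:CliffordDerived}, \ref{thm:CliffordLeftAdjoint} and \ref{thm:CliffordFullyFaithful}, which is precisely the three-step transcription (K-flat resolutions plus Lemmas \ref{thm:KFlatTensorAcyclic}, \ref{thm:TensorKFlatAcyclic} and \ref{thm:BoundedsGenerate}; adjointness via \cite[Exp.~17, Thm.~2.3.7]{SGA43}; unit $M \to FG(M)$ computed by the projection formula of Lemma \ref{thm:ProjectionFormula}) that you carry out, including the check that $\pi : \cZ_{-} \to \PP(L)$ is equivariantly affine.
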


\begin{nlemma}
The functor $G$ sends $D^{b}(\PP(L),R)$ to $D^{b}(\cZ_{-},W)$.
\end{nlemma}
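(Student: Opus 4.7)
The plan is to mirror the proof of Lemma \ref{thm:CliffordGBounded}, with the Clifford algebra $C$ there replaced by $f_{*}\cO_{Y}$. By Proposition \ref{thm:globalIso} and Lemma \ref{thm:QuasiIsoImpliesEquivalence}, the inclusion $f_{*}\cO_{Y} \into R$ as $\ker d_{R} \cap R^{0}$ is a quasi-isomorphism of sheaves of dg algebras, giving rise to an equivalence
\[
\Phi : D^{b}(\PP(L), f_{*}\cO_{Y}) \stackrel{\sim}{\to} D^{b}(\PP(L), R).
\]
Since $f$ is finite, $D^{b}(\PP(L), f_{*}\cO_{Y}) \cong D^{b}(Y)$, and it suffices to show that $H := G \circ \Phi$ sends bounded complexes of coherent sheaves on $Y$ into $D^{b}(\cZ_{-}, W)$.

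First I would unpack $H$ on the chain level. A direct computation, formally identical to the one in the proof of Lemma \ref{thm:CliffordGBounded}, yields
\[
UH(M) \cong \pi^{*}M \otimes_{\pi^{*}f_{*}\cO_{Y}} \cK,
\]
where $\cK$ is viewed as a $\pi^{*}f_{*}\cO_{Y}$-module through $\pi^{*}f_{*}\cO_{Y} \into \pi^{*}R \to \sEnd(\cK)$.

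The central step is the analogue of Lemma \ref{thm:KIsC}: one must show that, étale locally on $\PP(L)$, the factorisation $\cK$ is coherent and flat as a $\pi^{*}f_{*}\cO_{Y}$-module. To do this I would use Lemma \ref{thm:LocalForm} to reduce to the standard corank 2 model $X_{n}(2)/O(2) \to B$, and then apply Lemma \ref{thm:O2Knorrer} to cut $n$ down to a small base case. In this setup $f_{*}\cO_{Y}$ has the explicit form $P$ computed in Lemma \ref{thm:localIso}, and the explicit resolutions from Section \ref{sec:ComputingEndRing} give direct access to $\cK$ and to its $\pi^{*}P$-module structure. One can then verify the flatness by a finite computation, using crucially the fact that the generators $\theta_{1},\theta_{2}$ of $P$ interchange the two summands $\cO_{Y_{1}}$ and $\cO_{Y_{2}}$ of $K$, so that locally $\cK$ looks like a rank-one (up to an overall twist) free $\pi^{*}P$-module after a suitable regrouping of its components. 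The main obstacle is precisely this local flatness check: unlike Lemma \ref{thm:KIsC}, where $\cK$ was directly identified with $\pi^{*}C$ up to a twist by $\wedge^{2n}E^{\vee}$, here $f_{*}\cO_{Y}$ is a commutative algebra encoding the double cover, and the match between the two summands of $K$ and the two sheets of $Y$ must be extracted by hand from the explicit matrix factorisations.

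Granted this flatness, the remainder of the argument runs as in Lemma \ref{thm:CliffordGBounded}. Every object of $D^{b}(Y) \cong D^{b}(\PP(L), f_{*}\cO_{Y})$ admits a left replacement by a bounded complex of coherent $f_{*}\cO_{Y}$-modules; flatness of $\cK$ implies that $UH$ is exact on such complexes and therefore computes $H$ on them; and tensoring the coherent factorisation $\cK$ with a coherent $f_{*}\cO_{Y}$-module yields a coherent factorisation on $\cZ_{-}$. Transporting via $\Phi^{-1}$ then shows that $G$ sends $D^{b}(\PP(L), R)$ into $D^{b}(\cZ_{-}, W)$.
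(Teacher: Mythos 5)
The central step of your plan fails, and it is the one you yourself flag as the main obstacle. Your argument needs a chain-level algebra map $f_{*}\cO_{Y}\to R$ (you take the inclusion of $\ker d_{R}\cap R^{0}$) so that $\cK$ becomes an honest sheaf of modules over $\pi^{*}f_{*}\cO_{Y}$, and then a local freeness/flatness statement analogous to Lemma \ref{thm:KIsC}. Neither exists in this setting. Proposition \ref{thm:globalIso} only computes $H(R)$; unlike the Clifford case of Lemma \ref{thm:CliffordIsCohomology}, here $\ker d_{R}\cap R^{0}$ is strictly larger than $H^{0}(R)$: for any local section $g$ of the ideal of the corank $\ge 1$ divisor, $g\cdot\id_{\cK}$ is a nonzero closed degree-$0$ endomorphism (the underlying sheaf of $\cK$ is locally free of positive rank), while its class in $H^{0}(R)\cong f_{*}\cO_{Y}$ vanishes, since $f_{*}\cO_{Y}$ is annihilated by that ideal. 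So degree-$0$ coboundaries are unavoidable, $\ker d_{R}\cap R^{0}\into R$ is not a quasi-isomorphism, and no copy of $f_{*}\cO_{Y}$ sits inside $R$ at the chain level. (The equivalence $D^{b}(\PP(L),R)\cong D^{b}(Y)$ is fine via a zig-zag through a truncation of $R$; what fails is the chain-level model you need in order to define the underived functor $UH$.) Worse, the flatness claim cannot be rescued by any computation in the standard models: a unital sheaf-level module over the $\cO_{\cZ_{-}}$-algebra $\pi^{*}f_{*}\cO_{Y}$ is killed by the annihilator of that algebra, hence is supported on the preimage of the corank $\ge 1$ locus, whereas $\cK$ is locally free on all of $\cZ_{-}$. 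In the Clifford case this obstruction is absent precisely because $C\cong\wedge^{\bullet}E$ has full support and the same rank as $\cK$, which is what makes Lemma \ref{thm:KIsC} possible; the matching of the two summands of $K$ with the two sheets of $Y$ is real, but it lives at the level of cohomology (as in Lemma \ref{thm:Corank1Fibre}), not of sheaf homomorphisms.

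The paper's proof takes a different route, which you would essentially have to adopt. Via $D^{b}(\PP(L),R)\cong D^{b}(\Spec_{\PP(L)}H(R))$ it suffices to treat finite-rank locally free sheaves $\cE$ on $\Spec_{\PP(L)}H(R)\cong Y$, since these generate. Over an open $U\subseteq\PP(L)$ trivialising $\cE$, the corresponding $R$-module is a finite sum of copies of $R$, so $G(\cE)|_{U}$ is a finite sum of copies of $\cK|_{U}$, hence lies in $D^{b}(\cZ_{-}|_{U},W)$. The genuinely global issue --- that local membership in $D^{b}$ does not obviously glue for factorisations --- is then handled by passing through the Clifford category of Section \ref{sec:CliffordAlgebras}: applying $F_{C}$, coherence of cohomology is a local condition, so $F_{C}G(\cE)\in D^{b}(\cY,C)$ by Lemma \ref{thm:CliffordFBounded}, and then $G(\cE)\cong G_{C}F_{C}G(\cE)\in D^{b}(\cZ_{-},W)$ by Lemma \ref{thm:CliffordGBounded} together with Proposition \ref{thm:MainPropositionClifford}. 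As written, your proposal does not reach this point because its core flatness step is false.
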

\begin{proof}
By Lemma \ref{thm:HRIsYLocally}, the $\cO_{\PP(L)}$-algebra $H(R)$ is commutative and $X = \Spec_{\PP(L)} H(R)$ is nonsingular.
By Lemma \ref{thm:QuasiIsoImpliesEquivalence}, there is then an equivalence $D^{b}(X) \cong D^{b}(\PP(L),R)$.
We first prove that if $\cE$ is a locally free sheaf on $X$ of finite rank, then under this isomorphism $G(\cE) \in D^{b}(\cZ_{-},W)$.
Choose an open $U \subset \PP(L)$ such that $\cE|_{U}$ is free.
Then the isomorphism $D(X|_{U}) \to D(U,R|_{U})$ sends the object $\cE|_{U}$ to a finite sum of copies of $R$.
Therefore $G(\cE|_{U})$ is a finite sum of copies of $\cK|_{U}$ -- in particular it lies in $D^{b}(\cZ_{-}|_{U},W)$.

Let now $F_{C}$ and $G_{C}$ be the functors called $F$ and $G$ in Section \ref{sec:CliffordAlgebras}.
Using Lemma \ref{thm:CliffordFBounded} we have $F_{C}G(\cE|_{U}) \in D^{b}(\cY|_{U},C)$.
It follows that the cohomology of $F_{C}G(\cE)$ is coherent over $U$, and as this holds for all $U$, the cohomology of $F_{C}G(\cE)$ is in fact coherent, so that $F_{C}G(\cE) \in D^{b}(\cY,C)$.
By Lemma \ref{thm:CliffordGBounded} and Proposition \ref{thm:MainPropositionClifford}, it follows that $G_{C}F_{C}G(\cE) = G(\cE)$ lies in $D^{b}(\cZ_{-},W)$

Since this holds for all locally free $\cE$ and since such $\cE$ generate $D^{b}(X)$, the claim follows.
\end{proof}

\begin{nlemma}
\label{thm:KIsRestricted}
The object $K \in D^{b}(\cZ_{-},W)$ is contained in $D^{b}(\cZ_{-},W)_{\res}$.
\end{nlemma}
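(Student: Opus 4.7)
The plan is to verify the weight bound at each $p \in \PP(L)$ by transporting the computation, via Lemma \ref{thm:LocalForm}, into the standard corank $2$ LG model. That lemma provides an étale-local Cartesian square expressing $\cZ_{-}$ as a pullback of $X_{n}/O(2) \to B_{2}$ along a smooth morphism, with $K$ pulled back from $K_{n} = \cO_{Y_{n,1}}((n-1)/2) \oplus \cO_{Y_{n,2}}(-(n-1)/2)$. Since pullback along this smooth morphism preserves the $SO(2)$-weights at the fiber points, the question reduces to: do the $SO(2)$-weights of $K_{n}$ at each $(0,0,b) \in X_{n}$ lie in $[-(n-1)/2,(n-1)/2]$?

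Second, I would apply the $O(2)$-Knörrer equivalence of Lemma \ref{thm:O2Knorrer}, which sends $K_{n}$ to $K_{n+2}$ and shifts $SO(2)$-weights by $\{-1,0,1\}$ (since the \Knorrer{} factorization $\cF$ has weights $\{-1,0,1\}$); this matches the change in the allowed interval when $n \mapsto n+2$. An induction therefore reduces the problem to the base case $n=3$.

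For $n=3$, apply the $SO(2)$-Knörrer of Lemma \ref{thm:SO2Knorrer}: $\cO_{Y_{3,1}} = \Phi(\cO_{Y_{2,1}})$ and $\cO_{Y_{3,2}}(-1) = \Phi(\cO_{Y_{2,2}})$, with $\Phi$ adding weights from $\{-1,0\}$. The explicit locally free resolution $M_{1}$ written down in the proof of Lemma \ref{thm:ComputingRTilde} shows that $\cO_{Y_{2,1}}$ has $SO(2)$-weights in $\{-1,0\}$, since all its summands are copies of $U$ or $U(-1)$; the obvious involution of the corank $2$ standard model swapping $Y_{2,1} \leftrightarrow Y_{2,2}$ negates $SO(2)$-weights, so $\cO_{Y_{2,2}}$ has weights in $\{0,1\}$. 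Combining, $\cO_{Y_{3,1}}$ has weights in $\{-2,-1,0\}$ and $\cO_{Y_{3,2}}(-1)$ in $\{-1,0,1\}$; after the final twist, both $\cO_{Y_{3,1}}(1)$ and $\cO_{Y_{3,2}}(-1)$ have weights in $\{-1,0,1\} = [-1,1]$, giving the grade restriction for $n=3$.

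The hardest part of executing this plan is bookkeeping: I have to match the weight conventions across three incarnations of the computation—the global $T$-equivariant setting on $\cZ_{-}$, the $O(2)$-equivariant standard model, and the two Knörrer functors—and carefully check that the $T$-weights $((n-1)/2,0)$ and $(0,(n-1)/2)$ translate to the $SO(2)$-weights $\pm(n-1)/2$ in the fiber, and that the Knörrer shifts interact with these twists in the correct way. Once that is verified, the rest of the argument is just reading weights off the summands of $M_{1}$ and its images under the Knörrer functors.
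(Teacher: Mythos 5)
Your proof is correct and follows essentially the same route as the paper: reduce étale-locally via Lemma \ref{thm:LocalForm} to the standard corank 2 model, then use Knörrer periodicity to reduce to reading the weights $\{-1,0\}$ of $\cO_{Y_{2,1}}$ off the resolution \eqref{eqn:ResolutionOfK}. The only (harmless) difference is bookkeeping: the paper iterates the $SO(2)$-Knörrer functor of Lemma \ref{thm:SO2Knorrer} and tracks the weight window by hand, whereas you run an $O(2)$-Knörrer induction via Lemma \ref{thm:O2Knorrer} (which already preserves grade restriction and sends $K_{n}$ to $K_{n+2}$) and invoke the $SO(2)$-step only once for the base case $n=3$.
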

\begin{proof}
We may check this étale locally, and therefore by Lemma \ref{thm:LocalForm} further reduce to checking the statement on a standard corank 2 LG model from Definition \ref{defi:StandardSpaces}.
By applying Lemma \ref{thm:SO2Knorrer}, we reduce to showing that the weights of $\cO_{Y_{2,1}}$ are $\{-1,0\}$, which holds because of the resolution \eqref{eqn:ResolutionOfK}.
\end{proof}

\begin{nlemma}
For any $M \in D^{b}(\PP(L),R)$, we have $G(M) \in D^{b}(\cZ_{-},W)_{\res}$.
\end{nlemma}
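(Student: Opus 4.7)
The plan is to check the grade-restriction condition fibrewise over $\PP(L)$ and bootstrap it from the known grade restriction of $\cK$ supplied by Lemma \ref{thm:KIsRestricted}. Fix a point $p \in \PP(L)$, let $\CC^{*} = SO(2)$ be the connected component of its isotropy in $\cZ_{-}$, and let $i_{p}: p/\CC^{*} \to \cZ_{-}$ be the inclusion, which factors through the zero-section $\cY \to \cZ_{-}$ and hence through the fibre of $\pi$ over $p$. What must be shown is that the $\CC^{*}$-weights of $i_{p}^{*}G(M)$ lie in $[-\lfloor n/2\rfloor, \lfloor n/2 \rfloor]$.

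First I would apply derived base change along $i_{p}$ to the formula $G(M) = \pi^{*}M \otimes^{L}_{\pi^{*}R} \cK$. The key observation is that both $\pi^{*}M$ and $\pi^{*}R$ are pulled back from $\PP(L)$, along which the projection $p/\CC^{*} \to \PP(L)$ factors through the point $p$. Consequently, $i_{p}^{*}(\pi^{*}M) \cong M_{p}$ and $i_{p}^{*}(\pi^{*}R) \cong R_{p}$ carry only the trivial $\CC^{*}$-weight, and I obtain
\[
i_{p}^{*}G(M) \cong M_{p} \otimes^{L}_{R_{p}} i_{p}^{*}\cK.
\]
Next I would compute this derived tensor product by choosing a K-flat resolution $P \to M_{p}$ over $R_{p}$ whose underlying $R_{p}$-module is free, i.e.\ a direct sum of shifted copies of $R_{p}$ itself; such resolutions exist by the standard arguments behind Lemma \ref{thm:KFlatTensorAcyclic}. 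Each term of $P$ then has $\CC^{*}$-weight $0$, so that $P \otimes_{R_{p}} i_{p}^{*}\cK$ is, as a $\CC^{*}$-equivariant object, just a direct sum of shifts of $i_{p}^{*}\cK$. Its $\CC^{*}$-weights therefore agree with those of $i_{p}^{*}\cK$, which lie in $[-\lfloor n/2\rfloor, \lfloor n/2\rfloor]$ by Lemma \ref{thm:KIsRestricted}. This gives $G(M) \in D^{b}(\cZ_{-},W)_{\res}$ as required.

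The main technical obstacle I expect is making the derived base change step fully rigorous: strictly speaking, one needs to verify that for a K-flat resolution $P$ of $\pi^{*}M$ over $\pi^{*}R$ on $\cZ_{-}$, the restriction $i_{p}^{*}P$ is a K-flat resolution of $M_{p}$ over $R_{p}$ that can be used to compute the derived tensor product after restriction. Since $i_{p}$ is a closed immersion of quotient stacks and the resolution can be built from direct sums of $\pi^{*}R$, this is formally a consequence of Lemmas \ref{thm:ProjectionFormula} and \ref{thm:TensorKFlatAcyclic}, but the setup must be written out carefully to ensure $\CC^{*}$-equivariance is preserved throughout.
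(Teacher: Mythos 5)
Your proposal is correct in outline, but it takes a genuinely different route from the paper. The paper's proof is a short adjunction argument: grade restriction of an object $\cE$ is reformulated as the vanishing of $\RHom(\cE, i_{*}(\cO_{p}(k)))$ for all $p \in \PP(L)$ and $|k| > \lfloor n/2 \rfloor$, and then $\RHom(G(M), i_{*}(\cO_{p}(k))) \cong \RHom(M, \pi_{*}\Rhom_{\cZ_{-}}(\cK, i_{*}(\cO_{p}(k))))$ by the adjunction $G \dashv F$; since $\cK \in D^{b}(\cZ_{-},W)_{\res}$ (Lemma \ref{thm:KIsRestricted}), the inner complex $\pi_{*}\Rhom(\cK, i_{*}(\cO_{p}(k)))$ is acyclic on weight grounds, and the vanishing follows with no resolutions or base change at all. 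Your argument instead computes $i_{p}^{*}G(M)$ directly from a semi-free model and bounds its weights by those of $i_{p}^{*}\cK$; both proofs ultimately rest on the same input, Lemma \ref{thm:KIsRestricted}. Your route works, but the technical burden you flag is real and sits exactly where you put it: restricting first and then resolving $M_{p}$ over $R_{p}$ requires a derived base-change statement that is not in the paper. The cleaner implementation of your idea is to resolve globally — take a K-flat resolution of $\pi^{*}M$ over $\pi^{*}R$ whose underlying graded module is a sum of copies of $\pi^{*}R$ twisted by sheaves pulled back from $\PP(L)$ (these exist by the resolution-property argument behind Lemma \ref{thm:KFlatTensorAcyclic}, and such twists have weight $0$ at $p$ since they come from the coarse space) — so that the representative of $G(M)$ has underlying sheaf a direct sum of twists of $\cK$, which is K-flat, and then restrict to $p/\CC^{*}$; also note the weights of the result are \emph{contained in} (not equal to) those of $i_{p}^{*}\cK$, which is all you need. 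What your approach buys is an explicit weight bound on $i_{p}^{*}G(M)$; what the paper's buys is brevity, since the adjunction sidesteps every resolution and base-change issue.
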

\begin{proof}
An object $\cE \in D^{b}(\cZ_{-},W)$ is grade restricted if and only if for every point $p \in \PP(L)$ the object $\cE|_{p}$ admits no shifted maps to $\cO_{p}(k)$ with $|k| > \lfloor\frac n 2 \rfloor$.
This means that $\RHom(\cE, i_{*}(\cO_{p}(k))) \cong 0$, where $i$ is the inclusion $p/\CC^{*} \into \cZ_{-}$.
Now for any $M \in D^{b}(\PP(L),R)$ we have
\[
\RHom(G(M), i_{*}(\cO_{p}(k))) = \RHom(M, \pi_{*}\Rhom_{\cZ_{-}}(\cK, i_{*}(\cO_{p}(k)))).
\]
Since $\cK \in D^{b}(\cZ_{-},W)_{\res}$, the complex $\pi_{*}\Rhom_{\cZ_{-}}(\cK, i_{*}(\cO_{p}(k)))$ is acyclic, and the claim follows.
\end{proof}

\begin{nlemma}
\label{thm:GeneratorDoubleCover}
If $\cE \in D^{b}(\cZ_{-},W)_{\res}$ is such that $\pi_{*}\sHom(\cK, \cE) \cong 0$, then $\cE = 0$.
\end{nlemma}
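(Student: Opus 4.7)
The plan is to argue by contradiction, following the template of Lemma \ref{thm:CliffordGenerates}. If $\cE \not\cong 0$, then $S(\cE) := \mathrm{supp}\, \cH^{*}(\cE \otimes \cE^{\vee}) \subset Z^{\mathrm{ss}}_{-}$ is a nonempty closed $G \times \CC^{*}_{R}$-invariant subset; its image $V \subset \PP(L)$ is closed and nonempty, and I would pick $p \in V$ and derive a contradiction case-by-case on the corank of $p$. Since both the hypothesis and the conclusion are étale-local on $\PP(L)$, Lemma \ref{thm:LocalForm} identifies the situation étale-locally around $p$ with a smooth pullback of the standard corank $2$ model $\pi_{n} : X_{n}(2)/O(2) \to B = \AA^{3}_{s,t,u}$, under which $K$ corresponds to $K_{n}$. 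The vanishing of $F(\cE)$ and the grade-restriction condition both descend, and iterated $O(2)$-Knörrer periodicity (Lemma \ref{thm:O2Knorrer}) then reduces further to the case $n = 3$.

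If $f(p)$ lies in the corank $0$ locus of $B$, then $Y_{3,1}$ and $Y_{3,2}$ are empty near $f(p)$ and $K_{3} \cong 0$, so the vanishing of $F$ is vacuous and grade-restriction must do all the work. Localising further to the corank $0$ open subset and diagonalising the nondegenerate quadratic form via Lemma \ref{thm:QuadraticLocalForm}, I would apply $O(2)$-Knörrer once more to reduce to showing that $D^{b}_{O(2)}(X_{1}(0), xy)$ contains no nonzero object whose $SO(2)$-weights at the origin lie in $\{0\}$. This is immediate from Lemma \ref{thm:SO2Knorrer}: every nonzero object of $D^{b}_{SO(2)}(X_{1}(0), xy)$ has at-origin weights of the form $\{k-1, k\}$ for some $k$, and so cannot be grade-restricted to a singleton.

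If instead $f(p)$ is of corank $1$ or $2$, then $K_{3}$ is nonzero near $f(p)$, and I would argue that $F_{3} := (\pi_{3})_{*}\sHom(K_{3}, -)$ is conservative on the grade-restricted fibre MF category at such points. The key input is the nontriviality of $H(R) \cong f_{*}\cO_{Y}$ from Proposition \ref{thm:globalIso}, together with the explicit local computation in Lemma \ref{thm:localIso}. Using the standard local form for maximal isotropic subbundles (Lemmas \ref{thm:MaximalIsotropicLocalForm} and \ref{thm:MaximalIsotropicLocalFormMaps}) and the resolutions of $\cO_{Y_{3,i}}$ from the proof of Lemma \ref{thm:localIso}, the plan is to verify that $\cO_{Y_{3,1}} \oplus \cO_{Y_{3,2}}$ classically generates the grade-restricted fibre MF category at every corank $\geq 1$ point. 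Then $F_{3}(\cE')|_{f(p)} = 0$ would force $\cE'$ to vanish over $f(p)$, contradicting $p \in V$.

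The main obstacle will be the corank $\geq 1$ case: turning the nontriviality of $H(R)$ into conservativity of $F$ on grade-restricted fibres requires a careful analysis of $\cO_{Y_{3,1}} \oplus \cO_{Y_{3,2}}$ as a $\ZZ_{2}$-equivariant pair, compatible with both the Knörrer reductions and the maximal-isotropic local form. The corank $0$ case, by contrast, is a direct weight calculation given the Knörrer machinery already developed in Section \ref{sec:GeometricInterpretation}.
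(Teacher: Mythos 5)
Your corank~$0$ analysis is sound: over a nondegenerate point the iterated Knörrer functors force every nonzero object of the fibre category to have weights filling an interval of length $n+1$, which is incompatible with the grade-restriction window of size $n$, so grade restriction alone kills $\cE$ there. The genuine gap is the corank~$\ge 1$ case, which is the actual content of the lemma and which your proposal defers rather than proves. Knowing $H(R)\cong f_{*}\cO_{Y}$ (Proposition~\ref{thm:globalIso}, Lemma~\ref{thm:localIso}) and the Hom computations of Lemma~\ref{thm:MaximalIsotropicLocalFormMaps} tells you the endomorphism algebra of $K$ and its interaction with a few specific objects; it says nothing about an arbitrary grade-restricted $\cE$ with $\pi_{*}\sHom(\cK,\cE)=0$. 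Non-triviality of $\REnd(K)$ never yields conservativity of $F=\pi_{*}\sHom(\cK,-)$ on the grade-restricted subcategory --- that conservativity \emph{is} the statement that $K$ locally generates, i.e.\ precisely what is being proved, so invoking it as ``the plan is to verify classical generation'' is circular as written. A direct weight-theoretic generation argument is feasible in the corank~$\le 1$ situation (this is exactly what the paper does in the even case of Section~8), but for the odd, corank~$2$ standard model with its $O(2)$-action no such computation is supplied, and it is not at all routine.

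The paper closes this gap by a completely different mechanism, the Calabi--Yau trick of Addington--Donovan--Segal: in the special case $\dim V=\dim L=5$ with $L$ generic one has $D^{b}(\cZ_{-},W)_{\res}\cong D^{b}(X)$ with $X$ a smooth Calabi--Yau 3-fold, which admits no nontrivial semiorthogonal decomposition, so in $D^{b}(\cZ_{-},W)_{\res}=\langle \im G,\ker F\rangle$ one must have $\ker F=0$; this is transported to the standard corank~$2$ model $\cX_{5}$ via Lemma~\ref{thm:LocalForm} (a counterexample on $\cX_{5}$ would pull back to one globally), extended to all odd $n$ by Lemma~\ref{thm:O2Knorrer}, and then an arbitrary counterexample, after restriction to a fibre $\pi^{-1}(p)$, is pushed forward along the smooth morphism $\ol{f}:\cZ_{-}\to\cX_{n}$ of Lemma~\ref{thm:SmoothMorphism} to contradict this. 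Note also that in your reduction the relevant operation is this pushforward of an object supported on a single fibre, not a ``descent'' of the hypotheses along the smooth map; pullback goes the wrong way. If you want to salvage a purely local proof you would need an honest generation argument for $K_{3}$ in the grade-restricted corank~$2$ fibre category, for which no substitute for the Calabi--Yau input is currently on the table.
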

\begin{proof}
We first apply a trick taken from \cite{addington_pfaffian-grassmannian_2014}.
Assume that $\dim V = \dim L = 5$ and that $L$ is generic, so that we have $D^{b}(\cZ_{-},W)_{\res} \cong D^{b}(X)$.
Since $X$ is a smooth Calabi--Yau variety, it admits no nontrivial semiorthogonal decompositions.
Therefore in the decomposition $D^{b}(\cZ_{-},W)_{\res} = \langle \im G, \ker F\rangle$, we must have $\ker F = 0$.

Let $\cX_{n} \to \AA^{3}$ be the standard $O(2)$-equivariant corank 2 model.
Using Lemma \ref{thm:LocalForm}, the above special case implies that $\ker F = 0$ for the standard model $\cX_{5}$, since a counterexample $\cE \in D^{b}(\cX_{5},W)_{\res}$ would pull back to give a counterexample in $D^{b}(\cZ_{-},W)_{\res}$.
Applying Lemma \ref{thm:O2Knorrer}, it follows that $\ker F = 0$ for $\cX_{n} \to B$ for any odd $n$.

Now, let $V$ and $L$ be arbitrary, and assume for a contradiction that we have a counterexample $\cE$, that is $0 \not= \cE \in D^{b}(\cZ_{-},W)_{\res}$, but $F(\cE) = 0$.
There must be a point $p \in \PP(L)$ such that $\cE|_{\pi^{-1}(p)} \not \cong 0$, and by replacing $\cE$ with $\cE|_{\pi^{-1}(p)}$ we get a counterexample which is supported on $\pi^{-1}(p)$.
By Lemma \ref{thm:SmoothMorphism}, étale locally around $p$ there is a smooth morphism $f : \PP(L) \to B$, inducing a smooth morphism $\ol{f} : \cZ_{-} \to \cX_{n}$.
Then the projection $\ol{f}_{*}(\cE)$ is contained in $D^{b}(\cX_{n},W)_{\res}$, it is non-vanishing, and we have $F(\ol{f_{*}}(\cE)) = 0$.
Thus $\ol{f}_{*}(\cE)$ is a counterexample on $\cX_{n}$, which is a contradiction.
\end{proof}

\subsection{The global structure of $H(R)$}
By Corollary \ref{thm:HRIsYLocally}, we know that $H(R) \cong f_{*}(\cO_{Y})$ étale locally on $\PP(L)$.
Lemma \ref{thm:Corank1Fibre} shows that this is also true globally away from the corank 2 locus.
Applying Lemma \ref{thm:DeterminedVariety} with $B = \PP(L)$, $Y_{1} = Y$ and $Y_{2} = \Spec_{\PP(L)} H(R)$ completes the proof of Proposition \ref{thm:globalIso}.

\begin{nlemma}
\label{thm:Corank1Fibre}
Away from the corank 2 locus in $\PP(L)$, there is an isomorphism of $\cO_{\PP(L)}$-algebras $H(R) \cong f_{*}(\cO_{Y})$.
\end{nlemma}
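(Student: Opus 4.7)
The plan is to pass to spectra over $\PP(L)$ and invoke Lemma \ref{thm:DeterminedVariety} to globalise the étale-local isomorphism $H(R) \cong f_*\cO_Y$ of Corollary \ref{thm:HRIsYLocally}. Let $B \subset \PP(L)$ denote the open complement of the corank-$\ge 2$ locus, let $B_1 \subset B$ denote the corank-$1$ divisor, and let $B_0 := B \setminus B_1$ denote the open corank-$0$ stratum.

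First I would check that $H(R)|_B$ is a sheaf of commutative $\cO_B$-algebras concentrated in cohomological degree $0$ and coherent as an $\cO_B$-module. These are all étale-local properties, inherited from Corollary \ref{thm:HRIsYLocally}. I can therefore form the affine $B$-scheme $X := \Spec_B H(R)|_B$, and the same étale-local data shows that $X \to B$ is finite, étale of degree $2$ over $B_1$, and empty over $B_0$ (on this open stratum the $Y_i$ are fibrewise empty, so $K$ restricts to $0$ over $\pi^{-1}(B_0)$ and thus $H(R)|_{B_0} = 0$).

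Next I would identify the fibres of $X \to B_1$ with the components $\pi_0(\mathrm{OGr}(V|_p, q_p))$ of the orthogonal Grassmannian of maximal isotropic subspaces. Fix $p \in B_1$ and work on an étale neighbourhood $U$ of $p$; by Lemma \ref{thm:MaximalIsotropicLocalForm} we can select maximal isotropic subbundles $M_1, M_2 \subset V|_{B_1 \cap U}$ with $M_1 \cap M_2 = \ker q$ and with $[M_i|_p]$ lying in distinct components. Setting $Z_i = M_i \oplus M_i$, Lemma \ref{thm:MaximalIsotropicLocalFormMaps} computes $\pi_*\sHom(\cO_{Z_1}\oplus\cO_{Z_2}, \cO_{Z_1}\oplus\cO_{Z_2})$ as $\cO_{B_1 \cap U}\oplus\cO_{B_1 \cap U}$, splitting canonically into two idempotents labelled by the $M_i$. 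Comparing this with $R$ via a derived equivalence between $K$ and $\cO_{Z_1}((n-1)/2)\oplus\cO_{Z_2}(-(n-1)/2)$ in $D^b(\cZ_-|_U, W)$ transports those idempotents into $H(R)|_p$ and identifies $X_p$ with $\{[M_1|_p], [M_2|_p]\}$.

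Finally, Lemma \ref{thm:DeterminedVariety}, applied to $Y|_B$ and $X$ — both finite covers of $B$, empty over $B_0$, with canonically identified fibres over $B_1$ — yields an isomorphism $X \cong Y|_B$ of $B$-schemes, whose pushforward on structure sheaves is the required algebra isomorphism. The hard part will be establishing the derived equivalence between $K$ and $\cO_{Z_1}((n-1)/2)\oplus\cO_{Z_2}(-(n-1)/2)$ (or at least a compatible comparison of their endomorphism algebras on $U$) and checking that the resulting labelling of $X_p$ depends only on the component class of the $M_i$ and not on the auxiliary choice of pair; this intrinsicness is precisely what is needed to apply Lemma \ref{thm:DeterminedVariety}.
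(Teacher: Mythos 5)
Your étale-local analysis (commutativity, coherence, concentration in degree $0$, emptiness over the corank-$0$ stratum, and the identification of the fibres over a corank-$1$ point via $\cO_{M_{i}\oplus M_{i}}$ and Lemma \ref{thm:MaximalIsotropicLocalFormMaps}) matches the ingredients of the paper, but the final globalisation step is where the lemma actually lives, and your plan for it does not work. Lemma \ref{thm:DeterminedVariety} requires finite \emph{dominant} morphisms and an honest isomorphism of schemes over a Zariski-open subset of the base. Here both $\Spec_{B}H(R)|_{B}$ and $Y|_{B}$ are supported on the corank-$1$ divisor $B_{1}$: over the dense stratum $B_{0}$ they are empty, so ``isomorphic over $B_{0}$'' is vacuous and the integral-closure argument behind that lemma has no content; while over $B_{1}$ all you have is an étale-local isomorphism together with a point-by-point identification of fibres. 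Any two étale double covers of $B_{1}$ are étale-locally isomorphic and fibrewise bijective, so this information cannot distinguish $\Spec_{B_{1}}H(R)$ from, say, the trivial double cover, and no appeal to Lemma \ref{thm:DeterminedVariety} converts it into a global isomorphism. In the paper that lemma is invoked only afterwards, in the proof of Proposition \ref{thm:globalIso}, to extend across the corank-$2$ locus an isomorphism that has already been constructed on its complement.

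What is needed — and what you explicitly defer as ``the hard part'' — is a canonical local isomorphism over $B_{1}$ together with a proof that these local choices glue. The paper gets this by applying the already-established equivalence $F=\pi_{*}\sHom(\cK,-)\colon D^{b}(\cZ_{-}|_{U},W)_{\res}\to D^{b}(H(R)|_{U})$ (it is local over $\PP(L)$, so it restricts to $U$) to the objects $J_{i}=\cO_{M_{i}\oplus M_{i}}$, and then using Lemma \ref{thm:MaximalIsotropicLocalFormMaps} to see that $p_{*}\Rhom(F(J_{i}),F(J_{j}))\cong\cO^{\delta_{ij}}$, so that $F(J_{1})$ and $F(J_{2})$ are supported on the two distinct components of $\Spec H(R)|_{U}$; no object-level equivalence between $K$ and $\cO_{Z_{1}}((n-1)/2)\oplus\cO_{Z_{2}}(-(n-1)/2)$ is needed, and the paper never establishes one ($K$ is built from $\ker q\oplus V$ and $V\oplus\ker q$, not from the maximal isotropics), so your proposed route through such an equivalence is an unjustified detour. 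Finally, the intrinsicness you flag — that the labelling of the components depends only on the connected components containing the $M_{i}$, not on the chosen pair — is exactly what makes the local isomorphisms agree on overlaps; the paper proves it by deforming one choice of maximal isotropics into another and restricting to a curve transverse to $B_{1}$, where the induced bijection of components is discrete and hence constant under the deformation. Without this step your argument has no mechanism for producing the global isomorphism, so as it stands the proposal has a genuine gap.
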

\begin{proof}
Let $\PP(L)_{1} \subset \PP(L)$ be the locus of corank 1 points and let $p \in \PP(L)_{1}$.
We know from Proposition \ref{thm:localIso} that choosing an étale neighbourhood $U \to \PP(L)$ of $p$, we have
\begin{equation}
\label{eqn:Splitting}
\Spec_{\PP(L)} H(R)|_{U} = \PP(L)_{1}|_{U} \sqcup \PP(L)_{1}|_{U}.
\end{equation}
This induces a local isomorphism $\Spec_{\PP(L)} H(R) \cong Y$, and we must show that we can define this isomorphism globally.
It suffices to show that there is a canonical way of assigning the 2 components of the splitting \eqref{eqn:Splitting} to the 2 components of the space of maximal isotropic subspaces over $\PP(L)_{1}|_{U}$.

We may assume that $\cZ_{-}|_{U} \to U$ is given by Construction \ref{defi:QuadraticConstruction} applied to a quadratic vector bundle $(F,q) \to U$.
Locally we can choose maximal isotropic subbundles $L_{1}, L_{2} \subset F$ over $\PP(L)_{1}|_{U}$, satisfying $L_{1} \cap L_{2} = \ker q$.
We define objects $J_{1}, J_{2} \in D^{b}(\cZ_{-}|_{U},W)_{\res}$ by $J_{i} = \cO_{L_{i} \oplus L_{i}}$.

Let $p : \Spec H(R) \to U$ be the projection, and recall that we have an equivalence $F : D^{b}(\cZ_{-}|_{U},W)_{\res} \stackrel{\cong}{\to} D^{b}(H(R)|_{U})$.\footnote{The functor $F$ and its inverse are local over $\PP(L)$, so the equivalence holds after base change to $U$.}
Using Lemma \ref{thm:MaximalIsotropicLocalFormMaps}, we find that 
\[
p_{*}(\Rhom(F(J_{i}),F(J_{j}))) \cong \cO_{\PP(L)_{1}|_{U}}^{\delta_{ij}} \in D^{b}(H(R)|_{U}).
\]
This means firstly that each $F(J_{i})$ is supported on a single component of the splitting \eqref{eqn:Splitting}, since otherwise $p_{*}(\Rhom(J_{i},J_{i}))$ would be decomposable, and secondly that the $J_{i}$ must be supported on different components, since otherwise we would have
\begin{align*}
0 &\cong p_{*}(\Rhom(J_{1},J_{2}) \otimes \Rhom(J_{2},J_{1})) \\
&\cong p_{*}(\Rhom(J_{1},J_{1}) \otimes \Rhom(J_{2},J_{2})) \cong \cO_{\PP(L)_{1}}.
\end{align*}
We now define the isomorphism $\Spec H(R)|_{U} \to Y|_{U}$ by sending the component of the splitting of $H(R)$ on which $F(J_{i})$ is supported to the component of the isotropic Grassmannian which contains $L_{i}$.

We must check that this assignment is independent of our choice of $L_{i}$.
Suppose $L_{i}^{\pr}$ is a different choice, with each $L_{i}$ in the same connected component as $L_{i}^{\pr}$.
Choose a smooth curve $C \subset U$ which intersects $\PP(L)_{1}$ transversely in $p$.

Let $q_{p}$ be the quadratic form on $V$ at $p$.
Base changing to $C$, the 2 pairs of maximal isotropics $\{L_{i}|_{p}\}$ and $\{L_{i}^{\pr}|_{p}\}$ induce as above 2 bijections between the components of $\Spec H(R) \cap C \cong \pt \sqcup \pt$ and the components of the space of maximal isotropic subspaces of $(V,q_{p})$.
We may deform the pair $\{L_{i}|_{p}\}$ to $\{L_{i}^{\pr}|_{p}\}$, and since the choice of bijection is discrete, the 2 bijections are the same.
\end{proof}

\begin{nlemma}
\label{thm:DeterminedVariety}
For $i = 1,2$, let $f_{i} : Y_{i} \to B$ be a finite, dominant map of varieties, with $Y_{i}$ normal.
If there is an open subset $U \subseteq B$ such that $Y_{1}|_{U} \cong Y_{2}|_{U}$ as $U$-schemes, then $Y_{1} \cong Y_{2}$ as $B$-schemes.
\end{nlemma}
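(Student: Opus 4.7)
My plan is to identify the sought-for isomorphism as the scheme-theoretic closure of the graph of the given $\phi : Y_{1}|_{U} \xrightarrow{\sim} Y_{2}|_{U}$ inside the fibre product $Y_{1} \times_{B} Y_{2}$, and then invoke Zariski's main theorem to show that its two projections to $Y_{1}$ and $Y_{2}$ are isomorphisms. Normality of the $Y_{i}$ enters precisely at this final step.

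Explicitly, I would form $\Gamma \subseteq Y_{1}|_{U} \times_{U} Y_{2}|_{U} \subseteq Y_{1} \times_{B} Y_{2}$ as the graph of $\phi$, take $\bar{\Gamma}$ to be its scheme-theoretic closure in $Y_{1} \times_{B} Y_{2}$, and consider the two projections $\pi_{i} : \bar{\Gamma} \to Y_{i}$. Since the finite morphism $f_{3-i} : Y_{3-i} \to B$ base changes to a finite morphism $Y_{1} \times_{B} Y_{2} \to Y_{i}$, and $\bar{\Gamma}$ is closed in the latter, each $\pi_{i}$ is finite. Over $U$ each $\pi_{i}$ restricts to the isomorphism $\Gamma \xrightarrow{\sim} Y_{i}|_{U}$; and because $f_{i}$ is finite dominant (hence surjective) with $U$ dense in $B$, the open $Y_{i}|_{U} = f_{i}^{-1}(U)$ is dense in $Y_{i}$. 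Hence $\pi_{i}$ is a finite birational morphism to the normal scheme $Y_{i}$, and by Zariski's main theorem $\pi_{i}$ is an isomorphism. The composition $\pi_{2} \circ \pi_{1}^{-1} : Y_{1} \xrightarrow{\sim} Y_{2}$ provides the required isomorphism of $B$-schemes.

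The main obstacle will be justifying that $\pi_{i}$ is birational when $Y_{i}$ may fail to be irreducible: one needs $Y_{i}|_{U}$ to meet, and be dense in, every irreducible component of $Y_{i}$. This is automatic in the intended application (Lemma \ref{thm:Corank1Fibre}), where $B = \PP(L)$ is irreducible and $U$ is open and dense, because $f_{i}$ finite dominant then forces every component of $Y_{i}$ to surject onto $B$. In general one can first reduce to the case that $B$ is irreducible and then handle each component of $Y_{i}$ separately, using $\phi$ over $U$ to match up components of $Y_{1}$ with those of $Y_{2}$. Equivalently, one may phrase the argument affine-locally via the sheaves $\mathcal{A}_{i} = (f_{i})_{*}\cO_{Y_{i}}$: normality of $Y_{i}$ and finiteness of $f_{i}$ identify $\mathcal{A}_{i}$ with the integral closure of $\cO_{B}$ inside $j_{*}(\mathcal{A}_{i}|_{U})$ for $j : U \hookrightarrow B$, and this integral closure depends only on $\mathcal{A}_{i}|_{U}$, which is preserved by $\phi$.
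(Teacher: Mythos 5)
Your argument is correct, but it runs along a different (though cognate) line from the paper's. You take the scheme-theoretic closure $\bar{\Gamma}$ of the graph of $\phi$ in $Y_{1} \times_{B} Y_{2}$ and observe that each projection $\pi_{i} : \bar{\Gamma} \to Y_{i}$ is finite (being closed in a scheme finite over $Y_{i}$) and an isomorphism over the dense open $Y_{i}|_{U}$, so that finiteness plus birationality plus normality of the target forces $\pi_{i}$ to be an isomorphism; composing gives $Y_{1} \cong Y_{2}$ over $B$, and in fact an isomorphism extending the given $\phi$. The paper instead argues directly with pushforward algebras: since $f_{i}$ is finite and $Y_{i}$ is normal, $(f_{i})_{*}\cO_{Y_{i}}$ is exactly the integral closure of $\cO_{B}$ in the function field $K(Y_{i})$, and $Y_{1}|_{U} \cong Y_{2}|_{U}$ identifies $K(Y_{1})$ with $K(Y_{2})$, so the two integral closures coincide. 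Your closing remark about identifying $\mathcal{A}_{i}$ with the integral closure of $\cO_{B}$ in $j_{*}(\mathcal{A}_{i}|_{U})$ is essentially this argument, so the two proofs rest on the same normality input ("finite birational onto normal is an isomorphism" versus "normal equals integrally closed"); the paper's version is shorter and avoids invoking Zariski's main theorem, while yours has the mild advantage of producing the isomorphism as an explicit extension of $\phi$ without passing through function fields. The irreducibility caveat you raise applies equally to the paper's proof (a function field only makes sense for irreducible $Y_{i}$), and your proposed fix — reducing to irreducible $B$ and matching components of the $Y_{i}$ via $\phi$ over the dense open $U$, using that every component of $Y_{i}$ dominates $B$ — is the right way to handle it; in the intended application this is harmless.
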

\begin{proof}
Let $K(Y_{1})$ be the function field of $Y_{1}$, considered as a constant sheaf on $B$.
Let us also consider $\cO_{Y_{1}}$ as a sheaf on $B$ via the map $f_{1}$.
We claim that $\cO_{Y_{1}}$ equals the integral closure $\ol{\cO}_{B}$ of $\cO_{B}$ in $K(Y_{1})$.
Since $f_{1}$ is finite, we have $\cO_{Y_{1}} \subseteq \ol{\cO}_{B}$.
On the other hand, since $Y_{1}$ is normal, $\cO_{Y_{1}}$ is integrally closed, hence $\ol{\cO}_{B} \subseteq \ol{\cO}_{Y_{1}} = \cO_{Y_{1}}$.

By the same argument, $\cO_{Y_{2}}$ is the integral closure of $\cO_{B}$ in $K(Y_{2})$.
But as $Y_{1}|_{U} \cong Y_{2}|_{U}$, we have $K(Y_{1}) \cong K(Y_{2})$, and the claim follows.
\end{proof}

\section{Geometric interpretation of $D^{b}(\cZ_{-},W)_{\res}$, even case}
Assume now that $n = \dim V$ is even, that the corank of the quadratic form at each point of $\PP(L) \subset \PP(\Sym^{2}V^{\vee})$ is at most $1$, and that the locus of corank 1 points is a nonsingular divisor.
This assumption holds for a generic $L$ of dimension $\le 3$.

We define the variety $f : Y \to \PP(L)$ as the nonsingular double cover of the corank 0 locus, ramified in the corank 1 locus.
At a corank 0 point $q \in \PP(L)$, the 2 points of the fibre $f^{-1}(q)$ correspond to the 2 components of the space of maximal isotropic subspaces of $(V,q)$.
We then have:
\begin{nprop}
\label{thm:InterpretationOfCliffordNEvenLater}
Under the assumptions above, $D^{b}(\cZ_{-},W)_{\res} \cong D^{b}(Y)$.
\end{nprop}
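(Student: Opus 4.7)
The plan is to mirror the proof of Proposition \ref{thm:GeometricInterpretationOfMFCategory} step by step, with modifications appropriate to the corank $\le 1$ setting. First I would construct a locally free factorisation $\cK \in D^{b}(\cZ_{-},W)_{\res}$ that plays the role of the generator. Since $\ker q$ is generically trivial in the even case, the subvarieties $Y_{1},Y_{2}$ used in the odd case are unsuitable; instead $\cK$ should be built from structure sheaves of $L \oplus L$ for a maximal isotropic $L \subset V$, using Lemma \ref{thm:MaximalIsotropicLocalFormMaps}. Over the corank 0 locus there are two families of such $L$, which assemble along the double cover $Y \to \PP(L)$, while over the corank 1 divisor the two families merge. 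I would then set $R = \pi_{*}(\sHom(\cK,\cK))$ and $F = \pi_{*}(\sHom(\cK,-)) : D^{b}(\cZ_{-},W)_{\res} \to D^{b}(\PP(L),R)$.

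The core of the proof is the identification $H(R) \cong f_{*}\cO_{Y}$. For this I would argue locally: by Lemma \ref{thm:QuadraticLocalForm} and the analogue of Lemma \ref{thm:SmoothMorphism}, étale locally on $\PP(L)$ the LG model pulls back from the standard corank 1 model of Definition \ref{defi:StandardSpaces}, $\pi_{n}:(X_{n},W_{n})\to\AA^{1}_{s}$. Using the $O(2)$-equivariant Knörrer reduction of Lemma \ref{thm:O2Knorrer}, I would reduce the computation to $n = 2$, where an explicit matrix factorisation computation analogous to Lemmas \ref{thm:localIso}--\ref{thm:ComputingRTilde} should yield
\[
H(R) \cong \CC[s,\theta]/(\theta^{2}-s),
\]
which is exactly the coordinate ring of the double cover of $\AA^{1}_{s}$ ramified at $s = 0$. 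To globalise, I would invoke Lemma \ref{thm:DeterminedVariety} applied to $\Spec_{\PP(L)} H(R)$ and $Y$, matching the two components over an étale neighbourhood via Lemma \ref{thm:MaximalIsotropicLocalFormMaps} — essentially repeating the argument of Lemma \ref{thm:Corank1Fibre}, but now the generic locus rather than a divisor carries the double cover structure. Combined with Lemma \ref{thm:QuasiIsoImpliesEquivalence} this would give $D^{b}(\PP(L),R) \cong D^{b}(Y)$.

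With these ingredients in place, the remaining steps would be routine adaptations of the odd-case arguments. I would check $\cK \in D^{b}(\cZ_{-},W)_{\res}$ by a weight calculation as in Lemma \ref{thm:KIsRestricted}, produce a left adjoint $G$ to $F$ which is fully faithful (following Lemmas \ref{thm:CliffordDerived}--\ref{thm:CliffordFullyFaithful} and \ref{thm:GDerivedExists}), show that $G$ preserves boundedness and the grade restriction condition, and finally deduce the equivalence $D^{b}(\cZ_{-},W)_{\res} \cong D^{b}(Y)$ by verifying $\ker F = 0$ and invoking \cite[Thm.\ 3.3]{kuznetsov_homological_2007} as in the proof of Proposition \ref{thm:GeometricInterpretationOfMFCategory}.

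The hard part will be step 2 — constructing the correct $\cK$ and pinning down the algebra structure on $H(R)$ — and the vanishing $\ker F = 0$. The latter is delicate here because Theorem \ref{thm:MainTheorem} offers no direct equivalence in the even case, so the Calabi--Yau trick from Lemma \ref{thm:GeneratorDoubleCover} is not immediately available. My plan is to first establish $\ker F = 0$ on the standard corank 1 model $X_{2}$ directly (where the whole set-up becomes very explicit), and then to propagate the conclusion to $X_{n}$ for all even $n$ using the Knörrer equivalence of Lemma \ref{thm:O2Knorrer}, before finally transferring to a general $\cZ_{-}$ via the pullback square of the analogue of Lemma \ref{thm:SmoothMorphism}, exactly as in the last paragraph of the proof of Lemma \ref{thm:GeneratorDoubleCover}.
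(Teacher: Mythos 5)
Your overall architecture (generator $\cK$, dg algebra $R=\pi_{*}\sHom(\cK,\cK)$, adjoint pair $(G,F)$, local computation of $H(R)$, globalisation via Lemma \ref{thm:DeterminedVariety}) is the right template, but the two places where the even case actually requires new content are not secured. First, your choice of generator does not exist globally. You propose building $\cK$ from structure sheaves of $M\oplus M$ for $M\subset V$ a maximal isotropic of $q$; but over the corank $0$ locus such families of isotropics exist only étale locally, and the monodromy exchanging the two families is precisely the double cover $Y$. So ``assembling along $Y$'' does not produce an object of $D^{b}(\cZ_{-},W)$ on which to base $R$ and $F$ -- this is exactly the spinor-bundle/Brauer-type obstruction discussed in Section \ref{sec:Quadrics}, and patching it (working over $Y$, or with the pushforward of a universal isotropic family) changes the endomorphism algebra and requires an argument you have not given. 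The paper sidesteps this entirely by taking the manifestly global, choice-free object
$K=\cO_{Y_{1}}(n/2,0)\oplus\cO_{Y_{2}}(0,n/2)$ with $Y_{1}=0\times V\times(L\setminus 0)$ and $Y_{2}=V\times 0\times(L\setminus 0)$: these are the zero sections of the two $V$-factors, which are maximal isotropics for $W$ on $V\oplus V$ but involve no choice depending on $q$. With that $K$ the local computation of $H(R)$ is a simpler version of Lemma \ref{thm:localiso} (sic: Lemma \ref{thm:localIso}) and does give the ramified double cover, as you predict.

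Second, the step you correctly identify as delicate -- $\ker F=0$ in $D^{b}(\cZ_{-},W)_{\res}$, where the Calabi--Yau trick of Lemma \ref{thm:GeneratorDoubleCover} is unavailable -- is left essentially unargued: ``do $X_{2}$ explicitly and propagate by Lemma \ref{thm:O2Knorrer}'' is a plausible plan (the $O(2)$-Knörrer functor does send the local generator for $n$ to the one for $n+2$ and preserves grade restriction), but the base case is the whole point and you give no proof of it. The paper instead proves local generation directly on the standard corank $1$ model for every even $n$ at once: self-duality of $\cO_{Y_{1}}(n/2)$ turns the vanishing $\RHom(K,\cE)=0$ into $\RGamma(Y_{1},\cE(n/2)|_{Y_{1}})^{SO(2)}=0$, Lemma \ref{thm:MaximalsGiveMaps} then excludes the extreme weights, so $\wt(\cE)\subseteq[-n/2+1,n/2-1]$, while the weight statement in the $SO(2)$-Knörrer equivalence of Lemma \ref{thm:SO2Knorrer} (from $X_{1}$ to $X_{n}$) forces any nonzero object to have weight spread at least $n-1$ -- a contradiction. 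Until you either supply the explicit $X_{2}$ argument or reproduce a weight argument of this kind, and replace your generator by a globally defined one, the proof has genuine gaps at both of the points where the even case differs from the odd one.
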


The method of proof is the same as for the case of odd $n$ in Proposition \ref{thm:GeometricInterpretationOfMFCategory}, and we only indicate the necessary changes.

We let $Y_{1} = 0 \times V \times (L \setminus 0) \subset Z^{\sstable}_{-}$ and $Y_{2} = V \times 0 \times (L \setminus 0) \subset Z^{\sstable}_{-}$.
We then get $\cO_{Y_{1}}, \cO_{Y_{2}} \in D^{b}_{T}(Z^{\sstable}_{-},W)$, and let
\[
K = \cO_{Y_{1}}(n/2,0) \oplus \cO_{Y_{2}}(0,n/2) \in D_{G}^{b}(Z^{\sstable}_{-},W).
\]
Choosing a locally free resolution $\cK$ of $K$, we get a dg algebra $R = \pi_{*}(\sHom(\cK,\cK))$ and a functor $F = \pi_{*}(\Rhom(\cK,-)) : D^{b}(\cZ_{-},W)_{\res} \to D^{b}(\PP(L),R)$.

\begin{nprop}
There is an isomorphism of $\cO_{\PP(L)}$-algebras $H(R) \cong f_{*}(\cO_{\PP(L)})$.
\end{nprop}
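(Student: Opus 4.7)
The plan is to mirror the proof of Proposition \ref{thm:globalIso} in the odd case, with substantial simplifications since now only coranks $0$ and $1$ occur in $\PP(L)$. Étale locally on $\PP(L)$, Lemma \ref{thm:SmoothMorphism} puts $\cZ_{-} \to \PP(L)$ into the pullback of one of two standard LG models: the corank $0$ model over a point, or the corank $1$ model over $\AA^{1}_{s}$. Since $\pi_{*}\sHom(\cK,\cK)$ is compatible with smooth base change, the computation of $H(R)$ reduces to the corresponding computations $H(P_{n})$ on the two standard models, where $P_{n} = (\pi_{n})_{*}\sHom(K_{n},K_{n})$ with $K_{n}$ now the analogue of $K$ adapted to the even case, namely $\cO_{Y_{n,1}}(n/2) \oplus \cO_{Y_{n,2}}(-n/2)$ with $Y_{n,1} = 0 \times F$ and $Y_{n,2} = F \times 0$.

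I would first establish an even-case $O(2)$-equivariant Knörrer periodicity analogous to Lemma \ref{thm:O2Knorrer}, in order to reduce both local computations to the minimal case $n = 2$. On $X_{2}(0) \to \pt$, the subvarieties $Y_{2,1}$ and $Y_{2,2}$ meet only at the origin, where $W$ vanishes; an explicit Koszul calculation in the style of Lemma \ref{thm:ComputingRTilde}, together with the cross-term vanishing used in Lemma \ref{thm:MaximalIsotropicLocalFormMaps}, should yield $H(P_{2}) \cong \CC \oplus \CC$, realising the trivial degree $2$ cover over a corank $0$ point. On $X_{2}(1) \to \AA^{1}_{s}$ with $q = s z_{1}^{2} + z_{2}^{2}$, the analogous calculation should produce $H(P_{2}) \cong \CC[s,\theta]/(\theta^{2}-s)$, realising the double cover of $\AA^{1}_{s}$ ramified at $s = 0$. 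As in the odd case, taking $\ZZ_{2} \subset O(2)$ invariants identifies $H(R)$ étale locally with the ring of functions on $Y$.

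To globalise these local isomorphisms I would follow Lemma \ref{thm:Corank1Fibre} closely. Over an étale neighbourhood $U \subseteq \PP(L)$ on which the quadratic bundle $(F,q)$ admits two maximal isotropic subbundles $L_{1}, L_{2} \subset F$ over the corank $0$ locus with $L_{1} \cap L_{2} = \ker q = 0$, form the objects $J_{i} = \cO_{L_{i} \oplus L_{i}} \in D^{b}(\cZ_{-}|_{U},W)_{\res}$. Lemma \ref{thm:MaximalIsotropicLocalFormMaps} then shows that under the equivalence $F : D^{b}(\cZ_{-}|_{U},W)_{\res} \to D^{b}(U, R|_{U})$, the images $F(J_{1})$ and $F(J_{2})$ are supported on distinct components of the étale double cover $\Spec H(R)$ over the corank $0$ locus, and this canonically identifies the two components of $\Spec H(R)$ over a corank $0$ point with the two components of the variety of maximal isotropics of $(V,q)$, independently of the choice of $L_{i}$ (by deforming within their connected components as in the proof of Lemma \ref{thm:Corank1Fibre}). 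Finally, Lemma \ref{thm:DeterminedVariety} promotes the resulting isomorphism $\Spec H(R) \cong Y$ over the corank $0$ locus to a global isomorphism over $\PP(L)$, since $Y$ is normal and both sides are finite dominant over $\PP(L)$.

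The main obstacle is the corank $1$ local computation, which must produce a \emph{ramified} rather than étale double cover. Unlike in the odd case, where the ramified slice appeared as a $1$-dimensional subvariety of the $2$-parameter corank $2$ computation of Lemma \ref{thm:ComputingRTilde} and was thus covered by it, here the reflection in $O(2)$ must directly fuse the two $SO(2)$-eigenspaces of maps between the relevant Koszul factorisations into a single generator $\theta$ satisfying $\theta^{2} = s$. Tracking $O(2)$-equivariance through the Koszul bookkeeping, and verifying that the resulting element $\theta \in H(P_{2})$ is not a square in the étale local ring at $s = 0$, will be the delicate point.
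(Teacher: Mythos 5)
Your overall route is the paper's own: the paper proves this proposition by asserting that the argument of Proposition \ref{thm:globalIso} goes through with simpler computations, and your plan (standard local forms via Lemma \ref{thm:SmoothMorphism}, an even-case Knörrer reduction in the style of Lemma \ref{thm:O2Knorrer}, an explicit local Koszul computation, then globalisation via maximal isotropics and Lemma \ref{thm:DeterminedVariety}) is exactly that argument.

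There is, however, a concrete error in your corank $0$ step. If the cross-terms $\RHom(\cO_{Y_{2,1}}(n/2),\cO_{Y_{2,2}}(-n/2))$ vanished, then the $O(2)$-invariant endomorphism algebra of $K_{2}$ would be only $\CC$: the reflection in $O(2)$ swaps the two summands of $K_{2}$, so the two identity endomorphisms contribute a single invariant class, and with no cross-terms nothing else survives. You would get rank $1$, not $\CC \oplus \CC$. In fact the cross-Homs do \emph{not} vanish: a Koszul computation gives $\Rhom(\cO_{Y_{n,1}},\cO_{Y_{n,2}})$ of rank one, concentrated in $SO(2)$-weight $n$ (the top exterior power of the conormal directions), and the twists by $\pm n/2$ in the definition of $K_{n}$ are precisely what bring this to weight $0$ so that it survives taking invariants. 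These rank-one cross-terms, fused by the reflection into a single element $\theta$, are the source of the second algebra generator; the genuine content of the local computation is the analogue of Lemma \ref{thm:CompositionLemma}, namely that the composite of the two cross-maps is a unit over the corank $0$ locus and equals $s$ (up to a unit) in the corank $1$ model, giving $\CC[s,\theta]/(\theta^{2}-s)$ and, at $s \neq 0$, the split algebra $\CC \times \CC$. Your appeal to Lemma \ref{thm:MaximalIsotropicLocalFormMaps} is also misplaced here: that lemma concerns $\cO_{M_{i}\oplus M_{i}}$ for maximal isotropics of $(F,q)$ over the corank $1$ locus with $M_{1}\cap M_{2} = \ker q$, not the subvarieties $0 \times F$ and $F \times 0$. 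The slip is recoverable, since corank $0$ points lie in the corank $1$ chart and your corank $1$ computation already covers them; but the same confusion must be avoided in the globalisation step, where the orthogonality statement you need for the objects $J_{i} = \cO_{L_{i}\oplus L_{i}}$ with $L_{1}\cap L_{2} = 0$ over the corank $0$ locus is an even-case analogue of Lemma \ref{thm:MaximalIsotropicLocalFormMaps} that has to be proved by a separate (easy) computation rather than quoted.
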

The proof of this proposition is carried out in the same way as that for Proposition \ref{thm:globalIso}, and the computations are simpler in the even case.

The proof of Proposition \ref{thm:InterpretationOfCliffordNEvenLater} now goes the same way as that of Proposition \ref{thm:GeometricInterpretationOfMFCategory}, except for one difficulty: In proving that $0 = \ker F \subset D^{b}(\cZ_{-},W)_{\res}$ (Lemma \ref{thm:GeneratorDoubleCover}), we no longer have an equivalence $D^{b}(X) \cong D^{b}(\cZ_{-},W)_{\res}$ for a variety $X$, and so the initial step in the proof of Lemma \ref{thm:GeneratorDoubleCover} does not work.
Instead we prove directly that in a standard corank 1 model, the appropriate object is a local generator for the grade restricted category.
The rest of the proof of Lemma \ref{thm:GeneratorDoubleCover} then goes through.

Let $X_{n} = \AA^{n} \times \AA^{n} \times \AA^{1} \stackrel{\pi}{\to} \AA^{1}$ be the standard corank 1 model, let $Y_{1} = 0 \times \AA^{n} \times \AA^{1}$, $Y_{2} = \AA^{n} \times 0 \times \AA^{1}$, and let $K \in D^{b}_{O(2)}(X_{n}, W)$ be given by
\[
K = \cO_{Y_{1}}(n/2) \oplus \cO_{Y_{2}}(-n/2).
\]
\begin{nlemma}
If $\cE \in D^{b}_{O(2)}(X_{n},W)_{\res}$ is such that $\pi_{*}(\Rhom(K,\cE)) = 0$, then $\cE = 0$.
\end{nlemma}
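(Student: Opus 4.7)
The plan is to follow the strategy of Lemma \ref{thm:GeneratorDoubleCover}, but in the absence of a convenient global Calabi--Yau reduction, to handle the smallest corank $1$ model directly. First I would establish an $O(2)$-equivariant \Knorrer{} periodicity for the corank $1$ standard model, entirely analogous to Lemma \ref{thm:O2Knorrer}: applied to the last two coordinate pairs $(x_{n-1}, y_{n-1})$ and $(x_n, y_n)$, whose superpotential contribution is the nondegenerate piece $x_{n-1}y_{n-1} + x_n y_n$, it produces an equivalence $\Phi : D^b_{O(2)}(X_{n-2}, W_{n-2}) \xrightarrow{\cong} D^b_{O(2)}(X_n, W_n)$. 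Since the \Knorrer{} kernel has $SO(2)$-weights in $\{-1, 0, 1\}$, $\Phi$ restricts to an equivalence of the grade-restricted subcategories (matching the intervals $[-(n-2)/2, (n-2)/2]$ and $[-n/2, n/2]$), and a direct computation as in Lemma \ref{thm:O2Knorrer} gives $\Phi(\cO_{Y_{n-2,i}}) \cong \cO_{Y_{n,i}}(\pm 1)$, hence $\Phi(K_{n-2}) \cong K_n$. Iterating reduces the lemma to the base case $n = 2$.

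For $n = 2$ the model is $X_2 = \AA^2_x \times \AA^2_y \times \AA^1_s$ with $W = sx_1 y_1 + x_2 y_2$, $Y_1 = \{x_1 = x_2 = 0\}$, $Y_2 = \{y_1 = y_2 = 0\}$, and the grade restriction constrains the $SO(2)$-weights to $\{-1, 0, 1\}$. I would replace $\cO_{Y_1}$ by the explicit Koszul matrix factorisation
\[
\cO_{Y_1} \cong \bigl(\, \cO(-2) \mfarrows{}{} \cO(-1)^{\oplus 2} \mfarrows{}{} \cO \,\bigr),
\]
built from the regular sequence $(x_1, x_2)$ and the dual sequence $(sy_1, y_2)$ pairing into $W$, and its counterpart for $\cO_{Y_2}$. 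Dualising and pushing forward, the hypothesis $\pi_*\Rhom(K, \cE) = 0$ translates into the vanishing of the $O(2)$-invariant pushforward to $\AA^1_s$ of a small finite collection of weight-graded pieces of $\cE|_{Y_i}$.

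The proof then concludes by a support argument parallel to Lemma \ref{thm:CliffordGenerates}: any grade-restricted locally free representative of $\cE$ is a finite $O(2)$-equivariant sum of sheaves $\cO(k)$ with $k \in \{-1, 0, 1\}$ whose matrix differentials are pinned down by the explicit vanishings just described; the closure of $\mathrm{supp}(\cE \otimes \cE^\vee)$ under the $O(2) \times \CC^*_R$-action then forces it to meet $Y_1 \cup Y_2$ whenever it is nonempty, and so to be empty altogether. The main obstacle I foresee is the $n = 2$ bookkeeping, in particular tracking how the $\ZZ_2 \subset O(2)$ intertwines the contributions from $Y_1$ and $Y_2$; the computation is tractable because the grade-restriction interval $\{-1, 0, 1\}$ matches exactly the three $SO(2)$-weights that appear in each Koszul resolution.
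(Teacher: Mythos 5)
Your reduction to a base case via an $O(2)$-equivariant \Knorrer{} periodicity for the corank~$1$ model is reasonable (it is the corank~$1$ analogue of Lemma~\ref{thm:O2Knorrer}, and $\Phi(K_{n-2})\cong K_{n}$ does come out of the same computation), but the way you propose to finish at $n=2$ has a genuine gap. The hypothesis $\pi_{*}\Rhom(K,\cE)=0$ does \emph{not} give vanishing of $\cE$ along $Y_{1}\cup Y_{2}$: after unwinding the $O(2)$-invariants and the self-duality of $\cO_{Y_{1}}(n/2)$ it only kills a single $SO(2)$-isotypic slice, namely $\RGamma(Y_{1},\cE(n/2)|_{Y_{1}})^{SO(2)}$ --- one weight-graded piece, as you yourself observe. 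A support argument ``parallel to Lemma~\ref{thm:CliffordGenerates}'' therefore cannot close the proof: in that lemma $K$ is the structure sheaf of the entire zero-section gerbe, so the hypothesis literally gives $\cE|_{\cY}\cong 0$, and a nonempty closed invariant support, which must accumulate on the zero section, yields a contradiction. Here the zero section is contained in $Y_{1}\cap Y_{2}$, so ``the support must meet $Y_{1}\cup Y_{2}$'' is automatic and contradicts nothing, and nothing in the hypothesis prevents the support from sitting on the zero section. In addition, your intermediate claim that a grade-restricted object admits a locally free representative whose summands are $\cO(k)$ with $k\in\{-1,0,1\}$ is unjustified: grade restriction constrains the weights of the \emph{derived} restriction $\cE|_{p}$ at base points, not the summands of a chosen representative.

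What is missing is precisely the weight-theoretic mechanism that the paper uses (directly for all even $n$, with no base case). First, the vanishing of $\RGamma(Y_{1},\cE(n/2)|_{Y_{1}})^{SO(2)}$ together with the maximal-weight criterion of Lemma~\ref{thm:MaximalsGiveMaps} on $Y_{1}$ shows that $\pm n/2\notin\wt(\cE)$, so grade restriction forces $\wt(\cE)\subseteq[-n/2+1,\,n/2-1]$, an interval containing at most $n-1$ weights. Second, the $SO(2)$-equivariant \Knorrer{} equivalence $D^{b}_{SO(2)}(X_{1},W)\cong D^{b}_{SO(2)}(X_{n},W)$ of Lemma~\ref{thm:SO2Knorrer}, whose kernel adds $\{-1,0\}$ to the weights at each step, shows that any nonzero object has weight spread at least $n-1$. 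The clash between these two statements is what forces $\cE=0$. If you insist on reducing to $n=2$, you still need an argument of this type there: the hypothesis plus maximality removes the weights $\pm 1$, leaving $\wt(\cE)\subseteq\{0\}$, and the comparison with $X_{1}$ shows a nonzero object must occupy two consecutive weights --- a statement that your Koszul bookkeeping and support considerations do not supply.
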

\begin{proof}
Since $X_{n}$ is affine, we have $\pi_{*}(\Rhom(K,\cE)) = \RHom_{X_{n}/O(2)}(K,\cE)$.
We then have $\RHom_{X_{n}/SO(2)}(\cO_{Y_{1}}(n/2), \cE) = \RHom_{X_{n}/O(2)}(K,\cE) = 0$.

Taking a standard resolution of $\cO_{Y_{1}}$ shows that $\cO_{Y_{1}}(n/2) \cong \cO_{Y_{1}}(n/2)^{\vee}$, and so we get
\begin{align*}
\RHom_{X_{n}/SO(2)}(\cO_{Y_{1}}(n/2),\cE) &= \RHom_{X_{n}/SO(2)}(\cE^{\vee}, \cO_{Y_{1}}(n/2)) \\
&= \RGamma(Y_{1},\cE(n/2)|_{Y_{1}})^{SO(2)} = 0.
\end{align*}
Now the weights of $\cE(n/2)|_{Y_{1}}$ are contained in $[0, n]$, and if $0$ were a weight of $\cE(n/2)|_{Y_{1}}$, then by Lemma \ref{thm:MaximalsGiveMaps} we would have $\RHom_{Y_{n}/SO(2)}(\cO, \cE(n/2)|_{Y_{1}}) \not= 0$.
Therefore $0$ is not a weight of $\cE(n/2)$, which means that $-n/2$ is not a weight of $\cE$.
Since $\cE$ is $O(2)$-equivariant, it follows that $n/2$ is not a weight of $\cE$ either, and so $\wt(\cE) \subseteq [-n/2+1, n/2-1]$.

By Lemma \ref{thm:SO2Knorrer} we get an equivalence $D^{b}_{SO(2)}(X_{1},W) \to D^{b}_{SO(2)}(X_{n},W)$.
Let $\cE^{\prime} \in D^{b}_{SO(2)}(X_{1},W)$ be the object sent to $\cE$ under this equivalence.

Assume now for a contradiction that $\cE \not= 0$.
Then $\cE^{\prime} \not= 0$ and so $\wt(\cE^{\prime}) \not= \varnothing$.
Now by the statement about weights in Lemma \ref{thm:SO2Knorrer}, it follows that $\max \{k\in \wt(\cE)\} - \min \{k\in \wt(\cE)\} \ge n-1$.
This contradicts the fact that $\wt(\cE) \subseteq [-n/2+1, n/2-1]$, and we obtain the desired conclusion $\cE = 0$.
\end{proof}

\section{The case of $\PP(V)^{2}$}
Our results and proofs extend with minor changes to the case of intersections of $(1,1)$-divisors in $\PP(V)^{2}$.
Let $f : \PP(V)^{2} \to \PP(V^{\otimes 2})$ be the Segre embedding, let $L \subset (V^{\otimes 2})^{\vee}$ be a linear subspace, and let $X = f^{-1}(\PP(L^{\perp}))$.

Recall that $T = (\CC^{*})^{2} \subset G$, and let $\cZ = (V \times V \times L)/T$, where the $T$-action is induced from the $G$-action.
Let $\cY_{L}$ be the substack $0 \times 0 \times (L\setminus 0)/T$.
The natural map $\cY_{L} \to \PP(L)$ makes $\cY_{L}$ an $SO(2)$-gerbe.

Thinking of a point $p \in L \subset (V^{\otimes 2})^{\vee}$ as a bilinear function on $V$ gives a natural superpotential $W$ on $\cZ$.
The map $\cZ \to \cY_{L}$ is a vector bundle, and $W$ induces a quadratic form on this bundle.
Proceeding now in the same way as in Section \ref{sec:CliffordAlgebras}, we get a sheaf of Clifford algebras $C$ on $\cY_{L}$ and a subcategory $D^{b}(\cY_{L},C)_{\res} \subset D^{b}(\cY_{L},C)$.

With this recycling of notation, Theorem \ref{thm:MainTheorem} holds verbatim, and the proof we have given in the $\Sym^{2}\PP(V)$-case goes through with minor changes; taking the same GIT stabilities and using similar definitions for the window categories.

We can interpret this as saying that $D^{b}(\cY_{V^{\otimes 2}},C)_{\res}$ is an HP dual for $\PP(V)^{2}$ with respect to the line bundle $\cO(1,1)$ and a Lefschetz decomposition of $D^{b}(\PP(V)^{2})$ 
described as follows. 
Let
\[
\cA = \langle \cO(i,j) \rangle_{(i,j) \in S},
\]
where $S = \{(i,j)\ |\ i + j \in [0,1], |i - j| \le \lfloor \frac{n}{2} \rfloor\}$.
If $n$ is odd, we take $\cA_{0} = \cdots = \cA_{n-1} = \cA$.

If $n$ is even, we let $\cA_{0} = \cdots = \cA_{n/2-1} = \cA$.
We let $S^{\pr} = S = \{(i,j)\ |\ i + j \in [0,1], |i - j| \le \frac{n}{2} - 1\}$, let
\[
\cA^{\pr} = \langle \cO(i,j)\rangle_{(i,j) \in S^{\pr}} ,
\]
and then let $\cA_{n/2} = \cdots \cA_{n-1} = \cA^{\pr}$.

We can also describe $D^{b}(\cY_{L},C)_{\res}$ more geometrically in this case.
Thinking of $\PP(V^{\otimes 2})$ as a space of $(n \times n)$-matrices, it is stratified by the rank of the matrices.
Assume $n$ is odd, that $\PP(L)$ does not intersect the locus of matrices of corank $\ge 2$, and that the locus of corank 1 points in $\PP(L)$ is a nonsingular divisor.
Let $Y \subset \PP(L)$ be the corank 1 locus.
\begin{nprop}
\label{thm:DerivedCAtegoryIsYSO2Case}
Under the above assumptions, we have 
\[
D^{b}(\cY_{L},C)_{\res} \cong D^{b}(Y).
\]
\end{nprop}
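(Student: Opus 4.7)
The plan is to adapt the strategy of Proposition \ref{thm:GeometricInterpretationOfMFCategory} to the $\PP(V)^{2}$ setting. The preliminary equivalence $D^{b}(\cZ_{-},W) \cong D^{b}(\cY_{L},C)$, together with its grade-restricted version, goes through by a verbatim analog of Proposition \ref{thm:MainPropositionClifford}, so it suffices to construct an equivalence $D^{b}(\cZ_{-},W)_{\res} \cong D^{b}(Y)$.

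I would begin by defining a generating factorisation. For a corank 1 point $q \in L \subset (V^{\otimes 2})^{\vee}$ viewed as a bilinear form on $V$, let $\ker_{L} q, \ker_{R} q \subset V$ denote the one-dimensional left and right kernels. Define reduced subvarieties $Y_{1}, Y_{2} \subset Z_{-}^{\sstable}$ whose fibres over a corank 1 point $q$ are $\ker_{L} q \oplus V$ and $V \oplus \ker_{R} q$ respectively (and empty otherwise). Set
\[
K = \cO_{Y_{1}}((n-1)/2, 0) \oplus \cO_{Y_{2}}(0, (n-1)/2) \in D_{G}^{b}(Z_{-}^{\sstable},W),
\]
and note that a Koszul weight computation in the style of Lemma \ref{thm:KIsRestricted} places $K$ in the grade-restricted subcategory. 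Choose a locally free resolution $\cK$, and form $R = \pi_{*}(\sHom(\cK,\cK))$ together with $F = \pi_{*}(\Rhom(\cK,-)) : D^{b}(\cZ_{-},W)_{\res} \to D^{b}(\PP(L),R)$.

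The principal step is computing $H(R)$. Étale locally on $\PP(L)$, a standard form lemma --- the bilinear-pairing analog of Lemma \ref{thm:QuadraticLocalForm} --- presents $\cZ_{-}|_{U} \to U$ as the pullback of a standard corank 1 model $X_{n} \to \AA^{1}_{s}$ with $W = s x_{1}y_{1} + \sum_{k=2}^{n} x_{k}y_{k}$. An $SO(2)$-\Knorrer{} reduction absorbing the nondegenerate tail $x_{2}y_{2} + \cdots + x_{n}y_{n}$, modelled on Lemma \ref{thm:SO2Knorrer}, reduces the problem to the case $n=1$ with $W = sx_{1}y_{1}$. In this toy model, a short explicit calculation with Koszul resolutions of $\cO_{Y_{1}}$ and $\cO_{Y_{2}}$ identifies $H(R)$ with $\cO_{\{s=0\}}$ as an $\cO_{\AA^{1}_{s}}$-algebra, so that $H(R)|_{U} \cong \cO_{\PP(L)_{1} \cap U}$. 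Because there is no nontrivial double-cover structure to glue --- in contrast with Lemma \ref{thm:Corank1Fibre} --- these local descriptions patch together automatically via Lemma \ref{thm:DeterminedVariety}, yielding a global isomorphism $H(R) \cong f_{*}\cO_{Y}$. Lemma \ref{thm:QuasiIsoImpliesEquivalence} then supplies $D^{b}(\PP(L),R) \cong D^{b}(Y)$.

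The formal machinery runs as before. I would construct a left adjoint $G$ to $F$ via the projection-formula argument of Lemmas \ref{thm:CliffordLeftAdjoint}, \ref{thm:CliffordFullyFaithful} and \ref{thm:GDerivedExists}, verify that $G$ is fully faithful and lands in $D^{b}(\cZ_{-},W)_{\res}$, and use the decomposition $D^{b}(\cZ_{-},W)_{\res} = \langle \im G, \ker F \rangle$ together with $\ker F = 0$ to conclude. The vanishing $\ker F = 0$ would be proved by reducing to the standard corank 1 model and running a weight-counting argument of the type carried out at the end of Section 8. The main obstacle is the local computation of $H(R)$, and in particular the precise calibration of weights in $K$ so that the output algebra is exactly $\cO_{Y}$ rather than a twisted or Azumaya variant; once the $n=1$ toy computation is carried out correctly, the remaining steps are routine adaptations of the arguments in Sections \ref{sec:CliffordAlgebras}--\ref{sec:GeometricInterpretation}.
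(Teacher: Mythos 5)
Your overall architecture (generator $K$, dg algebra $R=\pi_{*}\sHom(\cK,\cK)$, adjoint pair $F,G$, $\ker F=0$ by reduction to a standard corank~1 model) is the paper's, but your choice of generator breaks the central computation. The paper does \emph{not} reuse the two-summand object from the $\Sym^{2}$ case: since the relevant quotient here is by $T=(\CC^{*})^{2}$ only (there is no $\ZZ_{2}$ forcing you to symmetrise, so your ``$D^{b}_{G}$'' should in any case be $D^{b}_{T}$), it replaces $K$ by the single object $\cO_{Y_{1}}((n-1)/2,0)\in D^{b}_{T}(Z^{\sstable}_{-},W)$ and shows $\pi_{*}\Rhom(\cO_{Y_{1}}((n-1)/2,0),\cO_{Y_{1}}((n-1)/2,0))\cong \cO_{Y}$, a commutative algebra, whence $D^{b}(\PP(L),R)\cong D^{b}(Y)$ directly. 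With your $K=\cO_{Y_{1}}((n-1)/2,0)\oplus\cO_{Y_{2}}(0,(n-1)/2)$ the claimed local identification $H(R)\cong\cO_{\{s=0\}}$ cannot be right: the diagonal blocks alone contribute two copies of $\cO_{Y}$ (each summand is nonzero and has its identity endomorphisms), and the off-diagonal blocks do not vanish. Indeed, after the $SO(2)$-\Knorrer{} reduction to the $n=1$ model $W=sx_{1}y_{1}$, both $\cO_{Y_{1}}((n-1)/2,0)$ and $\cO_{Y_{2}}(0,(n-1)/2)$ become the factorisation $\cO_{\{s=0\}}$ with the \emph{same} $SO(2)$-weight $(n-1)/2$ (the twist by $(-1)$ per \Knorrer{} step in Lemma \ref{thm:SO2Knorrer} exactly cancels the relative twist you built into $K$), so the cross Hom's are one-dimensional over each corank~1 point. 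Hence your $H(R)$ is locally a $2\times 2$ matrix algebra over $\cO_{Y}$, not $\cO_{Y}$, and your appeal to Lemma \ref{thm:DeterminedVariety} does not apply since the algebra is not commutative.

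This is exactly the ``Azumaya variant'' issue you flag but do not resolve. Your route could in principle be repaired: one would have to show globally that the two off-diagonal pieces are mutually inverse line bundles on $Y$ with composition a perfect pairing, so that $H(R)\cong\sEnd_{\cO_{Y}}(\cO_{Y}\oplus\cL)$ is Morita-equivalent to $\cO_{Y}$ (i.e.\ rule out a genuine Brauer twist); none of that is in your argument. The paper's single-summand generator avoids the problem entirely, and its endomorphism algebra is computed, by the same explicit resolutions as in Lemma \ref{thm:ComputingRTilde}, to be exactly $\cO_{Y}$. The remaining formal steps of your proposal (adjunction, full faithfulness, boundedness, grade restriction of the image, and $\ker F=0$ via the corank~1 weight-counting argument) do match the paper's.
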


This proposition is proved along the lines of Proposition \ref{thm:GeometricInterpretationOfMFCategory}.
We replace the local generator $K = \cO_{Y_{1}}((n-1)/2, 0) \oplus \cO_{Y_{2}}(0, (n-1)/2) \in D^{b}_{G}(Z^{\mathrm{ss}}_{-},W)$ used in Section \ref{sec:GeometricInterpretation} by the object $\cO_{Y_{1}}((n-1)/2,0) \in D^{b}_{T}(Z^{\mathrm{ss}}_{-},W)$.

Computations like those in the proof of Lemma \ref{thm:ComputingRTilde} show that 
\[
\pi_{*}\Rhom(\cO_{Y_{1}}((n-1)/2,0), \cO_{Y_{1}}((n-1)/2,0)) \cong \cO_{Y}
\]
as an $\cO_{\PP(L)}$-algebra, and the rest of the argument in Section \ref{sec:GeometricInterpretation} goes through to show
\[
D^{b}(\cY_{L},C)_{\res} \cong D^{b}(\PP(L), \cO_{Y}) \cong D^{b}(Y).
\]

\footnotesize{
\bibliographystyle{alpha-abbrv}
\bibliography{bibliography}
}
\end{document}